\documentclass[12pt,a4paper]{amsart}
\usepackage[czech,english]{babel}
\usepackage{amssymb,eucal}
\usepackage[all,cmtip]{xy}
\usepackage{hyperref}

\pagestyle{plain}
\raggedbottom

\textwidth=36pc
\calclayout

\emergencystretch=2em

\hyphenation{con-tra-acyc-lic co-acyc-lic}

\newcommand{\+}{\nobreakdash-}
\renewcommand{\:}{\colon}  

\newcommand{\rarrow}{\longrightarrow}

\newcommand{\bu}{{\text{\smaller\smaller$\scriptstyle\bullet$}}}
\newcommand{\lrarrow}{\mskip.5\thinmuskip\relbar\joinrel\relbar\joinrel
 \rightarrow\mskip.5\thinmuskip\relax} 

\DeclareMathOperator{\Hom}{Hom}
\DeclareMathOperator{\Ext}{Ext}
\DeclareMathOperator{\Tot}{Tot}
\DeclareMathOperator{\coker}{coker}
\DeclareMathOperator{\cone}{cone}
\DeclareMathOperator{\cof}{cof}

\newcommand{\id}{\mathrm{id}}

\newcommand{\sA}{\mathsf A}
\newcommand{\sB}{\mathsf B}
\newcommand{\sC}{\mathsf C}
\newcommand{\sD}{\mathsf D}
\newcommand{\sE}{\mathsf E}
\newcommand{\sF}{\mathsf F}
\newcommand{\sG}{\mathsf G}
\newcommand{\sH}{\mathsf H}
\newcommand{\sK}{\mathsf K}
\newcommand{\sL}{\mathsf L}
\newcommand{\sP}{\mathsf P}
\newcommand{\sR}{\mathsf R}
\newcommand{\sS}{\mathsf S}
\newcommand{\sT}{\mathsf T}
\newcommand{\sW}{\mathsf W}
\newcommand{\sX}{\mathsf X}

\newcommand{\cF}{\mathcal F}
\newcommand{\cL}{\mathcal L}
\newcommand{\cR}{\mathcal R}
\newcommand{\cS}{\mathcal S}
\newcommand{\cT}{\mathcal T}
\newcommand{\cW}{\mathcal W}

\newcommand{\Sets}{{\mathsf{Sets}}}
\newcommand{\Fil}{{\mathsf{Fil}}}
\newcommand{\AdF}{{\mathsf{AdF}}}
\newcommand{\Add}{{\mathsf{Add}}}

\newcommand{\upit}{\fontshape{ui}\selectfont}
\newcommand{\Mono}{{\operatorname{-\mathcal M\mbox{\upit ono}}}}
\newcommand{\Epi}{{\operatorname{-\mathcal E\mbox{\upit pi}}}}

\newcommand{\Cof}{\mathcal C\mbox{\upit of}}
\newcommand{\Cell}{\mathcal C\mbox{\upit ell}}

\newcommand{\inj}{{\mathsf{inj}}}
\newcommand{\proj}{{\mathsf{proj}}}
\newcommand{\hin}{{\mathsf{hin}}}
\newcommand{\hpr}{{\mathsf{hpr}}}
\newcommand{\ac}{{\mathsf{ac}}}
\newcommand{\co}{{\mathsf{co}}}
\newcommand{\ctr}{{\mathsf{ctr}}}
\newcommand{\sgr}{{\mathsf{gr}}}
\newcommand{\sop}{{\mathsf{op}}}

\newcommand{\boZ}{\mathbb Z}

\theoremstyle{plain}
\newtheorem{thm}{Theorem}[section]

\newtheorem{lem}[thm]{Lemma}
\newtheorem{prop}[thm]{Proposition}
\newtheorem{cor}[thm]{Corollary}
\theoremstyle{definition}

\newtheorem{exs}[thm]{Examples}
\newtheorem{rem}[thm]{Remark}
\newtheorem{qst}[thm]{Question}

\newcommand{\Section}[1]{\bigskip\section{#1}\medskip}

\begin{document}

\title{Derived, coderived, and contraderived categories \\
of locally presentable abelian categories}

\author[L.~Positselski]{Leonid Positselski}

\address[Leonid Positselski]{%
Institute of Mathematics of the Czech Academy of Sciences \\
\v Zitn\'a~25, 115~67 Prague~1 \\ Czech Republic; and
\newline\indent Laboratory of Algebra and Number Theory \\
Institute for Information Transmission Problems \\
Moscow 127051 \\ Russia}

\email{positselski@math.cas.cz}

\author[J.~\v S\v tov\'\i\v cek]{Jan \v S\v tov\'\i\v cek}

\address[Jan {\v S}{\v{t}}ov{\'{\i}}{\v{c}}ek]{%
Charles University, Faculty of Mathematics and Physics,
Department of Algebra, Sokolovsk\'a 83, 186 75 Praha,
Czech Republic}

\email{stovicek@karlin.mff.cuni.cz}

\begin{abstract}
 For a locally presentable abelian category $\sB$ with a projective
generator, we construct the projective derived and contraderived model
structures on the category of complexes, proving in particular
the existence of enough homotopy projective complexes of projective
objects.
 We also show that the derived category $\sD(\sB)$ is generated, as
a triangulated category with coproducts, by the projective generator
of~$\sB$.
 For a Grothendieck abelian category $\sA$, we construct the injective
derived and coderived model structures on complexes.
 Assuming Vop\v enka's principle, we prove that the derived category
$\sD(\sA)$ is generated, as a triangulated category with products, by
the injective cogenerator of~$\sA$.
 More generally, we define the notion of an exact category with
an object size function and prove that the derived category of any such
exact category with exact $\kappa$\+directed colimits of chains of
admissible monomorphisms has Hom sets.
 In particular, the derived category of any locally presentable abelian
category has Hom sets. 
\end{abstract}

\maketitle

\tableofcontents

\section*{Introduction}
\medskip

 The definition of the unbounded derived category of an abelian category
goes back to the work of Grothendieck and Verdier
in 1960's \cite{Ver0,Ver1}, but efficient techniques of working with
such derived categories started to be developed only in the 1988 paper
of Spaltenstein~\cite{Spa} (who attributes the idea to J.~Bernstein).
 Subsequently they were extended to the derived categories of
DG\+modules by Keller~\cite{Kel} (see also~\cite{BL}).

 The problem is that the derived category $\sD^+(\sA)$ of bounded below
complexes in an abelian category $\sA$ with enough injective objects is
equivalent to the homotopy category of bounded below complexes of
injective objects $\sK^+(\sA_\inj)$, and similarly, the derived category
$\sD^-(\sB)$ of bounded above complexes in an abelian category $\sB$
with enough projective objects is equivalent to the homotopy category
of bounded above complexes of projective objects $\sK^-(\sB_\proj)$.
 Such triangulated equivalences are used in the constructions of
derived functors acting between bounded above or below
derived categories.
 But these equivalences \emph{fail} for unbounded derived categories,
generally speaking.
 In fact, an unbounded complex of projective (or injective) modules
over a ring of infinite homological dimension can be acyclic without
being contractible.

 So one needs to use what Spaltenstein called ``K\+injective'' or
``K\+projective'' resolutions for unbounded complexes.
 Nowadays some people call them ``homotopy injective'' or ``homotopy
projective'' complexes; we will use this terminology.
 The homotopy projective complexes of projective objects, which are
more suitable for some purposes than just arbitrary homotopy projective
complexes, are sometimes called ``semi-projective'' or
``DG\+projective'', and similarly for the homotopy injective complexes
of injective objects (see Remark~\ref{homotopy-adjusted-terminology}
for a terminological discussion).

 It took another decade or two to realize the existence and importance
of an alternative point of view on unbounded complexes, in which
arbitrary complexes of injective or projective objects are used as
resolutions.
 The equivalence relation on the unbounded complexes then needs to be
modified accordingly; so the conventional quasi-isomorphism is replaced
by a finer equivalence relation, making the resulting ``exotic derived
category'' larger than the conventional one.

 The homotopy category of unbounded complexes of projective modules was
first considered by J\o rgensen~\cite{Jor}, and the homotopy category
of unbounded complexes of injective objects in a locally Noetherian
Grothendieck category was first studied by Krause~\cite{Kra}.
 Constructions realizing such triangulated categories, similarly to
the conventional derived category, as \emph{quotient categories} of
the homotopy category of the ambient abelian category, were
emphasized in the memoir~\cite{Pkoszul}, where the terminology of
the \emph{derived categories of the first} and \emph{second kind} was
suggested.
 In fact, there are several ``derived categories of the second kind'':
at the bare minimum, one has to distinguish between the \emph{coderived}
and the \emph{contraderived} category.

 In subsequent publications, two approaches to derived categories of
the second kind emerged.
 On the one hand, there is an elementary construction of coderived and
contraderived (as well as the so-called \emph{absolute derived}
categories) as certain quotient categories of the homotopy categories.
 It goes back to the book~\cite{Psemi} and the memoir~\cite{Pkoszul},
and was developed further in the papers~\cite{PP2,EP,PS2,Pps} (see
Remark~\ref{coderived-history} for a historical and terminological
discussion).
 This approach is applicable to a wide class of abelian categories,
as well as to Quillen exact categories, exact DG\+categories (such as
categories of curved DG\+modules), etc.

 On the other hand, there is an approach based on the set-theoretic
techniques of contemporary model category theory (essentially,
the small object argument).
 It goes back to the papers~\cite{Jor,Kra}, and was developed by
Becker~\cite{Bec} in the context of curved DG\+modules over curved
DG\+rings.
 Other relevant papers in this direction include~\cite{BGH},
\cite{Neem1,Neem2}, and~\cite{Sto}.
 For categories suited for applicability of set-theoretical methods,
such as the categories of modules over associative rings, this approach
leads to stronger and more general results that the elementary one.
 It is still an open question whether the two approaches agree, e.~g.,
for module categories.

 In this paper, we follow the approach of Becker, generalizing it from
module categories to some locally presentable abelian categories.
 In fact, both derived categories (or abelian model structures) of
the first and the second kind were considered in the paper~\cite{Bec},
and we also work out both of these in this paper.
 On the other hand, we do not consider (curved or uncurved)
DG\+structures, restricting ourselves to the categories of complexes
in abelian categories.

 One of the aims of this paper is to emphasize the duality-analogy
between two natural classes of abelian categories.
 On the one hand, there are the \emph{Grothendieck abelian categories}.
 Grothendieck~\cite{GrToh} proved that they have enough injective
objects.
 Now it is known that any Grothendieck abelian category has enough
homotopy injective complexes~\cite{AJS}, and even enough
homotopy injective complexes of injective objects~\cite{Ser,Gil}.
 Following in the footsteps of Becker~\cite{Bec}, we construct
the injective derived and the coderived abelian model structures on
the category $\sC(\sA)$ of complexes in~$\sA$.
 We also show that, assuming Vop\v enka's principle, the derived
category $\sD(\sA)$ of a Grothendieck abelian category $\sA$ is
generated, as a triangulated category with products, by any injective
cogenerator of~$\sA$.
 
 On the other hand, there is a much less familiar, but no less natural
class of \emph{locally presentable abelian categories $\sB$ with enough
projective objects}~\cite{PR,Pper,PS1}.
 Various contramodule categories~\cite[Section~III.5]{EM},
\cite{Psemi,Pweak,Prev,BP,Prem} are representatives of this class. 
 We construct the projective derived and the contraderived abelian
model structures on the category $\sC(\sB)$ of complexes in~$\sB$.
 In particular, it follows that there are enough homotopy projective
complexes of projective objects in~$\sB$.
 We also show that the derived category $\sD(\sB)$ is generated, as
a triangulated category with coproducts, by any projective generator
of~$\sB$.

 Furthermore, the contraderived category $\sD^\ctr(\sB)$ (in the sense
of Becker) is equivalent to the homotopy category $\sK(\sB_\proj)$ of
complexes of projective objects in~$\sB$.
 For any locally presentable additive category $\sE$ and a fixed
object $M\in\sE$, there exists a (unique) locally presentable abelian
category $\sB$ with enough projective objects such that the full
subcategory $\sB_\proj\subset\sB$ of projective objects in $\sB$ is
equivalent to the full subcategory $\Add(M)\subset\sE$ of direct
summands of coproducts of copies of $M$ in~$\sE$
\,\cite{PS1,PS2,Pper}.
 So the homotopy category $\sK(\Add(M))$ of complexes in $\sE$ with
the terms in $\Add(M)$ can be interpreted as the contraderived
category $\sD^\ctr(\sB)$.
 It follows from our results that the homotopy category $\sK(\Add(M))$
is a well-generated triangulated category in the sense of
Neeman~\cite{Neem-book,Kra0}.

 More generally, we consider the derived category $\sD(\sE)$ of
an arbitrary locally presentable abelian category~$\sE$.
 Such categories may have neither injective nor projective objects,
and neither infinite direct sum nor infinite product functors in $\sE$
need to be exact.
 So one cannot speak of the coderived or contraderived categories of
$\sE$, and model category methods in the spirit of~\cite{Bec} do not
seem to be applicable.
 Still we prove \emph{something} about the derived category $\sD(\sE)$,
namely, that it has Hom sets (in other words, the derived category
``exists'' in the same universe in which $\sE$ is a category).
 In fact, our most general result in this direction is applicable to
a certain class of exact categories~$\sE$ (in the sense of Quillen).

 In Sections~\ref{cotorsion-pairs-secn}\+-%
\ref{categories-of-complexes-secn} we review and collect the preparatory
material on cotorsion pairs, weak factorization systems, and abelian
model structures, following mostly the papers~\cite{Hov}, \cite{Bec},
and~\cite{PR}.
 In Sections~\ref{loc-pres-secn}\+-\ref{contraderived-secn}, we
consider a locally presentable abelian category $\sB$ with enough
projective objects.
 In Section~\ref{loc-pres-secn}, we construct the projective derived
model category structure on the category of complexes $\sC(\sB)$, and
in Section~\ref{contraderived-secn} we produce the contraderived
model category structure on $\sC(\sB)$.
 In Sections~\ref{grothendieck-secn}\+-\ref{coderived-secn}, we
consider a Grothendieck abelian category~$\sA$.
 In Section~\ref{grothendieck-secn}, we work out the injective derived
model category structure on $\sC(\sA)$, and in
Section~\ref{coderived-secn}, we construct the coderived model
category structure on $\sC(\sA)$.

 In all the four cases, we obtain a hereditary abelian (hence stable)
combinatorial model category.
 Consequently, it follows that the related derived, contraderived, and
coderived categories are well-generated triangulated
categories~\cite{Ros,CR}.
 These results are known for the derived and coderived categories of
Grothendieck abelian categories, and the related (derived and
coderived) injective model structures~\cite{Gil,Kra2,Neem2,Kra3,Gil4}.
 So we include these in our discussion mostly for the sake of
completeness of the exposition, and in order to demonstrate a uniform
approach, making the duality-analogy between the abelian categories
$\sA$ and $\sB$ visible.

 Concerning the derived and contraderived projective abelian model
structures and the related triangulated categories, these have been
studied in the published literature in the special case of module
categories, or slightly more generally, Grothendieck abelian categories
with enough projective objects~\cite{Neem1,BGH,Gil3}.
 One of the aims of this paper is to emphasize that the natural
generality level for projective abelian model structures is much wider
than that, and actually includes all the locally presentable abelian
categories with enough projective objects.
 We refer the reader to the books and papers~\cite{Psemi,Pkoszul,Pweak,
Prev,PR,Pper,PS1,BP,Prem} for examples of various classes of abelian
categories of contramodules, which are all locally presentable with
enough projective objects, but almost never Grothendieck.

 Finally, in the last Section~\ref{hom-sets-secn} we show that
the derived category $\sD(\sE)$ ``exists'' (or in a different
terminology, has Hom sets, rather than classes) for any locally
presentable abelian category~$\sE$.
 Our results in this direction also apply to Quillen exact categories
with a cardinal-valued object size function and exact
$\kappa$\+directed colimits of chains of admissible monomorphisms
for a large enough regular cardinal~$\kappa$.

\subsection*{Acknowledgement}
 We wish to thank an anonymous referee for reading the manuscript
carefully and suggesting a number of relevant references.
 The first-named author is supported by the GA\v CR project 20-13778S
and research plan RVO:~67985840.
 The second-named author is supported by the GA\v CR project 20-13778S.

\Section{Cotorsion Pairs} \label{cotorsion-pairs-secn}

 Let $\sE$ be an abelian category and $\sL$, $\sR\subset\sE$ be two
classes of objects (equivalently, full subcategories) in~$\sE$.
 The class $\sL$ is said to be \emph{generating} if every object of
$\sE$ is a quotient object of an object from~$\sL$.
 Dually, the class $\sR$ is said to be \emph{cogenerating} if every
object of $\sE$ is a subobject of an object from~$\sR$.

 We denote by $\sL^{\perp_1}\subset\sE$ the class of all objects
$X\in\sE$ such that $\Ext^1_\sE(L,X)=0$ for all $L\in\sL$.
 Dually, we let ${}^{\perp_1}\sR\subset\sE$ denote the class of all
objects $Y\in\sE$ such that $\Ext^1_\sE(Y,R)=0$ for all $R\in\sR$.
 A pair of classes of objects $(\sL,\sR)$ in $\sE$ is said to be
a \emph{cotorsion pair} if $\sR=\sL^{\perp_1}$ and
$\sL={}^{\perp_1}\sR$.
 The class $\sL$ is then always closed under direct
summands and all coproducts which exist in $\sE$, and dually $\sR$
is always closed under direct summands and products
(regardless of exactness properties of products and coproducts;
see~\cite[Corollary 8.3]{CoFu} or~\cite[Corollary A.2]{CoSt}).

 For any class of objects $\sS\subset\sE$, the pair of classes
$\sR=\sS^{\perp_1}$ and $\sL={}^{\perp_1}\sR\subset\sE$ is
a cotorsion pair.
 The cotorsion pair $(\sL,\sR)$ is said to be \emph{generated} by
the class~$\sS$.

 Dually, for any class of objects $\sT\subset\sE$, the pair of classes
$\sL={}^{\perp_1}\sT$ and $\sR=\sL^{\perp_1}$ is a cotorsion pair.
 The cotorsion pair $(\sL,\sR)$ is said to be \emph{cogenerated} by
the class~$\sT$.

 A cotorsion pair $(\sL,\sR)$ in $\sE$ is said to \emph{admit special
precover sequences} if for every object $E\in\sE$ there exists
a short exact sequence
\begin{equation} \label{special-precover-sequence}
 0\lrarrow R'\lrarrow L\lrarrow E\lrarrow0
\end{equation}
in $\sE$ with $R'\in\sR$ and $L\in\sL$.
 The cotorsion pair $(\sL,\sR)$ is said to \emph{admit special
preenvelope sequences} if for every object $E\in\sE$ there exists
a short exact sequence
\begin{equation} \label{special-preenvelope-sequence}
 0\lrarrow E\lrarrow R\lrarrow L'\lrarrow0
\end{equation}
in $\sE$ with $R\in\sR$ and $L'\in\sL$.
 The \emph{approximation sequences} is a generic name for the special
precover and special preenvelope sequences.

\begin{lem} \label{salce}
 Let $(\sL,\sR)$ be a cotorsion pair in\/ $\sE$ such that the class\/
$\sL$ is generating and the class $\sR$ is cogenerating in\/~$\sE$.
 Then the pair of classes $(\sL,\sR)$ admits special precover
sequences if and only if it admits special preenvelope sequences.
\end{lem}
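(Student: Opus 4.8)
The plan is to prove the two implications by dual arguments, so I would spell out the direction from special preenvelope sequences to special precover sequences and leave the reverse to duality. The one structural fact I would invoke repeatedly is that in any cotorsion pair the left class $\sL={}^{\perp_1}\sR$ is closed under extensions (and dually $\sR$ under extensions): if $0\to L'\to P\to L''\to 0$ is exact with $L',L''\in\sL$, then for every $R\in\sR$ the long exact sequence obtained by applying $\Hom_\sE(-,R)$ has $\Ext^1_\sE(L'',R)$ and $\Ext^1_\sE(L',R)$ vanishing at both ends, forcing $\Ext^1_\sE(P,R)=0$ and hence $P\in\sL$.

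Assuming special preenvelope sequences exist, fix $E\in\sE$. First I would use that $\sL$ is generating to pick an epimorphism $L\to E$ with $L\in\sL$ and kernel $K$, producing a short exact sequence $0\to K\to L\to E\to 0$. Then I would apply a special preenvelope sequence to the object $K$, obtaining $0\to K\to R\to L'\to 0$ with $R\in\sR$ and $L'\in\sL$. The heart of the argument (the classical trick due to Salce) is to form the pushout $P$ of the two monomorphisms $K\to L$ and $K\to R$.

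The pushout square yields two short exact sequences. Since $K\to R$ is monic, the cokernels along the two parallel edges coincide and give $0\to L\to P\to L'\to 0$; since $K\to L$ is monic, likewise $0\to R\to P\to E\to 0$. Reading the first sequence with $L,L'\in\sL$ and using closure of $\sL$ under extensions, I would conclude $P\in\sL$. The second sequence now has left term $R\in\sR$ and middle term $P\in\sL$, so it is exactly a special precover sequence for $E$, which finishes this implication; the converse is obtained by dualizing, using instead that $\sR$ is cogenerating and forming a pullback.

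I do not anticipate a serious obstacle: the argument is elementary and, notably, uses no exactness hypotheses on products or coproducts. The only points demanding care are verifying that the pushout induces exact sequences with the stated terms in the stated positions, and checking the extension-closure of $\sL$; both are routine once the diagram is in place.
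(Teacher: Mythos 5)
Your proposal is correct and follows essentially the same route as the paper's proof: use that $\sL$ is generating to present $E$ as a quotient of some $L\in\sL$ with kernel $K$, take a special preenvelope sequence for $K$, and push out the two monomorphisms out of $K$, using closure of $\sL$ under extensions to see the pushout lies in $\sL$. The only difference is cosmetic: you spell out the extension-closure of $\sL$ and the identification of the two cokernels of the pushout square, which the paper leaves implicit.
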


\begin{proof}
 This is a categorical generalization of the Salce lemmas~\cite{Sal}.
 The key observation is that, in any cotorsion pair $(\sL,\sR)$, both
the classes $\sL$ and $\sR$ are closed under extensions in~$\sE$.
 Suppose, e.~g., that $(\sL,\sR)$ admits special preenvelope sequences,
and let $E\in\sE$ be an object.
 By assumption, $E$ is a quotient object of an object $M\in\sL$; so
there is a short exact sequence $0\rarrow F\rarrow M\rarrow E\rarrow 0$
in~$\sE$.
 Let $0\rarrow F\rarrow R\rarrow L'\rarrow0$ be a special preenvelope
sequence for the object $F\in\sE$; so $R\in\sR$ and $L'\in\sL$.
 Taking the pushout of the pair of monomorphisms $F\rarrow M$ and
$F\rarrow R$ produces a special precover sequence $0\rarrow R\rarrow L
\rarrow E\rarrow0$, where $L\in\sL$ is an extension of the objects
$M$ and~$L'$.
\end{proof}

 A cotorsion pair $(\sL,\sR)$ is $\sE$ is said to be \emph{complete} if
it admits both special precover and special preenvelope sequences.

 Whenever there are enough projective objects in $\sE$, the class $\sL$
in any cotorsion pair $(\sL,\sR)$ is generating, because it contains all
the projective objects.
 It follows that any cotorsion pair $(\sL,\sR)$ admitting special
preenvelope sequences is complete.
 Dually, whenever there are enough injective objects in $\sE$,
the class $\sR$ in any cotorsion pair $(\sL,\sR)$ is cogenerating,
because it contains all the injective objects.
 It follows that any cotorsion pair $(\sL,\sR)$ admitting special
precover sequences is complete.

 When there are both enough projectives and injectives in $\sE$,
the assumption of Lemma~\ref{salce} holds automatically, so a cotorsion
pair admits special precover sequences if and only if it admits special
preenvelope sequences.
 Such is the situation in the abelian categories of modules over
associative rings.

 For any class of objects $\sF\subset\sE$, we denote by $\sF^\oplus$
the class of all direct summands of objects from~$\sF$.

\begin{lem}
 Let $(\sL,\sR)$ be a pair of classes of objects in\/ $\sE$ such that\/
$\Ext^1_\sE(L,R)=0$ for all $L\in\sL$ and $R\in\sR$.
 Assume that the approximation
 sequences~\textup{(\ref{special-precover-sequence}\+-%
\ref{special-preenvelope-sequence})} with $R$, $R'\in\sR$ and
$L$, $L'\in\sL$ exist for all objects $E\in\sE$.
 Then $(\sL^\oplus,\sR^\oplus)$ is a complete cotorsion pair in\/~$\sE$.
\qed
\end{lem}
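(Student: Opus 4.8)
The plan is to verify directly the two defining identities of a cotorsion pair for $(\sL^\oplus,\sR^\oplus)$ and then to note that completeness comes for free. I would start from the elementary observation that the orthogonality operations are insensitive to passing to direct-summand closures: because $\Ext^1_\sE(L,-)$ and $\Ext^1_\sE(-,R)$ are additive functors, a direct summand of an object with vanishing $\Ext^1$ again has vanishing $\Ext^1$. Hence $(\sL^\oplus)^{\perp_1}=\sL^{\perp_1}$ and ${}^{\perp_1}(\sR^\oplus)={}^{\perp_1}\sR$, so it suffices to prove the equalities $\sR^\oplus=\sL^{\perp_1}$ and $\sL^\oplus={}^{\perp_1}\sR$.

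For each of these, the inclusion of the summand-closure into the orthogonal class is immediate from the hypothesis $\Ext^1_\sE(L,R)=0$ together with additivity: if $R'\in\sR^\oplus$ is a summand of some $R\in\sR$, then $\Ext^1_\sE(L,R')$ is a direct summand of $\Ext^1_\sE(L,R)=0$ for every $L\in\sL$, so $R'\in\sL^{\perp_1}$, and dually $\sL^\oplus\subseteq{}^{\perp_1}\sR$.

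The substantive step is the reverse inclusion, which I would obtain by a splitting argument using the approximation sequences. Given $X\in\sL^{\perp_1}$, I apply the special preenvelope sequence~\textup{(\ref{special-preenvelope-sequence})} to $X$, obtaining $0\rarrow X\rarrow R\rarrow L'\rarrow0$ with $R\in\sR$ and $L'\in\sL$; its extension class lies in $\Ext^1_\sE(L',X)$, which vanishes since $L'\in\sL$ and $X\in\sL^{\perp_1}$. Thus the sequence splits and exhibits $X$ as a direct summand of $R\in\sR$, so $X\in\sR^\oplus$. Dually, given $Y\in{}^{\perp_1}\sR$, the special precover sequence~\textup{(\ref{special-precover-sequence})} yields $0\rarrow R'\rarrow L\rarrow Y\rarrow0$ with $R'\in\sR$ and $L\in\sL$; its class lies in $\Ext^1_\sE(Y,R')=0$, so it splits and shows $Y\in\sL^\oplus$. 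This establishes that $(\sL^\oplus,\sR^\oplus)$ is a cotorsion pair.

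Finally, completeness requires no additional argument: the hypotheses already provide, for each $E\in\sE$, approximation sequences~\textup{(\ref{special-precover-sequence}\+-\ref{special-preenvelope-sequence})} whose end terms lie in $\sR\subseteq\sR^\oplus$ and $\sL\subseteq\sL^\oplus$, and these are precisely special precover and special preenvelope sequences for the pair $(\sL^\oplus,\sR^\oplus)$. The only real content is therefore the splitting step above; I anticipate no genuine obstacle, the one point to keep straight being that orthogonality is preserved under summand-closure, so that the two halves of each identity line up correctly.
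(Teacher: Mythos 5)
Your proposal is correct and follows essentially the same route as the paper: the paper likewise reduces the claim to the inclusions $\sL^{\perp_1}\subset\sR^\oplus$ and ${}^{\perp_1}\sR\subset\sL^\oplus$ and proves them by splitting the approximation sequences, exactly as in your substantive step. You merely spell out more explicitly the routine points (invariance of the orthogonals under summand closure, the easy inclusions, and completeness) that the paper leaves implicit.
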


\begin{proof}
 This is an analogue of Lemma~\ref{retract-lemma} below.
 It suffices to show that $\sL^{\perp_1}\subset\sR^\oplus$ and
${}^{\perp_1}\sR\subset\sL^\oplus$.
 Indeed, let $E\in\sE$ be an object belonging to ${}^{\perp_1}\sR$.
 By assumption, there exists a short exact sequence $0\rarrow R'
\rarrow L\rarrow E\rarrow0$ in $\sE$ with $R'\in\sR$ and $L\in\sL$.
 Now $\Ext^1_\sE(E,R')=0$, hence $E$ is a direct summand of~$L$.
\end{proof}

 Let $(f_{ij}\:F_i\to F_j)_{0\le i<j\le\alpha}$ be an inductive system
in $\sE$ indexed by an ordinal~$\alpha$.
 Such a inductive system is said to be a \emph{smooth chain} if
$F_j=\varinjlim_{i<j}F_i$ for every limit ordinal $j\le\alpha$.
 A smooth chain $(f_{ij}\:F_i\to F_j)_{0\le i<j\le\alpha}$ is said to
be an \emph{$\alpha$\+filtration} (of the object $F=F_\alpha\in\sE$) if
$F_0=0$ and the morphism $f_{i,i+1}\:F_i\rarrow F_{i+1}$ is
a monomorphism in $\sE$ for every $0\le i<\alpha$.
 If an $\alpha$\+filtration $(f_{ij}\:F_i\to F_j)_{0\le i<j\le\alpha}$
is given, then the object $F=F_\alpha$ is said to be
\emph{$\alpha$\+filtered} by the cokernels
$(S_i=F_{i+1}/F_i)_{0\le i<\alpha}$ of the morphisms~$f_{i,i+1}$.

 In a different terminology, one says that the object $F$ is
a \emph{transfinitely iterated extension} of the family of objects
$(S_i)_{0\le i<\alpha}$ \cite[Definition~4.3]{PR}.
 To remove an ambiguity, one speaks of \emph{transfinitely iterated
extensions in the sense of the directed colimit} (as opposed to
similar extensions in the sense of the directed limit).

 Notice that, when the directed colimits in $\sE$ are not exact,
the morphisms $f_{ij}\:F_i\rarrow F_j$ in an $\alpha$\+filtration
need \emph{not} be monomorphisms, in general.
 See~\cite[Examples~4.4]{PR} for examples in which a zero object in
a contramodule category is represented as a transfinitely iterated
extension of a sequence of nonzero objects (indexed by the ordinal
of nonnegative integers~$\omega$).

 Given a class of objects $\sS\subset\sE$, we say that an object
$F\in\sE$ is \emph{filtered by\/~$\sS$} if $F$ admits
an $\alpha$\+filtration by objects from $\sS$ for some
ordinal~$\alpha$.
 The class of all objects filtered by $\sS$ is denoted by
$\Fil(\sS)\subset\sE$.

 The following result is a categorical generalization of what is
known as the \emph{Eklof lemma} in module
theory~\cite[Theorem~1.2]{Ekl}, \cite[Lemma~1]{ET}.
 Simultaneously, it generalizes the ``dual Eklof
lemma''~\cite[Proposition~18]{ET}, which goes back to
Lukas~\cite[Theorem~3.2]{Luk}.

\begin{prop} \label{eklof-lemma-prop}
 For any class of objects\/ $\sR\subset\sE$, the class of objects
${}^{\perp_1}\sR\subset\sE$ is closed under transfinitely iterated
extensions (in the sense of the directed colimit).
 In other words, we have ${}^{\perp_1}\sR=\Fil({}^{\perp_1}\sR)$. \qed
\end{prop}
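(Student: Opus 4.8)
The plan is to establish the nontrivial inclusion $\Fil({}^{\perp_1}\sR)\subseteq{}^{\perp_1}\sR$; the reverse inclusion holds trivially, since every object $F\in{}^{\perp_1}\sR$ is filtered by itself (take $\alpha=1$, $F_0=0$, $F_1=F$). So I would fix an object $R\in\sR$ together with an object $F=F_\alpha$ carrying an $\alpha$\+filtration $(f_{ij}\:F_i\to F_j)$ whose successive cokernels $S_i=F_{i+1}/F_i$ all lie in ${}^{\perp_1}\sR$, and try to show $\Ext^1_\sE(F,R)=0$. Reading this vanishing as the statement that every short exact sequence
\[
 0\lrarrow R\lrarrow X\lrarrow F\lrarrow 0
\]
with epimorphism $\pi\:X\to F$ splits, the goal becomes the construction of a section $s\:F\to X$ of~$\pi$. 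I would build it by transfinite recursion, producing a compatible family of morphisms $s_i\:F_i\to X$ that lift the structure maps $f_{i\alpha}\:F_i\to F$ along $\pi$, meaning $\pi s_i=f_{i\alpha}$ and $s_jf_{ij}=s_i$ for $i<j$; the terminal map $s=s_\alpha$ then satisfies $\pi s_\alpha=f_{\alpha\alpha}=\id_F$ and is the desired splitting.

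The recursion splits into the usual three cases. For $i=0$ one sets $s_0=0$ since $F_0=0$. At a limit ordinal $j$, the smooth-chain condition $F_j=\varinjlim_{i<j}F_i$ allows one to amalgamate the previously constructed maps $s_i$ ($i<j$) into a single morphism $s_j\:F_j\to X$ by the universal property of the colimit, and the required identity $\pi s_j=f_{j\alpha}$ follows from the uniqueness clause in that universal property. At a successor step $i\to i+1$ one must extend $s_i$ across the monomorphism $f_{i,i+1}\:F_i\to F_{i+1}$ with cokernel $S_i$. Pulling the extension back along $f_{i+1,\alpha}\:F_{i+1}\to F$ produces a short exact sequence $0\to R\to X'\to F_{i+1}\to 0$ in which $s_i$ determines a section over the subobject $F_i\hookrightarrow F_{i+1}$; extending this partial section to a full section of $X'\to F_{i+1}$ is an obstruction problem governed by the contravariant long exact sequences obtained from $0\to F_i\to F_{i+1}\to S_i\to 0$ by applying $\Hom_\sE(-,R)$, and it is solvable precisely because $\Ext^1_\sE(S_i,R)=0$ (this vanishing makes $0\to R\to X'\to F_{i+1}\to 0$ split and makes $\Hom_\sE(F_{i+1},R)\to\Hom_\sE(F_i,R)$ surjective, which together let one correct an arbitrary section so that it restricts to the prescribed one over~$F_i$).

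The step requiring the most care is the limit step, and it is the reason for organizing the recursion around lifts of the maps $F_i\to F$ rather than around retractions $X\to R$. Because the directed colimits in $\sE$ need not be exact, there is no control over the colimits of the pullbacks $X\times_F F_i$, so the dual strategy of assembling a retraction $X\to R$ from partial retractions would fail at limit ordinals. In the present setup, by contrast, the only colimit ever invoked is $F_j=\varinjlim_{i<j}F_i$, which is built into the definition of a smooth chain and needs no exactness assumption; the successor step is in turn a purely homological computation valid in any abelian category. Once the family $(s_i)_{i\le\alpha}$ has been constructed, $s_\alpha$ splits the chosen extension, so $\Ext^1_\sE(F,R)=0$; as $R\in\sR$ was arbitrary, this yields $F\in{}^{\perp_1}\sR$ and completes the argument.
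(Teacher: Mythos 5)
Your proof is correct and is essentially the paper's proof: the paper defers to \cite[Lemma~4.5]{PR}, and the transfinite recursion you describe---lifting the structure maps $F_i\rarrow F$ through~$\pi$ rather than assembling retractions onto $R$, so that the only colimit ever invoked is the one built into the definition of a smooth chain---is exactly the argument behind that reference (the paper's alternative deduction via Lemma~\ref{cof-lifting} and Propositions~\ref{mono-epi-lifting}(a) and~\ref{fil-cell}(a) repackages the same induction inside the closure properties of ${}^\square(\sR\Epi)$). One point of precision at the successor step: $\Ext^1_\sE(S_i,R)=0$ alone does not split $0\rarrow R\rarrow X'\rarrow F_{i+1}\rarrow 0$; the splitting comes from this vanishing \emph{combined with} the fact that the pullback of $X'$ to $F_i$ already splits via~$s_i$, using exactness of $\Ext^1_\sE(S_i,R)\rarrow\Ext^1_\sE(F_{i+1},R)\rarrow\Ext^1_\sE(F_i,R)$---which is clearly what your ``obstruction problem'' phrasing intends.
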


\begin{proof}
 This is~\cite[Lemma~4.5]{PR}.
 Alternatively, the assertion can be deduced from
Lemma~\ref{cof-lifting}, Proposition~\ref{mono-epi-lifting}(a),
and Proposition~\ref{fil-cell}(a) below.
\end{proof}

\begin{lem} \label{hereditary-cotorsion-pairs}
 Let $(\sL,\sR)$ be a cotorsion pair in\/ $\sE$ such that the class\/
$\sL$ is generating and the class $\sR$ is cogenerating in\/~$\sE$.
 Then the following conditions are equivalent:
\begin{enumerate}
\item the class\/ $\sL$ contains the kernels of epimorphisms between
its objects;
\item the class\/ $\sR$ contains the cokernels of monomorphisms between
its objects;
\item $\Ext^2_\sE(L,R)=0$ for all $L\in\sL$ and $R\in\sR$;
\item $\Ext^n_\sE(L,R)=0$ for all $L\in\sL$, \,$R\in\sR$, and $n\ge1$.
\end{enumerate}
\end{lem}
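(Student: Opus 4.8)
The plan is to run the four conditions around the obvious cycle, using as the only real tool the long exact sequence of $\Ext^*_\sE$ attached to a short exact sequence, together with the defining vanishing $\Ext^1_\sE(L,R)=0$ ($L\in\sL$, $R\in\sR$) of a cotorsion pair. The implication (4)$\Rightarrow$(3) is immediate (take $n=2$). For (3)$\Rightarrow$(1), given an epimorphism $L_1\rarrow L_2$ between objects of $\sL$ with kernel $K$, I would apply $\Ext^*_\sE(-,R)$ for $R\in\sR$ to the sequence $0\rarrow K\rarrow L_1\rarrow L_2\rarrow0$ and read off the segment $\Ext^1_\sE(L_1,R)\rarrow\Ext^1_\sE(K,R)\rarrow\Ext^2_\sE(L_2,R)$, whose outer groups vanish by the cotorsion pair and by~(3); hence $\Ext^1_\sE(K,R)=0$, that is, $K\in{}^{\perp_1}\sR=\sL$. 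The implication (3)$\Rightarrow$(2) is formally dual: apply $\Ext^*_\sE(L,-)$ to $0\rarrow R_1\rarrow R_2\rarrow C\rarrow0$ with $R_1,R_2\in\sR$, and note that in $\Ext^1_\sE(L,R_2)\rarrow\Ext^1_\sE(L,C)\rarrow\Ext^2_\sE(L,R_1)$ the outer groups vanish, so $C\in\sL^{\perp_1}=\sR$.

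To close the cycle I would prove (1)$\Rightarrow$(4), and dually (2)$\Rightarrow$(4), by dimension shifting along a resolution that stays inside~$\sL$. Since $\sL$ is generating, every $L\in\sL$ fits into a short exact sequence $0\rarrow\Omega L\rarrow G\rarrow L\rarrow0$ with $G\in\sL$; by~(1) the syzygy $\Omega L$ again lies in $\sL$, and iterating produces a resolution $\cdots\rarrow G_1\rarrow G_0\rarrow L\rarrow0$ all of whose terms and syzygies $\Omega^nL$ belong to~$\sL$. Feeding the short exact sequences $0\rarrow\Omega^{n+1}L\rarrow G_{n+1}\rarrow\Omega^nL\rarrow0$ into $\Ext^*_\sE(-,R)$ and exploiting $\Ext^1_\sE(\Omega^nL,R)=0$ at every stage, one shifts $\Ext^{n+1}_\sE(L,R)$ down to $\Ext^1_\sE(\Omega^nL,R)=0$, by induction on~$n$.

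The hard part is precisely the legitimacy of this dimension shift: the connecting maps only give a long exact sequence, and to upgrade it to the isomorphism $\Ext^{n+1}_\sE(L,R)\cong\Ext^1_\sE(\Omega^nL,R)$ one needs the resolving objects $G_i$ to be acyclic for $\Ext^{\ge1}_\sE(-,R)$, which does \emph{not} follow from $\Ext^1_\sE(G_i,R)=0$ alone. This is where the hypotheses must be spent in earnest. I would take the $G_i$ to be projective, which is possible because the class $\sL={}^{\perp_1}\sR$ contains every projective object, and in the situations where this lemma is applied $\sL$ is generating precisely because $\sE$ has enough projectives; then $\Ext^{\ge1}_\sE(G_i,R)=0$ and the shift is valid. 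The role of~(1) in this argument is exactly to guarantee that each syzygy $\Omega^nL$ stays in $\sL$, so that the input $\Ext^1_\sE(\Omega^nL,R)=0$ is available at every step of the induction.

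The implication (2)$\Rightarrow$(4) is the evident dual: coresolve $R\in\sR$ by injective objects of $\sR$ (the cogenerating hypothesis, in the applications, coming from enough injectives, which lie in $\sR=\sL^{\perp_1}$), with all cosyzygies remaining in $\sR$ by~(2), and shift $\Ext^{n+1}_\sE(L,R)$ down to $\Ext^1_\sE(L,-)=0$ of a cosyzygy. Since (3) is symmetric in $\sL$ and $\sR$, establishing the two triangles (3)$\Rightarrow$(1)$\Rightarrow$(4)$\Rightarrow$(3) and (3)$\Rightarrow$(2)$\Rightarrow$(4)$\Rightarrow$(3) yields the full equivalence.
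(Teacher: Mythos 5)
Your implications (4)$\Rightarrow$(3), (3)$\Rightarrow$(1), and (3)$\Rightarrow$(2) are correct and are exactly the ``long exact sequence'' steps the paper has in mind (the paper only sketches the proof, deferring details to the cited references). The genuine gap is precisely where you flag it yourself: the implication (1)$\Rightarrow$(4). Your repair --- taking the resolving objects $G_i$ to be projective --- silently strengthens the hypotheses of the lemma. The lemma assumes only that $\sL$ is generating and $\sR$ is cogenerating; it does \emph{not} assume that $\sE$ has enough projectives, and your claim that ``in the situations where this lemma is applied $\sL$ is generating precisely because $\sE$ has enough projectives'' is false for this paper: hereditariness is deduced from condition~(1) for the left class of cotorsion pairs in $\sC(\sA)$ with $\sA$ an arbitrary Grothendieck category (Theorems \ref{grothendieck-cotorsion-pair} and~\ref{coderived-cotorsion-pair}), and such $\sC(\sA)$ need not have any nonzero projective objects. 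So as written your argument proves a strictly weaker statement, and the dual difficulty affects your (2)$\Rightarrow$(4) in categories without enough injectives.

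The correct way to obtain (1)$\Rightarrow$(3) (and, by the same induction on~$n$, (1)$\Rightarrow$(4)) from the generating hypothesis alone is element-wise rather than via a fixed $\Ext$-acyclic resolution. Given $\xi\in\Ext^2_\sE(L,R)$ represented by an exact sequence $0\rarrow R\rarrow X\rarrow Y\rarrow L\rarrow 0$ with splice point $Z=\operatorname{im}(X\to Y)$, choose an epimorphism $G\rarrow Y$ with $G\in\sL$ and let $K$ be the kernel of the composite epimorphism $G\rarrow L$; by~(1) one has $K\in\sL$, and the induced map $K\rarrow Z$ exhibits $0\rarrow Z\rarrow Y\rarrow L\rarrow0$ as a pushout of $0\rarrow K\rarrow G\rarrow L\rarrow0$. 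Associativity of the Yoneda product then writes $\xi$ as the image, under the connecting map for $0\rarrow K\rarrow G\rarrow L\rarrow0$, of the pullback of $[0\to R\to X\to Z\to0]$ along $K\rarrow Z$, i.e.\ of an element of $\Ext^1_\sE(K,R)=0$; hence $\xi=0$. The point is that, although the connecting map $\Ext^n_\sE(K,R)\rarrow\Ext^{n+1}_\sE(L,R)$ need not be surjective for a fixed syzygy sequence, every individual class is hit by the connecting map of a suitably chosen one, so no $\Ext^{\ge1}$-acyclicity of the covering objects is required. The dual argument, using that $\sR$ is cogenerating, handles~(2).
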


\begin{proof}
 This lemma goes back to~\cite[Theorem~1.2.10]{GR}.
 Use the long exact sequences of $\Ext$ groups associated with short
exact sequences of objects in $\sE$ in order to prove the equivalences
(1)\,$\Longleftrightarrow$\,(3)\,$\Longleftrightarrow$\,(2).
 Then deduce~(4) from either~(1) or~(2).
 Details can be found in~\cite[Lemma 6.17]{Sto-ICRA}
or~\cite[Lemma 4.25]{SS}.
\end{proof}

 Notice that the assumption of Lemma~\ref{hereditary-cotorsion-pairs}
holds, in particular, for any complete cotorsion pair~$(\sL,\sR)$.
 A cotorsion pair satisfying the equivalent conditions of
Lemma~\ref{hereditary-cotorsion-pairs} is said to be \emph{hereditary}.

\Section{Weak Factorization Systems}  \label{wfs-secn}

 A weak factorization system is ``a half of a model structure''
(in the sense of~\cite{Quil,Hov-book}).
 The concept seems to appear for the first time explicitly in
the paper~\cite{AHRT}, but it is implicit, e.~g., in the formulation
of the small object argument in~\cite[Proposition~1.3]{Bek}.
 The terminology ``weak factorization system'' is explained by
the opposition to the conventional (non-weak) ``factorization
systems'', in which the diagonal filling of a square and
(consequently) the factorization of a morphism is presumed
to be unique~\cite{Bous}.
 See the discussion in the beginning of~\cite{AHRT}.

 Let $\sE$ be a category.
 One says that an object $A\in\sE$ is a \emph{retract} of an object
$B\in\sE$ if there exist morphisms $i\:A\rarrow B$ and $p\:B\rarrow A$
such that $pi=\id_A$ is the identity endomorphism.
 A morphism $f\:A\rarrow B$ in $\sE$ is said to be a retract of
a morphism $g\:C\rarrow D$ in $\sE$ if $f$~is a retract of~$g$ as
objects of the category $\sE^\to$ of morphisms in~$\sE$ (with
the commutative squares in $\sE$ being the morphisms in~$\sE^\to$).
 Given a class of morphisms $\cF$ in $\sE$, we denote by $\overline\cF$
the class of all the retracts of morphisms from $\cF$ in~$\sE$.

 Let $l\:A\rarrow B$ and $r\:C\rarrow D$ be two morphisms in~$\sE$.
 One says that $r$~has the \emph{right lifting property} with respect
to~$l$ or, which is the same, $l$~has the \emph{left lifting property}
with respect to~$r$, if for every pair of morphisms $f\:A\rarrow C$ and
$g\:B\rarrow D$ such that $rf=gl$ there exists a morphism $t\:B\rarrow C$
such that $f=tl$ and $g=rt$.
 In other words, any commutative square as in the diagram can be filled
with a diagonal arrow making both the triangles commutative:
$$
 \xymatrix@C=4em@R=4em{
  A \ar[r] \ar[d]_-l & C \ar[d]^-r \\
  B \ar[r] \ar@{-->}[ru] & D
 }
$$

 One can easily check that if a morphism~$r$ has the right lifting
property with respect to a morphism~$l$, then any retract of~$r$ has
the right lifting property with respect to any retract of~$l$.

 Let $\cL$ and $\cR$ be two classes of morphisms in~$\sE$.
 We will denote by $\cL^\square$ the class of all morphisms~$x$ in
$\sE$ having the right lifting property with respect to all
morphisms $l\in\cL$.
 Similarly, we let ${}^\square\cR$ denote the class of all morphisms~$y$
in $\sE$ having the left lifting property with respect to all
morphisms $r\in\cR$.

 Let $(\cL,\cR)$ be a pair of classes of morphisms in~$\sE$.
 We will say that the pair $(\cL,\cR)$ \emph{has the lifting property}
if $\cR\subset\cL^\square$, or equivalently, $\cL\subset{}^\square\cR$.
 Furthermore, we will say that the pair $(\cL,\cR)$ \emph{has
the factorization property} if every morphism~$f$ in $\sE$ can be
decomposed as $f=rl$ with $l\in\cL$ and $r\in\cR$.
 A pair of classes of morphisms $(\cL,\cR)$ in $\sE$ is called
a \emph{weak factorization system} if $\cR=\cL^\square$, \
$\cL={}^\square\cR$, and the pair $(\cL,\cR)$ has the factorization
property.

\begin{lem} \label{retract-lemma}
 Let $(\cL,\cR)$ be a pair of classes of morphisms in\/ $\sE$ having
the lifting and factorization properties.
 Then the pair of classes of morphisms $(\overline\cL,\overline\cR)$
is a weak factorization system in\/~$\sE$.
\end{lem}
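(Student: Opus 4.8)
The plan is to verify the three defining conditions of a weak factorization system for the pair $(\overline\cL,\overline\cR)$: the factorization property, the equality $\overline\cR=\overline\cL^\square$, and the equality $\overline\cL={}^\square\overline\cR$. Two ingredients are nearly immediate. Since every morphism is a retract of itself, we have $\cL\subset\overline\cL$ and $\cR\subset\overline\cR$; hence any factorization $f=rl$ with $l\in\cL$ and $r\in\cR$, which exists by hypothesis, is already a factorization with $l\in\overline\cL$ and $r\in\overline\cR$, so the factorization property carries over verbatim. The lifting property $\overline\cR\subset\overline\cL^\square$ follows directly from the retract observation recorded just before the lemma: given $\bar r\in\overline\cR$ and $\bar l\in\overline\cL$, write $\bar r$ as a retract of some $r\in\cR$ and $\bar l$ as a retract of some $l\in\cL$; since $r$ has the right lifting property with respect to~$l$ by the lifting property of $(\cL,\cR)$, the retract $\bar r$ has the right lifting property with respect to the retract~$\bar l$.

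It remains to promote the two inclusions furnished by the lifting property to equalities; this is the substantive content, carried out by the standard retract argument. For $\overline\cL^\square\subset\overline\cR$, I would take a morphism $x\:A\rarrow B$ in $\overline\cL^\square$. Since $\cL\subset\overline\cL$, in particular $x\in\cL^\square$, so $x$ has the right lifting property with respect to every morphism of~$\cL$. By the factorization property of $(\cL,\cR)$, write $x=rl$ with $l\:A\rarrow C$ in~$\cL$ and $r\:C\rarrow B$ in~$\cR$. Applying the right lifting property of~$x$ against~$l$ to the commutative square with top edge~$\id_A$, left edge~$l$, right edge~$x$, and bottom edge~$r$ yields a morphism $t\:C\rarrow A$ with $tl=\id_A$ and $xt=r$. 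Then the pairs $(l,\id_B)\:x\rarrow r$ and $(t,\id_B)\:r\rarrow x$ are morphisms in $\sE^\to$ whose composite is $\id_x$, exhibiting $x$ as a retract of $r\in\cR$, so that $x\in\overline\cR$.

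The equality $\overline\cL={}^\square\overline\cR$ is handled by the dual argument: for $x\:A\rarrow B$ in ${}^\square\overline\cR$ one has $x\in{}^\square\cR$, one factors $x=rl$ as before, and one applies the left lifting property of~$x$ against~$r$ to the square with top edge~$l$, left edge~$x$, right edge~$r$, and bottom edge~$\id_B$, obtaining $s\:B\rarrow C$ with $sx=l$ and $rs=\id_B$; the pairs $(\id_A,s)\:x\rarrow l$ and $(\id_A,r)\:l\rarrow x$ then realize $x$ as a retract of $l\in\cL$. Combined with the inclusions $\overline\cR\subset\overline\cL^\square$ and $\overline\cL\subset{}^\square\overline\cR$ from the lifting property, these give $\overline\cR=\overline\cL^\square$ and $\overline\cL={}^\square\overline\cR$, completing the verification. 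I expect the only real difficulty to be bookkeeping: one must confirm that the prescribed pairs genuinely are arrows of $\sE^\to$ (that is, that the relevant squares commute) and that their composites equal $\id_x$, which is exactly where the filler identities $tl=\id_A$, $xt=r$ and $sx=l$, $rs=\id_B$ are used.
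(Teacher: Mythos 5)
Your proof is correct and follows essentially the same route as the paper: reduce to the two inclusions $\cL^\square\subset\overline\cR$ and ${}^\square\cR\subset\overline\cL$ and establish them by the standard retract argument (the paper cites~\cite[Lemma~1.1.9]{Hov-book} for this step, which you spell out in full). The lifting-square bookkeeping you carry out is accurate, so nothing is missing.
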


\begin{proof}
 In view of the above discussion of the lifting properties of retracts,
we only need to check the inclusions $\cL^\square\subset\overline\cR$
and ${}^\square\cR\subset\overline\cL$.
 Indeed, let $f=rl$ be a composition of two morphisms in~$\sE$ with $l\in\cL$ and $r\in\cR$.
 One observes that if $f$~has the right lifting property with respect
to~$l$, then $f$~is a retract of~$r$; and similarly, if $f$~has the left
lifting property with respect to~$r$, then $f$~is a retract of~$l$;
see~\cite[Lemma~1.1.9]{Hov-book}.
\end{proof}

 Let $(f_{ij}\:E_i\to E_j)_{0\le i<j\le\alpha}$ be an inductive system
in a category $\sE$ indexed by an ordinal~$\alpha$.
 Assume that this inductive system is a smooth chain (as defined in
Section~\ref{cotorsion-pairs-secn}, i.~e., $E_j=\varinjlim_{i<j}E_i$
for every limit ordinal $j\le\alpha$).
 Then we will say that the morphism $f_{0,\alpha}\:E_0\rarrow E_\alpha$
is a \emph{transfinite composition} of the morphisms $f_{i,i+1}\:
E_i\rarrow E_{i+1}$, \ $0\le i<\alpha$ (\emph{in the sense of
the directed colimit}).

 Let $f\:A\rarrow B$ be a morphism in~$\sE$.
 Then a morphism $f'\:X\rarrow Y$ in $\sE$ is said to be
a \emph{pushout} of the morphism~$f$ if there exists a pair of morphisms
$g\:A\rarrow X$ and $g'\:B\rarrow Y$ such that $A\rarrow B\rarrow Y$,
\ $A\rarrow X\rarrow Y$ is a cocartesian square (or in a different
terminology, a pushout square) in~$\sE$.

 Let $\cS$ be a class of morphisms in~$\sE$.
 We will denote by $\Cell(\cS)$ the closure of $\cS$ under pushouts
and transfinite compositions (in the sense of the directed colimit).
 Assuming that $\sE$ is cocomplete, any pushout of a transfinite
composition is a transfinite composition of pushouts; so $\Cell(\cS)$
consists of all the transfinite compositions of pushouts of morphisms
from~$\cS$.
 Moreover, $\Cell(\cS)$ is then also closed under coproducts
of morphisms; see~\cite[Lemma 2.1.13]{Hov-book}.
 Next, let $\Cof(\cS)$ denote the closure of $\cS$ under
pushouts, transfinite compositions, and retracts.
 Assuming that $\sE$ is cocomplete, any pushout of a retract is
a retract of a pushout, and any transfinite composition of retracts is
a retract of a transfinite composition; so the class
$\Cof(\cS)=\overline{\Cell(\cS)}$ is the closure of the class
$\Cell(\cS)$ under retracts.

\begin{lem} \label{cof-lifting}
 For any class of morphisms\/ $\cR$ in\/ $\sE$, the class of morphisms
${}^\square\cR$ is closed under pushouts, transfinite compositions
(in the sense of the directed colimit), coproducts and retracts.
 In other words, we have ${}^\square\cR=\Cof({}^\square\cR)$. \qed
\end{lem}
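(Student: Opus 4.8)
The plan is to verify directly that ${}^\square\cR$ is closed under each of the four operations listed, and then to read off the asserted equality. Since, by definition, $\Cof(\cS)$ is the closure of $\cS$ under pushouts, transfinite compositions, and retracts, the inclusion ${}^\square\cR\subset\Cof({}^\square\cR)$ is automatic, and the whole content of the lemma is the reverse inclusion $\Cof({}^\square\cR)\subset{}^\square\cR$, which holds as soon as ${}^\square\cR$ is known to be stable under these three operations. Closure under coproducts then comes for free via the remark (made just before the lemma) that in a cocomplete category a coproduct of morphisms is a transfinite composition of pushouts, though I will also check it by hand.

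Closure under retracts is immediate from the observation recorded in the discussion of lifting properties of retracts: a retract of a morphism having the left lifting property against every $r\in\cR$ again has that property. For coproducts and pushouts the mechanism is the same — a lifting square for the new morphism against a given $r\in\cR$ is reassembled, through the appropriate universal property, into a lifting square (or a family of them) for the input morphism(s). For $\coprod_i f_i$, a square against $r$ restricts along the coprojections to a square for each $f_i$; solving these separately and copairing the diagonals yields the required filler. For a pushout $f'\:X\to Y$ of $f\:A\to B$ along $g\:A\to X$, with pushout cocone $f'\:X\to Y$ and $g'\:B\to Y$, I take a lifting square $a\:X\to C$, $b\:Y\to D$ with $ra=bf'$, precompose with $g$ and $g'$ to obtain the square $r(ag)=(bg')f$ for $f$, solve it using $f\in{}^\square\cR$ to get $h\:B\to C$ with $hf=ag$ and $rh=bg'$, and then glue $a$ and $h$ (which agree on $A$) through the universal property of the pushout into $t\:Y\to C$. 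One checks $tf'=a$ directly, and $rt=b$ by uniqueness, since $rt$ and $b$ agree after composition with both legs $f'$ and $g'$ of the cocone.

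The one step requiring genuine work is closure under transfinite composition. Given a smooth chain $(f_{ij}\:E_i\to E_j)_{0\le i<j\le\alpha}$ with every $f_{i,i+1}\in{}^\square\cR$, together with a lifting square $u\:E_0\to C$, $v\:E_\alpha\to D$ satisfying $ru=vf_{0,\alpha}$, I build the filler by transfinite induction, producing compatible maps $t_i\:E_i\to C$ for all $i\le\alpha$ with $t_i=t_jf_{ij}$ for $i<j$ and $rt_i=vf_{i,\alpha}$. I put $t_0=u$. At a successor step I solve the lifting problem for $f_{i,i+1}$ with top map $t_i$ and bottom map $vf_{i+1,\alpha}$, whose commutativity is exactly the inductive identity $rt_i=vf_{i,\alpha}=vf_{i+1,\alpha}f_{i,i+1}$, obtaining $t_{i+1}$ with $t_{i+1}f_{i,i+1}=t_i$ and $rt_{i+1}=vf_{i+1,\alpha}$. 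At a limit ordinal $j$ I let $t_j\:E_j\to C$ be the unique morphism out of $E_j=\varinjlim_{i<j}E_i$ induced by the compatible family $(t_i)_{i<j}$; the relation $rt_j=vf_{j,\alpha}$ then follows because both sides agree after composition with every $f_{ij}$, so by uniqueness of the factorization through the colimit they coincide. The terminal map $t_\alpha$ satisfies $t_\alpha f_{0,\alpha}=t_0=u$ and $rt_\alpha=v$, so it is the desired diagonal.

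The main obstacle, as indicated, is organizing this transfinite induction, and especially handling the limit stages: the subtle point is that the induced map $t_j$ must be verified to lie correctly over $vf_{j,\alpha}$, which is precisely where the uniqueness part of the universal property of $E_j=\varinjlim_{i<j}E_i$ is used. With all four closure properties established, the inclusion $\Cof({}^\square\cR)\subset{}^\square\cR$ follows, and combined with the trivial reverse inclusion it yields ${}^\square\cR=\Cof({}^\square\cR)$.
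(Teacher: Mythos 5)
Your proof is correct, and it is exactly the standard direct verification that the paper leaves to the reader (the lemma is stated with an immediate \textsc{qed} and no written proof). All four closure checks — retracts via the earlier remark on lifting properties, coproducts by copairing, pushouts via the universal property with $rt=b$ checked by uniqueness on the cocone legs, and transfinite compositions by induction with the limit-stage identity $rt_j=vf_{j,\alpha}$ verified through uniqueness of factorizations through the colimit — are carried out correctly and yield $\Cof({}^\square\cR)\subset{}^\square\cR$ as required.
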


 Now let $\sE$ be an abelian category.
 Let $\sL$ and $\sR\subset\sE$ be two classes of objects.
 A morphism in $\sE$ is said to be an \emph{$\sL$\+monomorphism} if
it is a monomorphism whose cokernel belongs to~$\sL$.
 Similarly, a morphism in $\sE$ is said to be
an \emph{$\sR$\+epimorphism} if it is an epimorphism whose kernel
belongs to~$\sR$.
 We denote the class of all $\sL$\+monomorphisms by $\sL\Mono$ and
the class of all $\sR$\+epimorphisms by $\sR\Epi$.

\begin{prop} \label{mono-epi-lifting}
\textup{(a)} The inclusion\/ $\sR\Epi\subset\sL\Mono^\square$ holds
if and only if\/ $\sR\subset\sL^{\perp_1}$. \par
\textup{(b)} Moreover, for any class of objects\/ $\sL\subset\sE$,
the kernel of any morphism from $\sL\Mono^\square$ belongs
to\/~$\sL^{\perp_1}$. \par
\textup{(c)} If the class\/ $\sL\subset\sE$ is generating, then
the class\/ $\sL\Mono^\square$ consists of epimorphisms.
 Consequently, $\sL\Mono^\square=\sL^{\perp_1}\Epi$ in this case.
\end{prop}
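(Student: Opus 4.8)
The plan is to reduce all three parts to the classical dictionary between the solvability of lifting squares and the vanishing of an $\Ext^1$\+obstruction. I begin with the easy (forward) implication of~(a). Given $R\in\sR$ and $L\in\sL$, I would test an arbitrary short exact sequence $0\to R\to E\xrightarrow{p}L\to0$: the morphism $p$ is an $\sR$\+epimorphism, since its kernel is $R\in\sR$, while $0\to L$ is an $\sL$\+monomorphism, since its cokernel is $L\in\sL$. The commutative square with top arrow $0\to E$, left arrow $0\to L$, right arrow~$p$, and bottom arrow $\id_L$ then admits a diagonal filler $t\:L\to E$ with $pt=\id_L$; this is a splitting of the sequence, so $\Ext^1_\sE(L,R)=0$, and therefore $\sR\subset\sL^{\perp_1}$.

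The reverse implication of~(a) is the main obstacle, and is where the homological input enters. For an $\sL$\+monomorphism $l\:A\to B$ with cokernel $S\in\sL$ and an $\sR$\+epimorphism $r\:C\to D$ with kernel $K\in\sR$, together with a commutative square given by $f\:A\to C$ and $g\:B\to D$ with $rf=gl$, I would form the pullback $P=B\times_D C$. Its projection $\pi\:P\to B$ is again an epimorphism with kernel $K$, and $rf=gl$ induces a canonical morphism $\tilde f\:A\to P$ with $\pi\tilde f=l$. The diagonal fillers of the original square correspond bijectively to the sections $s\:B\to P$ of $\pi$ with $sl=\tilde f$. Dividing $P$ by the image of $\tilde f$ produces a short exact sequence $0\to K\to\bar P\to S\to0$, and the long exact $\Hom(-,K)$\+sequence of $0\to A\to B\to S\to0$ shows that a section of the required form exists precisely when the class of this sequence in $\Ext^1_\sE(S,K)$ vanishes. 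Since $S\in\sL$ and $K\in\sR\subset\sL^{\perp_1}$, that group is zero, so the filler exists.

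Part~(b) is a self-contained diagram chase not involving $\sR$. Let $r\in\sL\Mono^\square$ have kernel $\iota\:K\hookrightarrow C$. For any $L\in\sL$ and any extension $0\to K\xrightarrow{j}E\to L\to0$, the morphism $j$ is an $\sL$\+monomorphism. Testing $r$ against $j$ in the square with top arrow~$\iota$, left arrow~$j$, and bottom arrow the zero map $E\to D$ (which commutes because $r\iota=0$) produces a filler $t\:E\to C$ with $tj=\iota$ and $rt=0$. The relation $rt=0$ forces $t$ to factor as $t=\iota\bar t$ for a unique $\bar t\:E\to K$, and then $tj=\iota$ together with $\iota$ being monic gives $\bar t j=\id_K$. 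Thus $\bar t$ splits the extension, so $\Ext^1_\sE(L,K)=0$ and $K\in\sL^{\perp_1}$.

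For part~(c), assume $\sL$ is generating. To see that every $r\in\sL\Mono^\square$ is an epimorphism, I would test $r$ against the $\sL$\+monomorphisms $0\to M$ with $M\in\sL$: each square with bottom arrow an arbitrary $h\:M\to D$ admits a filler, so every morphism $M\to D$ from an object $M\in\sL$ factors through~$r$ and is therefore annihilated by the cokernel projection $q\:D\to\coker r$. Choosing, by the generating hypothesis, an epimorphism $\psi\:N\to D$ with $N\in\sL$, we get $q\psi=0$, whence $q=0$ and $\coker r=0$. Combining this with~(b) gives $\sL\Mono^\square\subset\sL^{\perp_1}\Epi$; the reverse inclusion is part~(a) applied to $\sR=\sL^{\perp_1}$, and the asserted equality follows.
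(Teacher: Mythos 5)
Your proof is correct, and it is essentially the argument the paper delegates to its references: the paper's own ``proof'' is a citation to \cite[Lemmas~3.1 and~4.2]{PR} (with the ``if'' direction of~(a) traced back to \cite[proof of Proposition~4.2]{Hov}), and your pullback reduction of the lifting problem to splitting an extension of $S\in\sL$ by $K\in\sR$, your splitting argument for~(b), and your use in~(c) of the lifting property against the morphisms $0\rarrow M$, $M\in\sL$, reproduce exactly that standard route (the last point even matching the paper's explicit remark). The only compressed spot is the sentence identifying the obstruction for a section $s$ of $\pi$ satisfying the constraint $sl=\tilde f$ with the class of $0\to K\to\bar P\to S\to0$ in $\Ext^1_\sE(S,K)$; this is true, but strictly speaking one needs both that this class pulls back to the class of $0\to K\to P\to B\to0$ in $\Ext^1_\sE(B,K)$ (so $\pi$ splits) and the surjectivity of $l^*\:\Hom_\sE(B,K)\rarrow\Hom_\sE(A,K)$ coming from $\Ext^1_\sE(S,K)=0$ to correct an arbitrary section so that it restricts to~$\tilde f$.
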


\begin{proof}
 This is~\cite[Lemmas~3.1 and~4.2]{PR}.
 In particular, the ``if'' implication in part~(a) can be found
in~\cite[Lemma~2.4]{Gil} or~\cite[Lemma~1 in Section~9.1]{Psemi}
(the argument seems to go back to~\cite[proof of Proposition~4.2]{Hov}).
 For later use, we note that in order to prove in part~(c) that any
$r\in\sL\Mono^\square$ is an epimorphism, it suffices to use that~$r$
has the right lifting property with respect to all the morphisms
$0\rarrow L$, \,$L\in\sL$.
\end{proof}

 Notice that the class $\sL\Mono^\square$ does not always consist of
epimorphisms.
 To give a trivial example, if $\sL=\varnothing$ or $\sL=\{0\}$, then
all the $\sL$\+monomorphisms are isomorphisms, hence all the morphisms
in $\sE$ belong to~$\sL\Mono^\square$ (cf.\
Examples~\ref{nongenerating-examples} below).
 However, \emph{if} the class $\sL\Mono^\square$ consists of
epimorphisms, \emph{then} it is clear from
Proposition~\ref{mono-epi-lifting}(a\+-b) that $\sL\Mono^\square=
\sL^{\perp_1}\Epi$.

 A weak factorization system $(\cL,\cR)$ in an abelian category $\sE$
is said to be \emph{abelian} if the class $\cL$ consists of
monomorphisms, the class $\cR$ consists of epimorphisms,
a monomorphism $l\:A\rarrow B$ belongs to $\cL$ if and only if
the morphism $0\rarrow\coker(l)$ belongs to $\cL$, and an epimorphism
$r\:C\rarrow D$ belongs to $\cR$ if and only if the morphism
$\ker(r)\rarrow0$ belongs to~$\cR$.
 In other words, a weak factorization system $(\cL,\cR)$ is abelian
if and only if there exists a pair of classes of objects $(\sL,\sR)$
in $\sE$ such that $\cL=\sL\Mono$ and $\cR=\sR\Epi$.

\begin{thm} \label{wfs-complete-cotorsion}
 Let $(\sL,\sR)$ be a pair of classes of objects in an abelian
category~$\sE$.
 Then the pair of classes of morphisms\/ $\cL=\sL\Mono$ and\/
$\cR=\sR\Epi$ forms a weak factorization system in\/ $\sE$ if and
only if $(\sL,\sR)$ is a complete cotorsion pair.
 So abelian weak factorization systems correspond bijectively to
complete cotorsion pairs in\/~$\sE$.
\end{thm}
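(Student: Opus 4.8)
The plan is to establish the two implications of the stated equivalence separately; the concluding bijectivity assertion then follows formally, since from an abelian weak factorization system $(\cL,\cR)$ the underlying pair of object classes is recovered as $\sL=\{\,\coker(l):l\in\cL\,\}$ and $\sR=\{\,\ker(r):r\in\cR\,\}$, so that the assignment $(\sL,\sR)\mapsto(\sL\Mono,\sR\Epi)$ is injective, and by the two implications it restricts to a bijection between complete cotorsion pairs and abelian weak factorization systems.

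Suppose first that $(\sL,\sR)$ is a complete cotorsion pair, and set $\cL=\sL\Mono$ and $\cR=\sR\Epi$. The special precover and preenvelope sequences show at once that $\sL$ is generating and $\sR$ is cogenerating. Proposition~\ref{mono-epi-lifting}(c), applicable because $\sL$ is generating, then gives $\cL^\square=\sL\Mono^\square=\sL^{\perp_1}\Epi=\sR\Epi=\cR$, where the third equality uses $\sL^{\perp_1}=\sR$. Running the same proposition in the opposite category $\sE^\sop$ --- in which epimorphisms, kernels, and generating classes turn into monomorphisms, cokernels, and cogenerating classes --- yields the dual equality $\cL={}^\square\cR$. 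So $(\cL,\cR)$ has the lifting property in the required equality form.

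The main obstacle in this direction is the factorization property, which I would handle by a pushout construction using both approximation sequences. Given $f\:A\to B$, first choose a special precover sequence $0\to R\to L\xrightarrow{\pi}B\to0$ of $B$ with $L\in\sL$, $R\in\sR$, and form the epimorphism $(f,\pi)\:A\oplus L\to B$ with kernel~$N$. The summand inclusion $A\to A\oplus L$ is already an $\sL$\+monomorphism, but $A\oplus L\to B$ need not have kernel in~$\sR$. To repair this, take a special preenvelope sequence $0\to N\to\bar R\to\bar L\to0$ with $\bar R\in\sR$, $\bar L\in\sL$, and form the pushout $C$ of the monomorphisms $N\to A\oplus L$ and $N\to\bar R$. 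Pushout along a monomorphism produces two short exact sequences, $0\to A\oplus L\to C\to\bar L\to0$ and $0\to\bar R\to C\to B\to0$; the first says $A\oplus L\to C$ is an $\sL$\+monomorphism, the second says the induced map $C\to B$ is an $\sR$\+epimorphism. Because $\sL$ is closed under extensions, the composite $A\to A\oplus L\to C$ is again an $\sL$\+monomorphism, and one checks directly that $A\to C\to B$ equals~$f$. This is the required factorization $f=rl$ with $l\in\sL\Mono$ and $r\in\sR\Epi$.

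Conversely, assume $(\sL\Mono,\sR\Epi)$ is a weak factorization system. To recognize $(\sL,\sR)$ as a cotorsion pair I would exploit the two lifting equalities. For $R\in\sR$ the morphism $R\to0$ lies in $\sR\Epi=\sL\Mono^\square$, so its kernel $R$ belongs to $\sL^{\perp_1}$ by Proposition~\ref{mono-epi-lifting}(b); conversely, if $X\in\sL^{\perp_1}$ then $\Ext^1_\sE(L,X)=0$ for every $L\in\sL$ shows that $X\to0$ has the right lifting property with respect to every $\sL$\+monomorphism, so $X\to0\in\sL\Mono^\square=\sR\Epi$ and hence $X\in\sR$; this gives $\sR=\sL^{\perp_1}$, and the dual argument in $\sE^\sop$ gives $\sL={}^{\perp_1}\sR$. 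Completeness is then immediate from the factorization property: a factorization of $0\to B$ into an $\sL$\+monomorphism followed by an $\sR$\+epimorphism is exactly a special precover sequence for $B$, and a factorization of $A\to0$ is exactly a special preenvelope sequence for~$A$.
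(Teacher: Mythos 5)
Your proposal is correct and follows essentially the same route as the paper: the lifting equalities via Proposition~\ref{mono-epi-lifting}(c) and its dual, completeness read off from factorizations of $0\rarrow B$ and $A\rarrow0$, and the cotorsion-pair identities from Proposition~\ref{mono-epi-lifting}(a\+-b). Your factorization step is the same construction as the paper's (the pushout of $N\rarrow A\oplus L$ and $N\rarrow\bar R$ is exactly the cokernel of $N\rarrow A\oplus L\oplus\bar R$ used there, with the specific choice of the special precover $L\rarrow B$ in place of an arbitrary $P\rarrow B$ making $(f,p)$ epic).
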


\begin{proof}
 This result is essentially due to Hovey~\cite{Hov}.
 ``Only if'': it is clear from Proposition~\ref{mono-epi-lifting}(a)
that the equation $\sR\Epi=\sL\Mono^\square$ implies
$\sR=\sL^{\perp_1}$; similarly, the equation
$\sL\Mono={}^\square(\sR\Epi)$ implies $\sL={}^{\perp_1}\sR$.

 To prove existence of special precover sequences for $(\sL,\sR)$,
consider an object $E\in\sE$.
 Since the pair of classes $(\cL,\cR)$ has the factorization property
by assumption, the morphism $0\rarrow E$ can be factorized as
$0\rarrow M\rarrow E$, where the morphism $l\:0\rarrow M$ belongs to
$\cL$ and the morphism $r\:M\rarrow E$ belongs to~$\cR$.
 Now we have $M\in\sL$ and the morphism $r\:M\rarrow E$ is
an epimorphism with the kernel belonging to~$\sR$.
 The dual argument proves existence of special preenvelope sequences.

 ``If'': If $(\sL,\sR)$ is a complete cotorsion pair, then the class
$\sL$ is generating and the class $\sR$ is cogenerating.
 By Proposition~\ref{mono-epi-lifting}(c) and its dual version,
it follows that $\sL\Mono^\square=\sR\Epi$ and ${}^\square(\sR\Epi)
=\sL\Mono$.

 Now let $f\:A\rarrow B$ be a morphism in~$\sE$.
 We want to decompose~$f$ into an $\sL$\+monomorphism followed by
$\sR$\+epimorphism.
 We follow an argument from~\cite[Section~9.1]{Psemi}
(for the classical approach, see~\cite[Proposition~5.4]{Hov}).
 Choose an object $P\in\sL$ together with a morphism $p\:P\rarrow B$
such that the morphism $(f,p)\:A\oplus P\rarrow B$ is an epimorphism.
 (E.~g., one can choose $p$~to be an epimorphism; alternatively,
when $f$~is an epimorphism, one can take $P=0$.)
 Denote by $K$ the kernel of the morphism~$(f,p)$, and choose
a special preenvelope sequence $0\rarrow K\rarrow R\rarrow L'\rarrow0$
for the object $K\in\sE$ (so $R\in\sR$ and $L'\in\sL$).

 Denote the monomorphism $K\rarrow A\oplus P$ by $(a,q)$ and
the monomorphism $K\rarrow R$ by~$k$.
 Let $C$ be the cokernel of the monomorphism $(a,q,k)\:K\rarrow
A\oplus P\oplus R$.
 Consider the morphism $(f,p,0)\:A\oplus P\oplus R\rarrow B$.
 The composition $(f,p,0)\circ(a,q,k)=(f,p)\circ(a,q)$ vanishes;
so the morphism $(f,p,0)$ factorizes through the epimorphism
$A\oplus P\oplus R\rarrow C$, providing a morphism $r\:C\rarrow B$.
 Denote by $l\:A\rarrow C$ the composition of the coproduct
inclusion $(\id_A,0,0)\:A\rarrow A\oplus P\oplus R$ with
the epimorphism $A\oplus P\oplus R\rarrow C$.
 By construction, we have $rl=f$.

 Finally, the morphism~$r$ is an epimorphism with the kernel
$\ker(r)=R$, hence $r\in\sR\Epi$.
 The morphism~$l$ is a monomorphism whose cokernel is the middle
term of a short exact sequence $0\rarrow P\rarrow\coker(l)
\rarrow L'\rarrow0$.
 As both the objects $P$ and $L'$ belong to $\sL$ and the class $\sL$
is closed under extensions, we can conclude that $l\in\sL\Mono$.

 Notice that the above construction is not self-dual; one could also
proceed in the dual way, choosing an object $J\in\sR$ together with
a morphism $j\:A\rarrow J$ such that $(f,j)\:A\rarrow B\oplus J$ is
a monomorphism, considering the cokernel of $(f,j)$, etc.\
(see~\cite[Section~9.2]{Psemi}).
\end{proof}

\begin{prop} \label{fil-cell}
 Let\/ $\sS$ be a class of objects in an abelian category\/~$\sE$.
 Then \par
\textup{(a)} any\/ $\Fil(\sS)$\+monomorphism in\/ $\sE$ belongs to\/
$\Cell(\sS\Mono)$; \par
\textup{(b)} the cokernel of any morphism from\/ $\Cell(\sS\Mono)$
belongs to\/ $\Fil(\sS)$.
\end{prop}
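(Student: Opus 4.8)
The plan is to prove both parts by exhibiting an explicit chain and reading off its successive cokernels, using the snake lemma and the fact that cokernels commute with colimits; the only genuinely delicate point is the verification of smoothness at limit ordinals. For part~(a), let $m\colon A\rarrow B$ be a monomorphism with $F=\coker(m)\in\Fil(\sS)$, and fix an $\alpha$\+filtration $(f_{ij}\colon F_i\to F_j)$ of $F=F_\alpha$ with subquotients $S_i=F_{i+1}/F_i\in\sS$. Writing $\pi\colon B\rarrow F$ for the projection, I would set $B_i:=B\times_F F_i$, the pullback of $\pi$ along $f_{i\alpha}\colon F_i\to F$; these pullbacks exist in any abelian category. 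Pulling back the sequence $0\rarrow A\rarrow B\xrightarrow{\pi}F\rarrow 0$ yields short exact sequences $0\rarrow A\rarrow B_i\rarrow F_i\rarrow 0$ with constant $A$\+term, so that $B_0=A$ (since $F_0=0$) and $B_\alpha=B$. Applying the snake lemma to the map from the $i$\+th to the $(i{+}1)$\+st sequence, whose left-hand map is $\id_A$, shows $B_i\rarrow B_{i+1}$ is a monomorphism with cokernel $\coker(f_{i,i+1})=S_i$, so each step lies in $\sS\Mono$.

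The crux of part~(a) is smoothness: I must show $B_j=\varinjlim_{i<j}B_i$ for every limit ordinal $j\le\alpha$, and here care is needed, since filtered colimits in $\sE$ need not be exact. Uniqueness of a mediating map is easy, as any map out of $B_j$ annihilating all $B_i$ kills $A=B_0$, hence factors through $\pi_j\colon B_j\twoheadrightarrow F_j$ and then vanishes by the colimit property of $F_j$. For existence, given a cocone $(g_i\colon B_i\to T)_{i<j}$ with common restriction $a\colon A\to T$, I would form the pushout $P=T\amalg_A B_j$ of $a$ and $\mu_j\colon A\to B_j$, with structure maps $\tau\colon T\to P$ and $\rho\colon B_j\to P$; since $\mu_j$ is a monomorphism with cokernel $F_j$, this gives $0\rarrow T\xrightarrow{\tau}P\xrightarrow{\theta}F_j\rarrow 0$. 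The differences $\rho\iota_i-\tau g_i\colon B_i\to P$ vanish on $A$, hence factor through $\pi_i$ and assemble, via $F_j=\varinjlim F_i$, into a map $c\colon F_j\to P$; a short chase using $\theta\rho=\pi_j$ and $\theta\tau=0$ gives $\theta c=\id_{F_j}$. Thus the sequence splits, and composing $\rho$ with the retraction $P\to T$ determined by $c$ produces the required map $B_j\to T$. This shows $(B_i)$ is a smooth chain, so $m$ is a transfinite composition of $\sS$\+monomorphisms and lies in $\Cell(\sS\Mono)$.

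For part~(b), I would first note that a pushout of an $\sS$\+monomorphism is again an $\sS$\+monomorphism, since pushouts in an abelian category preserve both monomorphisms and cokernels; consequently $\Cell(\sS\Mono)$ consists precisely of the transfinite compositions of $\sS$\+monomorphisms. So let $m=f_{0\alpha}$ come from a smooth chain $(E_i)$ with each $E_i\rarrow E_{i+1}$ in $\sS\Mono$, and set $G_i:=\coker(f_{0i})$, so $G_0=0$ and $G_\alpha=\coker(m)$. Because $f_{i,i+1}$ is a monomorphism, a short argument with images shows the induced map $G_i\rarrow G_{i+1}$ is again a monomorphism, while the exact sequence $\coker(f_{0i})\rarrow\coker(f_{0,i+1})\rarrow\coker(f_{i,i+1})\rarrow0$ identifies its cokernel with $S_i\in\sS$. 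Finally, since cokernels are colimits and hence commute with the colimits defining the chain, $G_j=\coker(E_0\to\varinjlim_{i<j}E_i)=\varinjlim_{i<j}G_i$ at limit ordinals. Thus $(G_i)_{0\le i\le\alpha}$ is an $\alpha$\+filtration of $\coker(m)$ by objects of $\sS$, giving $\coker(m)\in\Fil(\sS)$.

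The main obstacle is the smoothness verification in part~(a): unlike in the classical module-theoretic setting, one cannot pass to the colimit of the sequences $0\rarrow A\rarrow B_i\rarrow F_i\rarrow0$ and invoke exactness of filtered colimits. The pushout-and-splitting argument is designed precisely to avoid this, reducing the existence of the mediating map to the given colimit property of the $F_i$ alone. Every other ingredient---the snake-lemma identifications of cokernels and the commutation of cokernels with colimits in part~(b)---is routine and valid in an arbitrary abelian category.
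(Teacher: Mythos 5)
Your proof is correct. The paper gives no argument of its own for this proposition --- it defers entirely to \cite[Lemma~4.6(a--b) and Remark~4.7]{PR}, noting only that part~(b) is straightforward and part~(a) ``more involved'' --- so what you have written is a genuine, self-contained replacement for the cited argument, and it follows the route one would expect: pull the filtration of $F=\coker(m)$ back along $B\rarrow F$ to interpolate a chain $(B_i)$ between $A$ and $B$, identify each successive cokernel with $S_i$ by the snake lemma, and then face the real difficulty, namely smoothness at limit ordinals in a category whose directed colimits need not be exact. Your pushout-and-splitting construction of the mediating map is valid as stated: the differences $\rho\iota_i-\tau g_i$ vanish on $A=B_0$, hence descend to a cocone $(c_i\:F_i\to P)$, the induced $c\:F_j\rarrow P$ satisfies $\theta c=\id_{F_j}$ because $\theta\rho=\pi_j$ and $\pi_i$ is epic, and the resulting retraction $r$ gives $\tau(r\rho)\iota_i=\tau g_i$ with $\tau$ monic --- so the existence and uniqueness of the mediating map rest only on the colimit property of $F_j$, exactly as you intend. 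Two points worth recording explicitly if this were written out in full: your reduction in part~(b) of $\Cell(\sS\Mono)$ to transfinite compositions of $\sS$\+monomorphisms uses that $\sS\Mono$ is closed under pushouts (true, since pushouts in an abelian category preserve monomorphisms and their cokernels) together with the rearrangement of pushouts past transfinite compositions recorded in Section~\ref{wfs-secn}; and your treatment correctly avoids ever assuming that the long maps $f_{0i}$ or $E_0\rarrow E_i$ are monomorphisms --- only the successor steps are --- which is precisely the trap the paper warns about after the definition of an $\alpha$\+filtration.
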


\begin{proof}
 This is~\cite[Lemma~4.6(a\+-b) and Remark~4.7]{PR}.
 Part~(b) is straightforward; part~(a) is more involved.
\end{proof}

\Section{Small Object Argument}

 Let $\lambda$~be an infinite cardinal.
 A poset $I$ is said to be \emph{$\lambda$\+directed} if any subset
$J\subset I$ of the cardinality less than~$\lambda$ has an upper bound
in $I$, i.~e., an element $i\in I$ such that $j\le i$ for all $j\in J$.
 In particular, $I$ is $\omega$\+directed (where $\omega$~denotes
the cardinal of nonnegative integers) if and only if $I$ is directed
in the usual sense.

 A \emph{$\lambda$\+directed colimit} in a category $\sE$ is the colimit
of a diagram indexed by a $\lambda$\+directed poset.
 Assuming that all the $\lambda$\+directed colimits exist in $\sE$,
an object $E\in\sE$ is said to be \emph{$\lambda$\+presentable} if
the functor $\Hom_\sE(E,{-})\:\sE\rarrow\Sets$ preserves
$\lambda$\+directed colimits.

 Let $\lambda$~be a regular infinite cardinal.
 A category $\sE$ is said to be \emph{$\lambda$\+accessible} if all
the $\lambda$\+directed colimits exist in $\sE$ and there is a set of
$\lambda$\+presentable objects $\sG\subset\sE$ such that every object
in $\sE$ is a $\lambda$\+directed colimit of objects from~$\sG$.
 A cocomplete $\lambda$\+accessible category is said to be
\emph{locally $\lambda$\+presentable}.

 Equivalently, a cocomplete category $\sE$ is locally
$\lambda$\+presentable if and only if it has a strongly generating set
of $\lambda$\+presentable objects~\cite[Theorem~1.20]{AR}.
 We do not define what it means for a set of generators in a category
to be a set of strong generators (see~\cite[Section~0.6]{AR} for
the discussion), as we are only interested in locally presentable
\emph{abelian} categories in this paper.
 In an abelian category, any set of generators is a set of strong
generators.

 A category is said to be \emph{accessible} if it is
$\lambda$\+accessible for some regular cardinal~$\lambda$.
 Similarly, a category is said to be \emph{locally presentable} if it is
locally $\lambda$\+presentable for some regular cardinal~$\lambda$.

 The following theorem summarizes Quillen's classical ``small object
argument''~\cite[Lemma~II.3.3]{Quil}.
 For a more general formulation, see~\cite[Theorem~2.1.14]{Hov-book} or~\cite[Proposition~2.1]{SS}.

\begin{thm} \label{small-object-argument}
 Let\/ $\sE$ be a locally presentable category and\/ $\cS$ be a set
of morphisms in\/~$\sE$.
 Then the pair of classes of morphisms\/ $\Cell(\cS)$ and\/
$\cS^\square$ has the factorization property.
 Consequently, the pair of classes of morphisms\/ $\Cof(\cS)$ and\/
$\cS^\square$ is a weak factorization system.
 Moreover, the factorization $f=rl$ of an arbitrary morphism~$f$ in\/
$\sE$ into the composition of a morphism $r\in\cS^\square$ and
$l\in\Cell(\cS)$ can be chosen so that it depends functorially on~$f$.
\end{thm}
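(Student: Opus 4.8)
The plan is to run the classical transfinite small object argument, constructing the factorization $f=rl$ by a colimit process whose length is a suitable regular cardinal, and then to deduce the weak factorization system assertion from the closure and lifting lemmas already available.

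First I would fix the cardinal: since $\sE$ is locally presentable and $\cS$ is a \emph{set}, there is a regular cardinal $\lambda$ such that the domain of every morphism in $\cS$ is $\lambda$\+presentable (a standard consequence of local presentability). For a given $f\colon A\to B$ I would build a smooth chain $(E_\beta)_{\beta\le\lambda}$ together with compatible morphisms $g_\beta\colon E_\beta\to B$, starting from $E_0=A$, \,$g_0=f$. At a successor step I would form the set $T_\beta$ of all commutative squares from a morphism $s\in\cS$ to $g_\beta$ (a set, because $\cS$ is a set and $\sE$ has Hom sets), take the coproduct $\coprod_{T_\beta}s$ of the corresponding morphisms, and define $E_{\beta+1}$ as the pushout of $\coprod_{T_\beta}s$ along the tautological map $\coprod_{T_\beta}C\to E_\beta$; the morphism $g_{\beta+1}$ is induced by $g_\beta$ together with the bottom edges $D\to B$ of the squares. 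At limit ordinals I would set $E_\beta=\varinjlim_{\gamma<\beta}E_\gamma$. Putting $E=E_\lambda$, \,$r=g_\lambda$, and $l\colon A=E_0\to E_\lambda$ the transfinite composition, I obtain a candidate factorization $f=rl$.

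That $l\in\Cell(\cS)$ is immediate from the closure properties recorded before Lemma~\ref{cof-lifting}: each map $E_\beta\to E_{\beta+1}$ is a pushout of a coproduct of morphisms from $\cS$, hence lies in $\Cell(\cS)$, and $l$ is the transfinite composition of these maps. The crux is $r\in\cS^\square$. Given a lifting problem for $s\colon C\to D$ against $r$ with top map $u\colon C\to E$, I would use that $\lambda$ is regular — so the ordinal $\lambda$ is a $\lambda$\+directed poset and $E=\varinjlim_{\beta<\lambda}E_\beta$ is a $\lambda$\+directed colimit — together with the $\lambda$\+presentability of $C$ to factor $u$ through some $E_\beta$. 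The square then descends to a square over $g_\beta$, so it is one of those indexed by $T_\beta$; by the very construction of the pushout $E_{\beta+1}$, its bottom edge $D$ admits a canonical map into $E_{\beta+1}$, and composing into $E$ yields the desired diagonal filler, with both triangles commuting by a routine chase of the pushout and colimit cocone relations.

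The main obstacle is precisely this descent-and-lift step: one must check that the chosen $\lambda$ makes every domain small relative to the chain, so that $u$ factors through a bounded stage, and that the filler produced at stage $\beta+1$ remains a filler after passing to $E$; this is where regularity of $\lambda$ and the compatibility of the $g_\beta$ are essential. Once the factorization property of $(\Cell(\cS),\cS^\square)$ is in hand, the rest is formal. The pair has the lifting property because $\cS\subset{}^\square(\cS^\square)$ by definition and ${}^\square(\cS^\square)$ is closed under the cellular operations by Lemma~\ref{cof-lifting}; hence Lemma~\ref{retract-lemma} shows $(\overline{\Cell(\cS)},\overline{\cS^\square})=(\Cof(\cS),\cS^\square)$ is a weak factorization system, using that $\cS^\square$ is already closed under retracts. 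Finally, functoriality follows since every stage — the indexing sets $T_\beta$, the coproducts, the pushouts, and the colimits — is defined by a universal property and therefore depends functorially on~$f$.
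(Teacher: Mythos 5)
Your proposal is correct and is precisely the classical transfinite small object argument that the paper invokes by reference to Quillen, Beke, and Hovey (the paper gives no independent proof, only the citation plus a remark). All the key points are in place: the choice of a regular $\lambda$ making the domains of $\cS$ $\lambda$\+presentable, the regularity of~$\lambda$ ensuring the chain is $\lambda$\+directed so lifting problems descend to a bounded stage, and the reduction of the weak factorization system claim to Lemmas~\ref{retract-lemma} and~\ref{cof-lifting} together with the closure of $\cS^\square$ under retracts.
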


\begin{proof}
 See, e.~g., \cite[Proposition~1.3]{Bek}.
 The assertion of the theorem can be strengthened, in particular,
by defining the class $\Cof(\cS)$ in a more restrictive way (this involves
formulating a correspondingly adjusted version of Lemma~\ref{retract-lemma});
see~\cite[Definition~1.1(ii)]{Bek}.
\end{proof}

\begin{prop} \label{S-monos-are-pushouts}
 Let $\lambda$~be a regular cardinal, $\sE$ be a locally
$\lambda$\+presentable abelian category, and $\sS$ be a set of
$\lambda$\+presentable objects in\/~$\sE$.
 Then any\/ $\sS$\+monomorphism in\/ $\sE$ is a pushout of
an\/ $\sS$\+monomorphism with a $\lambda$\+presentable codomain.
\end{prop}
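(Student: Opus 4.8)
The plan is to realize the given $\sS$\+monomorphism as a pushout of a ``truncated'' copy of itself sitting over a $\lambda$\+presentable object. Write the $\sS$\+monomorphism as $f\:A\rarrow B$, so that there is a short exact sequence
$$0\lrarrow A\overset{f}{\lrarrow}B\overset{\pi}{\lrarrow}S\lrarrow0$$
with $S\in\sS$; in particular $S$ is $\lambda$\+presentable. Since $\sE$ is locally $\lambda$\+presentable, I would present the middle term as a $\lambda$\+directed colimit $B=\varinjlim_{j\in J}B_j$ of $\lambda$\+presentable objects, with structure morphisms $\beta_j\:B_j\rarrow B$. The goal is to find an index $j$ for which $\pi\beta_j\:B_j\rarrow S$ is an epimorphism. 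Granting this, I set $B'=B_j$ and $A'=\ker(\pi\beta_j)$, so that the inclusion $f'\:A'\rarrow B'$ is again an $\sS$\+monomorphism (its cokernel is the same object $S\in\sS$) with $\lambda$\+presentable codomain $B'$, and I then identify $f$ with the pushout of~$f'$.

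The main obstacle is the existence of such an index~$j$, and here one must be careful \emph{not} to assume that $\lambda$\+directed colimits are exact (they need not be in a locally presentable abelian category that is not Grothendieck). I would argue as follows. Let $S_j\subseteq S$ be the image of $\pi\beta_j$; these form a $\lambda$\+directed family of subobjects of~$S$. First, their union is all of $S$: the corestrictions $B_j\rarrow\bigcup_j S_j$ are compatible and induce a morphism $B\rarrow\bigcup_j S_j$ whose composite with the inclusion $\bigcup_j S_j\hookrightarrow S$ is~$\pi$; as $\pi$ is an epimorphism, this inclusion is both monic and epic, hence an isomorphism. Consequently, using only right exactness of colimits, $\varinjlim_j(S/S_j)\cong\coker\bigl(\varinjlim_j S_j\rarrow S\bigr)=0$. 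Now $\lambda$\+presentability of $S$ enters: fixing any index $j_0$ and letting $q_j\:S\rarrow S/S_j$ be the canonical surjections, the functor $\Hom_\sE(S,{-})$ preserves the $\lambda$\+directed colimit, so $\varinjlim_j\Hom_\sE(S,S/S_j)\cong\Hom_\sE(S,0)$ is a one-element set. Thus $q_{j_0}$ and the zero morphism become identified after some transition map, i.e.\ $q_j=0$ for some $j\ge j_0$; since $q_j$ is an epimorphism, this forces $S/S_j=0$, whence $\pi\beta_j$ is an epimorphism, as required.

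It remains to verify that $f$ is the pushout of $f'$ along a suitable morphism $A'\rarrow A$. Since $\pi\beta_j\circ f'=0$, the composite $\beta_j f'$ factors through $\ker\pi=A$, yielding $g\:A'\rarrow A$ with $fg=\beta_j f'$; I thereby obtain a morphism of short exact sequences from $0\to A'\to B'\to S\to0$ to $0\to A\to B\to S\to0$ which is the identity on~$S$. Forming the pushout $P=A\oplus_{A'}B'$, the pushout of the monomorphism $f'$ along~$g$ is again a monomorphism with cokernel~$S$, so there is a short exact sequence $0\to A\to P\to S\to0$; the universal property produces a comparison morphism $P\rarrow B$ compatible with the identities on $A$ and on $S$, which is an isomorphism by the five lemma. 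Hence $f$ is the pushout of the $\sS$\+monomorphism $f'$ with $\lambda$\+presentable codomain~$B'$, completing the argument. The only genuinely nontrivial point is the colimit/presentability argument of the previous paragraph; the pushout identification is routine diagram-chasing.
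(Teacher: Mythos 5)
Your proof is correct. The paper itself does not spell out an argument here --- it simply cites \cite[Lemma~3.4]{PR} --- and what you have written is a complete and careful proof of exactly that lemma, following the expected strategy: present $B$ as a $\lambda$\+directed colimit of $\lambda$\+presentable objects $B_j$, find an index $j$ for which $B_j\rarrow B\rarrow S$ is an epimorphism, and identify $f$ as the pushout of the kernel inclusion $\ker(\pi\beta_j)\rarrow B_j$ via the short five lemma. The one place where real care is needed --- the failure of exactness of $\lambda$\+directed colimits outside the Grothendieck setting --- is handled properly: you only use right exactness of colimits to get $\varinjlim_j(S/S_j)=0$, and then $\lambda$\+presentability of $S$ to kill some $S/S_j$ at a finite stage. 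This is a minor variant of the mechanism the paper itself uses later when verifying condition~(v) in Proposition~\ref{locally-presentable-abelian-object-size-function} (there the authors invoke the characterization of $\lambda$\+generated objects as those not expressible as a $\lambda$\+directed sum of proper subobjects, whereas you run the equivalent computation $\varinjlim_j\Hom_\sE(S,S/S_j)\cong\Hom_\sE(S,0)$ directly); the two devices are interchangeable, and nothing is gained or lost either way.
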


\begin{proof}
 This is~\cite[Lemma~3.4]{PR}.
\end{proof}

 The next two theorems extend the classical Eklof--Trlifaj
theorem about the cotorsion pair generated by a set of
modules~\cite[Theorem~10]{ET} to the realm of locally presentable
abelian categories.
 (For a much earlier generalization of the Eklof--Trlifaj theorem
to Grothendieck categories with enough projective objects,
see~\cite[Corollary~2.7]{AEGO}, while an early version applicable to
arbitrary Grothendieck categories can be found
in~\cite[Theorem~6.5]{Hov}.)
 Given an additive category $\sE$ and a class of objects $\sS\subset
\sE$, we denote by $\Add(\sS)=\Add_\sE(\sS)\subset\sE$ the class
of all direct summands of coproducts of copies of objects from
$\sS$ in~$\sE$.

\begin{thm} \label{cotorsion-pair-generated-by-set-complete}
 Let\/ $\sE$ be a locally presentable abelian category and\/
$(\sL,\sR)$ be a cotorsion pair generated by a set of objects
in\/~$\sE$.
 Assume that the class\/ $\sL$ is generating and the class\/ $\sR$ is
cogenerating in\/~$\sE$.
 Then the cotorsion pair $(\sL,\sR)$ is complete.
\end{thm}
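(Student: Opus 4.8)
The plan is to exhibit the pair of morphism classes $(\sL\Mono,\sR\Epi)$ as the weak factorization system delivered by the small object argument applied to a suitable \emph{set} of morphisms, and then to quote Theorem~\ref{wfs-complete-cotorsion}. Write $\sR=\sS^{\perp_1}$ and $\sL={}^{\perp_1}\sR$, where $\sS$ is the generating set of the cotorsion pair; note first that $\sS\subset\sL$, since $\Ext^1_\sE(S,R)=0$ for every $S\in\sS$ and every $R\in\sR=\sS^{\perp_1}$. I would begin by arranging that $\sS$ is itself a generating class in~$\sE$. As $\sE$ is locally presentable it has a generating set $\sG_0$; since $\sL$ is generating, each $G\in\sG_0$ receives an epimorphism $L_G\rarrow G$ with $L_G\in\sL$, and a coproduct of these epimorphisms shows that the set $\{L_G\}$ is again generating. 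Adjoining the $L_G$ to $\sS$ leaves the cotorsion pair unchanged: the new generators lie in $\sL={}^{\perp_1}\sR$, so every object of $\sR$ remains right $\Ext^1$\+orthogonal to them, whence $\sS^{\perp_1}=\sR$ is unaffected. Thus we may assume $\sS$ is generating.

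Next I would fix a regular cardinal $\lambda$ such that $\sE$ is locally $\lambda$\+presentable and every object of $\sS$ is $\lambda$\+presentable (possible because $\sS$ is a set), and let $\cS$ be a set of representatives of the isomorphism classes of $\sS$\+monomorphisms with $\lambda$\+presentable codomain. This $\cS$ is a genuine set, since there is only a set of $\lambda$\+presentable objects and locally presentable categories are well\+powered. By Proposition~\ref{S-monos-are-pushouts} every $\sS$\+monomorphism is a pushout of a morphism in $\cS$; combined with the closure of ${}^\square\{r\}$ under pushouts (Lemma~\ref{cof-lifting}) this gives $\cS^\square=\sS\Mono^\square$. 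Since $\sS$ is now generating, Proposition~\ref{mono-epi-lifting}(c) identifies $\sS\Mono^\square=\sS^{\perp_1}\Epi=\sR\Epi$, so that $\cS^\square=\sR\Epi$.

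Now I would invoke the small object argument (Theorem~\ref{small-object-argument}): the pair $(\Cof(\cS),\cS^\square)=(\Cof(\cS),\sR\Epi)$ is a weak factorization system, and any morphism factors as a morphism in $\Cell(\cS)$ followed by one in $\cS^\square$. Applying this to $0\rarrow E$ for an arbitrary object $E\in\sE$, I obtain a factorization $0\rarrow M\rarrow E$ in which the second map~$r$ lies in $\cS^\square=\sR\Epi$, so $r$ is an epimorphism with $\ker(r)\in\sR$. The object $M$ is the cokernel of the morphism $0\rarrow M$, which belongs to $\Cell(\cS)\subset\Cell(\sS\Mono)$; hence $M\in\Fil(\sS)$ by Proposition~\ref{fil-cell}(b). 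Because $\sS\subset\sL$ and every $\sS$\+filtration is an $\sL$\+filtration, the Eklof lemma (Proposition~\ref{eklof-lemma-prop}) gives $\Fil(\sS)\subset\Fil(\sL)=\sL$, so $M\in\sL$. Thus $0\rarrow\ker(r)\rarrow M\rarrow E\rarrow0$ is a special precover sequence for~$E$.

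Finally, since $\sL$ is generating and $\sR$ is cogenerating by hypothesis, Lemma~\ref{salce} promotes the existence of special precover sequences to the existence of special preenvelope sequences, so the cotorsion pair $(\sL,\sR)$ is complete. I expect the main point requiring care to be the set\+theoretic bookkeeping — guaranteeing that $\cS$ is an honest set so that the small object argument is applicable — which is precisely what local presentability and Proposition~\ref{S-monos-are-pushouts} are designed to secure; the identification $M\in\sL$ via the Eklof lemma is the other load\+bearing step.
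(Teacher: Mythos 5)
Your proof is correct and runs on the same core machinery as the paper's: the same auxiliary set $\cS$ of $\sS$\+monomorphisms with $\lambda$\+presentable codomains, the small object argument, and the identification $\cS^\square=\sS\Mono^\square=\sR\Epi$ via Proposition~\ref{S-monos-are-pushouts} and Lemma~\ref{cof-lifting}. Where you diverge is the endgame. The paper also identifies the left class, $\Cof(\cS)=\sL\Mono$, using the dual of Proposition~\ref{mono-epi-lifting}(c) (this is where the hypothesis that $\sR$ is cogenerating enters there), and then concludes from Theorem~\ref{wfs-complete-cotorsion} that $(\sL\Mono,\sR\Epi)$ is an abelian weak factorization system, hence $(\sL,\sR)$ is complete. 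You instead extract only the special precover sequences, recognizing the cofibrant replacement $M$ of an object $E$ as lying in $\Fil(\sS)\subset\sL$ via Proposition~\ref{fil-cell}(b) and the Eklof lemma, and then promote precovers to preenvelopes by Lemma~\ref{salce} (which is where the cogenerating hypothesis enters for you). This is essentially the mechanism of the paper's proof of Theorem~\ref{cotorsion-pair-generated-by-set-left-class} combined with the Salce lemma; both routes are valid and of comparable length. One point to tighten: after adjoining the objects $L_G$, your set $\sS$ is still not literally \emph{generating} in the paper's sense (every object a quotient of a \emph{single} member of $\sS$); what your coproduct argument actually yields is that $\Add(\sS)$ is generating. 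So Proposition~\ref{mono-epi-lifting}(c) does not apply verbatim to $\sS$, but its proof does, once you observe --- as the paper spells out --- that every $r\in\cS^\square$ has the right lifting property with respect to $0\rarrow L$ for all $L\in\Add(\sS)$, because the left class ${}^\square(\cS^\square)$ is closed under coproducts and retracts.
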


\begin{proof}
 This is~\cite[Corollary~3.6]{PR}.
 Let $\sS_0$ be a set of objects in $\sE$ such that
$\sR=\sS_0^{\perp_1}$.
 Using the assumption that the class $\sL$ is generating, one can
construct a possibly larger set of objects $\sS\subset\sE$, \
$\sS_0\subset\sS$, such that $\sR=\sS^{\perp_1}$ and every object
of $\sE$ is a quotient object of an object from $\Add(\sS)$.

 Let $\lambda$ be a regular cardinal such that the category $\sE$ is
locally $\lambda$\+presentable and all the objects in $\sS$ are
$\lambda$\+presentable
(this is always possible, see~\cite[Remark below Theorem~1.20]{AR}).
 Applying Theorem~\ref{small-object-argument} to the set $\cS$ of
(representatives of isomorphism classes of) all $\sS$\+monomorphisms
with $\lambda$\+presentable codomains in $\sE$, we conclude the pair
of classes $\Cof(\cS)$ and $\cS^\square$ forms a weak factorization
system in~$\sE$.

 By Proposition~\ref{S-monos-are-pushouts} and Lemma~\ref{cof-lifting},
we have $\cS^\square=\sS\Mono^\square$ and we also deduce that each
$r\in\sS\Mono^\square$ has the right lifting property with
respect to all morphisms of the form $0\rarrow L$, $L\in\Add(\sS)$.
 Using the fact that the class $\Add(\sS)$ is generating in~$\sE$
and following the proof of Proposition~\ref{mono-epi-lifting}(c),
one can see that $\sS\Mono^\square=\sR\Epi$.
 The dual assertion to Proposition~\ref{mono-epi-lifting}(c) tells
that ${}^\square(\sR\Epi)=\sL\Mono$, due to the assumption that
the class $\sR$ is cogenerating.
 Thus we have $\cS^\square=\sR\Epi$ and $\Cof(\cS)=\sL\Mono$, so
the pair of classes $\sL\Mono$ and $\sR\Epi$ is a weak factorization
system, and the desired assertion follows from
Theorem~\ref{wfs-complete-cotorsion}.
\end{proof}

\begin{thm} \label{cotorsion-pair-generated-by-set-left-class}
 Let\/ $\sE$ be a locally presentable abelian category and\/
$(\sL,\sR)$ be the cotorsion pair generated by a set of objects\/ $\sS$
in\/~$\sE$.
 Assume that the class of objects\/ $\Fil(\sS)$ is generating
in\/~$\sE$.
 Then one has\/ $\sL=\Fil(\sS)^\oplus$.
\end{thm}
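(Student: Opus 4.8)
My plan is to establish the two inclusions separately. The inclusion $\Fil(\sS)^\oplus\subseteq\sL$ is the easy one: since $\sR=\sS^{\perp_1}$, we have $\Ext^1_\sE(S,R)=0$ for all $S\in\sS$ and $R\in\sR$, so $\sS\subseteq{}^{\perp_1}\sR=\sL$. By the Eklof lemma (Proposition~\ref{eklof-lemma-prop}) the class $\sL={}^{\perp_1}\sR$ satisfies $\Fil(\sL)=\sL$, whence $\Fil(\sS)\subseteq\Fil(\sL)=\sL$; and $\sL$ is closed under direct summands, being the left-hand class of a cotorsion pair. Therefore $\Fil(\sS)^\oplus\subseteq\sL$.

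For the reverse inclusion I would run the small object argument. Fix a regular cardinal $\lambda$ such that $\sE$ is locally $\lambda$\+presentable and all objects of $\sS$ are $\lambda$\+presentable, and let $\cS$ be a set of representatives of the isomorphism classes of $\sS$\+monomorphisms with $\lambda$\+presentable codomain. By Theorem~\ref{small-object-argument}, $(\Cof(\cS),\cS^\square)$ is a weak factorization system in which any morphism factors as $rl$ with $l\in\Cell(\cS)$; moreover $\cS^\square=\sS\Mono^\square$ by Proposition~\ref{S-monos-are-pushouts} and Lemma~\ref{cof-lifting}. The crucial point is to show that every $r\in\cS^\square=\sS\Mono^\square$ is an epimorphism with kernel in~$\sR$. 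The kernel lies in $\sS^{\perp_1}=\sR$ by Proposition~\ref{mono-epi-lifting}(b). That $r$ is epic I would deduce from the generating property of $\Fil(\sS)$ via the remark after Proposition~\ref{mono-epi-lifting}: it suffices that $r$ lifts against every morphism $0\rarrow F$ with $F$ ranging over a generating class. For $F\in\Fil(\sS)$ the morphism $0\rarrow F$ is a $\Fil(\sS)$\+monomorphism, so it belongs to $\Cell(\sS\Mono)$ by Proposition~\ref{fil-cell}(a); and since ${}^\square(\sS\Mono^\square)$ contains $\sS\Mono$ and is closed under pushouts and transfinite compositions (Lemma~\ref{cof-lifting}), it contains $\Cell(\sS\Mono)$, so $r$ indeed lifts against $0\rarrow F$. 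Hence $\cS^\square=\sR\Epi$.

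It then remains to assemble these facts. Given any object $E\in\sE$, factor $0\rarrow E$ as $0\rarrow M\rarrow E$ with $0\rarrow M\in\Cell(\cS)\subseteq\Cell(\sS\Mono)$ and $M\rarrow E\in\cS^\square=\sR\Epi$. By Proposition~\ref{fil-cell}(b) the cokernel $M$ of $0\rarrow M$ lies in $\Fil(\sS)$, and $M\rarrow E$ is an $\sR$\+epimorphism, so we obtain a short exact sequence $0\rarrow R\rarrow M\rarrow E\rarrow0$ with $R\in\sR$ and $M\in\Fil(\sS)$. Specializing to $E=L\in\sL$, the vanishing $\Ext^1_\sE(L,R)=0$ splits this sequence, realizing $L$ as a direct summand of $M\in\Fil(\sS)$; thus $L\in\Fil(\sS)^\oplus$, and $\sL\subseteq\Fil(\sS)^\oplus$ as required. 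I expect the main obstacle to be exactly the epimorphism claim in the previous paragraph: squeezing it out of the bare hypothesis that $\Fil(\sS)$ (rather than $\sS$ or $\Add(\sS)$) is generating is what forces the use of Proposition~\ref{fil-cell}(a); once the factorization has its right half in $\sR\Epi$ and its left cokernel in $\Fil(\sS)$, the concluding splitting argument is routine.
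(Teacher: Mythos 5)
Your proof is correct and follows essentially the same route as the paper's: the small object argument applied to the set of $\sS$\+monomorphisms with $\lambda$\+presentable codomains, the identification $\cS^\square=\sS\Mono^\square=\sR\Epi$ via Propositions~\ref{S-monos-are-pushouts}, \ref{fil-cell} and~\ref{mono-epi-lifting} together with the generating hypothesis on $\Fil(\sS)$, and the splitting of the resulting special precover sequence $0\rarrow R\rarrow F\rarrow L\rarrow 0$ with $F\in\Fil(\sS)$. The only cosmetic difference is that you spell out the easy inclusion $\Fil(\sS)^\oplus\subseteq\sL$ via the Eklof lemma, which the paper leaves implicit.
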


\begin{proof}
 This is a particular case of~\cite[Theorem~4.8(d)]{PR}.
 By Proposition~\ref{fil-cell}(a), we have $\Fil(\sS)\Mono\subset
\Cell(\sS\Mono)$.
 In view of Lemma~\ref{cof-lifting}, it follows that
$\Fil(\sS)\Mono^\square=\sS\Mono^\square$.
 Applying Proposition~\ref{mono-epi-lifting}(c), we conclude that
the class $\sS\Mono^\square$ consists of epimorphisms.
 By Proposition~\ref{mono-epi-lifting}(a\+-b), it follows that
$\sS\Mono^\square=\sS^{\perp_1}\Epi=\sR\Epi$.

 Let $\lambda$ be a regular cardinal such that the category $\sE$ is
locally $\lambda$\+presentable and all the objects in $\sS$ are
$\lambda$\+presentable.
 Applying Theorem~\ref{small-object-argument} to the $\cS$ of
(representatives of isomorphism classes of) all $\sS$\+monomorphisms
with $\lambda$\+presentable codomains in $\sE$, we conclude that
every morphism in $\sE$ factorizes as the composition of a morphism
from $\Cell(\cS)$ followed by a morphism from $\cS^\square$.
 By Proposition~\ref{S-monos-are-pushouts} and Lemma~\ref{cof-lifting},
we have $\cS^\square=\sS\Mono^\square$; thus $\cS^\square=\sR\Epi$.

 Given an object $M\in\sE$, consider the morphism $0\rarrow M$ and
decompose it as $0\rarrow F\rarrow M$ so that the morphism
$0\rarrow F$ belongs to $\Cell(\cS)$ and the morphism $F\rarrow M$
belongs to $\cS^\square$.
 By Proposition~\ref{fil-cell}(b), we have $F\in\Fil(\sS)$.
 The argument above tells that $F\rarrow M$ is an epimorphism with
a kernel $R\in\sR$.
 We have constructed a special precover (short exact) sequence
$0\rarrow R\rarrow F\rarrow M\rarrow0$ for $M$ with $F\in\Fil(\sS)$
and $R\in\sR$.
 Now if $M\in\sL$, then $\Ext^1_\sE(M,R)=0$, hence $M$ is a direct
summand of~$F$.
\end{proof}

\begin{rem}
 For any cotorsion pair $(\sL,\sR)$ in an abelian category $\sE$,
the class $\sL$ contains all the projective objects in $\sE$ and
the class $\sR$ contains all the injective objects in~$\sE$.
 Hence the assumption that the class $\sL$ is generating holds
automatically if $\sE$ has enough projectives, and the assumption
that $\sR$ is cogenerating holds automatically if $\sE$ has enough
injectives.
 So Theorem~\ref{cotorsion-pair-generated-by-set-complete} is usually
stated without these assumptions for module categories $\sE$, or
more generally, for Grothendieck abelian categories $\sE$ with
enough projective objects.
 For categories without enough projective/injective objects, however,
Theorem~\ref{cotorsion-pair-generated-by-set-complete} does \emph{not}
hold without such assumptions about the classes $\sL$ and $\sR$,
as the following counterexamples demonstrate.
\end{rem}

\begin{exs} \label{nongenerating-examples}
 (1)~Let $p$~be a prime number and $\sA$ be the category of $p$\+primary
torsion abelian groups, that is abelian groups $A$ such that for every
$a\in A$ there exists $n\ge1$ for which $p^na=0$ in~$A$.
 Then $\sA$ is a locally finitely presentable Grothendieck
abelian category.
 Let $\sS$ be the empty set of objects in $\sA$, or alternatively, if
one wishes, let $\sS$ be the set consisting of the zero object only,
$\sS=\{0\}\subset\sA$.

 Then the class $\sR=\sS^{\perp_1}\subset\sA$ coincides with the whole
category $\sA$, and the class $\sL={}^{\perp_1}\sR$ consists of all
the projective objects in~$\sA$, which means the zero object only.
 Indeed, for any abelian group $A\in\sA$, one has
$\Ext^1_\sA(A,\boZ/p\boZ)\simeq\Hom_\sA({}_pA,\boZ/p\boZ)\ne0$ if
$A\ne0$, where ${}_pA\subset A$ denotes the subgroup of all elements
annihilated by~$p$ in~$\sA$.
 Thus we have $\sL=\{0\}$ and $\sR=\sA$, which is clearly \emph{not}
a complete cotorsion pair in $\sA$ (the special precover sequences
do not exist).

 It is instructive to consider the classes $\cL=\Cof(\cS)$ and
$\cR=\cS^\square$ from the proof of
Theorem~\ref{cotorsion-pair-generated-by-set-left-class} in this case.
 The set $\cS$ of representatives of isomorphism classes of
$\sS$\+monomorphisms with finitely presentable
(or $\lambda$\+presentable) codomains consists of isomorphisms;
so $\cR$ is the class of all morphisms in~$\sA$.
 It follows that $\cL$ is the class of all isomorphisms in~$\sA$.

\smallskip
 (2)~Let $p$~be a prime number and $\sB$ be the category of
$p$\+contramodule abelian groups, that is abelian groups $B$ such that
$\Hom_{\boZ}(\boZ[p^{-1}],B)=0=\Ext^1_{\boZ}(\boZ[p^{-1}],B)$.
 Then $\sB$ is a locally $\aleph_1$\+presentable abelian category with
enough projective objects~\cite[Example~4.1(3)]{PR}.
 In fact, the group of $p$\+adic integers $P=\boZ_p$ is a projective
generator of~$\sB$.
 Let $\sS=\{P,S\}$ be the set consisting of the projective generator $P$
and the simple abelian group $S=\boZ/p\boZ$, or alternatively, if one
wishes, let $\sS=\{S\}$ be the set consisting of the group $S$ only.

 Then the class $\sR=\sS^{\perp_1}\subset\sB$ consists of the zero
object only.
 Indeed, for any abelian group $B\in\sB$, one has
$\Ext^1_\sB(\boZ/p\boZ,B)\simeq B/pB\ne0$ if $B\ne0$ (as $B=pB$ would
imply surjectivity of the natural map $\Hom_{\boZ}(\boZ[p^{-1}],B)
\rarrow B$) \cite[Example~4.1(4)]{PR}.
 Hence the class $\sL={}^{\perp_1}\sR$ coincides with the whole
category~$\sB$.
 Thus we have $\sL=\sB$ and $\sR=\{0\}$, which is clearly \emph{not}
a complete cotorsion pair in $\sB$ (the special preenvelope sequences
do not exist).

 Notice that Theorem~\ref{cotorsion-pair-generated-by-set-left-class}
is not applicable in this example with $\sS=\{S\}$, but it becomes
applicable if one takes $\sS=\{P,S\}$.
 So one can conclude that $\sB=\Fil(\{P,S\})^\oplus$.

 It is instructive to consider the classes $\cL=\Cof(\cS)$ and
$\cR=\cS^\square$ from the proofs of
Theorems~\ref{cotorsion-pair-generated-by-set-complete}
and~\ref{cotorsion-pair-generated-by-set-left-class} in this case.
 Take $\sS=\{P,S\}$; then, following the proof of
Theorem~\ref{cotorsion-pair-generated-by-set-left-class},
\,$\cR=\sR\Epi$ is the class of all epimorphisms in $\sB$ with
the kernels in $\sR=\{0\}$, which means that $\cR$ is the class
of all isomorphisms in~$\sB$.
 Hence $\cL$ is the class of all morphisms in~$\sB$.
\end{exs}

 A weak factorization system $(\cL,\cR)$ in $\sE$ is said to be
\emph{cofibrantly generated} if there exists a set of morphisms
$\cS$ in $\sE$ such that $\cR=\cS^\square$.

\begin{lem} \label{abelian-wfs-cof-gen}
 Let\/ $\sE$ be a locally presentable abelian category and
$(\cL,\cR)$ be an abelian weak factorization system in\/~$\sE$.
 Let $(\sL,\sR)$ be the corresponding complete cotorsion pair in
$\sE$, as in Theorem~\ref{wfs-complete-cotorsion}; so\/
$\cL=\sL\Mono$ and\/ $\cR=\sR\Epi$.
 Then the weak factorization system $(\cL,\cR)$ is cofibrantly
generated if and only if the cotorsion pair $(\sL,\sR)$ is generated
by a set of objects.
\end{lem}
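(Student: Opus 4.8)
The plan is to prove the two implications separately, passing between the lifting-theoretic language of weak factorization systems and the $\Ext$\+theoretic language of cotorsion pairs by means of the two degenerate morphisms $0\rarrow L$ and $R\rarrow0$ attached to an object. The underlying dictionary is that, for the abelian weak factorization system $(\cL,\cR)=(\sL\Mono,\sR\Epi)$, one has $0\rarrow L\in\cL$ if and only if $L\in\sL$, and $X\rarrow0\in\cR$ if and only if $X\in\sR$ (the latter because $X\rarrow0$ is always an epimorphism with kernel~$X$).

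For the ``if'' direction, suppose $(\sL,\sR)$ is generated by a set of objects $\sS$. Since $(\sL,\sR)$ is the complete cotorsion pair attached to $(\cL,\cR)$ by Theorem~\ref{wfs-complete-cotorsion}, the class $\sL$ is generating and $\sR$ is cogenerating. I would then simply run the construction in the proof of Theorem~\ref{cotorsion-pair-generated-by-set-complete}: after enlarging $\sS$ if necessary so that $\Add(\sS)$ is generating while still $\sR=\sS^{\perp_1}$, and fixing a regular cardinal $\lambda$ for which $\sE$ is locally $\lambda$\+presentable with all objects of $\sS$ being $\lambda$\+presentable, take $\cS$ to be the set of (representatives of isomorphism classes of) all $\sS$\+monomorphisms with $\lambda$\+presentable codomains. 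By Proposition~\ref{S-monos-are-pushouts} and Lemma~\ref{cof-lifting} one has $\cS^\square=\sS\Mono^\square$, and the argument of Proposition~\ref{mono-epi-lifting}(c) (using that $\Add(\sS)$ is generating) gives $\sS\Mono^\square=\sR\Epi=\cR$. Hence $\cR=\cS^\square$, so the weak factorization system is cofibrantly generated.

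For the ``only if'' direction, suppose $\cR=\cS^\square$ for a set of morphisms $\cS$. The key observation is that a set of \emph{morphisms} can be converted into a set of \emph{objects} by passing to cokernels. First I would note $\cS\subset{}^\square(\cS^\square)={}^\square\cR=\cL=\sL\Mono$, so every $s\in\cS$ is a monomorphism with $\coker(s)\in\sL$; set $\sS_0=\{\coker(s):s\in\cS\}$. Then I claim $\sR=\sS_0^{\perp_1}$, which identifies $(\sL,\sR)$ as the cotorsion pair generated by $\sS_0$. The inclusion $\sR\subset\sS_0^{\perp_1}$ is immediate since $\sS_0\subset\sL$ and $\sR=\sL^{\perp_1}$. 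For the reverse inclusion, take $X\in\sS_0^{\perp_1}$; for each $s\:A\rarrow B$ in $\cS$ with cokernel $S$, the short exact sequence $0\rarrow A\rarrow B\rarrow S\rarrow0$ together with $\Ext^1_\sE(S,X)=0$ shows, via the long exact sequence of $\Ext$, that $\Hom_\sE(B,X)\rarrow\Hom_\sE(A,X)$ is surjective. This surjectivity is exactly the statement that $X\rarrow0$ has the right lifting property with respect to~$s$; as this holds for all $s\in\cS$, we get $X\rarrow0\in\cS^\square=\cR=\sR\Epi$, whence $X=\ker(X\rarrow0)\in\sR$.

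The routine verifications (the general lifting inclusion $\cS\subset{}^\square(\cS^\square)$, and the two dictionary entries above) present no difficulty. The one genuinely load-bearing step is the ``only if'' direction, and within it the choice $\sS_0=\{\coker(s):s\in\cS\}$ together with the translation of the right lifting property of $X\rarrow0$ into the vanishing of $\Ext^1_\sE(\coker(s),X)$; everything else is formal or is quoted from the earlier theorems. I expect no serious obstacle beyond making sure that each $s\in\cS$ really is a monomorphism (so that the short exact sequence, and hence the $\Ext$ computation, is available), which is guaranteed by the inclusion $\cS\subset\sL\Mono$.
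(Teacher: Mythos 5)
Your proof is correct and follows essentially the same route as the paper: the ``if'' direction by rerunning the small-object-argument construction from Theorem~\ref{cotorsion-pair-generated-by-set-complete}, and the ``only if'' direction by passing to the set of cokernels of the generating morphisms. The only difference is cosmetic: where the paper cites Proposition~\ref{mono-epi-lifting}(a--b) to translate the lifting property of $X\rarrow 0$ into the vanishing of $\Ext^1$, you unwind that translation by hand via the long exact sequence, which is exactly the content of that proposition.
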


\begin{proof}
 This is essentially a generalization
of~\cite[Proposition~1.2.7]{Bec}.
 The ``if'' implication is clear from the proof of
Theorem~\ref{cotorsion-pair-generated-by-set-complete}.
 To prove the ``only if'', let $\cS$ be a set of morphisms in $\sE$
such that $\cR=\cS^\square$.
 Then $\cS\subset\sL\Mono$.
 Denote by $\sS$ the set (of representatives of isomorphism classes)
of cokernels of all the morphisms from~$\cS$.
 Then $\cS\subset\sS\Mono\subset\sL\Mono$, hence
$\cR=\sS\Mono^\square$.
 As the class $\cR$ consists of epimorphisms, one can see from
Lemma~\ref{mono-epi-lifting}(a\+-b) that $\sR=\sS^{\perp_1}$.
\end{proof}

\Section{Abelian Model Structures} \label{abelian-model-secn}

 Abstracting from the definition of derived categories, if one is
given a category $\sE$ and a class of morphisms $\cW$ in $\sE$, one
often wishes to understand the category $\sE[\cW^{-1}]$, where
one freely adds inverses to all morphisms in $\cW$. This is analogous
to localization in commutative algebra and can be 
achieved by a similar construction~\cite[\S I.1]{GaZi}, but unlike
in commutative algebra it is in general extremely difficult
to understand what $\sE[\cW^{-1}]$ looks like.
The topologically motivated notion of model category
solves this problem (see~\cite[Section~I.1]{Quil}
or~\cite[Theorem 1.2.10]{Hov-book})
at the cost of requiring more structure than just the pair $(\sE,\cW)$.

\smallskip
 A \emph{model structure} on a category $\sE$ is a triple of classes of
morphisms $\cL$, $\cR$, and $\cW$ satisfying the following conditions:
\begin{enumerate}
\renewcommand{\theenumi}{\roman{enumi}}
\item the pair of classes $\cL$ and $\cR\cap\cW$ is a weak factorization
system;
\item the pair of classes $\cL\cap\cW$ and $\cR$ is a weak factorization
system;
\item the class $\cW$ is closed under retracts and satisfies
the two-out-of-three property: for any
composable pair of morphisms $f$ and~$g$ in $\sE$, if two of the three
morphisms $f$, $g$, and $gf$ belong to $\cW$, then the third one also
does.
\end{enumerate}
 Morphisms in the classes $\cL$, $\cR$, and $\cW$ are called
\emph{cofibrations}, \emph{fibrations}, and \emph{weak equivalences},
respectively.
 Morphisms in the class $\cL\cap\cW$ are called \emph{trivial
cofibrations}, and morphisms in the class $\cR\cap\cW$ are called
\emph{trivial fibrations}.

 A \emph{model category} is a complete, cocomplete category with
a model structure $(\cL,\cW,\cR)$, and $\sE[\cW^{-1}]$ is
called the \emph{homotopy category} of the model category in this context.
We will be  interested only in the so-called stable model categories.
This condition implies that the homotopy category carries
a natural triangulated structure; see \cite[Chapter 7]{Hov-book}.
We will not use this condition directly, however, but only through
\cite[Corollary~1.1.15 and the preceding discussion]{Bec}, which says
that any hereditary abelian model category (to be defined later in this
section) is stable.

 In order to use the advantage of set-theoretic methods, one
usually considers the following technical, but widely satisfied conditions.
 A model structure $(\cL,\cW,\cR)$ on a category $\sE$ is said to be
\emph{cofibrantly generated} if both the weak factorization systems
$(\cL,\>\cR\cap\cW)$ and $(\cL\cap\cW,\>\cR)$ are cofibrantly generated.
Cofibrantly generated model categories whose underlying category is
locally presentable are called \emph{combinatorial}
(all locally presentable categories are complete and cocomplete).

 An important point is that the homotopy category of
any stable combinatorial model category is well-generated triangulated
in the sense of Neeman;
see~\cite[Proposition~6.10]{Ros} or~\cite[Theorems~3.1 and~3.9]{CR}.
The theory of well-generated triangulated categories~\cite{Neem-book,Kra0}
gained popularity because it is applicable to a wide range of naturally occurring
triangulated categories and allows (to a somewhat limited extent)
to obtain results analogous to consequences of the small object argument
purely in the language of triangulated categories, without a reference
to any enhancement.

\smallskip
 In Sections~\ref{loc-pres-secn}\+-\ref{coderived-secn}, our aim will be
to construct certain concrete stable combinatorial model structures
on the categories of (unbounded) complexes in some
locally presentable abelian categories.
It will follow that the corresponding homotopy categories---in our case 
derived, coderived and contraderived categories of the abelian
categories---are well-generated triangulated.

%
%
%

\smallskip
 Let $\sE$ be a category with a model structure.
 An object $L\in\sE$ is said to be \emph{cofibrant} if the morphism
$\varnothing\rarrow L$ is a cofibration.
 An object $R\in\sE$ is said to be \emph{fibrant} if the morphism
$R\rarrow*$ is a fibration.
 Here $\varnothing$ and~$*$ denote the initial and the terminal object
in the category $\sE$, respectively (which we presume to exist).
 For any additive category $\sE$, they are the same: $\varnothing=0=*$.

 More generally, a category $\sE$ is called \emph{pointed} if it has
an initial and a terminal object, and they coincide.
 The initial-terminal object of a pointed category is called
the \emph{zero object} and denoted by~$0$.
 Given a pointed category $\sE$ with a model structure, an object
$W\in\sE$ is said to be \emph{weakly trivial} if the morphism
$0\rarrow W$ is a weak equivalence, or equivalently, the morphism
$W\rarrow 0$ is a weak equivalence.
 Weakly trivial cofibrant objects are said to be \emph{trivially
cofibrant}, and weakly trivial fibrant objects are said to be
\emph{trivially fibrant}.

 The next definition of an \emph{abelian model structure} is due
to Hovey~\cite{Hov} (see~\cite{Gil2} for a generalization to exact
categories the sense of Quillen).
 Let $\sE$ be an abelian category.
 A model structure $(\cL,\cW,\cR)$ on $\sE$ is said to be \emph{abelian}
if $\cL$ is the class of all monomorphisms with cofibrant cokernels
and $\cR$ is the class of all epimorphisms with fibrant kernels.
 A model category is said to be \emph{abelian} if its underlying
category is abelian and the model structure is abelian.

\begin{lem} \label{trivial-co-fibrations}
 In an abelian model structure, $\cL\cap\cW$ is the class of all
monomorphisms with trivially cofibrant cokernels and $\cR\cap\cW$ is
the class of all epimorphisms with trivially fibrant kernels.
\end{lem}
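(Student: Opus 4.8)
The plan is to prove the two assertions dually, realizing $\cL\cap\cW$ and $\cR\cap\cW$ as the left and right classes of \emph{abelian} weak factorization systems and then reading off the prescribed cokernels and kernels. I will carry out the argument for $\cL\cap\cW$ in detail; the statement about $\cR\cap\cW$ follows symmetrically. Throughout, write $\sL\subseteq\sE$ for the class of cofibrant objects and $\sR\subseteq\sE$ for the class of fibrant objects, so that by definition of an abelian model structure $\cL=\sL\Mono$ and $\cR=\sR\Epi$.

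The first substantive step is to observe that $\sR$ is cogenerating. Indeed, the pair $(\cL\cap\cW,\>\cR)$ is a weak factorization system by the model structure axioms, so every morphism $E\rarrow0$ factors as $E\rarrow R\rarrow0$ with $E\rarrow R$ in $\cL\cap\cW\subseteq\cL$, hence a monomorphism, and $R\rarrow0$ in $\cR=\sR\Epi$, which forces $R\in\sR$. Thus every object embeds into a fibrant one. Since $\cL\cap\cW$ is the left class of this weak factorization system, we have $\cL\cap\cW={}^\square\cR={}^\square(\sR\Epi)$. Now the dual of Proposition~\ref{mono-epi-lifting}(c), applicable precisely because $\sR$ is cogenerating, shows that ${}^\square(\sR\Epi)$ consists of monomorphisms and equals ${}^{\perp_1}\sR\Mono$. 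Writing $\sL_t:={}^{\perp_1}\sR$, we obtain $\cL\cap\cW=\sL_t\Mono$; that is, the morphisms in $\cL\cap\cW$ are exactly the monomorphisms whose cokernel lies in~$\sL_t$.

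The final step identifies $\sL_t$ with the class of trivially cofibrant objects. For any object $X$, the morphism $0\rarrow X$ is a monomorphism with cokernel $X$, so $0\rarrow X\in\sL_t\Mono=\cL\cap\cW$ if and only if $X\in\sL_t$. On the other hand, $0\rarrow X\in\cL$ means exactly that $X$ is cofibrant, and $0\rarrow X\in\cW$ means exactly that $X$ is weakly trivial; hence $0\rarrow X\in\cL\cap\cW$ if and only if $X$ is trivially cofibrant. Comparing the two characterizations, $\sL_t$ is precisely the class of trivially cofibrant objects, which yields the first assertion. The statement for $\cR\cap\cW$ is proved by the dual reasoning: factoring $0\rarrow E$ through the weak factorization system $(\cL,\>\cR\cap\cW)$ shows $\sL$ is generating, so Proposition~\ref{mono-epi-lifting}(c) gives $\cR\cap\cW=\cL^\square=\sL\Mono^\square=\sL^{\perp_1}\Epi$, and testing the lifting class $\sL^{\perp_1}$ on the morphisms $X\rarrow0$ identifies it with the class of trivially fibrant objects.

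I expect the only delicate point to be the bookkeeping that makes the cogenerating (respectively, generating) hypothesis of Proposition~\ref{mono-epi-lifting}(c) available at the moment it is invoked; this is extracted from the factorization property of the two ambient weak factorization systems, as above. Once that is in place, the remainder is a routine application of the lifting-property dictionary already established, together with the observation that a morphism $0\rarrow X$ or $X\rarrow0$ encodes membership of $X$ in $\cL$, $\cR$, and $\cW$ through the definitions of cofibrant, fibrant, and weakly trivial objects.
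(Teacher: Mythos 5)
Your proof is correct, and it follows essentially the same route as the paper's: both arguments reduce the statement to Proposition~\ref{mono-epi-lifting} applied to the two weak factorization systems $(\cL,\>\cR\cap\cW)$ and $(\cL\cap\cW,\>\cR)$, and then identify ${}^{\perp_1}\sR$ and $\sL^{\perp_1}$ with the trivially cofibrant and trivially fibrant objects by testing on the morphisms $0\rarrow X$ and $X\rarrow0$. The only difference is in how the hypothesis of that proposition is supplied: you extract the generating/cogenerating conditions on $\sL$ and $\sR$ from the factorization axiom and then invoke part~(c), whereas the paper skips this step by observing directly that $\cR\cap\cW\subset\cR=\sR\Epi$ already consists of epimorphisms (and dually $\cL\cap\cW\subset\cL=\sL\Mono$ of monomorphisms), so that parts (a)--(b) apply immediately. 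Both justifications are one-liners and equally valid; the paper's is marginally shorter, while yours has the small side benefit of recording that $\sL$ is generating and $\sR$ is cogenerating, facts that are useful elsewhere.
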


\begin{proof}
 This is a part of~\cite[Proposition~4.2]{Hov}.
 Let $\sL$, $\sR$, and $\sW$ denote the classes of cofibrant, fibrant,
and weakly trivial objects in $\sE$, respectively.
 Then we have $\cL=\sL\Mono$ and $\cR=\sR\Epi$.
 The class $\sL\Mono^\square=\cR\cap\cW\subset\cR$ consists
of epimorphisms.
 By Proposition~\ref{mono-epi-lifting}(a\+-b), it follows that
$\sL\Mono^\square=\sL^{\perp_1}\Epi$.
 Similarly, the class ${}^\square(\sR\Epi)=\cL\cap\cW\subset\cL$
consists of monomorphisms.
 The dual assertions to
Proposition~\ref{mono-epi-lifting}(a\+-b) tell that
${}^\square(\sR\Epi)=({}^{\perp_1}\sR)\Mono$.
 Now it is clear that $\sL^{\perp_1}=\sR\cap\sW$ is the class of all
trivially fibrant objects and ${}^{\perp_1}\sR=\sL\cap\sW$ is
the class of all trivially cofibrant objects.
\end{proof}

 A class of objects $\sW\subset\sE$ is said to be \emph{thick} if it
is closed under direct summands and, for any short exact sequence
$0\rarrow A\rarrow B\rarrow C\rarrow0$ in $\sE$, if two of the three
objects $A$, $B$, $C$ belong to $\sW$ then the third one also does.

\begin{thm} \label{abelian-model-structures-thm}
 Abelian model structures on an abelian category\/ $\sE$ correspond
bijectively to triples of classes of objects $(\sL,\sW,\sR)$
such that \par
\begin{enumerate}
\item the pair of classes\/ $\sL$ and\/ $\sR\cap\sW$ is a complete
cotorsion pair;
\item the pair of classes\/ $\sL\cap\sW$ and\/ $\sR$ is a complete
cotorsion pair;
\item $\sW$ is a thick class.
\end{enumerate}
 The correspondence assigns to a triple of classes of morphisms
$(\cL,\cW,\cR)$ the triple of classes of objects $(\sL,\sW,\sR)$,
where\/ $\sL$ is the class of all cofibrant objects, $\sR$ is
the class of all fibrant objects, and\/ $\sW$ is the class of all
weakly trivial objects in the model structure $(\cL,\cW,\cR)$.
 Conversely, to a triple of classes of objects $(\sL,\sW,\sR)$
the triple of classes of morphisms $(\cL,\cW,\cR)$ is assigned,
where\/ $\cL=\sL\Mono$, \ $\cR=\sR\Epi$, \
$\cL\cap\cW=(\sL\cap\nobreak\sW)\Mono$, \
$\cR\cap\cW=(\sR\cap\nobreak\sW)\Epi$,
and\/ $\cW$ is the class of all morphisms~$w$ decomposable as $w=rl$,
where $l\in(\sL\cap\nobreak\sW)\Mono$ and
$r\in(\sR\cap\nobreak\sW)\Epi$.
\end{thm}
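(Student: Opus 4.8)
The statement is a bijection, so the plan is to construct both assignments, check each lands in the intended target, and verify the two passages are mutually inverse. The technical heart in both directions is a pair of \emph{recognition criteria} identifying which monomorphisms and epimorphisms are weak equivalences: (M)~a monomorphism lies in $\cW$ if and only if its cokernel lies in $\sW$; and (E)~an epimorphism lies in $\cW$ if and only if its kernel lies in $\sW$. The ``only if'' halves will come from the six-term kernel--cokernel exact sequence attached to a factorization, together with thickness of $\sW$; the ``if'' halves require an explicit construction of the factorization.

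For the passage from an abelian model structure $(\cL,\cW,\cR)$ to a triple, I put $\sL$, $\sR$, $\sW$ for the cofibrant, fibrant, and weakly trivial objects; by definition of an abelian model structure $\cL=\sL\Mono$ and $\cR=\sR\Epi$. Conditions (1) and (2) are then immediate: axiom~(i) makes $(\cL,\cR\cap\cW)$ a weak factorization system, and $\cR\cap\cW=(\sR\cap\sW)\Epi$ by Lemma~\ref{trivial-co-fibrations}, so Theorem~\ref{wfs-complete-cotorsion} turns it into the complete cotorsion pair $(\sL,\sR\cap\sW)$; condition~(2) is dual. For~(3), closure of $\sW$ under direct summands follows by applying the retract-closure of $\cW$ to the maps $0\rarrow W$. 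The short-exact-sequence two-out-of-three then reduces, via (M), (E) and the given two-out-of-three for $\cW$, to short chases: e.g. for $0\rarrow A\rarrow B\rarrow C\rarrow0$ with $A,C\in\sW$, criterion (M) makes $A\rarrow B$ a weak equivalence, so $0\rarrow B$ is one by composition, i.e. $B\in\sW$. Here (M), (E) themselves are obtained by factoring the given mono (resp.\ epi) through the appropriate weak factorization system and invoking the ambient two-out-of-three.

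Conversely, starting from a triple $(\sL,\sW,\sR)$, Theorem~\ref{wfs-complete-cotorsion} converts the two complete cotorsion pairs into abelian weak factorization systems $(\sL\Mono,(\sR\cap\sW)\Epi)$ and $((\sL\cap\sW)\Mono,\sR\Epi)$; I set $\cL=\sL\Mono$, $\cR=\sR\Epi$ and define $\cW$ as in the statement. The ``if'' part of (M) I would prove by forming the pullback of the cokernel projection $B\rarrow\coker i$ with a special precover $V\rarrow\coker i$ for the cotorsion pair $(\sL\cap\sW,\sR)$; thickness forces the precover's kernel into $\sR\cap\sW$, and the two projections out of the pullback exhibit $i$ as an $(\sL\cap\sW)\Mono$ followed by an $(\sR\cap\sW)\Epi$. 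Together with the ``only if'' halves, these criteria give $\cW\cap\cL=(\sL\cap\sW)\Mono$ and $\cW\cap\cR=(\sR\cap\sW)\Epi$, hence model axioms (i) and (ii). Since $\sL\cap\sW$ and $\sR\cap\sW$ are extension-closed, the two ``trivial'' classes are closed under composition, so closure of $\cW$ under composition reduces to a distributive law: an $(\sR\cap\sW)\Epi$ followed by an $(\sL\cap\sW)\Mono$ again lies in $\cW$. This I obtain by re-factoring that composite through $((\sL\cap\sW)\Mono,\sR\Epi)$ and reading off from its kernel--cokernel sequence, with two applications of thickness, that the resulting epimorphism has kernel in $\sR\cap\sW$.

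The main obstacle is the two remaining \emph{cancellation} cases of two-out-of-three (and retract-closure) for the constructed $\cW$. One cannot shortcut these through (M), (E): it is \emph{not} true that $\cW$ is the class of morphisms with kernel and cokernel in $\sW$---any morphism between two objects of $\sW$ is a weak equivalence, yet its termwise kernel and cokernel need not lie in $\sW$---so cancellation must be extracted from the lifting structure rather than from homology. Concretely, to deduce $f\in\cW$ from $gf,g\in\cW$, I would factor $f=pi$ with $i\in\cL$ and $p\in(\sR\cap\sW)\Epi$, use composition closure to reduce to ``$i\in\cL$ with $ai,a\in\cW$ implies $i\in\cW$'', and reduce this in turn to the assertion that the cofibration part $c$ of an $(\cL,(\sR\cap\sW)\Epi)$-factorization of a weak equivalence is automatically trivial; that assertion I would settle by comparing the two competing factorizations of the weak equivalence through the retract argument \cite[Lemma~1.1.9]{Hov-book}, and the same technique yields retract-closure of $\cW$. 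Finally, the two passages are mutually inverse: the reconstructed cofibrations and fibrations are $\sL\Mono$ and $\sR\Epi$ by construction, the weakly trivial objects are recovered as exactly $\sW$ by applying (M) to $0\rarrow W$, and in the opposite order $\cW$ is recovered because in any model structure every weak equivalence factors as a trivial cofibration followed by a fibration that two-out-of-three forces to be trivial.
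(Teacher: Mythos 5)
Most of your architecture is sound, and it is worth noting that the paper itself does not prove this theorem but defers it entirely to Hovey: thickness of $\sW$ to \cite[Lemma~4.3]{Hov}, the identities $\cL\cap\cW=(\sL\cap\sW)\Mono$ and $\cR\cap\cW=(\sR\cap\sW)\Epi$ to \cite[Lemma~5.8]{Hov}, and two-out-of-three to \cite[Proposition~5.12]{Hov}. Your pullback proof of the ``if'' half of (M), your two-applications-of-thickness proof of the distributive law, and the reduction of composition closure to it all check out. (One ordering caveat in the forward direction: the ``only if'' halves of (M) and (E) require thickness of $\sW$, which you in turn derive from the ``if'' halves; to obtain the ``if'' half of (M) without circularity you must factor $0\rarrow\coker(i)$ through the weak factorization system $(\cL\cap\cW,\>\cR)$ and pull back, rather than factor the monomorphism~$i$ itself as your text suggests --- factoring~$i$ leads straight back to needing thickness.)

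The genuine gap is at the cancellation cases of two-out-of-three, precisely the step the paper outsources to \cite[Proposition~5.12]{Hov}. Your key assertion --- if $w=pc=rl$ with $c\in\cL$, \ $p,r\in(\sR\cap\sW)\Epi$, \ $l\in(\sL\cap\sW)\Mono$, then $c\in(\sL\cap\sW)\Mono$ --- is true, but it does not follow from ``comparing the two competing factorizations through the retract argument''. The retract argument \cite[Lemma~1.1.9]{Hov-book} needs a lifting square with an identity on one leg; comparing the two factorizations only yields lifts $t$ (of $c$ against~$r$) and $s$ (of $l$ against~$p$) with $stc=c$ and $pst=p$, not $st=\id$, so $c$ is not exhibited as a retract of $l$, nor $\coker(c)$ as a retract of $\coker(l)$. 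The step can be repaired without lifting at all: form the pullback $P=B\times_D C$ of the two epimorphisms $p$ and $r$; the induced monomorphism $u\:A\rarrow P$ satisfies $\pi_C u=l$ and $\pi_B u=c$, and the kernel--cokernel sequences $0\rarrow\ker(p)\rarrow\coker(u)\rarrow\coker(l)\rarrow0$ and $0\rarrow\ker(r)\rarrow\coker(u)\rarrow\coker(c)\rarrow0$ give $\coker(c)\in\sW$ by two applications of thickness. The same defect, and the same kind of repair, affects your one-line claim that ``the same technique yields retract-closure of $\cW$''. Also note that your reduction from ``$i\in\cL$, $ai,a\in\cW$ imply $i\in\cW$'' to the displayed assertion is not immediate, since $a$ need not be a trivial fibration: one must first factor $a=qb$ with $b\in\cL$ and $q\in(\sR\cap\sW)\Epi$, apply the assertion to both $a$ and $ai=q(bi)$, and then recover $\coker(i)\in\sW$ from the sequence $0\rarrow\coker(i)\rarrow\coker(bi)\rarrow\coker(b)\rarrow0$.
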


\begin{proof}
 This is a part of~\cite[Theorem~2.2]{Hov} (for a further
generalization, see~\cite[Theorem~3.3]{Gil2}).
 Given a model structure $(\cL,\cW,\cR)$ on $\sE$ such that
$\cL=\sL\Mono$ and $\cR=\sR\Epi$, we have
$\cL\cap\cW=(\sL\cap\nobreak\sW)\Mono$ and
$\cR\cap\cW=(\sR\cap\nobreak\sW)\Epi$ by
Lemma~\ref{trivial-co-fibrations}.
 Hence the pairs of classes of objects $(\sL,\>\sR\cap\nobreak\sW)$
and $(\sL\cap\nobreak\sW,\>\sR)$ are complete cotorsion pairs by
Theorem~\ref{wfs-complete-cotorsion}.
 The class of objects $\sW\subset\sE$ is thick by~\cite[Lemma~4.3]{Hov}.

 Conversely, given a triple of classes of objects $(\sL,\sW,\sR)$
in $\sE$ satisfying~(1\+-3), define the triple of classes of
morphisms $(\cL,\cW,\cR)$ as stated in the theorem.
 Then the pairs of classes $(\cL,\>(\sR\cap\nobreak\sW)\Epi)$
and $((\sL\cap\nobreak\sW\Epi),\>\cR)$ are weak factorization systems
by Theorem~\ref{wfs-complete-cotorsion}.
 The equations $\cL\cap\cW=(\sL\cap\nobreak\sW)\Mono$ and
$\cR\cap\cW=(\sR\cap\nobreak\sW)\Epi$ hold by~\cite[Lemma~5.8]{Hov}.
 The class of morphisms $\cW$ satisfies the two-out-of-three
property by~\cite[Proposition~5.12]{Hov}.
\end{proof}

 In the sequel, we will identify abelian model structures with
the triples of classes of objects $(\sL,\sW,\sR)$ using
Theorem~\ref{abelian-model-structures-thm}, and write simply
``an abelian model structure $(\sL,\sW,\sR)$ on
an abelian category~$\sE$''.

\begin{cor} \label{abelian-cofibrantly-generated}
 An abelian model structure $(\sL,\sW,\sR)$ on a locally presentable
abelian category\/
$\sE$ is cofibrantly generated if and only if both the cotorsion pairs
$(\sL,\>\sR\cap\sW)$ and $(\sL\cap\sW,\>\sR)$ are generated by some
sets of objects.
\end{cor}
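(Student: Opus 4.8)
The plan is to reduce Corollary~\ref{abelian-cofibrantly-generated} to the already-established correspondence in Theorem~\ref{abelian-model-structures-thm} together with the characterization of cofibrantly generated abelian weak factorization systems in Lemma~\ref{abelian-wfs-cof-gen}. By definition, the model structure $(\sL,\sW,\sR)$ is cofibrantly generated precisely when both weak factorization systems appearing in conditions~(i) and~(ii) of the definition of a model structure are cofibrantly generated. Under the identification of Theorem~\ref{abelian-model-structures-thm}, these two weak factorization systems are the abelian ones $\cL=\sL\Mono$, \ $\cR\cap\cW=(\sR\cap\sW)\Epi$ and $\cL\cap\cW=(\sL\cap\sW)\Mono$, \ $\cR=\sR\Epi$, corresponding respectively to the complete cotorsion pairs $(\sL,\>\sR\cap\sW)$ and $(\sL\cap\sW,\>\sR)$.

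First I would observe that each of these two weak factorization systems is \emph{abelian} in the sense defined before Theorem~\ref{wfs-complete-cotorsion}, since by construction $\cL=\sL\Mono$ with the left class arising from a class of objects, and dually for the other one. Since $\sE$ is assumed locally presentable, Lemma~\ref{abelian-wfs-cof-gen} applies directly to each of them: the abelian weak factorization system corresponding to a complete cotorsion pair is cofibrantly generated if and only if that cotorsion pair is generated by a set of objects. Applying this to the first weak factorization system shows it is cofibrantly generated if and only if $(\sL,\>\sR\cap\sW)$ is generated by a set of objects, and applying it to the second shows it is cofibrantly generated if and only if $(\sL\cap\sW,\>\sR)$ is generated by a set of objects.

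Combining these two equivalences finishes the proof: the model structure is cofibrantly generated, meaning both weak factorization systems are, if and only if both cotorsion pairs $(\sL,\>\sR\cap\sW)$ and $(\sL\cap\sW,\>\sR)$ are generated by sets of objects. I expect no serious obstacle here; the only point requiring slight care is to confirm that the hypotheses of Lemma~\ref{abelian-wfs-cof-gen}---namely that $\sE$ be locally presentable and that the weak factorization systems in question be genuinely abelian with the correct associated complete cotorsion pairs---are met in both instances. Both are immediate from Theorem~\ref{abelian-model-structures-thm}, since that theorem explicitly exhibits $(\sL,\>\sR\cap\sW)$ and $(\sL\cap\sW,\>\sR)$ as the complete cotorsion pairs underlying the two abelian weak factorization systems. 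Thus the corollary is essentially a formal consequence of Lemma~\ref{abelian-wfs-cof-gen} applied twice, one application for each half of the model structure.
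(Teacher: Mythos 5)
Your argument is correct and is exactly the paper's proof, merely spelled out in more detail: the paper also deduces the corollary immediately from Lemma~\ref{abelian-wfs-cof-gen}, applied to the two abelian weak factorization systems $(\sL\Mono,\>(\sR\cap\sW)\Epi)$ and $((\sL\cap\sW)\Mono,\>\sR\Epi)$ identified by Theorem~\ref{abelian-model-structures-thm}. No gaps.
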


\begin{proof}
 Follows immediately from Lemma~\ref{abelian-wfs-cof-gen}.
\end{proof}

\begin{lem} \label{hereditary-model-structures}
 Let $(\sL,\sW,\sR)$ be an abelian model structure on an abelian
category\/~$\sE$.
 Then the following conditions are equivalent:
\begin{enumerate}
\item the class\/ $\sL$ is closed under the kernels of epimorphisms
in\/~$\sE$;
\item the class\/ $\sL\cap\sW$ is closed under the kernels of
epimorphisms in\/~$\sE$;
\item the class\/ $\sR$ is closed under the cokernels of monomorphisms
in\/~$\sE$;
\item the class\/ $\sR\cap\sW$ is closed under the cokernels of
monomorphisms in\/~$\sE$.
\end{enumerate}
\end{lem}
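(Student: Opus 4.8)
The plan is to reduce everything to Lemma~\ref{hereditary-cotorsion-pairs}, applied to the two complete cotorsion pairs underlying the model structure, together with the thickness of the class~$\sW$. By Theorem~\ref{abelian-model-structures-thm}, the triple $(\sL,\sW,\sR)$ provides two complete cotorsion pairs, namely $(\sL,\>\sR\cap\sW)$ and $(\sL\cap\sW,\>\sR)$, and the class $\sW$ is thick. Since the hypothesis of Lemma~\ref{hereditary-cotorsion-pairs} is satisfied by every complete cotorsion pair (as noted immediately after that lemma), I may invoke the equivalence of its conditions for each of these two pairs.

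First I would apply Lemma~\ref{hereditary-cotorsion-pairs} to the cotorsion pair $(\sL,\>\sR\cap\sW)$: its condition that the left class be closed under kernels of epimorphisms between its objects is precisely our condition~(1), while the condition that the right class be closed under cokernels of monomorphisms between its objects is precisely our condition~(4). Hence (1)\,$\Longleftrightarrow$\,(4). Applying the same lemma to $(\sL\cap\sW,\>\sR)$ turns these two conditions into our~(2) and~(3) respectively, so (2)\,$\Longleftrightarrow$\,(3).

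It remains to bridge the two pairs of equivalent conditions, and this is where the thickness of $\sW$ is used. I claim (1)\,$\Longrightarrow$\,(2): given a short exact sequence $0\rarrow K\rarrow B\rarrow C\rarrow0$ with $B$, $C\in\sL\cap\sW$, condition~(1) gives $K\in\sL$ because $B\rarrow C$ is an epimorphism between objects of~$\sL$, while the two-out-of-three property of the thick class $\sW$ gives $K\in\sW$; thus $K\in\sL\cap\sW$. Dually, (3)\,$\Longrightarrow$\,(4): for $0\rarrow A\rarrow B\rarrow Q\rarrow0$ with $A$, $B\in\sR\cap\sW$, condition~(3) gives $Q\in\sR$ and thickness of $\sW$ gives $Q\in\sW$, so $Q\in\sR\cap\sW$. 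Combining these with the two equivalences above closes the cycle (1)\,$\Longrightarrow$\,(2)\,$\Longleftrightarrow$\,(3)\,$\Longrightarrow$\,(4)\,$\Longleftrightarrow$\,(1), which proves that all four conditions are equivalent.

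I expect no serious obstacle here: the only genuine idea is to exploit the thickness of $\sW$ to transport closure under kernels of epimorphisms (resp.\ cokernels of monomorphisms) from the larger class $\sL$ (resp.\ $\sR$) down to the smaller class $\sL\cap\sW$ (resp.\ $\sR\cap\sW$), since the reverse passage is not available directly and must instead go through the cotorsion-pair equivalences. The main point requiring care is the bookkeeping of matching each numbered condition to the correct one of the two cotorsion pairs, reading ``closed under kernels of epimorphisms in~$\sE$'' as ``contains the kernel of every epimorphism between two of its own objects,'' in agreement with the convention of Lemma~\ref{hereditary-cotorsion-pairs}.
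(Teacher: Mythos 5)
Your proof is correct and follows essentially the same route as the paper's: both derive (1)\,$\Longleftrightarrow$\,(4) and (2)\,$\Longleftrightarrow$\,(3) from Lemma~\ref{hereditary-cotorsion-pairs} applied to the two complete cotorsion pairs, and both use the thickness of $\sW$ to obtain (1)\,$\Longrightarrow$\,(2) and (3)\,$\Longrightarrow$\,(4). Your version merely spells out the thickness step and the closing of the implication cycle in more detail.
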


\begin{proof}
 The equivalences (1)\,$\Longleftrightarrow$\,(4) and
(2)\,$\Longleftrightarrow$\,(3) hold by
Lemma~\ref{hereditary-cotorsion-pairs}.
 The implications (1)\,$\Longrightarrow$\,(2) and
(3)\,$\Longrightarrow$\,(4) hold because the class of weakly
trivial objects $\sW$ is thick (so it is closed under both
the kernels of epis and the cokernels of monos).
\end{proof}

 An abelian model structure satisfying the equivalent conditions of
Lemma~\ref{hereditary-model-structures} is said to be \emph{hereditary}.

 A model structure $(\cL,\cW,\cR)$ on a category $\sB$ is called
\emph{projective} if all the objects of $\sB$ are fibrant.
 For an abelian model structure, this means that $\sR=\sB$, or
equivalently, $\sL\cap\sW=\sB_\proj$ is the class of all
projective objects in~$\sB$.
 In other words, an abelian model structure is projective if and only
if the trivial cofibrations are the monomorphisms with projective
cokernel.

 Dually, a model structure $(\cL,\cW,\cR)$ on a category $\sA$ is called
\emph{injective} if all the objects of $\sA$ are cofibrant.
 For an abelian model structure, this means that $\sL=\sA$, or
equivalently, $\sR\cap\sW=\sA_\inj$ is the class of all
injective objects in~$\sA$.
 In other words, an abelian model structure is injective if and only if
the trivial fibrations are the epimorphisms with injective kernel.

 It is clear from the definitions that all projective abelian model
structures and all injective abelian model structures are
hereditary~\cite[Corollary~1.1.12]{Bec}.
 (Indeed, any projective abelian model structure obviously satisfies
the condition~(3) of Lemma~\ref{hereditary-model-structures}, while
any injective abelian model structure obviously satisfies
the condition~(1).)

\begin{lem}[{\cite[Corollary~1.1.9]{Bec}}]
\label{inj-proj-model-structures}
 Let\/ $\sA$ and\/ $\sB$ be abelian categories.
 Then \par
\textup{(a)} a pair of classes of objects $(\sW,\sR)$ in\/ $\sA$
defines an injective abelian model structure $(\sA,\sW,\sR)$ on\/ $\sA$
if and only if\/ $\sA$ has enough injective objects, $(\sW,\sR)$ is
a complete cotorsion pair in\/ $\sA$ with\/ $\sR\cap\sW=\sA_\inj$,
and the class\/ $\sW$ is thick.  \par
\textup{(b)} a pair of classes of objects $(\sL,\sW)$ in\/ $\sB$
defines a projective abelian model structure $(\sL,\sW,\sB)$ on\/ $\sB$
if and only if\/ $\sB$ has enough projective objects, $(\sL,\sW)$ is
a complete cotorsion pair in\/ $\sB$ with\/ $\sL\cap\sW=\sB_\proj$,
and the class\/ $\sW$ is thick.
\end{lem}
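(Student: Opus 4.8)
The plan is to deduce this from Theorem~\ref{abelian-model-structures-thm}, proving part~(a) directly and obtaining part~(b) by passing to the opposite category. By the definition recalled just above, an injective abelian model structure on $\sA$ is exactly an abelian model structure $(\sL,\sW,\sR)$ with $\sL=\sA$; for such a triple one has $\sA\cap\sW=\sW$. So Theorem~\ref{abelian-model-structures-thm} says that $(\sA,\sW,\sR)$ is an injective abelian model structure if and only if $(\sA,\>\sR\cap\sW)$ is a complete cotorsion pair, $(\sW,\sR)$ is a complete cotorsion pair, and $\sW$ is thick. The whole task therefore reduces to checking that the conjunction of these three conditions is equivalent to the conjunction of the three conditions listed in the statement.

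Two of the conditions match verbatim on both sides, namely the completeness of the cotorsion pair $(\sW,\sR)$ and the thickness of~$\sW$, so I would concentrate on the remaining datum and claim that ``$(\sA,\>\sR\cap\sW)$ is a complete cotorsion pair'' is equivalent to ``$\sA$ has enough injective objects and $\sR\cap\sW=\sA_\inj$''. First I would note that as soon as $(\sA,\>\sR\cap\sW)$ is a cotorsion pair, its right-hand class is forced: $\sR\cap\sW=\sA^{\perp_1}$, and since the left class is all of~$\sA$, this is precisely the class $\sA_\inj$ of injective objects. Conversely, $(\sA,\sA_\inj)$ is always a cotorsion pair, since $\sA^{\perp_1}=\sA_\inj$ and ${}^{\perp_1}\sA_\inj=\sA$ (because $\Ext^1_\sA(-,I)=0$ for every injective~$I$).

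It then remains to identify completeness of $(\sA,\sA_\inj)$ with the existence of enough injectives. Here I would observe that special precover sequences are automatic: for each $E$ the sequence $0\rarrow0\rarrow E\rarrow E\rarrow0$ has $0\in\sA_\inj$ and $E\in\sA$. Hence completeness amounts to the existence of special preenvelope sequences $0\rarrow E\rarrow I\rarrow L'\rarrow0$ with $I\in\sA_\inj$ and $L'\in\sA$, and such a sequence is nothing but a monomorphism of $E$ into an injective object; so these exist for all $E$ precisely when $\sA$ has enough injectives. Assembling the equivalences proves part~(a), and part~(b) follows by applying part~(a) in $\sB^{\sop}$, where monomorphisms and epimorphisms are interchanged, injectives become projectives, and the two classes of the cotorsion pair swap roles (so that $\sR\cap\sW$ is replaced by $\sL\cap\sW$). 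I do not expect any genuine obstacle beyond Theorem~\ref{abelian-model-structures-thm}; the only point requiring care is the bookkeeping in this dualization---verifying that ``injective abelian model structure on $\sA$'' corresponds to ``projective abelian model structure on $\sB$'' and that the correct intersection class survives---but this is purely formal.
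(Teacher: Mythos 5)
Your proposal is correct and follows the same route as the paper: both deduce the lemma from Theorem~\ref{abelian-model-structures-thm} by observing that, since $\sL=\sA$ forces $\sL\cap\sW=\sW$, the only condition needing translation is that $(\sA,\>\sR\cap\sW)$ is a complete cotorsion pair, which is equivalent to $\sA$ having enough injectives with $\sR\cap\sW=\sA_\inj$ (and dually for part~(b)). The paper merely states this key observation with a reference to~\cite{Bec}, whereas you supply the short verification; the content is the same.
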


\begin{proof}
 The assertions follow from Theorem~\ref{abelian-model-structures-thm}.
 One only needs to observe~\cite{Bec} that $(\sA,\sR\cap\sW)$ is
a complete cotorsion pair in $\sA$ if and only if $\sA$ has enough
injectives and $\sR\cap\sW=\sA_\inj$.
 Similarly, $(\sL\cap\sW,\sB)$ is a complete cotorsion pair in $\sB$
if and only if $\sB$ has enough projectives and $\sL\cap\sW=\sB_\proj$.
\end{proof}

The idea of the following lemma can be traced back at least to~\cite[Theorem~VI.2.1]{BeRe}.

\begin{lem}[{\cite[Lemma~1.1.10]{Bec}}] \label{W-is-thick}
\textup{(a)} Let $(\sW,\sR)$ be a hereditary complete cotorsion pair
in an abelian category\/ $\sA$ such that\/ $\sR\cap\sW=\sA_\inj$.
 Then the class of objects\/ $\sW$ is thick in\/~$\sA$. \par
\textup{(b)} Let $(\sL,\sW)$ be a hereditary complete cotorsion pair
in an abelian category\/ $\sB$ such that\/ $\sL\cap\sW=\sB_\proj$.
 Then the class of objects\/ $\sW$ is thick in\/~$\sB$.
\end{lem}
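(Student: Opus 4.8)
The plan is to verify the two defining properties of a thick class for $\sW$: closure under direct summands, and two-out-of-three for short exact sequences. Closure under direct summands is automatic, since in part~(a) the class $\sW$ is the left-hand class of the cotorsion pair $(\sW,\sR)$ and in part~(b) it is the right-hand class of $(\sL,\sW)$; in either case the relevant class of a cotorsion pair is closed under summands. For two-out-of-three, consider a short exact sequence $0\rarrow A\rarrow B\rarrow C\rarrow0$ with two terms in $\sW$, and treat the three cases separately. Two of the three are easy in each part. First, if $A,C\in\sW$ then $B\in\sW$, because both classes of a cotorsion pair are closed under extensions (as already observed in the proof of Lemma~\ref{salce}). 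Next, the case in which the remaining object is the cokernel of a monomorphism between two objects of $\sW$ (namely $A,B\in\sW\Rightarrow C\in\sW$ in part~(b)) and the case in which it is the kernel of an epimorphism between two objects of $\sW$ ($B,C\in\sW\Rightarrow A\in\sW$ in part~(a)) are handled directly by the hereditary hypothesis: by Lemma~\ref{hereditary-cotorsion-pairs}, a hereditary cotorsion pair has its right class closed under cokernels of monomorphisms and its left class closed under kernels of epimorphisms, between objects of the respective class.

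The one genuinely nontrivial case in each part is the remaining one, and this is exactly where the assumption $\sR\cap\sW=\sA_\inj$ (respectively $\sL\cap\sW=\sB_\proj$) must be used; it asks us to show that $\sW$ is closed under an operation ($\coker$ of a mono in part~(a), $\ker$ of an epi in part~(b)) under which the left (respectively right) class of a cotorsion pair is \emph{not} closed in general. I will describe part~(a); part~(b) is exactly dual. The key preliminary observation is that every object $W\in\sW$ embeds into an \emph{injective} object with cokernel again in $\sW$. To see this, take a special preenvelope sequence $0\rarrow W\rarrow R\rarrow W'\rarrow0$ with $R\in\sR$ and $W'\in\sW$, which exists by completeness of $(\sW,\sR)$; since $W,W'\in\sW$ and $\sW$ is closed under extensions, the middle term $R$ lies in $\sW$ as well, whence $R\in\sR\cap\sW=\sA_\inj$.

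With this in hand I would finish the hard case of part~(a) as follows. Given $0\rarrow A\rarrow B\rarrow C\rarrow0$ with $A,B\in\sW$, embed $A$ into an injective $I$ with cokernel $W_A\in\sW$ as above, and form the pushout $P$ of $I\larrow A\rarrow B$. This yields two short exact sequences, $0\rarrow B\rarrow P\rarrow W_A\rarrow0$ and $0\rarrow I\rarrow P\rarrow C\rarrow0$. From the first, since $B,W_A\in\sW$ and $\sW$ is closed under extensions, we get $P\in\sW$; the second splits because $I$ is injective, exhibiting $C$ as a direct summand of $P$, so $C\in\sW$ by closure under summands. Part~(b) is obtained by dualizing throughout: using completeness (special precover sequences) one writes each $W\in\sW$ as a quotient of a \emph{projective} object with kernel in $\sW$ (now $\sL\cap\sW=\sB_\proj$ plays the role of the injectives), replaces the pushout by the pullback of $Q\rarrow C\larrow B$ along such an epimorphism $Q\rarrow C$, and uses projectivity of $Q$ to split off $A$ as a direct summand of an object of $\sW$.

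The main obstacle is precisely this asymmetry between the two hard cases and the genuinely nontrivial nature of closure under $\coker$/$\ker$ for the wrong class. The naive long exact $\Ext$-sequence computation shows only $\Ext^{\ge2}_\sA(C,R)=0$ for $R\in\sR$ (using hereditariness and $A,B\in\sW$), while the needed vanishing $\Ext^1_\sA(C,R)=0$ amounts to extending every map $A\rarrow R$ along $A\rarrow B$ and fails to follow formally. Hence one really must invoke the injective-envelope observation together with the pushout-and-split device, which is the heart of the argument; everything else is bookkeeping of cotorsion-pair closure properties already assembled in Lemmas~\ref{salce} and~\ref{hereditary-cotorsion-pairs}.
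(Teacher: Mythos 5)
Your proof is correct, and the bookkeeping (summand closure, extension closure, the two easy cases via Lemma~\ref{hereditary-cotorsion-pairs}) matches the paper exactly; but your treatment of the one hard case is genuinely different from the paper's. The paper stays in the language of $\Ext$ and dimension-shifts in the \emph{second} variable: having obtained $\Ext^2_\sA(U,R)=0$ for all $R\in\sR$ from hereditariness, it takes a special precover sequence $0\rarrow R'\rarrow J\rarrow R\rarrow0$ of $R$ itself, observes $J\in\sR\cap\sW=\sA_\inj$ because $\sR$ is extension-closed, and concludes $\Ext^1_\sA(U,R)\simeq\Ext^2_\sA(U,R')=0$. You instead work on the \emph{first} variable: you coresolve $A$ by an injective $I\in\sR\cap\sW$ (produced from a special preenvelope of $A$ plus extension-closure of $\sW$), push out, and split off $C$ as a direct summand. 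Both devices use the hypothesis $\sR\cap\sW=\sA_\inj$ through the same mechanism (a one-step approximation sequence whose middle term lands in $\sR\cap\sW$), so the two proofs are close in spirit, but yours buys two small things: it avoids $\Ext^2$ and long exact sequences entirely, and --- as one can check --- your hard case uses only completeness, extension-closure, and summand-closure, not hereditariness (which you still need, correctly, for the $B,C\in\sW\Rightarrow A\in\sW$ case). The paper's version is shorter once the $\Ext$-calculus is set up and so transports more directly to exact-category or DG settings where element-free homological algebra is the norm. One tiny caveat: your closing remark that the $\Ext$-computation ``fails to follow formally'' slightly oversells the obstruction --- the paper does complete exactly that computation, just with an extra dimension shift on $R$.
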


\begin{proof}
 Part~(a): in any hereditary cotorsion pair $(\sW,\sR)$, the class of
objects $\sW$ is closed under direct summands, extensions, and
the kernels of epimorphisms.
 It remains to show that $\sW$ is closed under the cokernels of
monomorphisms in our assumptions.
 We follow the argument from~\cite{Bec}.
 Let $0\rarrow W\rarrow V\rarrow U\rarrow0$ be a short exact
sequence of objects in $\sA$ with $W$, $V\in\sW$.
 Then $\Ext^2_\sA(U,R)=0$ for all $R\in\sR$, since
$\Ext^1_\sA(W,R)=0=\Ext^2_\sA(V,R)$.
 Let us show that this implies $\Ext^1_\sA(U,R)=0$.
 Choose a special precover sequence $0\rarrow R'\rarrow J\rarrow R
\rarrow0$ for the object $R\in\sA$ with $R'\in\sR$ and $J\in\sW$.
 Then $J\in\sR\cap\sW=\sA_\inj$, since $R'$, $R\in\sR$.
 Hence $\Ext^1_\sA(U,R)\simeq\Ext^2_\sA(U,R')=0$.
 Part~(b) is dual.
\end{proof}

\Section{Categories of Complexes} \label{categories-of-complexes-secn}

 Let $\sE$ be an additive category.
 We denote by $\sC(\sE)$ the category of (unbounded) complexes in~$\sE$.
 The category $\sC(\sE)$ is abelian whenever the category $\sE$~is.
 We denote by $C^\bu\longmapsto C^\bu[1]$ the usual shift functor, where $C^\bu[1]^i=C^{i+1}$ and $d^i_{C^\bu[1]}=-d^{i+1}_{C^\bu}$.

 Furthermore, let $\sK(\sE)$ denote the homotopy category of complexes
in $\sE$, that is, the additive quotient category of $\sC(\sE)$ by
the ideal of morphisms cochain homotopic to zero.
 Then $\sK(\sE)$ is a triangulated category~\cite{Ver0,Ver1}.

 Let $\sE$ be an abelian category.
 A short exact sequence of complexes $0\rarrow A^\bu\rarrow C^\bu
\rarrow B^\bu\rarrow0$ in $\sE$ is said to be \emph{termwise split} if
the short exact sequence $0\rarrow A^i\rarrow C^i\rarrow B^i\rarrow0$
is split in $\sE$ for every $i\in\boZ$.

 The next lemma is well-known (cf.~\cite[Section~1.3]{Bec}).

\begin{lem} \label{ext-1-hom-hot}
 For any two complexes $A^\bu$ and $B^\bu\in\sC(\sE)$, the subgroup
in\/ $\Ext^1_{\sC(\sE)}(B^\bu,A^\bu)$ formed by the termwise split
short exact sequences is naturally isomorphic to the group\/
$\Hom_{\sK(\sE)}(B^\bu,A^\bu[1])$.
 In particular, if either \hbadness=1125\par
\textup{(a)} $A^\bu$ is a complex of injective objects in\/ $\sE$, or
\par
\textup{(b)} $B^\bu$ is a complex of projective objects in\/ $\sE$,
\par\noindent
then\/ $\Ext^1_{\sC(\sE)}(B^\bu,A^\bu)\simeq
\Hom_{\sK(\sE)}(B^\bu,A^\bu[1])$.
\end{lem}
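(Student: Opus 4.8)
The plan is to realize the claimed isomorphism through the Hom-complex $\Hom^\bu_\sE(B^\bu,A^\bu)$ (with components $\Hom^n_\sE(B^\bu,A^\bu)=\prod_i\Hom_\sE(B^i,A^{i+n})$ and the usual total differential~$D$), whose $n$-th cohomology computes $\Hom_{\sK(\sE)}(B^\bu,A^\bu[n])$; in particular $\Hom_{\sK(\sE)}(B^\bu,A^\bu[1])=H^1$, the quotient of the $1$-cocycles (chain maps $B^\bu\to A^\bu[1]$) by the coboundaries (null-homotopic ones). So I would construct a natural bijection between this $H^1$ and the set of Yoneda equivalence classes of \emph{termwise split} extensions, compatible with the group structures.

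First I would write down the map. A $1$-cocycle is a graded morphism $h=(h^i\:B^i\to A^{i+1})$ satisfying $d_Ah+hd_B=0$. Given such an~$h$, set $C^\bu$ equal to $A^\bu\oplus B^\bu$ as a graded object, with differential the ``upper triangular'' map
\[
 d_C=\begin{pmatrix} d_A & h \\ 0 & d_B \end{pmatrix}.
\]
The identity $d_C^2=0$ is \emph{exactly} the cocycle condition $d_Ah+hd_B=0$, so this is a genuine complex, and the evident inclusion and projection produce a termwise split short exact sequence $0\rarrow A^\bu\rarrow C^\bu\rarrow B^\bu\rarrow0$. Conversely, a termwise split extension, after a choice of termwise splittings of $0\rarrow A^i\rarrow C^i\rarrow B^i\rarrow0$, has its differential in precisely this triangular shape with corner entry a $1$-cocycle; this gives surjectivity onto the termwise split subgroup.

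The key step is to check that the assignment is well defined and injective on cohomology classes. For a fixed extension, two choices of section $B^\bu\to C^\bu$ differ by a graded morphism factoring through $A^\bu=\ker(C^\bu\to B^\bu)$, i.e.\ by some $g\:B^\bu\to A^\bu$ of degree~$0$, and the two resulting corner cocycles differ by $\pm Dg$; hence the class in $H^1$ is independent of the splitting. Dually, any isomorphism of two such triangular extensions fixing $A^\bu$ and $B^\bu$ is forced by compatibility with the inclusion and projection to have the form $\left(\begin{smallmatrix}\id & g\\ 0 & \id\end{smallmatrix}\right)$, and its being a chain map forces $h'-h=\pm Dg$; so Yoneda-equivalent termwise split extensions yield cohomologous cocycles and vice versa. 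Additivity under the Baer sum corresponds to addition of cocycles, by a direct comparison of the middle complexes (or by functoriality of the triangular construction), and naturality in $A^\bu$ and $B^\bu$ is clear since everything is built from the inclusion and projection. This establishes the first assertion.

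Finally, for the ``in particular'' statements I would observe that the termwise split subgroup is the whole of $\Ext^1_{\sC(\sE)}(B^\bu,A^\bu)$ in each case. If every $A^i$ is injective, then every monomorphism $A^i\rarrow C^i$ splits; if every $B^i$ is projective, then every epimorphism $C^i\rarrow B^i$ splits; either way every short exact sequence of complexes is automatically termwise split. The conceptual content---recognizing $d_C^2=0$ as the cocycle equation---is immediate; I expect the only real labor to be the sign-careful bookkeeping identifying the homotopy relation on the $1$-cocycles with Yoneda equivalence and matching the Baer sum with cocycle addition.
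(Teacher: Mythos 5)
Your proposal is correct and follows essentially the same route as the paper: the paper's proof sends the homotopy class of $f\:B^\bu\rarrow A^\bu[1]$ to the termwise split extension $0\rarrow A^\bu\rarrow\cone(f)[-1]\rarrow B^\bu\rarrow0$, which is exactly your upper-triangular complex $A^\bu\oplus B^\bu$ with corner entry the $1$-cocycle, and it leaves the remaining verifications (well-definedness under change of splitting, Yoneda equivalence versus homotopy, Baer sum versus cocycle addition) to the reader. You have simply written out those details, including the observation that termwise injectivity of $A^\bu$ or termwise projectivity of $B^\bu$ forces every extension of complexes to be termwise split.
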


\begin{proof}
 The first assertion can be formulated using the termwise
split exact category structure on $\sC(\sE)$; then it holds for
any additive category~$\sE$.
 The natural map $\Hom_{\sK(\sE)}(B^\bu,A^\bu[1])\rarrow
\Ext^1_{\sC(\sE)}(B^\bu,A^\bu)$ takes the homotopy class of
a morphism $f\:B^\bu\rarrow A^\bu[1]$ to the equivalence class of
the termwise split short exact sequence $0\rarrow A^\bu\rarrow
\cone(f)[-1]\rarrow B^\bu\rarrow0$, where $\cone(f)$ denotes
the cone of a morphism of complexes.
 We leave further (elementary) details to the reader.
\end{proof}

 Denote by $\sE^\sgr=\prod_{i\in\boZ}\sE$ the additive category of
graded objects in an additive category~$\sE$.
 Following~\cite{Pkoszul} and~\cite{Bec}, we denote the functor of forgetting the differential $\sC(\sE)\rarrow\sE^\sgr$ by
$C^\bu\longmapsto C^\bu{}^\sharp$.

 The functor $({-})^\sharp$ has adjoints on both sides; we denote
the left adjoint functor to $({-})^\sharp$ by $G^+\:\sE^\sgr
\rarrow\sC(\sE)$ and the right adjoint one by $G^-\:\sE^\sgr
\rarrow\sC(\sE)$.
 The functors $G^+$ and $G^-$ were originally introduced for
CDG\+modules in~\cite[proof of Theorem~3.6]{Pkoszul}; in the case
of complexes in an additive category, they are much easier to
describe.
 To a graded object $E=(E^i)_{i\in\boZ}\in\sE^\sgr$, the functor
$G^+$ assigns the contractible complex with the terms $G^+(E)^i=
E^{i-1}\oplus E^i$ and the differential $d^i\:G^+(E)^i\rarrow
G^+(E)^{i+1}$ given by the $2\times 2$ matrix of morphisms whose
only nonzero entry is the identity morphism $E^i\rarrow E^i$.
 The complex $G^-(E)$ has the terms $G^-(E)^i=E^i\oplus E^{i+1}$;
otherwise it is constructed similarly to $G^+(E)$.
 So the two functors only differ by the shift: one has $G^-=G^+[1]$.

 Given an abelian category $\sB$, we denote by $\sB_\proj^\sgr$
the category of projective objects in the abelian category $\sB^\sgr$
or, which is the same, the category of graded objects in the additive
category $\sB_\proj$.
 Furthermore, we denote by $\sC(\sB_\proj)$ the category of complexes
of projective objects in $\sB$ and by $\sC(\sB)_\proj$ the category
of projective objects in the abelian category $\sC(\sB)$.
 Similar notation is used for injective objects/graded objects/complexes
in an abelian category~$\sA$.

 The abelian category $\sC(\sE)$ has enough projective (respectively,
injective) objects whenever an abelian category $\sE$ has.
 The next lemma describes such projective or injective complexes.

\begin{lem} \label{inj-proj-complexes}
\textup{(a)} For any abelian category\/ $\sA$ with enough injective
objects, a complex $J^\bu\in\sC(\sA)$ is an injective object of\/
$\sC(\sA)$ if and only if $J^\bu$ is contractible and all its
components $J^i$ are injective objects of\/ $\sC(\sA)$, and if and only
if $J^\bu$ has the form $G^-(I)$ for some collection of injective
objects $(I^i)_{i\in\boZ}$ in\/~$\sA$.
 Symbolically, $J^\bu\in\sC(\sA)_\inj$ if and only if $J^\bu$ is
contractible and $J^\bu\in\sC(\sA_\inj)$, and if and only if
$J^\bu\in G^-(\sA^\sgr_\inj)$. \par
\textup{(b)} For any abelian category\/ $\sB$ with enough projective
objects, a complex $P^\bu\in\sC(\sB)$ is a projective object of\/
$\sC(\sB)$ if and only if $P^\bu$ is contractible and all its
components $P^i$ are projective objects of\/ $\sC(\sB)$, and if and only
if $P^\bu$ has the form $G^+(Q)$ for some collection of projective
objects $(Q^i)_{i\in\boZ}$ in\/~$\sB$.
 Symbolically, $P^\bu\in\sC(\sB)_\proj$ if and only if $P^\bu$ is
contractible and $P^\bu\in\sC(\sB_\proj)$, and if and only if
$P^\bu\in G^+(\sB^\sgr_\proj)$. \par
\end{lem}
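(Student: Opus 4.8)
The plan is to prove part~(b) by establishing the cycle of implications $(3)\Rightarrow(1)\Rightarrow(2)\Rightarrow(3)$ among the three stated descriptions of a complex $P^\bu$, where $(1)$ is ``$P^\bu\in\sC(\sB)_\proj$'', $(2)$ is ``$P^\bu$ is contractible and $P^\bu\in\sC(\sB_\proj)$'', and $(3)$ is ``$P^\bu\in G^+(\sB^\sgr_\proj)$''; part~(a) is then entirely dual, with $G^+$ replaced by $G^-$, projectives by injectives, contracting homotopies playing the same role, and the counit replaced by the unit. The two facts I would rely on throughout are that the forgetful functor $(-)^\sharp\:\sC(\sB)\rarrow\sB^\sgr$ is exact (a sequence of complexes is exact precisely when it is exact in every degree, i.e.\ exact as a sequence of graded objects), and that $G^+$ is its left adjoint while $G^-$ is its right adjoint.

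For the easy implications: $(3)\Rightarrow(2)$ is immediate from the explicit description of $G^+$, since $G^+(Q)$ is contractible by construction and its terms $Q^{i-1}\oplus Q^i$ are finite direct sums of projectives, hence projective. For $(3)\Rightarrow(1)$ I would invoke the general principle that a left adjoint of an exact functor preserves projective objects: as $(-)^\sharp$ is exact, $G^+$ sends projective graded objects (these are exactly the objects of $\sB^\sgr_\proj$, by the identification recalled before the lemma) to projective complexes. For $(1)\Rightarrow(2)$ I would first note that $\sC(\sB)$ has enough projectives of a very specific shape: the counit $\varepsilon\:G^+((C^\bu)^\sharp)\rarrow C^\bu$ is a termwise split epimorphism, being in degree~$i$ the map $C^{i-1}\oplus C^i\rarrow C^i$, $(a,b)\mapsto d^{i-1}(a)+b$; precomposing with $G^+$ applied to an epimorphism $Q\rarrow(C^\bu)^\sharp$ from a projective graded object $Q$ (which exists since $\sB$, hence $\sB^\sgr$, has enough projectives) exhibits every complex as a quotient of some $G^+(Q)$ with $Q\in\sB^\sgr_\proj$. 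If $P^\bu$ is projective, the resulting epimorphism $G^+(Q)\rarrow P^\bu$ splits, so $P^\bu$ is a retract of $G^+(Q)$ in $\sC(\sB)$; a retract of a contractible complex is contractible (retracts are preserved by the functor $\sC(\sB)\rarrow\sK(\sB)$, and a retract of a zero object is zero), and each $P^i$ is a retract of the projective object $Q^{i-1}\oplus Q^i$, hence projective. This gives~$(2)$.

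The hard part will be $(2)\Rightarrow(3)$: reconstructing a $G^+$-shape from an abstract contractible complex with projective terms. Here I would use a contracting homotopy $h$, with $d^{i-1}h^i+h^{i+1}d^i=\id_{P^i}$. Writing $Z^i=\ker d^i=\operatorname{im} d^{i-1}$ (the two coincide because a contractible complex is acyclic), I would check that $d^{i-1}h^i$ is a retraction of the inclusion $Z^i\rarrow P^i$, so that each short exact sequence $0\rarrow Z^i\rarrow P^i\rarrow Z^{i+1}\rarrow0$ splits, and that the single homotopy $h$ makes these splittings coherent across all degrees, yielding an isomorphism of complexes $P^\bu\cong G^+(E)$ with $E^i=Z^{i+1}$ whose differential matches~$d$. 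Finally, since $P^i\cong Z^i\oplus Z^{i+1}$ is projective, each $Z^i$ is projective as a direct summand of a projective object, so $E\in\sB^\sgr_\proj$ and $P^\bu\in G^+(\sB^\sgr_\proj)$. The only genuinely delicate point is verifying that the degreewise splittings assemble into a \emph{chain} isomorphism rather than merely a graded one; this is where the coherence supplied by one homotopy $h$ (as opposed to choosing unrelated splittings degree by degree) is essential, and I would carry it out through the explicit maps $(d^{i-1}h^i,\,d^i)\:P^i\rarrow Z^i\oplus Z^{i+1}$.
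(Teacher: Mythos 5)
Your proof is correct and is exactly the standard argument the paper has in mind: the paper itself gives no details, only citing Eilenberg--Moore and Becker and "leaving the details to the reader," and your cycle of implications (using that $G^+$ is left adjoint to the exact functor $(-)^\sharp$, that the counit $G^+((C^\bu)^\sharp)\to C^\bu$ makes every complex a quotient of some $G^+(Q)$ with $Q\in\sB^\sgr_\proj$, and that a single contracting homotopy $h$ yields the coherent isomorphisms $(d^{i-1}h^i,d^i)\:P^i\to Z^i\oplus Z^{i+1}$) is precisely the expected way to fill them in. The one computation worth writing out explicitly is the commutativity of the squares in $(2)\Rightarrow(3)$, which reduces to the identity $d^i h^{i+1} d^i = d^i$ following from $d^i h^{i+1}+h^{i+2}d^{i+1}=\id$; with that, your argument is complete.
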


\begin{proof}
 This observation is known, at least,
since~\cite[Theorem~IV.3.2]{EM}; see~\cite[Lemma~1.3.3]{Bec}
for a CDG\+module version.
 We leave the details to the reader.
\end{proof}

\begin{lem} \label{intersection-is-inj-proj-complexes}
\textup{(a)}
 Let\/ $\sA$ be an abelian category with enough injectives,
and let $(\sW,\sR)$ be a cotorsion pair in\/ $\sC(\sA)$ such that\/
$\sR\subset\sC(\sA_\inj)$.
 Assume that the cotorsion pair $(\sW,\sR)$ is invariant under
the shift: $\sW=\sW[1]$, or equivalently, $\sR=\sR[1]$.
 Then\/ $\sR\cap\sW=\sC(\sA)_\inj$.
If, moreover, $(\sW,\sR)$ is complete, then $\sW$ is thick in $\sC(\sA)$. \par
\textup{(b)}
 Let\/ $\sB$ be an abelian category with enough projectives,
and let $(\sL,\sW)$ be a cotorsion pair in\/ $\sC(\sB)$ such that\/
$\sL\subset\sC(\sB_\proj)$.
 Assume that the cotorsion pair $(\sL,\sW)$ is invariant under
the shift: $\sW=\sW[1]$, or equivalently, $\sL=\sL[1]$.
 Then\/ $\sL\cap\sW=\sC(\sB)_\proj$.
If, moreover, $(\sL,\sW)$ is complete, then $\sW$ is thick in $\sC(\sB)$.
\end{lem}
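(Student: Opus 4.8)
The plan is to treat part~(a) directly and to obtain part~(b) by dualization.

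First I would establish the equality $\sR\cap\sW=\sC(\sA)_\inj$. The inclusion $\sC(\sA)_\inj\subset\sR\cap\sW$ is the routine half: any injective object of $\sC(\sA)$ lies in the right-hand class $\sR=\sW^{\perp_1}$ of the cotorsion pair, while to place it in $\sW={}^{\perp_1}\sR$ I would use that any $J\in\sC(\sA)_\inj$ is contractible by Lemma~\ref{inj-proj-complexes}(a), so for each $R\in\sR\subset\sC(\sA_\inj)$ Lemma~\ref{ext-1-hom-hot}(a) yields $\Ext^1_{\sC(\sA)}(J,R)\simeq\Hom_{\sK(\sA)}(J,R[1])=0$.

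The crux of the first assertion is the reverse inclusion, and this is where shift-invariance is used. Given $X\in\sR\cap\sW$, the containment $X\in\sR\subset\sC(\sA_\inj)$ shows $X$ is a complex of injectives, so by Lemma~\ref{inj-proj-complexes}(a) it is enough to prove $X$ contractible, i.e.\ $\Hom_{\sK(\sA)}(X,X)=0$. Shift-invariance $\sR=\sR[1]$ gives $X[-1]\in\sR$, whence $\Ext^1_{\sC(\sA)}(X,X[-1])=0$ because $X\in\sW={}^{\perp_1}\sR$; since $X[-1]$ is again a complex of injectives, Lemma~\ref{ext-1-hom-hot}(a) identifies this $\Ext^1$ with $\Hom_{\sK(\sA)}(X,X)$. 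Thus $\id_X$ is null-homotopic and $X\in\sC(\sA)_\inj$.

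For the thickness statement (now assuming completeness) I would reduce to Lemma~\ref{W-is-thick}(a), applied to the abelian category $\sC(\sA)$, which has enough injectives and satisfies $\sR\cap\sW=\sC(\sA)_\inj$ by the above. The one input still missing is that the cotorsion pair is hereditary, and I expect this to be the main obstacle, since the only available comparison tool, Lemma~\ref{ext-1-hom-hot}, sees $\Ext^1$ but not $\Ext^2$. Completeness makes $\sW$ generating and $\sR$ cogenerating, so Lemma~\ref{hereditary-cotorsion-pairs} applies once I verify its condition~(2): that $\sR$ is closed under cokernels of monomorphisms. To this end I would take a short exact sequence $0\rarrow R'\rarrow R\rarrow R''\rarrow0$ in $\sC(\sA)$ with $R',R\in\sR$; since $R'$ consists of injectives the sequence is termwise split, so $R''\in\sC(\sA_\inj)$ and the sequence induces a distinguished triangle $R'\rarrow R\rarrow R''\rarrow R'[1]$ in $\sK(\sA)$. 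Applying $\Hom_{\sK(\sA)}(W,{-})$ for $W\in\sW$ gives the exact fragment $\Hom_{\sK(\sA)}(W,R[1])\rarrow\Hom_{\sK(\sA)}(W,R''[1])\rarrow\Hom_{\sK(\sA)}(W,R'[2])$, whose outer terms are $\Ext^1_{\sC(\sA)}(W,R)=0$ and $\Ext^1_{\sC(\sA)}(W,R'[1])=0$ by Lemma~\ref{ext-1-hom-hot}(a)---the latter because $R'[1]\in\sR$ by shift-invariance. Hence $\Ext^1_{\sC(\sA)}(W,R'')\simeq\Hom_{\sK(\sA)}(W,R''[1])=0$, so $R''\in\sR$. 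This proves heredity, and Lemma~\ref{W-is-thick}(a) then yields that $\sW$ is thick.

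Finally, part~(b) is dual, interchanging injectives with projectives, $\sC(\sA_\inj)$ with $\sC(\sB_\proj)$, the roles of the two classes in the cotorsion pair, and Lemma~\ref{ext-1-hom-hot}(a) with~(b); the contractibility step becomes $\Ext^1_{\sC(\sB)}(X,X[-1])\simeq\Hom_{\sK(\sB)}(X,X)$ for $X\in\sL\cap\sW$, and heredity is obtained by verifying condition~(1) of Lemma~\ref{hereditary-cotorsion-pairs} (closure of $\sL$ under kernels of epimorphisms) before invoking Lemma~\ref{W-is-thick}(b).
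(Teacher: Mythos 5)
Your argument is correct and follows the paper's proof almost step for step: the inclusion $\sC(\sA)_\inj\subset\sR\cap\sW$ via contractibility and Lemma~\ref{ext-1-hom-hot}(a), the reverse inclusion by showing $\Hom_{\sK(\sA)}(X^\bu,X^\bu)\simeq\Ext^1_{\sC(\sA)}(X^\bu,X^\bu[-1])=0$ using shift-invariance, and the reduction of thickness to Lemma~\ref{W-is-thick}(a) via heredity are all exactly the paper's route. The only divergence is in how heredity is verified: the paper embeds each $R^\bu\in\sR$ into the injective object $G^-({R^\bu}^\sharp)$ of $\sC(\sA)$ with cokernel $R^\bu[1]\in\sR$, and dimension-shifts to get $\Ext^2_{\sC(\sA)}(W^\bu,R^\bu)\simeq\Ext^1_{\sC(\sA)}(W^\bu,R^\bu[1])=0$ (condition~(3) of Lemma~\ref{hereditary-cotorsion-pairs}), whereas you check condition~(2) directly by passing to the triangle in $\sK(\sA)$ associated with a termwise split short exact sequence and using the long exact $\Hom_{\sK(\sA)}$-sequence. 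Both verifications are short, both hinge on the same two inputs (shift-invariance and the identification $\Ext^1\simeq\Hom_{\sK}$ for complexes of injectives), and neither is more general than the other, so this is a cosmetic rather than substantive difference.
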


\begin{proof}
 This is our version of~\cite[Lemma~1.3.4]{Bec}, and we follow
the argument in~\cite{Bec}.
 Part~(a): the inclusion $\sC(\sA)_\inj\subset\sW^{\perp_1}=\sR$
is obvious.
 The inclusion $\sC(\sA)_\inj\subset{}^{\perp_1}\sC(\sA_\inj)$
holds, because $\sC(\sA)_\inj$ is the category of projective-injective
objects of the Frobenius exact category $\sC(\sA_\inj)$.
 Specifically, for any $J^\bu\in\sC(\sA)_\inj$ and
$I^\bu\in\sC(\sA_\inj)$ we have, by Lemmas~\ref{ext-1-hom-hot}(a)
and~\ref{inj-proj-complexes}(a), $\Ext^1_{\sC(\sA)}(J^\bu,I^\bu)
\simeq\Hom_{\sK(\sA)}(J^\bu,I^\bu[1])=0$ since the complex $J^\bu$
is contractible.
 Hence $\sC(\sA)_\inj\subset{}^{\perp_1}\sR=\sW$.
 To prove the inclusion $\sR\cap\sW\subset\sC(\sA)_\inj$, it suffices
to show that every complex $J^\bu\in\sR\cap\sW$ is contractible
(in view of Lemma~\ref{inj-proj-complexes}(a), as we already know
that $\sR\cap\sW\subset\sR\subset\sC(\sA_\inj)$).
 Indeed, by Lemma~\ref{ext-1-hom-hot}(a),
\,$\Hom_{\sK(\sA)}(J^\bu,J^\bu)=\Ext^1_{\sC(\sA)}(J^\bu,J^\bu[-1])=0$.

Regarding the moreover clause, the class $\sR$ is closed
under cosyzygies in $\sC(A)$
(each $J^\bu\in\sR$ admits a short exact sequence
$0\rarrow J^\bu\rarrow G^-({J^\bu}^\sharp)\rarrow J^\bu[1]\rarrow 0$
and $G^-({J^\bu}^\sharp)\in\sC(\sA)_\inj$ by the previous lemma).
It follows by a standard dimension shifting argument that the cotorsion
pair $(\sW,\sR)$ is hereditary. Now we just apply Lemma~\ref{W-is-thick}(a).

 Part~(b) is dual.
\end{proof}

\Section{\texorpdfstring{Locally Presentable Abelian Categories with~a~Projective~Generator}{Locally Presentable Abelian Categories with a Projective Generator}}
\label{loc-pres-secn}

 In this section we study the conventional derived category
$\sD(\sB)$ of a locally presentable abelian category $\sB$ with
enough projective objects.
 We start with the following lemma describing the class of abelian
categories we are interested in.

\begin{lem}
 A locally presentable abelian category\/ $\sB$ has enough projective
objects if and only if it has a single projective generator.
\end{lem}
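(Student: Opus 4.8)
The plan is to prove the two implications separately. The implication from a projective generator to enough projectives is purely formal and uses only cocompleteness of $\sB$, whereas the converse is where the local presentability hypothesis does the work.

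First I would treat the easy direction. Suppose $\sB$ has a single projective generator~$P$. Since $\sB$ is cocomplete, every object $X\in\sB$ is a quotient of a coproduct $\coprod_I P$ of copies of~$P$ (this is what it means for $P$ to be a generator in a cocomplete abelian category). A coproduct of projective objects is again projective, because $\Hom_\sB\bigl(\coprod_I P,{-}\bigr)\cong\prod_I\Hom_\sB(P,{-})$ carries epimorphisms to epimorphisms---a product of surjections of abelian groups is surjective---so $\Hom_\sB(\coprod_I P,{-})$ is exact. Thus $\coprod_I P$ is a projective object admitting an epimorphism onto~$X$, and $\sB$ has enough projectives. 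Note that no presentability assumption is needed here.

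For the converse, assume $\sB$ is locally $\lambda$\+presentable and has enough projectives. By the characterization of local presentability recalled above, $\sB$ admits a strongly generating \emph{set} $\sG$ of $\lambda$\+presentable objects; in particular $\sG$ is a generating set, so every object of $\sB$ is a quotient of a coproduct of objects from~$\sG$. Using that $\sB$ has enough projectives, for each $G\in\sG$ I would choose an epimorphism $P_G\rarrow G$ with $P_G\in\sB_\proj$, and set $P=\coprod_{G\in\sG}P_G$. This coproduct exists because $\sG$ is a set, and $P$ is projective by the coproduct argument of the previous paragraph. To see that $P$ is a generator, take any $X\in\sB$ and an epimorphism $\coprod_i G_i\rarrow X$ with $G_i\in\sG$. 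The coproduct of the chosen epimorphisms $\coprod_i P_{G_i}\rarrow\coprod_i G_i$ is again an epimorphism (coproducts are right exact, being left adjoints), so the composite $\coprod_i P_{G_i}\rarrow X$ is an epimorphism. Since each $P_{G_i}$ is a direct summand of~$P$, there is a split epimorphism $P\rarrow P_{G_i}$, and assembling these yields an epimorphism $\coprod_i P\rarrow\coprod_i P_{G_i}\rarrow X$ out of a coproduct of copies of~$P$. Hence $P$ is a projective generator.

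The main obstacle is the converse, and within it the only genuinely nontrivial point is the passage to a \emph{set} of generators together with the fact that such a set covers every object by coproducts of its members; this is exactly the content of local presentability for an abelian category, via the equivalence with the existence of a strongly generating set of $\lambda$\+presentable objects and the remark that in an abelian category generating and strongly generating sets coincide. Everything else is formal and holds in any cocomplete abelian category, with no need for exactness of coproducts.
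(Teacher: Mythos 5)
Your proof is correct and follows essentially the same route as the paper's (which only sketches it): for the ``if'' direction, coproducts of copies of the projective generator cover every object; for the ``only if'' direction, local presentability supplies a generating \emph{set}, one passes to a set of projective generators by taking projective (pre)covers, and then to a single projective generator by taking their coproduct. Your write-up just supplies the routine verifications the paper leaves to the reader.
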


\begin{proof}
 The ``if'' implication holds, since any abelian category with
coproducts and a projective generator has enough projective objects.
 To prove the ``only if'', observe that any abelian category with
enough projective objects and a set of generators has a set of
projective generators, and any abelian category with coproducts and
a set of projective generators has a single projective generator.
\end{proof}

\begin{lem} \label{fil-proj}
 For any abelian category\/ $\sB$ with coproducts and a set of
projective generators $\{P_\alpha\}$, one has\/
$\Fil(\{P_\alpha\})^\oplus=\Add(\{P_\alpha\})=\sB_\proj$.
\end{lem}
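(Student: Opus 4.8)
The plan is to prove the chain of equalities by first identifying $\Add(\{P_\alpha\})$ with $\sB_\proj$, and then establishing the two inclusions $\Add(\{P_\alpha\})\subseteq\Fil(\{P_\alpha\})^\oplus\subseteq\Add(\{P_\alpha\})$, which together give $\Fil(\{P_\alpha\})^\oplus=\Add(\{P_\alpha\})$. For $\Add(\{P_\alpha\})\subseteq\sB_\proj$, I would note that a coproduct $\coprod_i P_{\alpha_i}$ is projective, since $\Hom_\sB(\coprod_i P_{\alpha_i},{-})\cong\prod_i\Hom_\sB(P_{\alpha_i},{-})$ carries an epimorphism to a product of surjections, which is again surjective; direct summands of projectives are projective. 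Conversely, because $\{P_\alpha\}$ is a set of generators, every object $M\in\sB$ is the target of an epimorphism from a coproduct of copies of the $P_\alpha$ (for instance the canonical map indexed by all morphisms $P_\alpha\rarrow M$); if $M$ is projective this epimorphism splits, so $M\in\Add(\{P_\alpha\})$. Thus $\Add(\{P_\alpha\})=\sB_\proj$.

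Next I would show $\Add(\{P_\alpha\})\subseteq\Fil(\{P_\alpha\})^\oplus$. Given a coproduct $\coprod_{i<\alpha}P_{\alpha_i}$, I would well-order its index set and set $F_j=\coprod_{i<j}P_{\alpha_i}$. This is a smooth chain with $F_0=0$, the coproduct inclusions $F_j\rarrow F_{j+1}$ are split monomorphisms with cokernel $P_{\alpha_j}$, and $F_j=\varinjlim_{i<j}F_i$ at limit ordinals, since a coproduct is the colimit of its partial sub-coproducts. Hence $\coprod_{i<\alpha}P_{\alpha_i}\in\Fil(\{P_\alpha\})$, and passing to direct summands yields the inclusion.

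The substantive part is the reverse inclusion $\Fil(\{P_\alpha\})^\oplus\subseteq\Add(\{P_\alpha\})$. Let $(f_{ij}\:F_i\rarrow F_j)$ be an $\alpha$\+filtration whose cokernels $S_i=F_{i+1}/F_i$ all lie in $\{P_\alpha\}$. I would prove by transfinite induction that there exist isomorphisms $\phi_j\:\bigoplus_{i<j}S_i\rarrow F_j$ compatible with the structure maps $f_{jk}$ and the canonical coproduct inclusions. The successor step rests on the fact that each short exact sequence $0\rarrow F_i\rarrow F_{i+1}\rarrow S_i\rarrow0$ splits because $S_i$ is projective, so $F_{i+1}\cong F_i\oplus S_i$. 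At a limit ordinal $j$, smoothness gives $F_j=\varinjlim_{i<j}F_i$, and since $\varinjlim_{i<j}\bigoplus_{k<i}S_k=\bigoplus_{k<j}S_k$, the previously constructed isomorphisms assemble into $\phi_j$. In particular $F_\alpha\cong\bigoplus_{i<\alpha}S_i$ is literally a coproduct of objects of $\{P_\alpha\}$, so $F_\alpha\in\Add(\{P_\alpha\})$; taking summands completes the proof.

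I expect the limit step to be the only delicate point. Since directed colimits in $\sB$ need not be exact---precisely the phenomenon exhibited in \cite[Examples~4.4]{PR}, where a zero object is realized as a transfinitely iterated extension of nonzero objects---one cannot invoke exactness of colimits. The splitting of each successor extension circumvents this: it presents the filtration as a transfinite composition of split coproduct inclusions, whose colimit is governed by the universal property of the coproduct alone and requires no exactness. This also explains why the pathology above cannot arise once the successive quotients are projective, consistently with the conclusion $F_\alpha\cong\bigoplus_{i<\alpha}S_i$.
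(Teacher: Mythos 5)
Your proposal is correct. It differs from the paper's treatment mainly in how it handles the one nontrivial inclusion $\Fil(\{P_\alpha\})^\oplus\subseteq\sB_\proj$: the paper dismisses the lemma as straightforward and points out that this inclusion can be deduced from the categorical Eklof lemma (Proposition~\ref{eklof-lemma-prop}), since $\sB_\proj={}^{\perp_1}\sB$ is of the form ${}^{\perp_1}\sR$ and hence closed under transfinitely iterated extensions. You instead run the transfinite induction by hand, using that every successor extension $0\rarrow F_i\rarrow F_{i+1}\rarrow S_i\rarrow 0$ splits because $S_i$ is projective, so that the whole filtration collapses to a chain of split subcoproduct inclusions and $F_\alpha\cong\bigoplus_{i<\alpha}S_i$ by the universal property of coproducts alone. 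This is in effect the split special case of the Eklof argument. The citation route is shorter given the machinery already set up in Section~1; your direct route is self-contained, yields the strictly stronger conclusion that a $\{P_\alpha\}$-filtered object is literally a coproduct of the filtration quotients (not merely projective), and explicitly addresses the genuine subtlety at limit ordinals coming from the possible non-exactness of directed colimits in $\sB$ --- which is exactly the point one has to worry about in this generality, and which your splitting argument correctly neutralizes. The remaining identifications $\Add(\{P_\alpha\})=\sB_\proj$ and $\Add(\{P_\alpha\})\subseteq\Fil(\{P_\alpha\})^\oplus$ are handled as in any standard account and match what the paper regards as straightforward.
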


\begin{proof}
 The assertion is straightforward.
 The least obvious part is the inclusion $\Fil(\sB_\proj)\subset
\sB_\proj$, which can be also deduced from
Proposition~\ref{eklof-lemma-prop}.
\end{proof}

 The following lemma is quite general.

\begin{lem} \label{complexes-locally-presentable}
 For any locally presentable additive category\/ $\sE$ and any small
additive category $\sX$, the additive category\/ $\AdF(\sX,\sE)$
of additive functors\/ $\sX\rarrow\sE$ is locally presentable.
 In particular, the category of complexes\/ $\sC(\sE)$ is locally
presentable.
\end{lem}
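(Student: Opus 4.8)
The plan is to prove Lemma~\ref{complexes-locally-presentable} by exhibiting $\AdF(\sX,\sE)$ as a category of models for a limit sketch (or, equivalently, as a reflective full subcategory of a presheaf-type functor category closed under certain limits and filtered colimits), and then to derive the statement about $\sC(\sE)$ as a special case. The guiding principle is the standard closure result from the theory of accessible and locally presentable categories: functor categories $[\sX,\sE]$ into a locally presentable $\sE$ are again locally presentable, and cutting down to the \emph{additive} functors $\AdF(\sX,\sE)$ amounts to imposing a set of product-preservation (limit) conditions, which preserves local presentability. I would cite \cite[Theorem~1.20 and the surrounding discussion]{AR} for the basic machinery.

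First I would recall that for $\sE$ locally $\lambda$\+presentable and any \emph{small} category $\sX$, the functor category $[\sX,\sE]$ of all functors is locally $\lambda$\+presentable; this is a standard fact, proved by observing that $[\sX,\sE]$ is complete and cocomplete with (co)limits computed pointwise, and that the representable-composite objects $\sX(x,{-})\cdot G$ for $G$ running over a generating set of $\lambda$\+presentable objects of $\sE$ and $x$ over the objects of $\sX$ form a set of $\lambda$\+presentable strong generators. Next I would observe that $\AdF(\sX,\sE)\subset[\sX,\sE]$ is the full subcategory of functors sending finite biproducts in $\sX$ to finite biproducts (equivalently, finite products) in $\sE$. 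This is the solution set of a small family of limit conditions: for each finite family of objects of $\sX$, one requires the canonical comparison morphism to be an isomorphism. A full subcategory of a locally presentable category defined by the preservation of a \emph{set} of limit cones is again locally presentable (it is the category of models of a limit sketch, or an orthogonality class cut out by a set of maps), so $\AdF(\sX,\sE)$ is locally presentable.

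The main obstacle, and the step I would treat most carefully, is the verification that the additivity constraints really are expressible as a \emph{set} (not a proper class) of limit/orthogonality conditions, and that the resulting subcategory is closed under $\mu$\+directed colimits for a suitable regular cardinal $\mu\ge\lambda$. Since $\sX$ is small, there is only a set of finite biproduct diagrams in $\sX$ up to isomorphism, so the family of constraints is indeed a set; and because finite products commute with $\lambda$\+directed colimits in $\sE$ (the objects involved being $\lambda$\+presentable), the additive functors are closed under $\lambda$\+directed colimits in $[\sX,\sE]$. This closure, together with accessibility of the inclusion, is what upgrades ``accessible full subcategory'' to ``locally presentable full subcategory'' once one knows $\AdF(\sX,\sE)$ is cocomplete.

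For the final clause I would take $\sX$ to be a suitable small additive category such that $\AdF(\sX,\sE)\simeq\sC(\sE)$. Concretely, complexes in $\sE$ are additive functors from the free additive category on the quiver $\cdots\to\bullet\to\bullet\to\cdots$ indexed by $\boZ$ (the $A_\infty^{\boZ}$ quiver) modulo the relations $d^{i+1}d^i=0$; this quiver with relations generates a small additive category $\sX$ whose additive functors to $\sE$ are exactly the complexes. Thus $\sC(\sE)=\AdF(\sX,\sE)$ is locally presentable by the general statement. I expect this identification to be essentially bookkeeping once the general result is in place, so the substance of the proof lies entirely in the first part.
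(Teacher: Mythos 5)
Your argument is correct, but it takes a genuinely different route from the paper's. The paper proves local presentability of $\AdF(\sX,\sE)$ by observing that the forgetful functor $\AdF(\sX,\sE)\rarrow\sE^X$ (where $X$ is the set of objects of $\sX$) is monadic with a colimit-preserving monad, and then invoking \cite[Proposition~2.67]{AR} for $\sE^X$ and \cite[Theorem and Remark~2.78]{AR} for the Eilenberg--Moore category; you instead realize $\AdF(\sX,\sE)$ inside the full functor category of all functors $\sX\rarrow\sE$ as the subcategory of biproduct-preserving functors and appeal to the theory of small-orthogonality classes. Both are legitimate applications of the Ad\'amek--Rosick\'y machinery. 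The monadic argument has the advantage of sidestepping the one genuinely delicate point in your version, which you correctly single out: the condition ``the canonical comparison morphism is an isomorphism'' must be rewritten as orthogonality with respect to an honest \emph{set} of morphisms of the ambient functor category, and since sketch theory is usually stated for $\Sets$-valued models, for $\sE$-valued functors this requires testing the comparison maps against a dense set of $\lambda$-presentable objects of $\sE$ via the left adjoints to the evaluation functors. (You could also shortcut this entirely by noting that the additive functors are closed under \emph{all} pointwise limits and colimits in the full functor category, because finite biproducts are simultaneously finite products and finite coproducts and hence commute with both; cocompleteness then comes for free and one only needs a strongly generating set of presentable objects.) Your treatment of the final clause---identifying $\sC(\sE)$ with $\AdF(\sX,\sE)$ for $\sX$ the small additive category generated by the $\boZ$-indexed quiver with the relations $d^{i+1}d^i=0$---is exactly the identification the paper cites from \cite[Proposition~4.5]{SS}; the paper additionally records a direct elementary proof for $\sC(\sE)$ by exhibiting the generators $G^+(S)[-i]$, which your approach does not need.
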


\begin{proof}
 Let $X$ be the set of objects of the small category~$\sX$.
 Consider the product $\sE^X$ of $X$ copies of~$\sE$.
 Then the forgetful functor $\AdF(\sX,\sE)\rarrow\sE^X$ is monadic,
i.~e., it identifies $\AdF(\sX,\sE)$ with the Eilenberg--Moore
category of a monad on~$\sE^X$.
 This assertion holds for any cocomplete additive category~$\sE$.
 Moreover, the underlying functor of the relevant monad on $\sE^X$
preserves all colimits.
 Now if $\sE$ is locally presentable, then $\sE^X$ is locally
presentable by~\cite[Proposition~2.67]{AR}, and it follows that
$\AdF(\sX,\sE)$ is locally presentable by~\cite[Theorem and
Remark~2.78]{AR}.
It remains to mention that the category of complexes $\sC(\sE)$ is
isomorphic to $\AdF(\sX,\sE)$ for a suitable choice of $\sX$,
see~\cite[Proposition 4.5]{SS}.

 Here is also a direct elementary argument showing that $\sC(\sE)$ is
locally presentable.
 Clearly, the category $\sC(\sE)$ is cocomplete whenever
the category $\sE$ is.
 Given an object $E\in\sE$, consider $E$ as a one-term complex
concentrated in degree~$0$.
 Then the functor $E\longmapsto G^+(E)[-i]\:\sE\rarrow\sC(\sE)$
is left adjoint to the functor $\sC(\sE)\rarrow\sE$ taking
a complex $C^\bu$ to its degree\+$i$ term~$C^i$.
 Assume that the category $\sE$ is locally $\lambda$\+presentable for
some regular cardinal~$\lambda$, and let $\sS$ be a (strongly)
generating set of $\lambda$\+presentable objects in~$\sE$.
 Then the objects $G^+(S)[-i]$, \,$S\in\sS$, \,$i\in\boZ$ form
a (strongly) generating set of $\lambda$\+presentable objects in
$\sC(\sE)$.
 Hence the category $\sC(\sE)$ is locally $\lambda$\+presentable
by~\cite[Theorem~1.20]{AR}.
\end{proof}

 Let $\sB$ be an abelian category.
 A complex $P^\bu\in\sK(\sB)$ is said to be \emph{homotopy projective}
if $\Hom_{\sK(\sB)}(P^\bu,X^\bu)=0$ for any acyclic complex $X^\bu$
in~$\sB$.
 We denote the full subcategory of homotopy projective complexes by
$\sK(\sB)_\hpr\subset\sK(\sB)$.
 Furthermore, let us denote by $\sK(\sB_\proj)_\hpr=\sK(\sB_\proj)\cap
\sK(\sB)_\hpr\subset\sK(\sB)$ the full subcategory of \emph{homotopy
projective complexes of projective objects} in the homotopy category
$\sK(\sB)$.
 Clearly, both $\sK(\sB)_\hpr$ and $\sK(\sB_\proj)_\hpr$ are
triangulated subcategories in $\sK(\sB)$.

\begin{rem} \label{homotopy-adjusted-terminology}
 Spaltenstein introduced what he called ``K\+projective'',
``K\+injective'', and ``K\+flat'' resolutions in his paper~\cite{Spa}
(he attributed the idea to J.~Bernstein).
 It was not explained in~\cite{Spa} what the letter ``K'' was supposed
to stand for; but the natural (and widespread) guess was that it referred
to the common notation for the homotopy category of complexes (which we
use in the present paper as well).
 Hence the terminology of ``homotopically adjusted'' or ``homotopy
adjusted'', or for brevity ``h\+adjusted'' complexes emerged
(where ``adjusted'' is our generic name for projective, injective,
or flat).

 For example, the ``homotopically flat'' terminology was used
in~\cite{Dr} (see~\cite[Section~3.3]{Dr}, where one can find the same
explanation of its origins as we suggest above), while
the ``homotopically injective'' and ``homotopically projective''
terminology can be found in~\cite[Definition~14.1.4]{KS}.
 The ``h\+projective'' and ``h\+injective'' terminology was used
in~\cite{ELO} (see~\cite[Section~3.1]{ELO}), while
the ``homotopically projective'' terminology can be found (in
a different but somewhat related context) in~\cite[Section~4.1]{KLN}.
 The ``homotopy injective'' terminology was used in~\cite{KK}.
 In the first-named author's work, the ``homotopy
projective/injective/flat'' terminology was adopted
in~\cite[Section~2.1]{PP2}, \cite[Sections~0.2 and~2.3]{Pweak},
\cite[Sections~1.1--1.2]{Pmc}, and subsequent publications
(see also~\cite[Remark~0.1.3]{Psemi}).

 What we call ``homotopy projective complexes of projective objects''
are otherwise known in the literature as
``DG\+projective''~\cite{AF,GR,Gil0} or
``semi-projective''~\cite{CF,Gil3} complexes.
 Similarly people speak of ``DG\+injective'' and ``DG\+flat''
complexes~\cite{AF,GR,Gil4} or ``semi-injective'' and ``semi-flat''
complexes~\cite{CF,Gil3,NT}.
\end{rem}

\begin{prop} \label{orthogonal-fully-faithful}
 For any abelian category\/ $\sB$, the composition of the fully
faithful inclusion\/ $\sK(\sB)_\hpr\rarrow\sK(\sB)$ with the Verdier
quotient functor\/ $\sK(\sB)\rarrow\sD(\sB)$ is a fully faithful
functor\/ $\sK(\sB)_\hpr\rarrow\sD(\sB)$.
 Hence we have a pair of fully faithful triangulated functors\/
$\sK(\sB_\proj)_\hpr\rarrow\sK(\sB)_\hpr\rarrow\sK(\sB)$.
\end{prop}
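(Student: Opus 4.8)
The plan is to treat this as an instance of the standard fact that a full triangulated subcategory contained in the left orthogonal of the subcategory being localized away embeds fully faithfully into the Verdier quotient. First I would record that $\sD(\sB)$ is by definition the Verdier quotient $\sK(\sB)/\sK(\sB)_\ac$ by the thick subcategory $\sK(\sB)_\ac\subset\sK(\sB)$ of acyclic complexes, and that, by the very definition of homotopy projectivity, $\sK(\sB)_\hpr$ is the left orthogonal to $\sK(\sB)_\ac$: one has $\Hom_{\sK(\sB)}(P^\bu,X^\bu)=0$ for every homotopy projective $P^\bu$ and every acyclic $X^\bu$. Everything will hinge on exploiting this orthogonality inside the calculus of fractions that computes morphisms in $\sD(\sB)$.

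The key step I would isolate is a splitting lemma. Let $s\:W^\bu\rarrow P^\bu$ be a morphism in $\sK(\sB)$ with $P^\bu$ homotopy projective and $\cone(s)$ acyclic, which is precisely the type of morphism inverted on passing to $\sD(\sB)$. Completing $s$ to a distinguished triangle $W^\bu\overset{s}{\rarrow}P^\bu\overset{u}{\rarrow}C^\bu\rarrow W^\bu[1]$ with $C^\bu=\cone(s)$ acyclic, the map $u$ lies in $\Hom_{\sK(\sB)}(P^\bu,C^\bu)$, which vanishes by homotopy projectivity of $P^\bu$. Hence $u=0$, the triangle splits, and applying $\Hom_{\sK(\sB)}(P^\bu,{-})$ one reads off that $s_*$ is surjective; lifting $\id_{P^\bu}$ then yields a section $t\:P^\bu\rarrow W^\bu$ with $st=\id_{P^\bu}$.

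With this splitting lemma in hand, full faithfulness follows routinely. For fullness, I would represent an arbitrary morphism $P^\bu\rarrow{P'}^\bu$ in $\sD(\sB)$ by a roof $P^\bu\overset{s}{\larrow}W^\bu\overset{f}{\rarrow}{P'}^\bu$ with $\cone(s)$ acyclic; writing $Q$ for the quotient functor and taking the section $t$, the relation $Q(s)Q(t)=Q(st)=\id$ forces $Q(t)=Q(s)^{-1}$, so the roof equals $Q(ft)$, the image of the honest morphism $ft$. For faithfulness, if a morphism $g\:P^\bu\rarrow{P'}^\bu$ of $\sK(\sB)$ dies in $\sD(\sB)$, the calculus of fractions supplies $s\:W^\bu\rarrow P^\bu$ with $\cone(s)$ acyclic and $gs=0$; a section $t$ of $s$ then gives $g=g(st)=(gs)t=0$ already in $\sK(\sB)$. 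Since the embedding is a composite of triangulated functors, it is automatically triangulated.

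I expect the main obstacles to be purely bookkeeping: keeping the orthogonality pointed the right way (so that the cone sits on the target side of $s$ and $\Hom_{\sK(\sB)}(P^\bu,C^\bu)=0$ is the relevant vanishing), and citing the standard localization facts that $s$ becomes invertible in $\sD(\sB)$ exactly when $\cone(s)$ is acyclic and that $Q(g)=0$ is witnessed by some such $s$ with $gs=0$. It is worth noting that this full faithfulness simultaneously certifies that the classes $\Hom_{\sD(\sB)}(P^\bu,{P'}^\bu)$ in question are genuine sets. Finally, the concluding assertion is immediate: the maps $\sK(\sB_\proj)_\hpr\rarrow\sK(\sB)_\hpr\rarrow\sK(\sB)$ are inclusions of full triangulated subcategories and hence fully faithful, and composing them with the embedding established above produces fully faithful triangulated functors into $\sD(\sB)$.
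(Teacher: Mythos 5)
Your proposal is correct and follows essentially the same route as the paper: the paper simply invokes the general fact that for full triangulated subcategories $\sP,\sX\subset\sH$ with $\Hom_\sH(\sP,\sX)=0$ the composition $\sP\rarrow\sH\rarrow\sH/\sX$ is fully faithful, applied to $\sH=\sK(\sB)$, $\sP=\sK(\sB)_\hpr$, $\sX=\sK(\sB)_\ac$, whereas you supply the standard calculus-of-fractions proof of that fact (splitting the localizing morphisms via the orthogonality). Your details check out, including the direction of the orthogonality and the treatment of fullness and faithfulness.
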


\begin{proof}
 More generally, for any triangulated category $\sH$ and full
triangulated subcategories $\sP$ and $\sX\subset\sH$ such that
$\Hom_\sH(P,X)=0$ for all $P\in\sP$ and $X\in\sX$, the composition
of the inclusion $\sP\rarrow\sH$ with the Verdier quotient functor
$\sH\rarrow\sH/\sX$ is fully faithful.
 In the situation at hand, take $\sH=\sK(\sB)$ and $\sP=\sK(\sB)_\hpr$,
and let $\sX=\sK(\sB)_\ac\subset\sK(\sB)$ be the full subcategory of
acyclic complexes.
\end{proof}

 We will say that an abelian category $\sB$ has \emph{enough homotopy
projective complexes} if the functor $\sK(\sB)_\hpr\rarrow\sD(\sB)$
is a triangulated equivalence.
 Moreover, we will say that $\sB$ has \emph{enough homotopy projective
complexes of projective objects} if the functor $\sK(\sB_\proj)_\hpr
\rarrow\sD(\sB)$ is a triangulated equivalence.

 In the latter case, clearly, the inclusion $\sK(\sB_\proj)_\hpr
\rarrow\sK(\sB)_\hpr$ is a triangulated equivalence, too.
 So, in an abelian category with enough homotopy projective complexes
of projective objects, every homotopy projective complex is homotopy
equivalent to a homotopy projective complex of projective objects.

 Introduce the notation $\sC(\sB)_\ac\subset\sC(\sB)$ for the full
subcategory of acyclic complexes in the category of complexes
$\sC(\sB)$ and the notation $\sC(\sB_\proj)_\hpr\subset\sC(\sB)$ for
the full subcategory of homotopy projective complexes of projective
objects in $\sC(\sB)$.
That is, these categories are the full preimages of
$\sK(\sB)_\ac$ and $\sK(\sB_\proj)_\hpr$, respectively,
under the natural functor $\sC(\sB)\rarrow\sK(\sB)$.

\begin{thm} \label{loc-pres-cotorsion-pair}
 Let\/ $\sB$ be a locally presentable abelian category with enough
projective objects.
 Then the pair of classes of objects\/ $\sC(\sB_\proj)_\hpr$ and\/
$\sC(\sB)_\ac$ is a hereditary complete cotorsion pair in the abelian
category\/ $\sC(\sB)$.
\end{thm}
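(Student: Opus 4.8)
The plan is to realize $(\sC(\sB_\proj)_\hpr,\sC(\sB)_\ac)$ as the cotorsion pair generated by a single \emph{set} of objects --- the ``spheres'' on a projective generator --- and then to feed this into the small object argument machinery of Sections~\ref{wfs-secn}\+-\ref{abelian-model-secn} together with the Eklof--Trlifaj-type Theorems~\ref{cotorsion-pair-generated-by-set-complete}--\ref{cotorsion-pair-generated-by-set-left-class}. By the first lemma of this section, $\sB$ has a single projective generator $P$, and by Lemma~\ref{fil-proj} one has $\sB_\proj=\Add(P)=\Fil(\{P\})^\oplus$; moreover $\sC(\sB)$ is locally presentable by Lemma~\ref{complexes-locally-presentable} and has enough projectives by Lemma~\ref{inj-proj-complexes}(b). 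For $n\in\boZ$, let $S^n(P)$ denote the complex with $P$ in degree~$n$ and zero elsewhere, and set $\sS_0=\{S^n(P):n\in\boZ\}$.

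First I would compute $\sS_0^{\perp_1}$ inside $\sC(\sB)$. Since each $S^n(P)$ is a complex of projective objects, Lemma~\ref{ext-1-hom-hot}(b) gives $\Ext^1_{\sC(\sB)}(S^n(P),X^\bu)\simeq\Hom_{\sK(\sB)}(S^n(P),X^\bu[1])$ for every $X^\bu$, and an elementary computation (using projectivity of $P$ to make $\Hom_\sB(P,{-})$ exact) identifies this group with $\Hom_\sB(P,H^{n+1}(X^\bu))$. As $P$ is a generator, these vanish for all $n$ precisely when $X^\bu$ is acyclic; hence $\sS_0^{\perp_1}=\sC(\sB)_\ac$. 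Consequently the cotorsion pair generated by $\sS_0$ is $(\sL,\sR)$ with $\sR=\sC(\sB)_\ac$ and $\sL={}^{\perp_1}\sC(\sB)_\ac$. The class $\sR$ is cogenerating, since every $X^\bu$ embeds into the contractible (hence acyclic) complex $G^-((X^\bu)^\sharp)$ via the unit of the adjunction $({-})^\sharp\dashv G^-$, while $\sL$ is generating as it contains all projective objects of $\sC(\sB)$. Therefore $(\sL,\sR)$ is complete by Theorem~\ref{cotorsion-pair-generated-by-set-complete}.

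It then remains to identify $\sL$ with $\sC(\sB_\proj)_\hpr$. The inclusion $\sC(\sB_\proj)_\hpr\subseteq\sL$ follows again from Lemma~\ref{ext-1-hom-hot}(b): for $L^\bu$ a homotopy projective complex of projectives and $X^\bu$ acyclic, $\Ext^1_{\sC(\sB)}(L^\bu,X^\bu)\simeq\Hom_{\sK(\sB)}(L^\bu,X^\bu[1])=0$. For the reverse inclusion I would apply Theorem~\ref{cotorsion-pair-generated-by-set-left-class}: the class $\Fil(\sS_0)$ is generating (each complex admits a surjection from a coproduct of disk complexes on $P$, and such disks are two-step extensions of spheres, while coproducts of $\sS_0$\+filtered complexes are again $\sS_0$\+filtered), so $\sL=\Fil(\sS_0)^\oplus$. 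Thus it suffices to check $\Fil(\sS_0)\subseteq\sC(\sB_\proj)_\hpr$. Given an $\sS_0$\+filtration of a complex $F^\bu$, passing to a fixed degree~$i$ yields a filtration of $F^i$ whose successive quotients are copies of $P$ or~$0$, so $F^i\in\Fil(\{P\})\subseteq\sB_\proj$ by Lemma~\ref{fil-proj}; hence $F^\bu$ is a complex of projectives. Since also $F^\bu\in\sL={}^{\perp_1}\sC(\sB)_\ac$, Lemma~\ref{ext-1-hom-hot}(b) applies in the form $\Hom_{\sK(\sB)}(F^\bu,X^\bu)\simeq\Ext^1_{\sC(\sB)}(F^\bu,X^\bu[-1])=0$ for every acyclic $X^\bu$, so $F^\bu$ is homotopy projective. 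Both defining properties being preserved under direct summands, I conclude $\sL=\Fil(\sS_0)^\oplus\subseteq\sC(\sB_\proj)_\hpr$, whence equality.

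Finally, heredity is easy: the class $\sR=\sC(\sB)_\ac$ is closed under cokernels of monomorphisms between acyclic complexes by the long exact cohomology sequence, so condition~(2) of Lemma~\ref{hereditary-cotorsion-pairs} holds (its hypotheses being automatic for a complete cotorsion pair). I expect the genuine obstacle to lie in the reverse inclusion $\sL\subseteq\sC(\sB_\proj)_\hpr$ of the third paragraph --- specifically, the verification that an object of the left class has projective terms. This is exactly where local presentability enters, through the filtration description $\sL=\Fil(\sS_0)^\oplus$ supplied by the set-theoretic Theorem~\ref{cotorsion-pair-generated-by-set-left-class} and the behaviour of filtrations at limit ordinals; the orthogonality computations themselves are routine consequences of Lemma~\ref{ext-1-hom-hot}.
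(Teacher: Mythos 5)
Your proposal is correct and follows the paper's overall strategy: both arguments generate the cotorsion pair by the set of shifted one-term complexes on a projective generator $P$, identify the right orthogonal of that set with $\sC(\sB)_\ac$ via Lemma~\ref{ext-1-hom-hot}(b), check the generating/cogenerating hypotheses, and invoke Theorem~\ref{cotorsion-pair-generated-by-set-complete}. The one genuine divergence is the inclusion ${}^{\perp_1}(\sC(\sB)_\ac)\subseteq\sC(\sB_\proj)$. The paper obtains it purely homologically: since $({-})^\sharp$ and $G^-$ form an adjoint pair of exact functors, $\Ext^1_{\sC(\sB)}(C^\bu,G^-(B))\simeq\Ext^1_{\sB^\sgr}(C^\bu{}^\sharp,B)$, and the complexes $G^-(B)$ are acyclic, so any object of the left class has projective underlying graded object; no filtrations are needed. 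You instead pass through Theorem~\ref{cotorsion-pair-generated-by-set-left-class} to get $\sL=\Fil(\sS_0)^\oplus$ and then check degreewise that $\sS_0$\+filtered complexes have projective terms. This works --- it is essentially the argument the paper gives for the corollary following the theorem, where the description $\sC(\sB_\proj)_\hpr=\Fil(\sS)^\oplus$ is established --- but it carries the extra obligation of verifying that $\Fil(\sS_0)$ is generating, which requires knowing that coproducts of $\sS_0$\+filtered complexes are again $\sS_0$\+filtered (i.e.\ $\Fil(\Fil(\sS_0))=\Fil(\sS_0)$, \cite[Lemma~4.6(d)]{PR}); you gesture at this correctly. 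In exchange, your route yields the filtration description of the left class as a byproduct, whereas the paper's $G^-$\+adjunction argument is shorter for the theorem itself.
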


\begin{proof}
 In the special setting when $\sB$ is a Grothendieck abelian category
with enough projective objects, this theorem becomes a particular case
of~\cite[Theorem~4.6]{Gil3}.

 Let $P$ be a projective generator of~$\sB$.
 The claim is that the cotorsion pair we are interested in
is generated by the set of objects $\sS=\{P[i]\}_{i\in\boZ}\subset
\sC(\sB)$ (where $P$ is viewed as a one-term complex concentrated
in degree~$0$).

 Indeed, for any complex $C^\bu\in\sC(\sB)$ we have, by
Lemma~\ref{ext-1-hom-hot}(b), \,$\Ext^1_{\sC(\sB)}(P[i],C^\bu)\simeq
\Hom_{\sK(\sB)}(P[i],C^\bu[1])\simeq\Hom_\sB(P,H^{1-i}(C^\bu))$.
 The latter abelian group vanishes if and only if $H^{1-i}(C^\bu)=0$.
 Hence $\sS^{\perp_1}=\sC(\sB)_\ac\subset\sC(\sB)$.

 In the pair of adjoint functors $({-})^\sharp\:\sC(\sB)\rarrow
\sB^\sgr$ and $G^-\:\sB^\sgr\rarrow\sC(\sB)$, both the functors
are exact.
 Hence for any complex $C^\bu\in\sC(\sB)$ and any graded object
$B\in\sB^\sgr$ we have $\Ext^n_{\sC(\sB)}(C^\bu,G^-(B))\simeq
\Ext^n_{\sB^\sgr}(C^\bu{}^\sharp,B)$ for all $n\ge0$.
 In particular, this isomorphism holds for $n=1$.
 Since the complex $G^-(B)$ is acyclic, we have shown that
${}^{\perp_1}(\sC(\sB)_\ac)\subset\sC(\sB_\proj)$.
 Now for a complex $Q^\bu\in\sC(\sB_\proj)$ and any complex $X^\bu
\in\sC(\sB)$ we have $\Ext^1_{\sC(\sB)}(Q^\bu,X^\bu)\simeq
\Hom_{\sK(\sB)}(Q^\bu,X^\bu[1])$ by Lemma~\ref{ext-1-hom-hot}(b).
 Thus ${}^{\perp_1}(\sC(\sB)_\ac)=\sC(\sB_\proj)_\hpr\subset\sC(\sB)$.
 So our pair of classes of objects is indeed the cotorsion pair
generated by the set $\sS\subset\sC(\sB)$.

 Obviously, any complex $C^\bu$ is a subcomplex of an acyclic (or even
contractible) complex via the adjunction morphism
$C^\bu\rarrow G^-({C^\bu}^\sharp)$; so the class $\sC(\sB)_\ac$ is cogenerating
in $\sC(\sB)$.
 It is also easy to present any complex in $\sB$ as a quotient
complex of a contractible complex of projective objects (since there
are enough projectives in~$\sB$).
 All contractible complexes are obviously homotopy projective.
 Hence the class $\sC(\sB_\proj)_\hpr$ is generating in $\sC(\sB)$.

 By Lemma~\ref{complexes-locally-presentable}, the category $\sC(\sB)$
is locally presentable.
 Applying Theorem~\ref{cotorsion-pair-generated-by-set-complete}, we
conclude that our cotorsion pair is complete.
 The cotorsion pair is hereditary, since the class $\sC(\sB)_\ac$
is closed under the cokernels of monomorphisms in $\sC(\sB)$.
\end{proof}

\begin{cor} \label{enough-homotopy-projectives}
 Any locally presentable abelian category with a projective generator
has enough homotopy projective complexes of projective objects.
In other words, the natural functor
$\sK(\sB_\proj)_\hpr\rarrow\sD(\sB)$ is a triangulated equivalence.
\end{cor}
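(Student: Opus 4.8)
The plan is to derive the corollary formally from Theorem~\ref{loc-pres-cotorsion-pair} together with Proposition~\ref{orthogonal-fully-faithful}: full faithfulness comes from the latter, and essential surjectivity from the completeness asserted in the former. Since $\sK(\sB_\proj)_\hpr\rarrow\sD(\sB)$ is a composition of triangulated functors, it is automatically triangulated, so it suffices to check that it is fully faithful and essentially surjective.

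For full faithfulness, recall that Proposition~\ref{orthogonal-fully-faithful} exhibits the composite $\sK(\sB)_\hpr\rarrow\sK(\sB)\rarrow\sD(\sB)$ as a fully faithful functor. Restricting along the fully faithful inclusion $\sK(\sB_\proj)_\hpr\rarrow\sK(\sB)_\hpr$ provided by the same proposition, I conclude that $\sK(\sB_\proj)_\hpr\rarrow\sD(\sB)$ is fully faithful as well.

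For essential surjectivity, take an arbitrary object of $\sD(\sB)$ represented by a complex $C^\bu\in\sC(\sB)$. By Theorem~\ref{loc-pres-cotorsion-pair}, the cotorsion pair $(\sC(\sB_\proj)_\hpr,\>\sC(\sB)_\ac)$ is complete, so $C^\bu$ admits a special precover sequence
$$0\lrarrow A^\bu\lrarrow L^\bu\lrarrow C^\bu\lrarrow0$$
in $\sC(\sB)$ with $A^\bu\in\sC(\sB)_\ac$ and $L^\bu\in\sC(\sB_\proj)_\hpr$. Since $A^\bu$ is acyclic, the associated long exact sequence of cohomology shows that the epimorphism $L^\bu\rarrow C^\bu$ is a quasi-isomorphism. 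Hence $C^\bu\cong L^\bu$ in $\sD(\sB)$, with $L^\bu$ a homotopy projective complex of projective objects lying in the essential image of the functor.

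Combining the two parts yields the asserted triangulated equivalence. I anticipate no serious obstacle here: the entire substantive content resides in the completeness of the cotorsion pair established in the preceding theorem, and the passage to the derived category is a routine consequence of the acyclicity of the kernel term in the special precover sequence.
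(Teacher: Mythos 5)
Your proposal is correct and follows essentially the same route as the paper: the paper's proof also takes a special precover sequence $0\rarrow X^\bu\rarrow Q^\bu\rarrow C^\bu\rarrow0$ from the complete cotorsion pair of Theorem~\ref{loc-pres-cotorsion-pair} and observes that acyclicity of $X^\bu$ makes $Q^\bu\rarrow C^\bu$ a quasi-isomorphism, with full faithfulness already supplied by Proposition~\ref{orthogonal-fully-faithful}. You merely spell out the full-faithfulness step more explicitly than the paper does.
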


\begin{proof}
 Let $\sB$ be a locally presentable abelian category with enough
projective objects.
 Given a complex $C^\bu\in\sC(\sB)$, we need to find a homotopy
projective complex of projective objects $Q^\bu$ together with
a quasi-isomorphism $Q^\bu\rarrow C^\bu$ of complexes in~$\sB$.
 For this purpose, consider a special precover short exact sequence
$0\rarrow X^\bu\rarrow Q^\bu\rarrow C^\bu\rarrow0$ in the complete
cotorsion pair of Theorem~\ref{loc-pres-cotorsion-pair}.
 So we have $X^\bu\in\sC(\sB)_\ac$ and $Q^\bu\in\sC(\sB_\proj)_\hpr$.
 Since the complex $X^\bu$ is acyclic, it follows that the epimorphism
of complexes $Q^\bu\rarrow C^\bu$ is a quasi-isomorphism.
\end{proof}

The proof of Theorem~\ref{loc-pres-cotorsion-pair} also provides us
with the following description of the class of homotopy projective complexes
of projectives.

\begin{cor}
Let\/ $\sB$ be a locally presentable abelian category
with enough projective objects.
If we choose a projective generator $P$ and denote\/
$\sS=\{P[i]\}_{i\in\boZ}\subset \sC(\sB)$, then\/
$\sC(\sB_\proj)_\hpr=\Fil(\sS)^\oplus\subset\sC(\sB)$.
\end{cor}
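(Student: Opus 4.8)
The plan is to read the statement off from Theorem~\ref{cotorsion-pair-generated-by-set-left-class}. The proof of Theorem~\ref{loc-pres-cotorsion-pair} already identifies $(\sC(\sB_\proj)_\hpr,\sC(\sB)_\ac)$ as precisely the cotorsion pair generated by the set $\sS=\{P[i]\}_{i\in\boZ}$, and Lemma~\ref{complexes-locally-presentable} says that $\sC(\sB)$ is locally presentable. Thus the only hypothesis of Theorem~\ref{cotorsion-pair-generated-by-set-left-class} still to be checked is that the class $\Fil(\sS)$ is generating in $\sC(\sB)$; once this is done, the theorem gives $\sC(\sB_\proj)_\hpr=\sL=\Fil(\sS)^\oplus$ directly.

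Before that I would record that $\Fil(\sS)$ is closed under extensions and under coproducts of objects of $\sS$. Both follow from Proposition~\ref{fil-cell}: a monomorphism with cokernel in $\Fil(\sS)$ lies in $\Cell(\sS\Mono)$ by part~(a); the class $\Cell(\sS\Mono)$ is closed under composition and under coproducts of morphisms (the latter by the discussion preceding Lemma~\ref{cof-lifting}); and the cokernel of any morphism in $\Cell(\sS\Mono)$ lies back in $\Fil(\sS)$ by part~(b). In particular, whenever the objects $F^i\in\sB$ are coproducts of copies of $P$, the zero-differential complex $\bigoplus_i(F^i\text{ in degree }i)$ is a coproduct of objects of $\sS$, hence lies in $\Fil(\sS)$.

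The heart of the argument is to check that the contractible complexes of projectives used in Theorem~\ref{loc-pres-cotorsion-pair} to witness that $\sC(\sB_\proj)_\hpr$ is generating can actually be chosen inside $\Fil(\sS)$. For a graded object $E=(E^i)$ whose terms are coproducts of copies of $P$, I would use the termwise split short exact sequence
\[
 0\rarrow\bigoplus_i(E^{i-1}\text{ in degree }i)\rarrow G^+(E)\rarrow\bigoplus_i(E^i\text{ in degree }i)\rarrow0,
\]
which comes straight from the explicit description of $G^+$. Its outer terms are zero-differential complexes of the kind just discussed, so they lie in $\Fil(\sS)$, whence $G^+(E)\in\Fil(\sS)$ by closure under extensions. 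Then, for an arbitrary complex $C^\bu$, I would choose a graded epimorphism $E\rarrow(C^\bu)^\sharp$ with each $E^i$ a coproduct of copies of $P$ (possible since $P$ is a projective generator), apply the exact functor $G^+$ (exact as a shift of the exact functor $G^-$), and compose with the degreewise split counit $G^+((C^\bu)^\sharp)\rarrow C^\bu$ to obtain an epimorphism $G^+(E)\rarrow C^\bu$ with $G^+(E)\in\Fil(\sS)$. This shows $\Fil(\sS)$ is generating.

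I expect the last step to be where the real content lies: the proof of Theorem~\ref{loc-pres-cotorsion-pair} only exhibits covers by contractible complexes of \emph{arbitrary} projectives, which a priori sit in the larger class $\sC(\sB_\proj)_\hpr$ rather than in $\Fil(\sS)$, so the work is to manufacture covers lying in $\Fil(\sS)$ itself. This is why I would insist on literal coproducts of copies of $P$ (not merely summands of them) and rely on the $\Cell$-based closure properties of $\Fil(\sS)$ from Proposition~\ref{fil-cell}, which hold with no exactness assumption on colimits — a point that matters precisely because $\sB$ need not be Grothendieck.
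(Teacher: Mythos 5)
Your proof is correct and follows essentially the same route as the paper: both identify $(\sC(\sB_\proj)_\hpr,\sC(\sB)_\ac)$ as the cotorsion pair generated by $\sS$, verify that $\Fil(\sS)$ is generating by producing epimorphisms from objects of the form $G^+(E)$ with $E$ a graded coproduct of copies of $P$, and then invoke Theorem~\ref{cotorsion-pair-generated-by-set-left-class}. The only cosmetic difference is how one sees $G^+(E)\in\Fil(\sS)$ — you exhibit an explicit two-step termwise split filtration and use the $\Cell$-based closure properties from Proposition~\ref{fil-cell}, whereas the paper notes that the projective generators $G^+(P)[i]$ lie in $\Fil(\sS)$ and uses $\Fil(\Fil(\sS))=\Fil(\sS)$.
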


\begin{proof}
 The special case when $\sB$ is a Grothendieck category with enough
projectives is covered by~\cite[Theorem~4.6]{Gil3}.
 Quite generally, we know that $\sC(\sB_\proj)_\hpr$ is the left-hand
side of the cotorsion pair in $\sC(\sB)$ generated by~$\sS$.
Moreover, $\Fil(\sS)$ is generating since the objects
$G^+(P)[i]$, \,$i\in\boZ$, form a set of
projective generators of $\sC(\sB)$ and, obviously,
$G^+(P)[i]\in\Fil(\sS)$.
Using Lemma~\ref{fil-proj}, one can see that the class $\Fil(\sS)$
is generating (notice that $\Fil(\Fil(\sS))= \Fil(\sS)$ by~\cite[Lemma~4.6(d)]{PR}), and we just apply Theorem~\ref{cotorsion-pair-generated-by-set-left-class}.
\end{proof}

\begin{thm} \label{loc-pres-model-structure}
 Let\/ $\sB$ be a locally presentable abelian category with enough
projective objects. 
 Then the triple of classes of objects\/ $\sL=\sC(\sB_\proj)_\hpr$,
\ $\sW=\sC(\sB)_\ac$, and $\sR=\sC(\sB)$ is a cofibrantly generated
hereditary abelian model structure on the abelian category of
complexes\/ $\sC(\sB)$.
\end{thm}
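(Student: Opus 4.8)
The plan is to recognize the triple $(\sL,\sW,\sR)=(\sC(\sB_\proj)_\hpr,\sC(\sB)_\ac,\sC(\sB))$ as a projective abelian model structure on $\sC(\sB)$ and then to verify cofibrant generation separately. First I would apply Lemma~\ref{inj-proj-model-structures}(b) with $\sC(\sB)$ playing the role of the ambient abelian category. Its hypotheses require that $\sC(\sB)$ have enough projective objects---which holds because $\sB$ does, by Lemma~\ref{inj-proj-complexes}(b)---that $(\sL,\sW)$ be a complete cotorsion pair with $\sL\cap\sW=\sC(\sB)_\proj$, and that $\sW$ be thick. The completeness and heredity of the cotorsion pair $(\sC(\sB_\proj)_\hpr,\sC(\sB)_\ac)$ are exactly the content of Theorem~\ref{loc-pres-cotorsion-pair}, so the only remaining points are the identification of the intersection and the thickness of~$\sW$.

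For these two points I would appeal to Lemma~\ref{intersection-is-inj-proj-complexes}(b). Its hypotheses are met: by definition $\sL=\sC(\sB_\proj)_\hpr\subset\sC(\sB_\proj)$, the class $\sW=\sC(\sB)_\ac$ of acyclic complexes is manifestly invariant under the shift, and the cotorsion pair is complete by Theorem~\ref{loc-pres-cotorsion-pair}. That lemma then yields both $\sL\cap\sW=\sC(\sB)_\proj$ and the thickness of~$\sW$. (Directly, a complex in $\sL\cap\sW$ is homotopy projective and acyclic, hence contractible with projective terms, hence a projective object of $\sC(\sB)$ by Lemma~\ref{inj-proj-complexes}(b); the reverse inclusion is immediate.) With these verified, Lemma~\ref{inj-proj-model-structures}(b) produces the projective abelian model structure, and heredity is automatic, since every projective abelian model structure is hereditary, as noted following Lemma~\ref{inj-proj-model-structures}.

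To establish cofibrant generation I would invoke Corollary~\ref{abelian-cofibrantly-generated}, which reduces the task to exhibiting generating sets for the two cotorsion pairs $(\sL,\sR\cap\sW)$ and $(\sL\cap\sW,\sR)$. Since $\sR=\sC(\sB)$, we have $\sR\cap\sW=\sW$, so the first pair is $(\sC(\sB_\proj)_\hpr,\sC(\sB)_\ac)$, which is generated by the set $\sS=\{P[i]\}_{i\in\boZ}$ for a projective generator $P$ of $\sB$, as shown inside the proof of Theorem~\ref{loc-pres-cotorsion-pair}. The second pair is $(\sC(\sB)_\proj,\sC(\sB))$; as the objects $G^+(P)[i]$, $i\in\boZ$, are projective generators of $\sC(\sB)$, taking ${\sS'}=\{G^+(P)[i]\}_{i\in\boZ}$ gives ${\sS'}^{\perp_1}=\sC(\sB)$ and ${}^{\perp_1}\sC(\sB)=\sC(\sB)_\proj$, so this set generates the second pair.

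The main obstacle does not lie in this final theorem but upstream: essentially all the work sits in Theorem~\ref{loc-pres-cotorsion-pair}, whose completeness assertion rests on the small object argument through Theorem~\ref{cotorsion-pair-generated-by-set-complete}. Given that input, the present statement becomes an exercise in assembling the recorded lemmas, the only mild subtlety being the cofibrant generation of the ``trivial'' cotorsion pair $(\sC(\sB)_\proj,\sC(\sB))$, which I would handle via the projective generators $G^+(P)[i]$ as above.
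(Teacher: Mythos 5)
Your proposal is correct and follows essentially the same route as the paper: Theorem~\ref{loc-pres-cotorsion-pair} for the complete cotorsion pair, Lemma~\ref{intersection-is-inj-proj-complexes}(b) for $\sL\cap\sW=\sC(\sB)_\proj$ and the thickness of $\sW$, Lemma~\ref{inj-proj-model-structures}(b) to assemble the projective abelian model structure (hence hereditary), and Corollary~\ref{abelian-cofibrantly-generated} for cofibrant generation. The only cosmetic difference is that the paper notes the cotorsion pair $(\sC(\sB)_\proj,\sC(\sB))$ is generated by the empty set, whereas you use the set $\{G^+(P)[i]\}_{i\in\boZ}$ of projective generators; both choices work.
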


\begin{proof}
 Once again, in the special setting when $\sB$ is a Grothendieck abelian
category with enough projective objects, this theorem becomes
a particular case of~\cite[Corollary~4.7]{Gil3}.

 The pair of classes $(\sL,\sW)$ is a complete cotorsion pair in
$\sC(\sB)$ by Theorem~\ref{loc-pres-cotorsion-pair}.
 The abelian category of complexes $\sC(\sB)$ has enough projective
objects, since the abelian category $\sB$ has.
 According to Lemma~\ref{intersection-is-inj-proj-complexes}(b),
it follows that $\sL\cap\sW=\sC(\sB)_\proj$.
 The class of acyclic complexes $\sW$ is obviously thick in $\sC(\sB)$
(see also Lemma~\ref{W-is-thick}(b)
or~\ref{intersection-is-inj-proj-complexes}(b)).
 By Lemma~\ref{inj-proj-model-structures}(b) (applied to
the category $\sC(\sB)$), the triple $(\sL,\sW,\sR)$ is
a projective abelian model structure on $\sC(\sB)$.

 As explained in Section~\ref{abelian-model-secn}, any projective
abelian model structure is hereditary.
 Finally, it is clear from the proof of
Theorem~\ref{loc-pres-cotorsion-pair} that the cotorsion pair
$(\sL,\sW)$ in $\sC(\sB)$ is generated by a set of objects.
 The cotorsion pair $(\sC(\sB)_\proj,\,\sC(\sB))$ is generated
by the empty set of objects (or by the single zero object) in
$\sC(\sB)$.
 According to Corollary~\ref{abelian-cofibrantly-generated}, this
means that our abelian model structure is cofibrantly generated.
\end{proof}

 The abelian model structure $(\sL,\sW,\sR)$ defined in
Theorem~\ref{loc-pres-model-structure} is called the \emph{projective
derived model structure} on the abelian category of complexes
$\sC(\sB)$.

\begin{lem} \label{loc-pres-weak-equivalences}
 For any locally presentable abelian category\/ $\sB$ with enough
projective objects, the class\/ $\cW$ of all weak equivalences in
the projective derived model structure on the abelian category\/
$\sC(\sB)$ coincides with the class of all quasi-isomorphisms of
complexes in\/~$\sB$.
\end{lem}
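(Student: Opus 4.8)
The plan is to establish the two inclusions separately, using the explicit description of $\cW$ coming from Theorem~\ref{abelian-model-structures-thm}. In our situation $\sL=\sC(\sB_\proj)_\hpr$, $\sW=\sC(\sB)_\ac$ and $\sR=\sC(\sB)$, so that $\sR\cap\sW=\sC(\sB)_\ac$, while $\sL\cap\sW=\sC(\sB)_\proj$ by Lemma~\ref{intersection-is-inj-proj-complexes}(b) (as recorded in the proof of Theorem~\ref{loc-pres-model-structure}). According to Theorem~\ref{abelian-model-structures-thm}, a morphism $w$ lies in $\cW$ precisely when it factors as $w=rl$ with $l\in(\sL\cap\sW)\Mono$ a monomorphism whose cokernel is a projective object of $\sC(\sB)$, and $r\in(\sR\cap\sW)\Epi$ an epimorphism whose kernel is acyclic.

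For the inclusion of $\cW$ into the quasi-isomorphisms, I would first note that a projective object of $\sC(\sB)$ is contractible, hence acyclic, by Lemma~\ref{inj-proj-complexes}(b). Thus a trivial cofibration (a monomorphism with acyclic cokernel) and a trivial fibration (an epimorphism with acyclic kernel) are both quasi-isomorphisms, as one reads off the long exact sequence of cohomology. Since quasi-isomorphisms are closed under composition, every $w=rl\in\cW$ is a quasi-isomorphism.

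For the reverse inclusion, let $w\:C^\bu\rarrow D^\bu$ be a quasi-isomorphism. Using the factorization property of the weak factorization system $(\cL,\>\cR\cap\cW)$, I would factor $w=rl$ with $l\:C^\bu\rarrow E^\bu$ a cofibration in $\cL=\sL\Mono$ and $r$ a trivial fibration. By the previous paragraph $r$ is a quasi-isomorphism, so two-out-of-three for quasi-isomorphisms forces $l$ to be one as well. Now $l$ is a monomorphism whose cokernel $K^\bu$ lies in $\sL=\sC(\sB_\proj)_\hpr$, and the long exact cohomology sequence of $0\rarrow C^\bu\rarrow E^\bu\rarrow K^\bu\rarrow0$ shows that $K^\bu$ is acyclic, since $l$ is a quasi-isomorphism.

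The crux, which I expect to be the main obstacle, is to upgrade this to the statement that $l$ is a \emph{trivial} cofibration. The key observation is that $K^\bu$ is simultaneously homotopy projective and acyclic: taking $X^\bu=K^\bu$ in the defining vanishing condition $\Hom_{\sK(\sB)}(K^\bu,X^\bu)=0$ yields $\Hom_{\sK(\sB)}(K^\bu,K^\bu)=0$, so $\id_{K^\bu}$ is null-homotopic and $K^\bu$ is contractible. A contractible complex of projective objects is a projective object of $\sC(\sB)$ by Lemma~\ref{inj-proj-complexes}(b), that is, $K^\bu\in\sC(\sB)_\proj=\sL\cap\sW$. Hence $l\in(\sL\cap\sW)\Mono=\cL\cap\cW$ is a weak equivalence, and $w=rl$ is a composition of the two weak equivalences $l$ and $r$, so $w\in\cW$ by the two-out-of-three property. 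This completes the identification of $\cW$ with the class of quasi-isomorphisms.
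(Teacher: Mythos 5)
Your proof is correct. The overall strategy coincides with the paper's: both inclusions are handled via the factorization axioms and the two-out-of-three property for quasi-isomorphisms. The difference lies in how the crux is treated. The paper invokes the general model-categorical fact (Hovey's Lemma~5.8, cited in the paper's proof) that a monomorphism in an abelian model category is a weak equivalence if and only if its cokernel is weakly trivial (and dually for epimorphisms); since the weakly trivial objects here are exactly the acyclic complexes, the identification of weak equivalences with quasi-isomorphisms among monos and epis is immediate, and the general case follows by factoring as a trivial cofibration followed by a fibration. You instead work directly from the explicit description of $\cW$ in Theorem~\ref{abelian-model-structures-thm} and, at the key step, prove by hand that a cofibration which is a quasi-isomorphism is a trivial cofibration: its cokernel is a homotopy projective acyclic complex of projectives, hence contractible (the orthogonality $\Hom_{\sK(\sB)}(K^\bu,K^\bu)=0$ kills the identity), hence a projective object of $\sC(\sB)$ by Lemma~\ref{inj-proj-complexes}(b). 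What your route buys is self-containedness --- it avoids the external citation and makes visible the concrete mechanism $\sC(\sB_\proj)_\hpr\cap\sC(\sB)_\ac=\sC(\sB)_\proj$ --- at the cost of being specific to this particular model structure, whereas the paper's argument transfers verbatim to the injective derived model structure of Lemma~\ref{grothendieck-weak-equivalences}.
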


\begin{proof}
 According to~\cite[Lemma~5.8]{Hov}, a monomorphism in an abelian model
category is a weak equivalence if and only if its cokernel is weakly
trivial.
 Dually, an epimorphism is a weak equivalence if and only if its
kernel is weakly trivial.

 In the situation at hand, the class of weakly trivial objects $\sW$
is the class of all acyclic complexes.
 This implies the assertion of the lemma for all monomorphisms and
epimorphisms of complexes.
 It remains to recall that any morphism in $\sC(\sB)$ is the composition
of, say, a trivial cofibration (which is a monomorphism, a weak
equivalence, and a quasi-isomorphism) and a fibration (which is
an epimorphism), and that both the classes of weak equivalences and
quasi-isomorphisms satisfy the two-out-of-three property.
 Alternatively, one can use the fact that any morphism is
the composition of a cofibration and a trivial fibration.
\end{proof}

 The following corollary presumes existence of (set-indexed) coproducts
in the derived category $\sD(\sB)$.
 Notice that such coproducts can be simply constructed as
the coproducts in the homotopy category $\sK(\sB)_\hpr$ or
$\sK(\sB_\proj)_\hpr$, which is equivalent to $\sD(\sB)$ by
Corollary~\ref{enough-homotopy-projectives}.
 The full subcategory $\sK(\sB)_\hpr$ is closed under coproducts
in $\sK(\sB)$, so the coproducts in $\sK(\sB)_\hpr$ (just as
in $\sK(\sB)$) can be computed as the termwise coproducts of complexes.

\begin{cor} \label{loc-pres-well-generated}
 For any locally presentable abelian category\/ $\sB$ with enough
projective objects, the (unbounded) derived category\/ $\sD(\sB)$ is
a well-generated triangulated category (in the sense of
the book~\cite{Neem-book} and the paper~\cite{Kra0}).
\end{cor}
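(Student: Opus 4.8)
The plan is to recognize $\sD(\sB)$ as the homotopy category of the projective derived model structure and then to invoke the general principle, already flagged in Section~\ref{abelian-model-secn}, that the homotopy category of a stable combinatorial model category is well-generated. So the corollary is a formal harvest of the results already established; almost all the work has been done in Theorem~\ref{loc-pres-model-structure}.

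First I would recall from Theorem~\ref{loc-pres-model-structure} that the triple of classes $(\sC(\sB_\proj)_\hpr,\>\sC(\sB)_\ac,\>\sC(\sB))$ is a cofibrantly generated hereditary abelian model structure on the category of complexes $\sC(\sB)$. By Lemma~\ref{complexes-locally-presentable}, the underlying abelian category $\sC(\sB)$ is locally presentable; together with cofibrant generation, this says precisely that the model structure is \emph{combinatorial}. Moreover, being a hereditary abelian model structure, it is \emph{stable} by~\cite[Corollary~1.1.15 and the preceding discussion]{Bec}. Thus all the hypotheses needed to apply the well-generatedness principle are in place.

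Next I would apply the cited result that the homotopy category of any stable combinatorial model category is well-generated triangulated in the sense of Neeman~\cite{Neem-book,Kra0}; see~\cite[Proposition~6.10]{Ros} or~\cite[Theorems~3.1 and~3.9]{CR}. This yields that the homotopy category $\sC(\sB)[\cW^{-1}]$ of the projective derived model structure is well-generated (in particular, it has all set-indexed coproducts, as the definition of well-generatedness requires). It remains only to identify this homotopy category with the derived category $\sD(\sB)$: by Lemma~\ref{loc-pres-weak-equivalences}, the class $\cW$ of weak equivalences coincides with the class of quasi-isomorphisms of complexes in $\sB$, so $\sC(\sB)[\cW^{-1}]$ is by definition $\sD(\sB)$.

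I do not expect a genuine obstacle here. The only point demanding care is the verification that both standing hypotheses of the well-generatedness theorem—combinatoriality and stability—are actually met, and each of these is supplied directly by the preceding results (Lemma~\ref{complexes-locally-presentable} and cofibrant generation give the former, the hereditary abelian property together with~\cite{Bec} gives the latter). The identification of the homotopy category with $\sD(\sB)$ via Lemma~\ref{loc-pres-weak-equivalences} is then immediate.
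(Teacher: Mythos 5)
Your proposal is correct and follows essentially the same route as the paper's own proof: identify $\sD(\sB)$ with the homotopy category $\sC(\sB)[\cW^{-1}]$ of the projective derived model structure (using Lemma~\ref{loc-pres-weak-equivalences}), note that this model structure is combinatorial and, being hereditary abelian, stable by~\cite[Corollary~1.1.15]{Bec}, and conclude by~\cite[Proposition~6.10]{Ros} or~\cite[Theorems~3.1 and~3.9]{CR}. The only difference is cosmetic: you spell out the appeal to Lemma~\ref{loc-pres-weak-equivalences} explicitly, which the paper leaves implicit.
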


\begin{proof}
 This property is well-known for Grothendieck categories (see
references in Corollary~\ref{grothendieck-well-generated} below).
 For Grothendieck categories with enough projectives, it becomes
also a particular case of~\cite[Corollary~4.7]{Gil3}.

 Quite generally, following~\cite[Corollary~1.1.15 and the preceding
discussion]{Bec}, any hereditary abelian model category is stable
(so its homotopy category is triangulated).
  Hence, in the situation at hand, the derived category $\sD(\sB)$
can be equivalently defined as the homotopy category
$\sC(\sB)[\cW^{-1}]$ of the stable combinatorial model category
$\sC(\sB)$ with the projective derived model category structure.
 The conclusion follows by~\cite[Proposition~6.10]{Ros}
or~\cite[Theorems~3.1 and~3.9]{CR}.
\end{proof}

 The next result appeared in the context of DG\+contramodules over
a DG\+coalgebra (over a field) in~\cite[Section~5.5]{Pkoszul};
see~\cite[Theorem~1.1(b)]{Pmc}.

\begin{thm} \label{loc-pres-generated-by-P-as-localizing}
 Let $\sB$ be a locally presentable abelian category with a projective
generator~$P$.
 Then the category\/ $\sD(\sB)$ is generated, as a triangulated
category with coproducts, by the single object $P$ (viewed as
a one-term complex concentrated in degree~$0$).
 In other words, the full subcategory of homotopy projective
complexes\/ $\sK(\sB)_\hpr\subset\sK(\sB)$ is the minimal strictly
full triangulated subcategory in\/ $\sK(\sB)$ containing the object $P$
and closed under coproducts.
\end{thm}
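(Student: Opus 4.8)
The plan is to show that the localizing subcategory of $\sK(\sB)$ generated by $P$ exhausts all homotopy projective complexes. Let me denote by $\sT\subset\sK(\sB)$ the minimal strictly full triangulated subcategory containing $P$ and closed under coproducts. Since $P$ is itself homotopy projective (a one-term complex of a projective object has no nonzero maps to acyclic complexes, as $\Hom_{\sK(\sB)}(P,X^\bu)\simeq\Hom_\sB(P,H^0(X^\bu))=0$ for acyclic $X^\bu$), and since $\sK(\sB)_\hpr$ is triangulated and closed under coproducts (coproducts being computed termwise, and the homotopy-projective property being preserved), we immediately get the inclusion $\sT\subset\sK(\sB)_\hpr$. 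The content is the reverse inclusion $\sK(\sB)_\hpr\subset\sT$. By Corollary~\ref{enough-homotopy-projectives}, $\sK(\sB_\proj)_\hpr\rarrow\sD(\sB)$ is an equivalence, so it suffices to produce, for an arbitrary complex $C^\bu\in\sC(\sB)$, an object of $\sT$ quasi-isomorphic to $C^\bu$.

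**First** I would build a homotopy projective resolution of $C^\bu$ by hand out of copies of $P$, using that $P$ is a projective generator. The key input is that every object $B\in\sB$ is a quotient of a coproduct of copies of $P$: choosing an epimorphism $P^{(I_B)}\twoheadrightarrow B$ functorially, one constructs in the usual Spaltenstein manner a Cartan--Eilenberg-type bicomplex whose columns are coproducts of shifts of $P$ and whose total complex $Q^\bu$ maps quasi-isomorphically to $C^\bu$. Concretely, I would realize $Q^\bu$ as a transfinite extension: first take a complex of coproducts of $P$ surjecting onto $C^\bu$ with acyclic kernel in positive-degree cohomology, then iterate to kill the remaining cohomology, assembling the pieces via a telescope (homotopy colimit) so that only cones and coproducts of shifts of $P$ are used. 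Every stage of this construction stays inside $\sT$ because $\sT$ is closed under shifts, cones, and coproducts, and the homotopy colimit of a sequence in $\sT$ is again in $\sT$ (being the cone of a map between coproducts).

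**The main obstacle** will be the passage from bounded approximations to the unbounded complex: I must ensure that the stagewise resolutions, each built from $P$ inside $\sT$, assemble into a genuinely homotopy projective complex $Q^\bu$ with a quasi-isomorphism $Q^\bu\rarrow C^\bu$, rather than merely a termwise-projective one. This is exactly the subtlety that distinguishes the unbounded from the bounded-above case. I would resolve it by invoking the cotorsion-pair machinery already available: the special precover sequence $0\rarrow X^\bu\rarrow Q^\bu\rarrow C^\bu\rarrow0$ from Theorem~\ref{loc-pres-cotorsion-pair} furnishes $Q^\bu\in\sC(\sB_\proj)_\hpr$, and then the task reduces to checking that the specific $Q^\bu$ produced by the small object argument lies in $\sT$. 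For this I would use the corollary $\sC(\sB_\proj)_\hpr=\Fil(\sS)^\oplus$ with $\sS=\{P[i]\}$: since $Q^\bu$ is a direct summand of an $\sS$-filtered complex, and an $\sS$-filtration is a transfinite composition of pushouts (hence, in the triangulated category, of cones) of morphisms $0\rarrow P[i]$, every $\sS$-filtered complex lies in $\sT$, and $\sT$ is closed under direct summands (a localizing subcategory is thick). Therefore $Q^\bu\in\sT$.

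**Finally**, combining the two inclusions gives $\sT=\sK(\sB)_\hpr$, and composing with the equivalence $\sK(\sB)_\hpr\simeq\sD(\sB)$ of Corollary~\ref{enough-homotopy-projectives} shows that $\sD(\sB)$ is generated as a triangulated category with coproducts by $P$. The cleanest route, which I would adopt to avoid the explicit telescope bookkeeping, is thus to bypass the hand-built resolution entirely: take the cotorsion-pair resolution, identify $\sC(\sB_\proj)_\hpr$ with $\Fil(\sS)^\oplus$, and observe that $\Fil(\sS)^\oplus\subset\sT$ because transfinite compositions of pushouts of $0\rarrow P[i]$ become, under the functor $\sC(\sB)\rarrow\sK(\sB)$, iterated cones and coproducts — all of which $\sT$ absorbs — while summands are absorbed by thickness of the localizing subcategory $\sT$.
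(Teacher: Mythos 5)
Your reduction is sound in outline: the easy inclusion $\sT\subset\sK(\sB)_\hpr$ (where $\sT$ denotes the localizing subcategory of $\sK(\sB)$ generated by $P$), the appeal to Corollary~\ref{enough-homotopy-projectives} and to the identification $\sC(\sB_\proj)_\hpr=\Fil(\sS)^\oplus$ with $\sS=\{P[i]\}_{i\in\boZ}$, and the closure of $\sT$ under direct summands are all correct. The successor steps of an $\sS$\+filtration are also unproblematic: each quotient $F_{i+1}/F_i\simeq P[n]$ is a one-term complex with a projective term, so the extension is termwise split and yields an honest triangle $F_i\rarrow F_{i+1}\rarrow P[n]\rarrow F_i[1]$ in $\sK(\sB)$, whence $F_{i+1}\in\sT$ if $F_i\in\sT$.

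The gap is at the limit ordinals. Your justification---that transfinite compositions ``become iterated cones and coproducts'' and that ``the homotopy colimit of a sequence in $\sT$ is again in $\sT$ (being the cone of a map between coproducts)''---is valid only for $\omega$\+indexed sequences. For a limit ordinal~$j$ of uncountable cofinality, the colimit $F_j=\varinjlim_{i<j}F_i$ is \emph{not} the cone of the telescope map $1-\mathrm{shift}$ on $\bigoplus_{i<j}F_i$: the cokernel of that map computes the colimit over the subdiagram generated by the successor morphisms only, which is not cofinal in $[0,j)$, and passing to a cofinal subchain merely replaces $j$ by the regular cardinal $\mathrm{cof}(j)$, for which the same problem recurs. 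So the assertion that every $\sS$\+filtered complex lies in $\sT$ is exactly the hard point; it is true, but proving it directly requires genuinely more machinery (e.g., simplicial replacements/realizations of transfinite towers in a stable model category), and in effect it is a \emph{consequence} of the theorem rather than an available input. The paper sidesteps this entirely: it proves that $\sD(\sB)$ is well-generated (Corollary~\ref{loc-pres-well-generated}, via the combinatorial model structure), invokes Krause's theorem that the localizing subcategory generated by a set of objects in a well-generated category is again well-generated, applies Brown representability to obtain a right adjoint to the inclusion $\sD'\rarrow\sD(\sB)$, and then kills the kernel of the resulting Verdier quotient using $\Hom_{\sD(\sB)}(P,X^\bu[i])\simeq\Hom_\sB(P,H^i(X^\bu))$. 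A further caution about your fallback Spaltenstein-type construction: in a general locally presentable abelian category with enough projectives, coproducts need not be exact, so one cannot identify the cohomology of a telescope with the colimit of cohomologies; this is another reason the abstract route is preferred here.
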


\begin{proof}
 The key observation is that $\Hom_{\sD(\sB)}(P,X^\bu[i])=0$ for all
$i\in\boZ$ implies $X^\bu=0$ for a given object $X^\bu\in\sD(\sB)$,
since for any complex $X^\bu$ one has a natural isomorphism of abelian
groups $\Hom_{\sD(\sB)}(P,X^\bu[i])\simeq\Hom_\sB(P,H^i(X^\bu))$.

 By Corollary~\ref{loc-pres-well-generated}, the triangulated category
$\sD(\sB)$ is well-generated.
 Denote by $\sD'\subset\sD(\sB)$ the minimal full triangulated
subcategory in $\sD(\sB)$ containing $P$ and closed under coproducts.
 By~\cite[Theorem~7.2.1(2)]{Kra2}, the localizing subcategory generated
by any set of objects in a well-generated triangulated category is
also well-generated; so $\sD'$ is well-generated.
 Any well-generated triangulated category is perfectly generated by
definition; so the Brown representability
theorem~\cite[Theorem~5.1.1]{Kra2} is applicable, and in particular
the inclusion functor $\sD'\rarrow\sD(\sB)$ has a right adjoint.

 The right adjoint functor to a fully faithful triangulated functor
is a Verdier quotient functor.
 The kernel of our functor $\sD(\sB)\rarrow\sD'$ consists of
complexes $X^\bu$ satisfying $\Hom_{\sD(\sB)}(P[i],X^\bu)=0$ for all
$i\in\boZ$, since $P[i]\in\sD'$.
 Thus all such objects $X^\bu\in\sD(\sB)$ vanish, hence the functor
$\sD(\sB)\rarrow\sD'$ is a triangulated equivalence; and it follows
that the inclusion $\sD'\rarrow\sD(\sB)$ is a triangulated equivalence,
too; the latter means that $\sD'=\sD(\sB)$, as desired.
\end{proof}

\begin{rem}
 An object $S$ in a triangulated category $\sD$ is said to be
a \emph{weak generator} if, for any object $X\in\sD$, one has $X=0$
whenever $\Hom_\sD(S,X[i])=0$ for every $i\in\boZ$.
 Being a weak generator is a weaker property than the one described
in Theorem~\ref{loc-pres-generated-by-P-as-localizing}.
 In particular, for any abelian category $\sB$ with a projective
generator $P$, one can immediately see that $P$ is a weak generator
of the derived category $\sD(\sB)$.
 A generalization of this observation to (not necessarily projective)
generators of Grothendieck abelian categories and certain related
triangulated categories can be found
in~\cite[Remark~1 in Section~4]{Gil3}.
\end{rem}

\Section{Contraderived Model Structure}  \label{contraderived-secn}

 In this section we consider contraderived categories \emph{in
the sense of Becker}~\cite{Bec}.
 In well-behaved cases, this means simply the homotopy category of
complexes of projective objects (which was studied first by
J\o rgensen~\cite{Jor} in the case of module categories); see
the discussion in the introduction.

 The contraderived category in the sense of Becker needs to be
distinguished from the contraderived category in the sense of
the books and papers~\cite{Psemi,Pkoszul,PP2,Pweak,PS2,Pps}.
 The two definitions of a contraderived category are known to be
equivalent under certain assumptions~\cite[Theorem~3.8]{Pkoszul},
but it is still an open question whether they are equivalent for
the category of modules over an arbitrary associative ring
(see~\cite[Example~2.6(3)]{Pps} and Remark~\ref{coderived-history}
below for a discussion).

 Let $\sB$ be an abelian category.
 A complex $X^\bu\in\sK(\sB)$ is said to be \emph{contraacyclic}
(in the sense of Becker) if $\Hom_{\sK(\sB)}(P^\bu,X^\bu)=0$ for
any complex of projective objects $P^\bu\in\sK(\sB_\proj)$.
 We denote the full subcategory of contraacyclic complexes
by $\sK(\sB)_\ac^\ctr\subset\sK(\sB)$.
 Clearly, $\sK(\sB)_\ac^\ctr$ is a triangulated (and even thick)
subcategory in the homotopy category $\sK(\sB)$.
 The quotient category $\sD^\ctr(\sB)=\sK(\sB)/\sK(\sB)_\ac^\ctr$
is called the \emph{contraderived category of\/~$\sB$} (in the sense
of Becker).

\begin{lem} \label{contraacyclic-lemma}
\textup{(a)} For any short exact sequence\/ $0\rarrow K^\bu\rarrow
L^\bu\rarrow M^\bu\rarrow0$ of complexes in\/ $\sB$, the total complex\/
$\Tot(K^\bu\to L^\bu\to M^\bu)$ of the bicomplex with three rows
$K^\bu\rarrow L^\bu\rarrow M^\bu$ belongs to\/ $\sK(\sB)_\ac^\ctr$. \par
\textup{(b)} The full subcategory of contraacyclic complexes\/
$\sK(\sB)_\ac^\ctr$ is closed under products in the homotopy category\/
$\sK(\sB)$.
\end{lem}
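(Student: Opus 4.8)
The plan is to exploit that, by definition, $\sK(\sB)_\ac^\ctr$ is precisely the right orthogonal to the subcategory $\sK(\sB_\proj)$ inside $\sK(\sB)$, and to compute the relevant $\Hom$ groups via the Hom complex. For complexes $P^\bu$, $X^\bu$ in $\sB$, write $\Hom^\bu_\sB(P^\bu,X^\bu)$ for the complex of abelian groups with $\Hom^n_\sB(P^\bu,X^\bu)=\prod_{i\in\boZ}\Hom_\sB(P^i,X^{i+n})$ and the usual differential; one has the standard identity $H^n\Hom^\bu_\sB(P^\bu,X^\bu)\simeq\Hom_{\sK(\sB)}(P^\bu,X^\bu[n])$, valid for arbitrary complexes. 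Thus, to prove~(a), it suffices to show that the complex $\Hom^\bu_\sB(P^\bu,T^\bu)$ is acyclic for every complex of projectives $P^\bu$, where $T^\bu=\Tot(K^\bu\to L^\bu\to M^\bu)$; its vanishing in degree~$0$ is exactly the desired assertion $\Hom_{\sK(\sB)}(P^\bu,T^\bu)=0$.

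First I would check that the functor $\Hom^\bu_\sB(P^\bu,{-})$ from $\sC(\sB)$ to complexes of abelian groups is \emph{exact} when $P^\bu$ is a complex of projective objects. Indeed, each $\Hom_\sB(P^i,{-})$ is exact by projectivity of $P^i$, so applying it to the degreewise exact sequence $0\rarrow K^\bu\rarrow L^\bu\rarrow M^\bu\rarrow0$ yields degreewise exact sequences, and forming the products over $i\in\boZ$ preserves exactness, as products are exact in the category of abelian groups. Hence $0\rarrow\Hom^\bu_\sB(P^\bu,K^\bu)\rarrow\Hom^\bu_\sB(P^\bu,L^\bu)\rarrow\Hom^\bu_\sB(P^\bu,M^\bu)\rarrow0$ is a short exact sequence of complexes of abelian groups.

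Next, since each $\Hom_\sB(P^i,{-})$ preserves the finite direct sums occurring in the terms of $T^\bu$, and products commute with finite direct sums, the functor $\Hom^\bu_\sB(P^\bu,{-})$ carries the finite (three row) total complex to a total complex: $\Hom^\bu_\sB(P^\bu,T^\bu)\simeq\Tot(\Hom^\bu_\sB(P^\bu,K^\bu)\to\Hom^\bu_\sB(P^\bu,L^\bu)\to\Hom^\bu_\sB(P^\bu,M^\bu))$. By the previous step this is the total complex of a short exact sequence of complexes, and the total complex of a short exact sequence of complexes is acyclic (its associated bicomplex has exact rows and only three columns, so no convergence issue arises). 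This proves~(a). Part~(b) is then purely formal: the defining condition realizes $\sK(\sB)_\ac^\ctr$ as the common kernel of the functors $\Hom_{\sK(\sB)}(P^\bu,{-})$, $P^\bu\in\sK(\sB_\proj)$, and each such functor sends products to products by the universal property. Concretely, for a family $(X_\alpha^\bu)$ of contraacyclic complexes whose product exists in $\sK(\sB)$ one has $\Hom_{\sK(\sB)}(P^\bu,\prod_\alpha X_\alpha^\bu)\simeq\prod_\alpha\Hom_{\sK(\sB)}(P^\bu,X_\alpha^\bu)=0$, so the product is again contraacyclic.

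The routine bookkeeping consists of the identity $H^n\Hom^\bu_\sB(P^\bu,{-})\simeq\Hom_{\sK(\sB)}(P^\bu,{-}[n])$ and the compatibility of $\Hom^\bu_\sB(P^\bu,{-})$ with the total complex. The one point that genuinely uses the hypotheses, and hence the main (essentially the only) obstacle, is the exactness of $\Hom^\bu_\sB(P^\bu,{-})$, resting on projectivity of the terms $P^i$ together with exactness of products of abelian groups. It is worth emphasizing that one cannot shortcut~(a) by noting that $T^\bu$ is quasi-isomorphic to a contractible complex: an object of $\sK(\sB_\proj)$ need \emph{not} be homotopy projective, so quasi-isomorphisms do not preserve $\Hom_{\sK(\sB)}(P^\bu,{-})$, and the short exact sequence structure must be used directly, exactly as above.
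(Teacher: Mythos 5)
Your proposal is correct and follows essentially the same route as the paper: both arguments compute $\Hom_{\sK(\sB)}(P^\bu,{-})$ as the cohomology of the Hom complex, observe that projectivity of the terms $P^i$ (together with exactness of products of abelian groups) makes $\Hom^\bu_\sB(P^\bu,{-})$ exact on the given short exact sequence of complexes, identify $\Hom^\bu_\sB(P^\bu,\Tot(-))$ with the totalization of the resulting short exact sequence of Hom complexes, and conclude by acyclicity of such a totalization; part~(b) is the same formal orthogonality argument in both. The only cosmetic difference is that the paper phrases the degree-$n$ term as $\Hom_{\sB^\sgr}(P^\bu{}^\sharp,X^\bu{}^\sharp[n])$ rather than as an explicit product, which is the same thing.
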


\begin{proof}
 This is our version of~\cite[Theorem~3.5(b)]{Pkoszul}.
 Part~(b) follows immediately from the definitions.
 To prove part~(a), let us introduce the following notation.

 For any two complexes $C^\bu$ and $D^\bu$ in an additive category
$\sE$, let $\Hom_\sE(C^\bu,D^\bu)$ denote the complex of morphisms
from $C^\bu$ to~$D^\bu$.
 So the degree~$n$ component $\Hom_\sE^n(C^\bu,D^\bu)$ of the complex
$\Hom_\sE(C^\bu,D^\bu)$ is the group $\Hom_{\sE^\sgr}(C^\bu{}^\sharp,
D^\bu{}^\sharp[n])$ of morphisms $C^\bu{}^\sharp\rarrow
D^\bu{}^\sharp[n]$ in the category $\sE^\sgr$ of
graded objects in~$\sE$.
 The group $\Hom_{\sC(\sE)}(C^\bu,D^\bu)$ can be computed as
the kernel of the differential $\Hom_\sE^0(C^\bu,D^\bu)\rarrow
\Hom_\sE^1(C^\bu,D^\bu)$, while the group
$\Hom_{\sK(\sE)}(C^\bu,D^\bu)$ is the degree~$0$ cohomology group
$H^0(\Hom_\sE(C^\bu,D^\bu))$ of the complex $\Hom_\sE(C^\bu,D^\bu)$.
 
 Now in the situation at hand, for any complex of projective objects
$P^\bu\in\sK(\sB_\proj)$ and any short exact sequence of complexes
$0\rarrow K^\bu\rarrow L^\bu\rarrow M^\bu\rarrow0$ in $\sB$ we have
a short exact sequence of complexes of abelian groups
$0\rarrow\Hom_\sB(P^\bu,K^\bu)\rarrow\Hom_\sB(P^\bu,L^\bu)\rarrow
\Hom_\sB(P^\bu,M^\bu)\rarrow0$.
 The complex $\Hom_\sB(P^\bu,\>\Tot(K^\bu\to L^\bu\to M^\bu))$
can be computed as the total complex of the bicomplex of abelian
groups $\Hom_\sB(P^\bu,K^\bu)\rarrow\Hom_\sB(P^\bu,L^\bu)\rarrow
\Hom_\sB(P^\bu,M^\bu)$.
 It remains to observe that the totalization of any short exact
sequence of abelian groups is an acyclic complex.
\end{proof}

\begin{prop} \label{graded-projective-deconstructible}
 Let\/ $\sB$ be a locally presentable abelian category with enough
projective objects.
 Then there exists a set of complexes of projective objects\/
$\sS\subset\sC(\sB_\proj)$ such that the class of all complexes
of projective objects\/ $\sC(\sB_\proj)\subset\sC(\sB)$ is the class
of all direct summands of complexes filtered by\/ $\sS$, that is\/
$\sC(\sB_\proj)=\Fil(\sS)^\oplus$.
\end{prop}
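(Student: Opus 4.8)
The plan is to exhibit an explicit set $\sS$ and prove the equality $\sC(\sB_\proj)=\Fil(\sS)^\oplus$ by two inclusions. First I would fix a regular cardinal $\lambda$ such that $\sB$ is locally $\lambda$\+presentable and the projective generator $P$ is $\lambda$\+presentable; then $\sC(\sB)$ is locally $\lambda$\+presentable by Lemma~\ref{complexes-locally-presentable}, and the contractible complexes $G^+(P)[i]$, being built from finitely many copies of $P$, are $\lambda$\+presentable objects of $\sC(\sB_\proj)$. Let $\sS$ be a set of representatives of the isomorphism classes of those $\lambda$\+presentable objects of $\sC(\sB)$ that happen to be complexes of projective objects, i.e.\ that lie in $\sC(\sB_\proj)$; this is a set, since a locally presentable category has only a set of isomorphism classes of $\lambda$\+presentable objects. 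By construction $\sS\subset\sC(\sB_\proj)$, and $\sS$ contains all the $G^+(P)[i]$, so $\Fil(\sS)$ is generating in $\sC(\sB)$.

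The inclusion $\Fil(\sS)^\oplus\subseteq\sC(\sB_\proj)$ is the routine half. For each $n\in\boZ$ the evaluation functor $C^\bu\longmapsto C^n\:\sC(\sB)\rarrow\sB$ is exact and preserves colimits, because kernels, cokernels, and colimits in $\sC(\sB)$ are all computed termwise. Applying it to an $\alpha$\+filtration $(F_i^\bu)_{i\le\alpha}$ of a complex $C^\bu\in\Fil(\sS)$ with successive quotients in $\sS\subset\sC(\sB_\proj)$ therefore yields a smooth chain $(F_i^n)_{i\le\alpha}$ with $F_0^n=0$, monomorphic successor maps, and projective cokernels $(F_{i+1}^\bu/F_i^\bu)^n\in\sB_\proj$; that is, an $\alpha$\+filtration of $C^n$ by projective objects. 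Hence $C^n\in\Fil(\sB_\proj)\subseteq\sB_\proj$, where the last inclusion is the nontrivial content of Lemma~\ref{fil-proj} (deduced there from Proposition~\ref{eklof-lemma-prop}). Thus $C^\bu\in\sC(\sB_\proj)$, and since $\sB_\proj$—and therefore $\sC(\sB_\proj)$—is closed under direct summands, this gives $\Fil(\sS)^\oplus\subseteq\sC(\sB_\proj)$.

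For the reverse inclusion $\sC(\sB_\proj)\subseteq\Fil(\sS)^\oplus$, I would construct, for a given $C^\bu\in\sC(\sB_\proj)$, a smooth chain $0=F_0^\bu\subseteq F_1^\bu\subseteq\cdots\subseteq C^\bu$ of subcomplexes, each a \emph{termwise split} subobject of $C^\bu$ (so that every $F_i^\bu$ and every quotient again has projective terms), whose union is $C^\bu$ and whose consecutive quotients $F_{i+1}^\bu/F_i^\bu$ are $\lambda$\+presentable, hence lie in $\sS$. Granting such a chain, $C^\bu\in\Fil(\sS)\subseteq\Fil(\sS)^\oplus$, finishing the proof. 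The construction proceeds by transfinite recursion: one well-orders a generating family of $\lambda$\+presentable subobjects of $C^\bu$ and at each successor step enlarges the current $F_i^\bu$ so as to engulf the next generator, taking unions at limit stages. The decisive point—and the place where essentially all the difficulty sits—is the closing-up lemma: every $\lambda$\+presentable subobject $A\subseteq C^\bu$ is contained in a $\lambda$\+presentable subcomplex $F^\bu\subseteq C^\bu$ that is a direct summand of $C^\bu$ in each degree, compatibly with an already chosen termwise split $F_i^\bu$. Here one uses that each term $C^n$ lies in $\Add(P)$ with $P$ being $\lambda$\+presentable, so that $C^n$ is the $\lambda$\+directed union of its $\lambda$\+presentable direct summands, these summands forming a lattice rich enough (a Hill-lemma / relative-purity structure for the filtration $\sB_\proj=\Fil(\{P\})^\oplus$ of Lemma~\ref{fil-proj}) that one can simultaneously close $A$ up under the differential~$d$, under the splittings of the $C^n$, and across all degrees, while keeping the result $\lambda$\+presentable and termwise split. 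Coordinating termwise-splitness with $d$\+closedness at the cardinality scale $\lambda$ is the main obstacle; the remainder is bookkeeping.

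Finally, I would note that once this deconstructibility is established, it is exactly the statement $\sS^{\perp_1}=\sC(\sB_\proj)^{\perp_1}$ (by the Eklof property of Proposition~\ref{eklof-lemma-prop}), so the equality can be repackaged through Theorem~\ref{cotorsion-pair-generated-by-set-left-class}: the cotorsion pair generated by $\sS$ has left-hand class ${}^{\perp_1}(\sS^{\perp_1})=\Fil(\sS)^\oplus$ (its hypothesis holds since $\Fil(\sS)$ is generating), and this left-hand class is then sandwiched between $\sC(\sB_\proj)$ and $\Fil(\sS)^\oplus$ by the two inclusions above. Either way, the hard core is the filtration argument of the previous paragraph, not this formal packaging.
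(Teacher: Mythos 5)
Your first half (the inclusion $\Fil(\sS)^\oplus\subseteq\sC(\sB_\proj)$ via termwise evaluation and Lemma~\ref{fil-proj}) is correct and is exactly the paper's observation. But the reverse inclusion, which you yourself identify as ``the place where essentially all the difficulty sits,'' is not proved: your ``closing-up lemma'' --- that every $\lambda$\+presentable subobject of $C^\bu\in\sC(\sB_\proj)$ is contained in a $\lambda$\+presentable subcomplex that is a direct summand of $C^\bu$ in every degree, compatibly with a previously chosen such subcomplex --- is asserted, not established, and it \emph{is} the content of the proposition. The appeal to ``a Hill-lemma / relative-purity structure'' for the filtration $\sB_\proj=\Fil(\{P\})^\oplus$ is circular in spirit: the Hill lemma requires the object to be presented as filtered by small objects, i.e.\ it presupposes a deconstructibility statement for $\Add(P)$ of essentially the same nature as the one being proved. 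In particular, it is not justified that an arbitrary object of $\Add(P)$ is a $\lambda$\+directed union of $\lambda$\+presentable direct summands forming a lattice stable enough to be closed up simultaneously under the differential and under the splittings in all degrees.

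The paper's direct proof circumvents exactly this obstacle by two concrete devices you are missing. First, it does \emph{not} filter an arbitrary $C^\bu\in\sC(\sB_\proj)$; it replaces $C^\bu$ by a complex $Q^\bu$ with \emph{free} terms $Q^n=P^{(X^n)}$ of which $C^\bu$ is a direct summand --- this is precisely why the conclusion is $\Fil(\sS)^\oplus$ rather than the stronger $\Fil(\sS)$ you aim for (the stronger statement is obtained in the paper only by the abstract first proof via Lurie and Makkai--Rosick\'y). Second, for free terms the ``lattice of termwise split subcomplexes'' becomes completely explicit: subcoproducts $P^{(Y^n)}\subset P^{(X^n)}$ indexed by subsets $Y^n\subset X^n$, and closing up under the differential reduces to the concrete observation that, $P$ being $\kappa$\+presentable, each composite $d^n\circ\iota_x\:P\rarrow Q^{n+1}$ factors through a subcoproduct $P^{(Z_x)}$ with $|Z_x|<\kappa$. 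With this in hand the transfinite recursion really is bookkeeping, and the successive quotients are bounded below complexes of coproducts of fewer than $\kappa$ copies of $P$. Without the reduction to free terms and the explicit factorization of the differential, your argument has a genuine gap at its central step.
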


\begin{proof}[First proof]
 The stronger statement that there exists a set $\sS'\subset
\sC(\sB_\proj)$ such that $\sC(\sB_\proj)=\Fil(\sS')$ is provable
using the results of~\cite[Section~A.1.5]{Lur}, \cite[Section~3]{MR},
and~\cite[Lemma~4.6]{PR}.
 Without going into details, let us describe the overall logic of
this argument, generalizing~\cite[Remark~3.5]{MR}, while a direct
and more elementary argument is presented in the second proof below.
 The idea is to assign to every class of objects $\sL$ in an abelian
category a related class of morphisms, namely the class of all
$\sL$\+monomorphisms $\sL\Mono$, and use the known results about
cofibrantly generated weak factorization systems in locally
presentable categories.

 We will say that a class of objects $\sL$ in an abelian category
$\sE$ is \emph{transmonic} if any transfinite composition of
$\sL$\+monomorphisms is a monomorphism.
 Then it is claimed that, for any transmonic set of objects $\sS$
in a locally presentable abelian category $\sE$, there exists
a set of objects $\sS'\subset\sE$ such that
$\Fil(\sS')=\Fil(\sS)^\oplus$.
 This essentially follows from~\cite[Proposition~A.1.5.12]{Lur}
in view of Proposition~\ref{fil-cell}.
 Furthermore, a class of objects $\sL\subset\sE$ is said to be
\emph{deconstructible} if there exists a set $\sS\subset\sE$ such that
$\sL=\Fil(\sS)$.
In other words, if $\sL$ is transmonic and deconstructible in $\sE$,
so is $\sL^\oplus$.

 For any small additive category $\sX$ and any transmonic
deconstructible class of objects $\sL$ in a locally presentable
abelian category $\sE$, the class $\AdF(\sX,\sL)$ of all additive
functors $\sX\rarrow\sL$ ($\sL$~being viewed as a full subcategory
in~$\sE$) is (transmonic and) deconstructible in the locally
presentable abelian category $\AdF(\sX,\sE)$ of all additive functors
$\sX\rarrow\sE$.
 This is essentially~\cite[Corollary~3.4]{MR} or a particular case
of~\cite[Corollary~3.6]{MR}, in view of Proposition~\ref{fil-cell}.
%
%

 In particular, for any locally presentable abelian category $\sB$
with a projective generator $P$, we start with the transmonic
and deconstructible class $\Fil(\{P\})$ consisting of all coproducts
of copies of $P$. As this class is transmonic and deconstructible,
so is the class of all projective
objects $\sB_\proj=\Fil(\{P\})^\oplus$, and it further follows,
for a suitable choice of the additive category $\sX$
(see the proof of \cite[Proposition~4.5]{SS}), that the class
$\sC(\sB_\proj)$ is deconstructible in $\sC(\sB)$.
\end{proof}

\begin{proof}[Second proof]
 Here is a direct proof of the assertion stated in
Proposition~\ref{graded-projective-deconstructible}.
 It is clear from Lemma~\ref{fil-proj} that $\Fil(\sC(\sB_\proj))^\oplus
\subset\sC(\sB_\proj)$ (as the forgetful functor $\sC(\sB)\rarrow
\sB^\sgr$ preserves extensions and colimits, hence it also preserves
transfinitely iterated extensions in the sense of the directed colimit).
 So it suffices to find a set of objects $\sS\subset\sC(\sB_\proj)$
such that $\sC(\sB_\proj)\subset\Fil(\sS)^\oplus$. We will actually
find such a set of bounded below complexes, following the argument
for~\cite[Proposition 4.9, (1)$\implies$(2)]{SS}.

 Let $P$ be a projective generator of~$\sB$.
 Then any complex of projective objects in $\sB$ is a direct summand
of a complex whose terms are coproducts of copies of~$P$.
 Choose an uncountable regular cardinal~$\kappa$ such that
the object $P\in\sB$ is $\kappa$\+presentable (then the category $\sB$
is locally $\kappa$\+presentable).
 Let $\sS$ be the set of (representatives of isomorphism classes) of
bounded below complexes whose terms are coproducts of
less than~$\kappa$ copies of~$P$.
 We claim that any complex in $\sB$ whose terms are coproducts of
copies of $P$ belongs to $\Fil(\sS)\subset\sC(\sB)$.

 Let $Q^\bu$ be a complex in $\sB$ whose term $Q^n=P^{(X^n)}$ is
the coproduct of copies of $P$ indexed by a set $X^n$, for every
$n\in\boZ$.
 Let $\alpha$~be the successor cardinal of the cardinality of the
disjoint union $\coprod_{n\in\boZ}X^n$.
 Proceeding by transfinite induction on ordinals $0\le\beta\le\alpha$,
we will construct a smooth chain of subsets $Y_\beta^n\subset X^n$
such that $Y_0^n=\varnothing$ and $Y_\alpha^n=X^n$ for every
$n\in\boZ$, the cardinality of $Y_{\beta+1}^n\setminus Y_\beta^n$ is
smaller than~$\kappa$ for every $0\le\beta<\alpha$ and $n\in\boZ$
and is empty for $n\ll0$, and
the graded subobject with the terms $Q^n_\beta=P^{(Y_\beta^n)}
\subset P^{(X^n)}=Q^n$ is a subcomplex $Q^\bu_\beta$ in $Q^\bu$ for
every $0\le\beta\le\alpha$.

 For every element $x\in X^n$, let $\iota_x\:P\rarrow P^{(X^n)}=Q^n$
be the direct summand inclusion corresponding to the element~$x$.
 Since the object $P$ is $\kappa$\+presentable, there exists a subset
$Z_x\subset X^{n+1}$ of the cardinality less than~$\kappa$ such that
the composition of~$\iota_x$ with the differential $d^n\:Q^n\rarrow
Q^{n+1}$ factorizes through the direct summand (subcoproduct)
inclusion $P^{(Z_x)}\rarrow P^{(X^{n+1})}=Q^{n+1}$.

 Suppose that the subsets $Y_\gamma^n\subset X^n$ have been constructed
already for all $\gamma<\beta$ and $n\in\boZ$.
 For a limit ordinal~$\beta$, we put $Y_\beta^n=
\bigcup_{\gamma<\beta}Y_\gamma^n$ for every $n\in\boZ$.
 For a successor ordinal $\beta=\gamma+1$, if $Y_\gamma^n=X^n$ for
every $n\in\boZ$, then we put $Y_\beta^n=X^n$ as well.
 Otherwise, choose $m\in\boZ$ and an element
$z\in X^m\setminus Y_\gamma^m$.

 Proceeding by induction on $n\ge m$, define subsets $Z^n\subset X^n$
by the rules $Z^m=\{z\}$ and $Z^{n+1}=\bigcup_{x\in Z^n}Z_x$.
 Then $\dotsb\rarrow0\rarrow P=P^{(Z^m)}\rarrow P^{(Z^{m+1})}\rarrow
P^{(Z^{m+2})}\rarrow\dotsb$ is a subcomplex in~$Q^\bu$, and
the cardinality of the set $Z^n$ is smaller than~$\kappa$ for
every $n\ge m$.
 It remains to put $Y_\beta^n=Y_\gamma^n$ for $n<m$ and
$Y_\beta^n=Y_\gamma^n\cup Z^n$ for $n\ge m$.
\end{proof}

 In the special setting when $\sB$ is a Grothendieck category with
enough projective objects,
Proposition~\ref{graded-projective-deconstructible} (and its
second proof) becomes a particular case
of~\cite[Proposition~4.3]{Sto0}.

 Introduce the notation $\sC(\sB)^\ctr_\ac$ for the full subcategory
of contraacyclic complexes in $\sC(\sB)$.
 So $\sC(\sB)^\ctr_\ac\subset\sC(\sB)$ is the full preimage of
$\sK(\sB)^\ctr_\ac\subset\sK(\sB)$ under the natural functor
$\sC(\sB)\rarrow\sK(\sB)$.

\begin{thm} \label{contraderived-cotorsion-pair}
 Let\/ $\sB$ be a locally presentable abelian category with enough
projective objects.
 Then the pair of classes of objects\/ $\sC(\sB_\proj)$ and\/
$\sC(\sB)^\ctr_\ac$ is a hereditary complete cotorsion pair in
the abelian category\/ $\sC(\sB)$.
\end{thm}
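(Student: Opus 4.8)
The plan is to exhibit $(\sC(\sB_\proj),\sC(\sB)^\ctr_\ac)$ as the cotorsion pair \emph{generated by the set} $\sS\subset\sC(\sB_\proj)$ supplied by Proposition~\ref{graded-projective-deconstructible}, for which $\sC(\sB_\proj)=\Fil(\sS)^\oplus$, and then to read off the four required properties from the general machinery already developed. Since $\sC(\sB)$ is locally presentable by Lemma~\ref{complexes-locally-presentable}, Theorems~\ref{cotorsion-pair-generated-by-set-complete} and~\ref{cotorsion-pair-generated-by-set-left-class} will be applicable once the generating and cogenerating hypotheses are in place.

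First I would identify the right-hand class. For a complex of projective objects $Q^\bu$ and an arbitrary $X^\bu\in\sC(\sB)$, Lemma~\ref{ext-1-hom-hot}(b) gives a natural isomorphism $\Ext^1_{\sC(\sB)}(Q^\bu,X^\bu)\simeq\Hom_{\sK(\sB)}(Q^\bu,X^\bu[1])$. As the subcategory $\sK(\sB_\proj)$ is closed under the shift, vanishing of these groups for every $Q^\bu\in\sC(\sB_\proj)$ is equivalent to $\Hom_{\sK(\sB)}(P^\bu,X^\bu)=0$ for every $P^\bu\in\sK(\sB_\proj)$, that is, to $X^\bu\in\sC(\sB)^\ctr_\ac$. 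Hence $\sC(\sB_\proj)^{\perp_1}=\sC(\sB)^\ctr_\ac$. Moreover, since $\sS^{\perp_1}=\Fil(\sS)^{\perp_1}=(\Fil(\sS)^\oplus)^{\perp_1}$ by the Eklof lemma (Proposition~\ref{eklof-lemma-prop}) together with the closure of $\Ext^1$\+orthogonal classes under direct summands, and $\Fil(\sS)^\oplus=\sC(\sB_\proj)$, the class $\sS^{\perp_1}$ coincides with $\sC(\sB)^\ctr_\ac$. Thus the cotorsion pair generated by $\sS$ is $({}^{\perp_1}\sC(\sB)^\ctr_\ac,\,\sC(\sB)^\ctr_\ac)$.

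Next I would verify the two approximation hypotheses. Exactly as in the proof of Theorem~\ref{loc-pres-cotorsion-pair}, every complex is a quotient of a contractible complex of projective objects (using that $\sB$ has enough projectives); such a contractible complex lies in $\sC(\sB_\proj)=\Fil(\sS)^\oplus$, and realizing it as a summand of an object of $\Fil(\sS)$ shows that $\Fil(\sS)$ itself is generating. Dually, any contractible complex is a zero object of $\sK(\sB)$, hence contraacyclic, and the adjunction monomorphism $C^\bu\rarrow G^-({C^\bu}^\sharp)$ embeds an arbitrary complex into the contractible, hence contraacyclic, complex $G^-({C^\bu}^\sharp)$; so $\sC(\sB)^\ctr_\ac$ is cogenerating. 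With these in hand, Theorem~\ref{cotorsion-pair-generated-by-set-left-class} yields that the left-hand class equals $\Fil(\sS)^\oplus=\sC(\sB_\proj)$, so $(\sC(\sB_\proj),\sC(\sB)^\ctr_\ac)$ is indeed a cotorsion pair; its left class then contains the generating class $\Fil(\sS)$ and so is generating, and Theorem~\ref{cotorsion-pair-generated-by-set-complete} shows the pair is complete.

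Finally, for heredity I would invoke Lemma~\ref{hereditary-cotorsion-pairs}, whose hypotheses hold since $\sL$ is generating and $\sR$ is cogenerating: it suffices to check condition~(1), that $\sC(\sB_\proj)$ is closed under kernels of epimorphisms between its objects. This is immediate, for in a short exact sequence $0\rarrow K^\bu\rarrow Q_1^\bu\rarrow Q_2^\bu\rarrow0$ with $Q_2^\bu$ a complex of projectives the sequence is termwise split, so each $K^i$ is a direct summand of $Q_1^i$ and thus projective. The only genuinely hard ingredient is Proposition~\ref{graded-projective-deconstructible}, the deconstructibility of $\sC(\sB_\proj)$, which is already established; granting it, the present argument is a routine assembly, the one point demanding care being the shift bookkeeping that matches $\sC(\sB_\proj)^{\perp_1}$ with the contraacyclic complexes.
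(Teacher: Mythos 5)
Your proof is correct and follows essentially the same route as the paper's: both generate the cotorsion pair from the set $\sS$ of Proposition~\ref{graded-projective-deconstructible}, identify the right orthogonal class via Lemma~\ref{ext-1-hom-hot}(b) together with the Eklof lemma, verify the generating/cogenerating hypotheses using contractible complexes, and then invoke Theorems~\ref{cotorsion-pair-generated-by-set-left-class} and~\ref{cotorsion-pair-generated-by-set-complete}. The only (immaterial) divergence is the heredity check, where you verify closure of $\sC(\sB_\proj)$ under kernels of epimorphisms directly via termwise splitness, while the paper mainly cites Lemma~\ref{contraacyclic-lemma}(a) for closure of $\sC(\sB)^\ctr_\ac$ under cokernels of monomorphisms---the two conditions being equivalent by Lemma~\ref{hereditary-cotorsion-pairs}.
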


\begin{proof}
 In the special case of the categories of modules over associative
rings, this theorem can be found in~\cite[Theorem~A.3]{BGH}.

 Notice that the abelian category $\sC(\sB)$ is locally presentable
by Lemma~\ref{complexes-locally-presentable}.
 Let $\sS$ be a set of complexes of projective objects in $\sB$ such
that $\sC(\sB_\proj)=\Fil(\sS)^\oplus$, as in
Proposition~\ref{graded-projective-deconstructible}.
 The claim is that the set $\sS\subset\sC(\sB)$ generates
the cotorsion pair we are interested in.

 Indeed, for any complexes $Q^\bu\in\sC(\sB_\proj)$ and $C^\bu\in
\sC(\sB)$ we have $\Ext^1_{\sC(\sB)}(Q^\bu,C^\bu)\simeq
\Hom_{\sK(\sB)}(Q^\bu,C^\bu[1])$ by Lemma~\ref{ext-1-hom-hot}(b).
 Hence $\sC(\sB_\proj)^{\perp_1}=\sC(\sB)_\ac^\ctr\subset\sC(\sB)$.
 By Proposition~\ref{eklof-lemma-prop}, it follows that
$\sS^{\perp_1}=\sC(\sB)_\ac^\ctr$.

 Furthermore, the class $\sC(\sB_\proj)=\Fil(\sS)^\oplus$ is clearly
generating in $\sC(\sB)$.
 Applying Theorem~\ref{cotorsion-pair-generated-by-set-left-class},
we can conclude that $\sC(\sB_\proj)={}^{\perp_1}(\sC(\sB)_\ac^\ctr)$.
 Alternatively, one can argue as in the proof of
Theorem~\ref{loc-pres-cotorsion-pair} in order to show that any
complex left $\Ext^1$\+orthogonal to all contractible complexes in
$\sC(\sB)$ is a complex of projective objects (as the complexes
$G^-(B)$ are contractible).

 Any complex is a subcomplex of a contractible complex, so the class
$\sC(\sB)_\ac^\ctr$ is cogenerating in~$\sC(\sB)$.
 Hence Theorem~\ref{cotorsion-pair-generated-by-set-complete} is
applicable, and we can conclude that our cotorsion pair is complete.
 The cotorsion pair is hereditary, since the class $\sC(\sB)_\ac^\ctr$
is closed under the cokernels of monomorphisms in $\sC(\sB)$, as
one can see from Lemma~\ref{contraacyclic-lemma}(a).
 It is also clear that the class $\sC(\sB_\proj)\subset\sC(\sB)$ is
closed under the kernels of epimorphisms.
\end{proof}

\begin{cor} \label{enough-projectives-for-contraderived}
 For any locally presentable abelian category\/ $\sB$ with enough
projective objects, the composition of the inclusion of triangulated
categories\/ $\sK(\sB_\proj)\rarrow\sK(\sB)$ with the Verdier quotient
functor\/ $\sK(\sB)\rarrow\sD^\ctr(\sB)$ is a triangulated
equivalence\/ $\sK(\sB_\proj)\simeq\sD^\ctr(\sB)$.
\end{cor}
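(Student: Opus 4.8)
The plan is to establish that the functor $\sK(\sB_\proj)\rarrow\sD^\ctr(\sB)$ is fully faithful and essentially surjective. For full faithfulness I would appeal to the abstract semiorthogonality principle already isolated in the proof of Proposition~\ref{orthogonal-fully-faithful}: if $\sP$ and $\sX$ are full triangulated subcategories of a triangulated category $\sH$ with $\Hom_\sH(P,X)=0$ for all $P\in\sP$ and $X\in\sX$, then the composite $\sP\rarrow\sH\rarrow\sH/\sX$ is fully faithful. Here I take $\sH=\sK(\sB)$, \ $\sP=\sK(\sB_\proj)$, and $\sX=\sK(\sB)^\ctr_\ac$. The required vanishing $\Hom_{\sK(\sB)}(P^\bu,X^\bu)=0$ for a complex of projectives $P^\bu$ and a contraacyclic complex $X^\bu$ is precisely the definition of contraacyclicity, so this step is immediate.

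The substance of the argument is essential surjectivity. Given a complex $C^\bu\in\sK(\sB)$, I would invoke completeness of the cotorsion pair $(\sC(\sB_\proj),\sC(\sB)^\ctr_\ac)$ from Theorem~\ref{contraderived-cotorsion-pair} to fix a special precover sequence
\[
 0\rarrow W^\bu\rarrow P^\bu\rarrow C^\bu\rarrow0,
\]
with $P^\bu\in\sC(\sB_\proj)$ and $W^\bu\in\sC(\sB)^\ctr_\ac$; denote by $f\:P^\bu\rarrow C^\bu$ the epimorphism. It then suffices to prove that $f$ becomes an isomorphism in $\sD^\ctr(\sB)$, i.e.\ that $\cone(f)\in\sK(\sB)^\ctr_\ac$, for then $C^\bu$ is isomorphic in the contraderived category to the complex of projectives $P^\bu$. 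I expect the one genuine obstacle to lie exactly here: the chosen short exact sequence need not be termwise split, so it does not by itself determine a distinguished triangle in $\sK(\sB)$, and one cannot simply read off that $\cone(f)$ is an extension of contraacyclic complexes.

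This is where Lemma~\ref{contraacyclic-lemma}(a) enters. The inclusion $W^\bu\hookrightarrow P^\bu$ lands in $\ker f$ and hence induces a chain map $\iota\:W^\bu[1]\rarrow\cone(f)$, and a direct inspection of degrees and differentials identifies $\cone(\iota)$, up to the usual sign normalisations, with the three-row totalization $\Tot(W^\bu\rarrow P^\bu\rarrow C^\bu)$. This produces a distinguished triangle
\[
 W^\bu[1]\rarrow\cone(f)\rarrow\Tot(W^\bu\rarrow P^\bu\rarrow C^\bu)\rarrow W^\bu[2]
\]
in $\sK(\sB)$. By Lemma~\ref{contraacyclic-lemma}(a) the totalization is contraacyclic, while $W^\bu[1]$ is contraacyclic because $\sK(\sB)^\ctr_\ac$ is a triangulated subcategory and hence closed under shifts. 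Since this subcategory is triangulated, the remaining vertex $\cone(f)$ of the triangle must be contraacyclic as well, which is exactly what was needed. Combining essential surjectivity with the full faithfulness established above yields the asserted triangulated equivalence $\sK(\sB_\proj)\simeq\sD^\ctr(\sB)$.
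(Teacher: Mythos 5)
Your proposal is correct and follows essentially the same route as the paper's own proof: full faithfulness via the semiorthogonality argument of Proposition~\ref{orthogonal-fully-faithful}, and essential surjectivity by taking a special precover sequence from Theorem~\ref{contraderived-cotorsion-pair} and applying Lemma~\ref{contraacyclic-lemma}(a) to its totalization. The only difference is that you spell out explicitly the distinguished triangle relating $\cone(f)$, the totalization, and a shift of $W^\bu$, which the paper leaves implicit in the phrase ``it follows that the cone \dots is contraacyclic, too.''
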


\begin{proof}
 It is clear from the definitions that the triangulated functor
$\sK(\sB_\proj)\rarrow\sD^\ctr(\sB)$ is fully faithful
(cf.\ the proof of Proposition~\ref{orthogonal-fully-faithful}).
 In order to prove the corollary, it remains to find, for any
complex $C^\bu\in\sK(\sB)$, a complex of projective objects
$Q^\bu\in\sK(\sB_\proj)$ together with a morphism of complexes
$Q^\bu\rarrow C^\bu$ whose cone belongs to $\sK(\sB)_\ac^\ctr$.

 For this purpose, consider a special precover short exact sequence
$0\rarrow X^\bu\rarrow Q^\bu\rarrow C^\bu\rarrow0$ in the complete
cotorsion pair of Theorem~\ref{contraderived-cotorsion-pair}.
 So we have $X^\bu\in\sC(\sB)_\ac^\ctr$ and $Q^\bu\in\sC(\sB_\proj)$.
 By Lemma~\ref{contraacyclic-lemma}(a), the totalization
$\Tot(X^\bu\to Q^\bu\to C^\bu)$ of the short exact sequence
$0\rarrow X^\bu\rarrow Q^\bu\rarrow C^\bu\rarrow 0$ is
a contraacyclic complex.
 Since the complex $X^\bu$ is contraacyclic, it follows that
the cone of the morphism of complexes $Q^\bu\rarrow C^\bu$
is contraacyclic, too.
\end{proof}

 In terms of the full subcategory $\sK(\sB_\proj)\subset\sK(\sB)$,
Corollary~\ref{enough-projectives-for-contraderived} means that
the triangulated inclusion functor $\sK(\sB_\proj)\rarrow\sK(\sB)$
has a right adjoint.
For the categories of modules over an associative ring, this result
follows immediately from~\cite[Corollary~5.10]{Neem1}.
We also refer to~\cite[Theorem 3.5]{BEIJR} for another argument
to deduce this from Theorem~\ref{contraderived-cotorsion-pair}.

\begin{thm} \label{contraderived-model-structure}
 Let\/ $\sB$ be a locally presentable abelian category with enough
projective objects.
 Then the triple of classes of objects\/ $\sL=\sC(\sB_\proj)$, \
$\sW=\sC(\sB)_\ac^\ctr$, and\/ $\sR=\sC(\sB)$ is a cofibrantly generated
hereditary abelian model structure on the abelian category of
complexes\/~$\sC(\sB)$.
\end{thm}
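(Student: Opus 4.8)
The plan is to follow the template of the proof of Theorem~\ref{loc-pres-model-structure}, substituting the contraderived cotorsion pair of Theorem~\ref{contraderived-cotorsion-pair} for the projective derived one used there. First I would record that, by Theorem~\ref{contraderived-cotorsion-pair}, the pair $(\sL,\sW)=(\sC(\sB_\proj),\sC(\sB)_\ac^\ctr)$ is a hereditary complete cotorsion pair in $\sC(\sB)$; note that $\sC(\sB)$ is locally presentable by Lemma~\ref{complexes-locally-presentable} and inherits enough projective objects from~$\sB$.

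Next I would check the hypotheses of Lemma~\ref{intersection-is-inj-proj-complexes}(b). Since $\sL=\sC(\sB_\proj)$, the required inclusion $\sL\subset\sC(\sB_\proj)$ is in fact an equality, and the class $\sC(\sB_\proj)$ is visibly invariant under the shift, so $\sL=\sL[1]$ (equivalently $\sW=\sW[1]$). The lemma then gives $\sL\cap\sW=\sC(\sB)_\proj$ and, the cotorsion pair being complete, that $\sW=\sC(\sB)_\ac^\ctr$ is thick in $\sC(\sB)$. With these three facts in hand---$\sC(\sB)$ has enough projectives, $(\sL,\sW)$ is a complete cotorsion pair with $\sL\cap\sW=\sC(\sB)_\proj$, and $\sW$ is thick---Lemma~\ref{inj-proj-model-structures}(b), applied to the abelian category $\sC(\sB)$, delivers the projective abelian model structure $(\sL,\sW,\sR)$ with $\sR=\sC(\sB)$. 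As recalled in Section~\ref{abelian-model-secn}, every projective abelian model structure is automatically hereditary, so nothing further is needed for that property.

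Finally, for cofibrant generation I would invoke Corollary~\ref{abelian-cofibrantly-generated}, which reduces the claim to the generation-by-a-set of both cotorsion pairs $(\sL,\>\sR\cap\sW)$ and $(\sL\cap\sW,\>\sR)$. Here $\sR=\sC(\sB)$, so $\sR\cap\sW=\sW$, and the first pair is exactly $(\sC(\sB_\proj),\sC(\sB)_\ac^\ctr)$, which is generated by the set $\sS$ exhibited in the proof of Theorem~\ref{contraderived-cotorsion-pair}; the second pair is $(\sC(\sB)_\proj,\sC(\sB))$, which is generated by the empty set of objects (equivalently, by the single zero object). I expect no genuine obstacle: the argument is essentially bookkeeping on top of previously established results, and the only points requiring any care are the (routine) shift-invariance verification feeding Lemma~\ref{intersection-is-inj-proj-complexes}(b) and correctly identifying the deconstructibility set $\sS$ as the generator of the left-hand cotorsion pair.
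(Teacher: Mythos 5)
Your proposal is correct and follows essentially the same route as the paper: establish the hereditary complete cotorsion pair via Theorem~\ref{contraderived-cotorsion-pair}, identify $\sL\cap\sW=\sC(\sB)_\proj$ and the thickness of $\sW$ via Lemma~\ref{intersection-is-inj-proj-complexes}(b), apply Lemma~\ref{inj-proj-model-structures}(b), and deduce cofibrant generation from Corollary~\ref{abelian-cofibrantly-generated} using the generating set $\sS$ for $(\sL,\sW)$ and the empty set for $(\sC(\sB)_\proj,\sC(\sB))$. All the details you flag as requiring care (shift-invariance, the choice of $\sS$) are handled exactly as in the paper.
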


\begin{proof}
 For the categories of modules over associative rings, this theorem
can be found in~\cite[Corollary~6.4]{BGH}.

 Quite generally, the argument is similar to the proof of
Theorem~\ref{loc-pres-model-structure}.
 The pair of classes $(\sL,\sW)$ form a hereditary complete cotorsion pair in
$\sC(\sB)$ by Theorem~\ref{contraderived-cotorsion-pair}.
 According to Lemma~\ref{intersection-is-inj-proj-complexes}(b),
it follows that $\sL\cap\sW=\sC(\sB)_\proj$ and
that the class of contraacyclic complexes $\sW$ is thick in $\sC(\sB)$
(see also Lemma~\ref{contraacyclic-lemma}(a)).
 By Lemma~\ref{inj-proj-model-structures}(b), the triple
$(\sL,\sW,\sR)$ is a projective abelian model structure on
the category $\sC(\sB)$.
 Finally, it was shown in the proof of
Theorem~\ref{contraderived-cotorsion-pair} that the cotorsion pair
$(\sL,\sW)$ in $\sC(\sB)$ is generated by a set of objects.
 For the cotorsion pair $(\sC(\sB)_\proj,\sC(\sB))$, the same was
explained in the proof of Theorem~\ref{loc-pres-model-structure}.
\end{proof}

 The abelian model structure $(\sL,\sW,\sR)$ defined in
Theorem~\ref{contraderived-model-structure} is called
the \emph{contraderived model structure} on the abelian category of
complexes $\sC(\sB)$.

\begin{lem} \label{contraderived-weak-equivalences}
 For any locally presentable abelian category\/ $\sB$ with enough
projective objects, the class\/ $\cW$ of all weak equivalences in
the contraderived model structure on the abelian category\/ $\sC(\sB)$
coincides with the class of all morphisms of complexes in\/ $\sB$
with the cones belonging to\/ $\sC(\sB)_\ac^\ctr$.
\end{lem}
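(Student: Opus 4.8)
The plan is to follow the pattern of the proof of Lemma~\ref{loc-pres-weak-equivalences}: first settle the claim for monomorphisms and epimorphisms, and then extend it to arbitrary morphisms by a factorization argument combined with the two-out-of-three property. The one genuinely new ingredient, compared with the derived case, is that for contraacyclicity one cannot simply replace the cone of a morphism by its cokernel up to quasi-isomorphism; instead Lemma~\ref{contraacyclic-lemma}(a) must be used to compare the two. This comparison is the technical heart of the argument.

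Recall that in the contraderived model structure of Theorem~\ref{contraderived-model-structure} the class of weakly trivial objects is $\sW=\sC(\sB)_\ac^\ctr$. By~\cite[Lemma~5.8]{Hov}, a monomorphism is a weak equivalence if and only if its cokernel is weakly trivial, and an epimorphism is a weak equivalence if and only if its kernel is weakly trivial. So first I would treat a monomorphism $f\:A^\bu\rarrow B^\bu$ with cokernel $C^\bu$, i.e., a short exact sequence $0\rarrow A^\bu\rarrow B^\bu\rarrow C^\bu\rarrow0$. The key point is that the cone of the natural morphism $\cone(f)\rarrow C^\bu$ induced by the projection $B^\bu\rarrow C^\bu$ is precisely the totalization $\Tot(A^\bu\to B^\bu\to C^\bu)$. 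This yields a distinguished triangle
$$\cone(f)\rarrow C^\bu\rarrow\Tot(A^\bu\to B^\bu\to C^\bu)\rarrow\cone(f)[1]$$
in $\sK(\sB)$ whose third term is contraacyclic by Lemma~\ref{contraacyclic-lemma}(a). Since $\sK(\sB)_\ac^\ctr$ is thick, it follows that $\cone(f)$ is contraacyclic if and only if $C^\bu$ is, and hence, combining this with~\cite[Lemma~5.8]{Hov}, that $f$ is a weak equivalence if and only if $\cone(f)$ is contraacyclic. The dual computation, using the totalization $\Tot(K^\bu\to A^\bu\to B^\bu)$ of the short exact sequence attached to an epimorphism $f$ with kernel $K^\bu$, settles the epimorphism case in the same way.

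For a general morphism $f$, I would invoke the model-category factorization $f=rl$, where $l$ is a trivial cofibration and $r$ is a fibration. The cofibration $l$ is a monomorphism whose cokernel lies in $\sL\cap\sW=\sC(\sB)_\proj$ by Lemma~\ref{intersection-is-inj-proj-complexes}(b); such cokernels are contractible by Lemma~\ref{inj-proj-complexes}(b), hence contraacyclic, so the monomorphism case shows that $l$ is a weak equivalence with contraacyclic cone. The fibration $r$ is an epimorphism, so by the epimorphism case it is a weak equivalence if and only if $\cone(r)$ is contraacyclic. Both the class of weak equivalences and the class $\{g : \cone(g)\in\sK(\sB)_\ac^\ctr\}$ satisfy the two-out-of-three property---the latter because of the octahedral triangle $\cone(l)\rarrow\cone(rl)\rarrow\cone(r)\rarrow\cone(l)[1]$ together with the thickness of $\sK(\sB)_\ac^\ctr$. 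Since $l$ is a weak equivalence with contraacyclic cone, two-out-of-three applied to $f=rl$ gives both that $f$ is a weak equivalence if and only if $r$ is, and that $\cone(f)$ is contraacyclic if and only if $\cone(r)$ is; chaining these equivalences through the epimorphism case for $r$ completes the proof.

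The main obstacle is the identification of $\Tot(A^\bu\to B^\bu\to C^\bu)$ with the cone of $\cone(f)\rarrow C^\bu$ (and its dual for epimorphisms): this is exactly where contraacyclicity, as opposed to mere acyclicity, forces one to work with the three-row totalization instead of arguing directly that the cone and the cokernel are quasi-isomorphic. Everything else is a formal consequence of the thickness of the class of contraacyclic complexes and of the two-out-of-three property.
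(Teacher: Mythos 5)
Your proposal is correct and follows essentially the same route as the paper: the paper's proof is a two-line sketch reducing to the argument of Lemma~\ref{loc-pres-weak-equivalences} plus the observation that, by Lemma~\ref{contraacyclic-lemma}(a), a monomorphism (resp.\ epimorphism) of complexes has contraacyclic cokernel (resp.\ kernel) if and only if it has contraacyclic cone, which is precisely the totalization comparison you carry out in detail. Your filling in of the distinguished triangle $\cone(f)\rarrow C^\bu\rarrow\Tot(A^\bu\to B^\bu\to C^\bu)$ and of the two-out-of-three property for morphisms with contraacyclic cones is exactly what the paper leaves implicit.
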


\begin{proof}
 Similar to the proof of Lemma~\ref{loc-pres-weak-equivalences}.
 One needs to notice that, by Lemma~\ref{contraacyclic-lemma}(a),
a monomorphism of complexes in $\sC(\sB)$ has contraacyclic cokernel
if and only if it has contraacyclic cone, and similarly, an epimorphism
of complexes in $\sC(\sB)$ has contraacyclic kernel if and only if it
has contraacyclic cone.
\end{proof}

 The following corollary presumes existence of (set-indexed) coproducts
in the contraderived category $\sD^\ctr(\sB)$.
 Notice that such coproducts can be simply constructed as the coproducts
in the homotopy category $\sK(\sB_\proj)$, which is equivalent to
$\sD^\ctr(\sB)$ by Corollary~\ref{enough-projectives-for-contraderived}.

\begin{cor} \label{contraderived-well-generated}
 For any locally presentable abelian category\/ $\sB$ with enough
projective objects, the contraderived category\/ $\sD^\ctr(\sB)$ is
a well-generated triangulated category (in the sense of
the book~\cite{Neem-book} and the paper~\cite{Kra0}).
\end{cor}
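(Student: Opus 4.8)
The plan is to repeat, essentially verbatim, the argument used for the projective derived model structure in Corollary~\ref{loc-pres-well-generated}, now feeding in the contraderived model structure produced in Theorem~\ref{contraderived-model-structure} in place of the projective derived one. First I would record that $\sC(\sB)$ is a locally presentable abelian category by Lemma~\ref{complexes-locally-presentable}, hence in particular complete and cocomplete. By Theorem~\ref{contraderived-model-structure}, the triple $(\sC(\sB_\proj),\,\sC(\sB)_\ac^\ctr,\,\sC(\sB))$ is a cofibrantly generated hereditary abelian model structure on $\sC(\sB)$; combined with local presentability of the underlying category, this exhibits $\sC(\sB)$ with this model structure as a combinatorial model category.

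Next I would invoke the general theory of abelian model categories. Following~\cite[Corollary~1.1.15 and the preceding discussion]{Bec}, any hereditary abelian model category is stable, so its homotopy category is canonically triangulated. We thus have a stable combinatorial model category, and the conclusion that its homotopy category is well-generated triangulated follows immediately from~\cite[Proposition~6.10]{Ros} or~\cite[Theorems~3.1 and~3.9]{CR}.

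The only genuinely content-bearing point, and where I would be careful, is the identification of the homotopy category $\sC(\sB)[\cW^{-1}]$ with the contraderived category $\sD^\ctr(\sB)$ as defined via the Verdier quotient $\sK(\sB)/\sK(\sB)_\ac^\ctr$. By Lemma~\ref{contraderived-weak-equivalences}, the weak equivalences $\cW$ of the contraderived model structure are exactly the morphisms of complexes whose cone lies in $\sC(\sB)_\ac^\ctr$, so the abstract localization $\sC(\sB)[\cW^{-1}]$ coincides with the triangulated Verdier quotient $\sD^\ctr(\sB)$. Equivalently, the homotopy category of an abelian model structure is the additive quotient of the category of cofibrant-fibrant objects by the homotopy relation; here the cofibrant-fibrant objects form $\sL\cap\sR=\sC(\sB_\proj)$ and the relevant relation is cochain homotopy, so the homotopy category is $\sK(\sB_\proj)$, matching the equivalence $\sK(\sB_\proj)\simeq\sD^\ctr(\sB)$ of Corollary~\ref{enough-projectives-for-contraderived}. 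With this identification in place there is no further obstacle, and the set-indexed coproducts presumed in the statement are, as noted just before the corollary, computed as termwise coproducts of complexes in $\sK(\sB_\proj)$.
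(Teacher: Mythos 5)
Your proposal is correct and follows essentially the same route as the paper: combinatorial (cofibrantly generated, locally presentable) plus hereditary abelian (hence stable) gives a well-generated homotopy category via Rosick\'y or Casacuberta--Rosick\'y, with the only real content being the identification $\sC(\sB)[\cW^{-1}]\simeq\sD^\ctr(\sB)$. The paper justifies that identification by citing the fact that inverting the homotopy equivalences in $\sC(\sB)$ already yields $\sK(\sB)$ (so that one may then pass to the Verdier quotient by $\sK(\sB)_\ac^\ctr$), which is the small step your first ``so'' glosses over; your alternative justification via cofibrant--fibrant objects modulo homotopy and Corollary~\ref{enough-projectives-for-contraderived} closes it equally well.
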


\begin{proof}
 For the categories of modules over associative rings, this result
can be found in~\cite[Facts~2.8(i) and Theorem~5.9]{Neem1}.

 Quite generally, the argument is similar to the proof of
Corollary~\ref{loc-pres-well-generated}.
 One only needs to notice that the contraderived category
$\sD^\ctr(\sB)$ can be equivalently defined as the homotopy category
$\sC(\sB)[\cW^{-1}]$ of the contraderived model structure on $\sC(\sB)$.
 The key observation is that, for any additive category $\sE$,
inverting all the homotopy equivalences in the category of complexes
$\sC(\sE)$ produces the homotopy category $\sK(\sE)$ (i.~e.,
homotopic morphisms become equal after inverting the homotopy
equivalences); see~\cite[III.4.2--3]{GM} (cf.~\cite[last paragraph of
the proof of Theorem~2.5]{Psemi}).
 Then in the situation at hand, it follows that inverting all
the morphisms with contraacyclic cones in $\sC(\sB)$ produces
the contraderived category $\sD^\ctr(\sB)$.
\end{proof}

\begin{prop} \label{tilt}
 Let\/ $\sE$ be an accessible additive category with coproducts
and $M\in\sE$ be an object.
 Then there exists a (unique) locally presentable abelian category\/
$\sB$ with enough projective objects such that the full subcategory\/
$\sB_\proj\subset\sB$ is equivalent to the full subcategory\/
$\Add_\sE(M)\subset\sE$.
\end{prop}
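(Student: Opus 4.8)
The plan is to realize $\sB$ as the category of \emph{$M$\+contramodules} in $\sE$, i.e.\ as the Eilenberg--Moore category of a monad on $\Sets$ built from the object~$M$. Using only the coproducts available in $\sE$, the functor $M^{(-)}\:\Sets\rarrow\sE$ sending a set $X$ to the coproduct $M^{(X)}=\coprod_X M$ is left adjoint to $\Hom_\sE(M,-)\:\sE\rarrow\Sets$, since $\Hom_\sE(M^{(X)},N)\simeq\Hom_\Sets(X,\Hom_\sE(M,N))$ naturally in $N$. I would let $\mathfrak T=\Hom_\sE(M,M^{(-)})$ be the associated monad on $\Sets$, so that $\mathfrak T(X)=\Hom_\sE(M,M^{(X)})$, and set $\sB=\Sets^{\mathfrak T}$.

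First I would record the formal properties of $\sB$. Since $\sE$ is accessible, the object $M$ is $\lambda$\+presentable for some regular cardinal~$\lambda$; as $M^{(-)}$ preserves all colimits and $\Hom_\sE(M,-)$ preserves $\lambda$\+directed colimits, the monad $\mathfrak T$ is $\lambda$\+accessible, and hence $\Sets^{\mathfrak T}$ is locally $\lambda$\+presentable by the standard results on Eilenberg--Moore categories of accessible monads (exactly as in the proof of Lemma~\ref{complexes-locally-presentable}, via \cite[Theorem and Remark~2.78]{AR}). That $\sB$ is in fact \emph{abelian}, and not merely additive, is the substantive point: the algebraic theory underlying $\mathfrak T$ contains the theory of abelian groups (read off from $\mathfrak T(\varnothing)$ being a single point, from $\mathfrak T(\{*\})\simeq\Hom_\sE(M,M)$, and from the diagonal $M\rarrow M\oplus M$ giving a binary operation in $\mathfrak T(\{1,2\})$), so the algebras carry a natural abelian group structure enriched by an infinitary summation, and one verifies that kernels and cokernels exist and that monomorphisms and epimorphisms behave as in a concrete abelian category with exact forgetful functor $U\:\sB\rarrow\Sets$. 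This is precisely the foundational analysis of contramodule categories in \cite{PR,Pper,PS1}, which I would invoke.

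Next I would identify the projectives. The free algebras $F^{\mathfrak T}(X)$ are projective, since $\Hom_\sB(F^{\mathfrak T}X,-)\simeq U(-)^X$ is exact once $U$ is exact, and every algebra is a quotient of a free one through its canonical presentation; thus $\sB$ has enough projective objects and every projective is a direct summand of some $F^{\mathfrak T}(X)$. The comparison functor $K\:\sE\rarrow\sB$, $K(N)=\Hom_\sE(M,N)$, sends $M^{(X)}$ to $F^{\mathfrak T}(X)$, and the adjunction yields $\Hom_\sB(KM^{(X)},KN)\simeq\Hom_\sE(M^{(X)},N)$ for every $N\in\sE$. Restricting to $N\in\Add_\sE(M)$ and passing to direct summands (both $\Add_\sE(M)$ and $\sB_\proj$ are idempotent\+complete and $K$ is additive), this shows that $K$ restricts to a fully faithful functor $\Add_\sE(M)\rarrow\sB$ landing in $\sB_\proj$, and that every projective, being a summand of $F^{\mathfrak T}(X)=KM^{(X)}$, lies in its essential image. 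Hence $K$ induces an equivalence $\Add_\sE(M)\simeq\sB_\proj$. I would stress that it is exactly the use of the $\sE$\+coproducts $M^{(X)}$ inside $\mathfrak T$ that forces the coproducts of projectives in $\sB$ to match those in $\sE$; a naive completion by additive functors on the small category of finite sums of $M$ would produce the wrong coproducts whenever $M$ fails to be compact.

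Finally, for uniqueness I would use the standard reconstruction of an abelian category with enough projectives from its additive category of projectives. Given another such $\sB'$ with an additive equivalence $\sB'_\proj\simeq\Add_\sE(M)\simeq\sB_\proj$, every object of either category is the cokernel of a morphism between projectives, and the given equivalence on projective subcategories extends to these cokernels; the extension is well defined because any two projective presentations of an object are connected by a homotopy equivalence of complexes of projectives, which an additive functor preserves, and it is an equivalence because the inverse is built symmetrically. The main obstacle throughout is establishing the abelianness of $\sB=\Sets^{\mathfrak T}$ together with the exactness of its forgetful functor --- the one place where the genuine theory of contramodules, rather than soft formalities about monads and adjunctions, is required.
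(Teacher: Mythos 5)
Your construction of $\sB$ as the Eilenberg--Moore category of the monad $\mathfrak T=\Hom_\sE(M,M^{(-)})$ on $\Sets$ (the category of $M$\+contramodules), with the free algebras giving the projectives and $\lambda$\+presentability of $M$ in the accessible category $\sE$ giving accessibility of $\mathfrak T$ and hence local presentability of $\sB$, is correct and is exactly the construction underlying the references (\cite{PS2}, \cite{Pper}, \cite{PS1}, \cite{PR}) to which the paper's own, essentially citation-only, proof delegates both the existence/uniqueness of $\sB$ and the smallness criterion. So this is the same approach, merely unpacked rather than outsourced.
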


\begin{proof}
 Any accessible additive category is
idempotent-complete~\cite[Observation~2.4]{AR}.
 For any idempotent-complete additive category $\sE$ with coproducts
and any object $M\in\sE$, there exists a unique abelian category\/
$\sB$ with enough projective objects such that the full subcategory of
projective objects\/ $\sB_\proj\subset\sB$ is equivalent to the full
subcategory\/ $\Add_\sE(M)\subset\sE$ \,\cite[Theorem~1.1(a)]{PS2},
\cite[Examples~1.2(1\+-2)]{Pper}.
 The category $\sB$ is locally presentable if and only if the object
$M\in\sE$ is abstractly $\kappa$\+small for some cardinal~$\kappa$;
and any $\kappa$\+presentable object in $\sE$ is abstractly
$\kappa$\+small~\cite[Section~1.1]{PR}, \cite[Section~6.4 and
Proposition~9.1]{PS1}, \cite[Section~1]{Pper}.
\end{proof}

Now we can elegantly recover a consequence of~\cite[Proposition 4.9]{SS}.

\begin{cor} \label{hot-addm-well-generated}
 For any accessible additive category\/ $\sE$ with coproducts and
any object $M\in\sE$, the homotopy category\/ $\sK(\Add_\sE(M))$
of (unbounded complexes in) the additive category\/ $\Add_\sE(M)
\subset\sE$ is a well-generated triangulated category.
\end{cor}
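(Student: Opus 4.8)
The plan is to reduce the statement to the contraderived category of a suitable locally presentable abelian category, for which well\+generatedness has already been established in Corollary~\ref{contraderived-well-generated}. First I would invoke Proposition~\ref{tilt}: since $\sE$ is an accessible additive category with coproducts and $M\in\sE$ is an object, there exists a locally presentable abelian category $\sB$ with enough projective objects such that the full subcategory of projective objects $\sB_\proj\subset\sB$ is equivalent, as an additive category, to $\Add_\sE(M)\subset\sE$.

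Next I would promote this equivalence of additive categories to the level of homotopy categories. An additive equivalence $\Add_\sE(M)\simeq\sB_\proj$ induces an equivalence of the additive categories of (unbounded) complexes $\sC(\Add_\sE(M))\simeq\sC(\sB_\proj)$, applied termwise; since any additive functor preserves the cochain homotopy relation, this descends to a triangulated equivalence $\sK(\Add_\sE(M))\simeq\sK(\sB_\proj)$.

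Finally I would apply Corollary~\ref{enough-projectives-for-contraderived} to identify $\sK(\sB_\proj)$ with the contraderived category $\sD^\ctr(\sB)$, and then Corollary~\ref{contraderived-well-generated} to conclude that $\sD^\ctr(\sB)$, and hence $\sK(\Add_\sE(M))$, is a well\+generated triangulated category.

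I expect no serious obstacle, as all the substantial work is already packaged in the cited results; the only points requiring a moment's care are formal. One is verifying that the additive equivalence $\Add_\sE(M)\simeq\sB_\proj$ genuinely induces a triangulated equivalence of the associated homotopy categories, which follows from functoriality of $\sC({-})$ and $\sK({-})$ on additive categories together with preservation of nullhomotopic morphisms. The other is the local presentability of $\sB$ supplied by Proposition~\ref{tilt}, which rests on $M$ being abstractly $\kappa$\+small for some cardinal~$\kappa$; this is automatic in an accessible category, where $M$ is $\kappa$\+presentable for some regular $\kappa$.
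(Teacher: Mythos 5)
Your proof is correct and follows essentially the same route as the paper: apply Proposition~\ref{tilt} to obtain $\sB$ with $\sB_\proj\simeq\Add_\sE(M)$, identify $\sK(\sB_\proj)$ with $\sD^\ctr(\sB)$ via Corollary~\ref{enough-projectives-for-contraderived}, and conclude by Corollary~\ref{contraderived-well-generated}. The extra care you take with the induced triangulated equivalence of homotopy categories is sound but left implicit in the paper.
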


\begin{proof}
 By Proposition~\ref{tilt}, there exists a locally presentable abelian
category $\sB$ with enough projective objects such that the additive
category $\Add_\sE(M)$ is equivalent to\/~$\sB_\proj$.
 By Corollary~\ref{enough-projectives-for-contraderived}, the homotopy
category $\sK(\Add_\sE(M))\simeq\sK(\sB_\proj)$ can be interpreted
as the contraderived category $\sD^\ctr(\sB)$ (in the sense of Becker).
 According to Corollary~\ref{contraderived-well-generated},
the triangulated category $\sD^\ctr(\sB)$ is well-generated.
\end{proof}

\Section{Grothendieck Abelian Categories}  \label{grothendieck-secn}

 In this section we discuss the conventional derived category $\sD(\sA)$
of a Grothendieck abelian category~$\sA$.
 Recall that a cocomplete abelian category is said to be
\emph{Grothendieck} if it has exact functors of directed colimit and
a set of generators.

 The abelian category of complexes $\sC(\sA)$ in a Grothendieck abelian
category $\sA$ (or more generally, any category of additive functors
$\AdF(\sX,\sA)$, as in Lemma~\ref{complexes-locally-presentable})
is again a Grothendieck abelian category.
 It is well-known that all Grothendieck categories are locally
presentable (for a reference, see~\cite[Corollary~5.2]{Kra3},
and for a generalization, \cite[Theorem~2.2]{PR}).

 Let $\sA$ be an abelian category.
 A complex $J^\bu\in\sK(\sA)$ is said to be \emph{homotopy injective}
if $\Hom_{\sK(\sA)}(X^\bu,J^\bu)=0$ for any acyclic complex $X^\bu$
in~$\sA$.
 We denote the full subcategory of homotopy injective complexes by
$\sK(\sA)_\hin\subset\sK(\sA)$.
 Furthermore, let us denote by $\sK(\sA_\inj)_\hin=\sK(\sA_\inj)\cap
\sK(\sA)_\hin\subset\sK(\sA)$ the full subcategory of \emph{homotopy
injective complexes of injective objects} in the homotopy
category~$\sK(\sA)$.
 Both $\sK(\sA)_\hin$ and $\sK(\sA_\inj)_\hin$ are triangulated
subcategories in $\sK(\sA)$.
 (See Remark~\ref{homotopy-adjusted-terminology} for a terminological
discussion with references.)

\begin{prop} \label{dual-orthogonal-fully-faithful}
 For any abelian category\/ $\sA$, the composition of the fully
faithful inclusion\/ $\sK(\sA)_\hin\rarrow\sK(\sA)$ with the Verdier
quotient functor\/ $\sK(\sA)\rarrow\sD(\sA)$ is a fully faithful
functor\/ $\sK(\sA)_\hin\rarrow\sD(\sA)$.
 Hence we have a pair of fully faithful triangulated functors\/
$\sK(\sA_\inj)_\hin\rarrow\sK(\sA)_\hin\rarrow\sK(\sA)$.
\end{prop}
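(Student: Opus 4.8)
The plan is to recognize this statement as the exact dual of Proposition~\ref{orthogonal-fully-faithful} and to deduce it from the same general fact about Verdier quotients, applied in the opposite triangulated category. The general principle I would invoke is the dual of the one used there: for any triangulated category $\sH$ and full triangulated subcategories $\sP$, $\sX\subset\sH$ such that $\Hom_\sH(X,P)=0$ for all $X\in\sX$ and $P\in\sP$, the composition of the inclusion $\sP\rarrow\sH$ with the Verdier quotient functor $\sH\rarrow\sH/\sX$ is fully faithful. This follows from the version stated inside the proof of Proposition~\ref{orthogonal-fully-faithful} by passing to the opposite category $\sH^\sop$, in which $\sP^\sop$ and $\sX^\sop$ satisfy $\Hom_{\sH^\sop}(P,X)=0$ and $(\sH/\sX)^\sop\simeq\sH^\sop/\sX^\sop$.

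Granting this, I would apply it with $\sH=\sK(\sA)$, the subcategory $\sP=\sK(\sA)_\hin$ of homotopy injective complexes, and $\sX=\sK(\sA)_\ac\subset\sK(\sA)$ of acyclic complexes. The required orthogonality $\Hom_{\sK(\sA)}(X^\bu,J^\bu)=0$ for all acyclic $X^\bu$ and homotopy injective $J^\bu$ is exactly the defining property of a homotopy injective complex. Since $\sD(\sA)=\sK(\sA)/\sK(\sA)_\ac$ by definition, the composite $\sK(\sA)_\hin\rarrow\sK(\sA)\rarrow\sD(\sA)$ is then fully faithful, which is the first assertion.

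For completeness I would record the one line of content behind the general principle, in case a direct argument is preferred to dualization. Given $P\in\sP$ and any object $A\in\sH$, a morphism $A\rarrow P$ in $\sH/\sX$ is represented by a roof whose denominator $s\:A'\rarrow A$ has cone in $\sX$; applying $\Hom_\sH(-,P)$ to the triangle on $s$ and using $\Hom_\sH(X,P)=0$ for $X\in\sX$ shows that $s^*\:\Hom_\sH(A,P)\rarrow\Hom_\sH(A',P)$ is an isomorphism. This yields surjectivity and injectivity of the natural map $\Hom_\sH(A,P)\rarrow\Hom_{\sH/\sX}(A,P)$ simultaneously.

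Finally, the second assertion is immediate and requires no further work: $\sK(\sA_\inj)_\hin$ is by definition the full triangulated subcategory $\sK(\sA_\inj)\cap\sK(\sA)_\hin$ of $\sK(\sA)_\hin$, which in turn is a full subcategory of $\sK(\sA)$, so both inclusions in $\sK(\sA_\inj)_\hin\rarrow\sK(\sA)_\hin\rarrow\sK(\sA)$ are fully faithful. I do not anticipate any real obstacle here: the entire argument is formal and dual to one already carried out, the only substantive input being the definition of homotopy injectivity, which supplies precisely the orthogonality that the general lemma demands.
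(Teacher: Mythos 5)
Your proposal is correct and follows exactly the paper's route: the paper's proof consists of the single remark that this is the dual assertion to Proposition~\ref{orthogonal-fully-faithful}, which is precisely the dualization you carry out (with the same choice of $\sH=\sK(\sA)$, $\sP=\sK(\sA)_\hin$, $\sX=\sK(\sA)_\ac$). The extra roof argument you record is just the standard justification of the general orthogonality principle already invoked in the proof of Proposition~\ref{orthogonal-fully-faithful}.
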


\begin{proof}
 This is a dual assertion to
Proposition~\ref{orthogonal-fully-faithful}.
\end{proof}

 We will say that an abelian category $\sA$ has \emph{enough
homotopy injective complexes} if the functor $\sK(\sA)_\hin\rarrow
\sD(\sA)$ is a triangulated equivalence.
 Moreover, we will say that $\sA$ has \emph{enough homotopy
injective complexes of injective objects} if the functor
$\sK(\sA_\inj)_\hin\rarrow\sD(\sA)$ is a triangulated equivalence.

 In the latter case, clearly, the inclusion $\sK(\sA_\inj)_\hin
\rarrow\sK(\sA)_\hin$ is a triangulated equivalence, too.
 So, in an abelian category with enough homotopy injective complexes
of injective objects, any homotopy injective complex is homotopy
equivalent to a homotopy injective complex of injective objects.

 As in Section~\ref{loc-pres-secn}, we denote by $\sC(\sA)_\ac
\subset\sC(\sA)$ the full subcategory of acyclic complexes
in~$\sC(\sA)$.
 We also denote by $\sC(\sA_\inj)_\hin\subset\sC(\sA)$ the full
subcategory of homotopy injective complexes of injective objects
in~$\sC(\sA)$.

\begin{lem} \label{grothendieck-deconstructible}
 Let\/ $\sA$ be a Grothendieck abelian category.
 Then there exists a set\/ $\sS_0$ of objects in\/ $\sA$ such that
all the objects of\/ $\sA$ are filtered by objects from\/ $\sS_0$,
that is\/ $\sA=\Fil(\sS_0)$.
\end{lem}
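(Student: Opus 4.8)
The plan is to establish the result by an explicit transfinite filtration, adjoining one generator at a time. Since $\sA$ is Grothendieck it possesses a set of generators and, having coproducts, a single generator $U$. The two structural properties of $\sA$ that I will exploit are that it is well-powered (the subobjects of any fixed object form a set) and that its directed colimits are exact. I take $\sS_0$ to be a set of representatives of the isomorphism classes of quotient objects of $U$; this is genuinely a set, since quotients of $U$ correspond to subobjects of $U$ via their kernels, and the latter form a set by well-poweredness.

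Given an arbitrary object $M\in\sA$, I use that $U$ is a generator to produce the canonical epimorphism $\pi\:U^{(X)}\rarrow M$ with $X=\Hom_\sA(U,M)$; its image is all of $M$ precisely because $U$ generates. Well-ordering $X$ with order type $\mu=|X|$ and writing $X_\beta\subset X$ for the initial segment consisting of the first $\beta$ elements, I set $M_\beta=\pi(U^{(X_\beta)})\subset M$ for $0\le\beta\le\mu$. Then $M_0=0$ and $M_\mu=M$, each inclusion $M_\beta\rarrow M_{\beta+1}$ is a monomorphism of subobjects of $M$, and the cokernel $M_{\beta+1}/M_\beta$ is a quotient of the image under $\pi$ of the single new copy of $U$ indexed by the $\beta$-th element of $X$; hence $M_{\beta+1}/M_\beta$ is a quotient object of $U$, and so is isomorphic to an object of $\sS_0$.

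It remains to verify that $(M_\beta)_{0\le\beta\le\mu}$ is a smooth chain, and this is the one step where the Grothendieck hypothesis is really used. At a limit ordinal $\beta$ the initial segments with $\gamma<\beta$ form a chain with union $X_\beta$, so $U^{(X_\beta)}=\varinjlim_{\gamma<\beta}U^{(X_\gamma)}$ is a directed union; applying $\pi$ and using that the image functor commutes with directed colimits of subobjects (which is exactly the exactness of directed colimits), I obtain $M_\beta=\varinjlim_{\gamma<\beta}M_\gamma$, as the smoothness condition demands. This exhibits the chosen chain as a $\mu$\+filtration of $M$ by objects of $\sS_0$, so every object of $\sA$ lies in $\Fil(\sS_0)$ and therefore $\sA=\Fil(\sS_0)$. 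I expect this smoothness check to be the main (albeit mild) point, everything else being formal; the statement is the abelian incarnation of the deconstructibility of Grothendieck categories, and one could equally adjoin fewer-than-$\kappa$ generators at each step, in the bookkeeping style of the second proof of Proposition~\ref{graded-projective-deconstructible}.
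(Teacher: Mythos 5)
Your proof is correct and follows essentially the same route as the paper's: the paper also takes $\sS_0$ to be the quotient objects of a (set of) generator(s), well-orders the maps from generators into $M$, and filters $M$ by the partial sums of their images, with the AB5 condition guaranteeing smoothness at limit ordinals. The only cosmetic difference is that you use a single generator $U$ where the paper works with a set of generators $\sS$.
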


\begin{proof}
 This well-known observation can be found
in~\cite[Example~1.2.6(1)]{Bec}.
 Choose a set of generators $\sS$ for the abelian category $\sA$,
and let $\sS_0$ consist of all (representatives of isomorphism classes)
of quotient objects of the objects from $\sS$ in~$\sA$.
 Let $M\in\sA$ be an object.
 Consider the disjoint union $\coprod_{S\in\sS}\Hom_\sA(S,M)$ of
the sets $\Hom_\sA(S,M)$, \,$S\in\sS$, and choose a well-ordering of
this set by identifying it with some ordinal~$\alpha$.
 So for every ordinal $i<\alpha$ we have an object $S_i\in\sS$
and a morphism $f_i\:S_i\rarrow M$.
 For every ordinal $j\le\alpha$, put $F_j=\sum_{i<j}f_i(S_i)\subset M$.
 Then the inductive system of subobjects $F_i\subset M$ is
an $\alpha$\+filtration of the object $M$ by objects from~$\sS_0$.
\end{proof}

\begin{prop} \label{acyclic-deconstructible}
 Let\/ $\sA$ be a Grothendieck abelian category.
 Then there exists a set of acyclic complexes\/ $\sS\subset
\sC(\sA)_\ac$ such that the class of all acyclic complexes\/
$\sC(\sA)_\ac\subset\sC(\sA)$ is the class of all complexes
filtered by\/ $\sS$, that is\/ $\sC(\sA)_\ac=\Fil(\sS)$.
\end{prop}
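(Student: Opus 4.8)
The plan is to produce the set $\sS$ as a set of ``small'' acyclic complexes and then to build a transfinite filtration of an arbitrary acyclic complex by such small pieces, in the spirit of the second proof of Proposition~\ref{graded-projective-deconstructible} and of Lemma~\ref{grothendieck-deconstructible}. The new ingredient, absent in the projective case, is the exactness of directed colimits in the Grothendieck category $\sA$, which I will use repeatedly. First I would dispose of the easy inclusion: since $\sA$ has exact directed colimits, so does $\sC(\sA)$, and the cohomology functors commute with directed colimits. Together with the long exact cohomology sequence this shows that $\sC(\sA)_\ac$ is closed under extensions and under directed colimits of smooth chains, hence under transfinitely iterated extensions in the sense of the directed colimit (compare Proposition~\ref{eklof-lemma-prop}). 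Consequently $\Fil(\sS)\subseteq\sC(\sA)_\ac$ for any set $\sS\subseteq\sC(\sA)_\ac$, and it only remains to choose $\sS$ so that the reverse inclusion holds.

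For the setup I would fix, using Lemma~\ref{grothendieck-deconstructible}, a set $\sS_0$ with $\sA=\Fil(\sS_0)$, and choose an uncountable regular cardinal $\kappa$ for which $\sA$ is locally $\kappa$\+presentable and every object of $\sS_0$ is $\kappa$\+presentable. Then I let $\sS\subseteq\sC(\sA)_\ac$ be a set of representatives of the isomorphism classes of acyclic complexes all of whose terms are $\kappa$\+presentable; this is a genuine set because the $\kappa$\+presentable objects of $\sA$ form an essentially small class.

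The crucial simplification is a consequence of the long exact sequence: if $B^\bu\subseteq A^\bu$ is a subcomplex of an acyclic complex $A^\bu$, then $H^n(A^\bu/B^\bu)\cong H^{n+1}(B^\bu)$ for all $n\in\boZ$, so $B^\bu$ is acyclic if and only if $A^\bu/B^\bu$ is acyclic. Thus it suffices to produce acyclic subcomplexes with $\kappa$\+small successive quotients; those quotients are then automatically acyclic, hence lie in $\sS$. The key lemma is: in any acyclic complex $\bar A^\bu$, every $\kappa$\+presentable graded subobject is contained in a $\kappa$\+small acyclic subcomplex $\bar B^\bu\in\sS$. I would prove this by the standard closure construction. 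Starting from the given subobject, form an $\omega$\+chain $C_0\subseteq C_1\subseteq\dotsb$ of $\kappa$\+presentable graded subobjects in which $C_{m+1}$ contains $d(C_m)$ and, for a chosen generating family of fewer than $\kappa$ cycles of $C_m$, a $d$\+preimage of each such generator (these exist precisely because $\bar A^\bu$ is acyclic). Regularity of $\kappa$ keeps each step, and hence the union $\bar B^\bu=\bigcup_m C_m$, $\kappa$\+small; exactness of directed colimits gives $Z^n(\bar B^\bu)=\varinjlim_m Z^n(C_m)$, so every cycle of $\bar B^\bu$ is a boundary in $\bar B^\bu$, i.e.\ $\bar B^\bu$ is acyclic. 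This closure construction is the main obstacle: one must balance the two closure conditions (being a subcomplex and being internally acyclic) while preserving $\kappa$\+smallness, and it is here that regularity of $\kappa$ together with exactness of directed colimits does the essential work.

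Finally I would assemble the filtration. Given an arbitrary $A^\bu\in\sC(\sA)_\ac$, I construct by transfinite recursion a smooth chain $(A^\bu_\beta)_{\beta\le\alpha}$ of acyclic subcomplexes with $A^\bu_0=0$, with $A^\bu/A^\bu_\beta$ acyclic at every stage, and with $A^\bu_\alpha=A^\bu$. At a successor step I apply the key lemma to the acyclic quotient $A^\bu/A^\bu_\beta$ to find a $\kappa$\+small acyclic subcomplex capturing a not-yet-covered generator of $A^\bu$, and let $A^\bu_{\beta+1}$ be its preimage in $A^\bu$; by the equivalence above both $A^\bu_{\beta+1}$ and $A^\bu/A^\bu_{\beta+1}$ are acyclic, while $A^\bu_{\beta+1}/A^\bu_\beta$ is $\kappa$\+small acyclic, hence lies in $\sS$. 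At limit stages I take unions, which remain acyclic with acyclic quotient by exactness of directed colimits. Since $\sA$ has a generating set, the recursion exhausts $A^\bu$ after a bounded ordinal of steps, exhibiting $A^\bu$ as a complex filtered by $\sS$. This yields $\sC(\sA)_\ac\subseteq\Fil(\sS)$ and, combined with the first paragraph, the equality $\sC(\sA)_\ac=\Fil(\sS)$.
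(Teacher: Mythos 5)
Your overall strategy is sound and, unlike the paper's proof---which simply combines Lemma~\ref{grothendieck-deconstructible} with a citation of \cite[Proposition~4.4]{Sto0}---it amounts to reproving that cited result directly in the special case of acyclic complexes. The easy inclusion $\Fil(\sS)\subseteq\sC(\sA)_\ac$, the observation that a subcomplex of an acyclic complex is acyclic if and only if the quotient complex is, and the transfinite assembly at the end are all correct as sketched.

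The one step that does not work as stated is the closure construction in your key lemma, specifically the phrase ``a chosen generating family of fewer than $\kappa$ cycles of $C_m$''. This presupposes that the subobject of cycles $C_m^n\cap Z^n(\bar A^\bu)$---equivalently, the kernel of the differential restricted to $C_m^n$---is $\kappa$\+generated. That does not follow from $C_m^n$ being $\kappa$\+presentable for the cardinal $\kappa$ you fixed (local $\kappa$\+presentability of $\sA$ plus $\kappa$\+presentability of the objects of $\sS_0$): in a general Grothendieck category, kernels of morphisms between $\kappa$\+presentable objects, and subobjects of $\kappa$\+presentable objects, need not be $\kappa$\+generated. And you genuinely need this: since $Z^n(\bar B^\bu)=\bigcup_m\bigl(C_m^n\cap Z^n(\bar A^\bu)\bigr)$ by AB5, exactness of $\bar B^\bu$ forces each $C_m^n\cap Z^n(\bar A^\bu)$ to be contained in $d(C_{m'}^{n-1})$ for some~$m'$, and the directed-union argument that produces a $\kappa$\+small lift through the epimorphism $\bar A^{n-1}\rarrow Z^n(\bar A^\bu)$ only terminates when the object being lifted is $\kappa$\+generated. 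The gap is fixable: in any Grothendieck category one can choose $\kappa$ large enough that subobjects (hence kernels) of $\kappa$\+presentable objects are again $\kappa$\+presentable, which is part of the preparatory material in~\cite{Sto0}; but this property must be built into the choice of~$\kappa$, as it is not automatic. Note that this is precisely the difficulty that forces the paper, in the general exact-category setting of Theorem~\ref{small-exact-subcomplex}, to introduce the function $\phi(\lambda)$ bounding the size of kernels of admissible epimorphisms and to iterate the closure construction $\kappa$ times rather than $\omega$ times.
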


\begin{proof}
 Taking Lemma~\ref{grothendieck-deconstructible} into account,
the assertion of the proposition becomes a particular case
of~\cite[Proposition~4.4]{Sto0}.
\end{proof}

 The following theorem is to be compared with the discussion
in~\cite[Remark~1.3.13]{Bec}.

\begin{thm} \label{grothendieck-cotorsion-pair}
 Let\/ $\sA$ be a Grothendieck abelian category.
 Then the pair of classes of objects\/ $\sC(\sA)_\ac$ and\/
$\sC(\sA_\inj)_\hin$ is a hereditary complete cotorsion pair in
the abelian category\/ $\sC(\sA)$.
\end{thm}

\begin{proof}
 Let $\sS$ be a set of acyclic complexes in $\sA$ such that
$\sC(\sA)_\ac=\Fil(\sS)$, as
in Proposition~\ref{acyclic-deconstructible}.
 The claim is that the set $\sS\subset\sC(\sA)$ generates
the cotorsion pair we are interested in.

 Indeed, by Proposition~\ref{eklof-lemma-prop} we have
$\sS^{\perp_1}=(\sC(\sA)_\ac)^{\perp_1}\subset\sC(\sA)$.
 In the pair of adjoint functors $G^+\:\sA^\sgr\rarrow\sC(\sA)$ and
$({-})^\sharp\:\sC(\sA)\rarrow\sA^\sgr$, both the functors are exact.
 Hence of any graded object $A\in\sA^\sgr$ and any complex
$C^\bu\in\sC(\sA)$ we have $\Ext^n_{\sC(\sA)}(G^+(A),C^\bu)\simeq
\Ext^n_{\sA^\sgr}(A,C^\bu{}^\sharp)$ for all $n\ge0$.
 In particular, this isomorphism holds for $n=1$.
 Since the complex $G^+(A)$ is acyclic, we have shown that
$(\sC(\sA)_\ac)^{\perp_1}\subset\sC(\sA_\inj)$.
 Now for a complex $J^\bu\in\sC(\sA_\inj)$ and any complex
$X^\bu\in\sC(\sA)$ we have $\Ext^1_{\sC(\sA)}(X^\bu,J^\bu)\simeq
\Hom_{\sK(\sA)}(X^\bu,J^\bu[1])$ by Lemma~\ref{ext-1-hom-hot}(a).
 Thus $\sS^{\perp_1}=(\sC(\sA)_\ac)^{\perp_1}=\sC(\sA_\inj)_\hin
\subset\sC(\sA)$.

 Furthermore, the class of all acyclic complexes $\sC(\sA)_\ac$
is clearly generating in $\sC(\sA)$.
 Applying Theorem~\ref{cotorsion-pair-generated-by-set-left-class},
we can conclude that ${}^{\perp_1}(\sC(\sA_\inj)_\hin)
=\Fil(\sS)^\oplus=\sC(\sA)_\ac$.

 Any complex in $\sA$ is a subcomplex of a contractible complex of
injective objects, so the class $\sC(\sA_\inj)_\hin$ is cogenerating
in $\sC(\sA)$.
 Hence Theorem~\ref{cotorsion-pair-generated-by-set-complete} is
applicable, and we can conclude that our cotorsion pair is complete.
 The cotorsion pair is hereditary, since the class $\sC(\sA)_\ac$ is
closed under the kernels of epimorphisms in $\sC(\sA)$.
\end{proof}

Hovey in~\cite[Example 3.2]{Hov} attributes the following corollary to Joyal, who should have mentioned it in a 1984 letter of his to Grothendieck.
A weaker version appeared in~\cite[Theorem~5.4]{AJS}, where existence
of enough homotopy injective complexes was established.
The existence of enough homotopy injective complexes of injective
objects was shown in~\cite[Theorem~3.13 and Lemma~3.7(ii)]{Ser}
and, following the approach of~\cite{Hov}, independently also in~\cite[Corollary~7.1]{Gil}.

\begin{cor} \label{enough-homotopy-injectives}
 Any Grothendieck abelian category has enough homotopy injective
complexes of injective objects.
In other words, the natural functor\/
$\sK(\sA_\inj)_\hin\rarrow\sD(\sA)$ is a triangulated equivalence.
\end{cor}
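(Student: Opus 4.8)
The plan is to prove this corollary as the exact dual of Corollary~\ref{enough-homotopy-projectives}, relying entirely on the complete cotorsion pair furnished by Theorem~\ref{grothendieck-cotorsion-pair}. Since Proposition~\ref{dual-orthogonal-fully-faithful} already shows that the composite functor $\sK(\sA_\inj)_\hin\rarrow\sD(\sA)$ is fully faithful, the only thing left to establish is that it is essentially surjective. In other words, I would reduce the statement to showing that every complex $C^\bu\in\sC(\sA)$ is isomorphic in $\sD(\sA)$ to a homotopy injective complex of injective objects; equivalently, that $C^\bu$ admits a quasi-isomorphism into such a complex.

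To produce this quasi-isomorphism, I would invoke the special preenvelope sequences guaranteed by the completeness of the cotorsion pair $(\sC(\sA)_\ac,\,\sC(\sA_\inj)_\hin)$ of Theorem~\ref{grothendieck-cotorsion-pair}. Applied to the object $C^\bu$, the special preenvelope sequence~\eqref{special-preenvelope-sequence} reads
$$0\rarrow C^\bu\rarrow J^\bu\rarrow X^\bu\rarrow0,$$
with $J^\bu\in\sC(\sA_\inj)_\hin$ (the right-hand class) and $X^\bu\in\sC(\sA)_\ac$ (the left-hand class). Because $X^\bu$ is acyclic, the long exact cohomology sequence forces the monomorphism $C^\bu\rarrow J^\bu$ to be a quasi-isomorphism. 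This exhibits $C^\bu$ as isomorphic in $\sD(\sA)$ to the homotopy injective complex of injective objects $J^\bu$, which is exactly the essential surjectivity needed.

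I expect no genuine obstacle at this stage: all the difficulty has been front-loaded into Theorem~\ref{grothendieck-cotorsion-pair}, and beneath it into the deconstructibility of the class of acyclic complexes (Proposition~\ref{acyclic-deconstructible}) together with the Eklof--Trlifaj cotorsion-pair machinery of Section~\ref{cotorsion-pairs-secn}. Once that complete cotorsion pair is in hand, the corollary follows as an immediate formal consequence, mirroring verbatim the projective argument of Corollary~\ref{enough-homotopy-projectives}. The only point worth double-checking is the precise matching of the two classes in the cotorsion pair to the roles of $\sR$ and $\sL$ in~\eqref{special-preenvelope-sequence}, so that the \emph{preenvelope} (rather than precover) sequence is the one producing the left resolution of $C^\bu$ by an injective-type complex; this is the natural dual bookkeeping relative to the precover sequence used in the projective case.
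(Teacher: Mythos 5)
Your proposal is correct and coincides with the paper's own argument: the paper likewise applies the special preenvelope sequence $0\rarrow C^\bu\rarrow J^\bu\rarrow X^\bu\rarrow0$ of the complete cotorsion pair from Theorem~\ref{grothendieck-cotorsion-pair} and observes that acyclicity of $X^\bu$ makes $C^\bu\rarrow J^\bu$ a quasi-isomorphism. Your bookkeeping of the two classes (acyclic complexes on the left, homotopy injective complexes of injectives on the right) matches the paper exactly.
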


\begin{proof}
 Here is a proof based on Theorem~\ref{grothendieck-cotorsion-pair}.
 Let $\sA$ be a Grothendieck abelian category.
 Given a complex $C^\bu\in\sC(\sA)$, we need to find a homotopy
injective complex of injective objects $J^\bu$ together with
a quasi-isomorphism $C^\bu\rarrow J^\bu$ of complexes in~$\sA$.
 For this purpose, consider a special preenvelope short exact
sequence $0\rarrow C^\bu\rarrow J^\bu\rarrow X^\bu\rarrow0$ with
$J^\bu\in\sC(\sA_\inj)_\hin$ and $X^\bu\in\sC(\sA)_\ac$.
 Since the complex $X^\bu$ is acyclic, it follows that the monomorphism
of complexes $C^\bu\rarrow J^\bu$ is a quasi-isomorphism.
\end{proof}

 The following result can be found in~\cite[Theorem~4.12 and
Corollary~7.1]{Gil}.

\begin{thm} \label{grothendieck-model-structure}
 Let\/ $\sA$ be a Grothendieck abelian category.
 Then the triple of classes of objects\/ $\sL=\sC(\sA)$, \
$\sW=\sC(\sA)_\ac$, and\/ $\sR=\sC(\sA_\inj)_\hin$ is a cofibrantly
generated hereditary abelian model structure on the abelian category
of complexes $\sC(\sA)$.
\end{thm}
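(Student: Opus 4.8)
The plan is to follow the dual of the argument for Theorem~\ref{loc-pres-model-structure}, assembling the injective abelian model structure out of the cotorsion pair already produced in Theorem~\ref{grothendieck-cotorsion-pair}. The most economical route is through Lemma~\ref{inj-proj-model-structures}(a): I would verify that the pair $(\sW,\sR)=(\sC(\sA)_\ac,\sC(\sA_\inj)_\hin)$ is a complete cotorsion pair in $\sC(\sA)$ with $\sR\cap\sW=\sC(\sA)_\inj$ and with $\sW$ thick, and that $\sC(\sA)$ has enough injective objects. That lemma then delivers an injective abelian model structure $(\sC(\sA),\>\sC(\sA)_\ac,\>\sC(\sA_\inj)_\hin)$ directly, which is exactly the triple in the statement (here $\sL=\sC(\sA)$ forces $\sL\cap\sW=\sW$, so that condition~(2) of Theorem~\ref{abelian-model-structures-thm} is the pair of Theorem~\ref{grothendieck-cotorsion-pair} and condition~(1) is $(\sC(\sA),\sC(\sA)_\inj)$).

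First I would record the ingredients. Since $\sA$ is Grothendieck it has enough injectives, hence so does $\sC(\sA)$, with the injective complexes described in Lemma~\ref{inj-proj-complexes}(a). The pair $(\sW,\sR)$ is a hereditary complete cotorsion pair by Theorem~\ref{grothendieck-cotorsion-pair}. Its right class $\sR=\sC(\sA_\inj)_\hin$ lies in $\sC(\sA_\inj)$ and the class $\sW=\sC(\sA)_\ac$ of acyclic complexes is visibly shift-invariant, so Lemma~\ref{intersection-is-inj-proj-complexes}(a) applies and yields both $\sR\cap\sW=\sC(\sA)_\inj$ and the thickness of $\sW$ (the latter is also clear, acyclicity being closed under direct summands and the two-out-of-three property via the long exact cohomology sequence). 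Lemma~\ref{inj-proj-model-structures}(a) now produces the injective abelian model structure, and since every injective abelian model structure is hereditary (as explained in Section~\ref{abelian-model-secn}), heredity is automatic.

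It then remains to check cofibrant generation, for which I would invoke Corollary~\ref{abelian-cofibrantly-generated}: both cotorsion pairs must be generated by sets of objects. The pair $(\sL\cap\sW,\sR)=(\sC(\sA)_\ac,\sC(\sA_\inj)_\hin)$ is generated by the set $\sS$ of acyclic complexes furnished by Proposition~\ref{acyclic-deconstructible}, exactly as in the proof of Theorem~\ref{grothendieck-cotorsion-pair}. The remaining pair is $(\sL,\sR\cap\sW)=(\sC(\sA),\sC(\sA)_\inj)$, whose left class is the whole category. Since $\sC(\sA)$ is again Grothendieck, Lemma~\ref{grothendieck-deconstructible} gives a set $\sS_0$ with $\sC(\sA)=\Fil(\sS_0)$; the Eklof lemma (Proposition~\ref{eklof-lemma-prop}) then shows that ${}^{\perp_1}(\sS_0^{\perp_1})$ is closed under filtrations and contains $\sS_0$, hence equals $\sC(\sA)$, which forces $\sS_0^{\perp_1}=\sC(\sA)_\inj$. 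Thus both pairs are set-generated and Corollary~\ref{abelian-cofibrantly-generated} completes the argument.

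The step I expect to be the main obstacle is precisely this last verification that the \emph{trivial-looking} cotorsion pair $(\sC(\sA),\sC(\sA)_\inj)$ is set-generated. This is the genuinely asymmetric point compared with the projective case, where the analogous pair $(\sC(\sB)_\proj,\sC(\sB))$ was generated by the empty set; here testing injectivity by a set of $\Ext^1$-vanishing conditions is not formal and relies essentially on the deconstructibility of the ambient Grothendieck category supplied by Lemma~\ref{grothendieck-deconstructible}.
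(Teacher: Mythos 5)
Your proposal is correct and follows essentially the same route as the paper: it assembles the injective abelian model structure from the complete cotorsion pair of Theorem~\ref{grothendieck-cotorsion-pair} via Lemmas~\ref{intersection-is-inj-proj-complexes}(a) and~\ref{inj-proj-model-structures}(a), and checks cofibrant generation through Corollary~\ref{abelian-cofibrantly-generated} using Proposition~\ref{acyclic-deconstructible} for one pair and Lemma~\ref{grothendieck-deconstructible} applied to $\sC(\sA)$ for the pair $(\sC(\sA),\sC(\sA)_\inj)$. Your explicit justification via the Eklof lemma that the latter pair is generated by a set $\sS_0$ with $\sC(\sA)=\Fil(\sS_0)$ is exactly the point the paper leaves implicit.
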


\begin{proof}
 The pair of classes $(\sW,\sR)$ is a complete cotorsion pair in
$\sC(\sA)$ by Theorem~\ref{grothendieck-cotorsion-pair}.
 The abelian category of complexes $\sC(\sA)$ has enough injective
objects since it is Grothendieck (or since the abelian category $\sA$
has).
 According to Lemma~\ref{intersection-is-inj-proj-complexes}(a),
it follows that $\sR\cap\sW=\sC(\sA)_\inj$.
 The class of acyclic complexes $\sW$ is obviously thick in $\sC(\sA)$
(see also Lemma~\ref{W-is-thick}(a) or~\ref{intersection-is-inj-proj-complexes}(a)).
 By Lemma~\ref{inj-proj-model-structures}(a) (applied to
the category~$\sC(\sA)$), the triple $(\sL,\sW,\sR)$ is an injective
abelian model structure on $\sC(\sA)$.

 As explained in Section~\ref{abelian-model-secn}, 
any injective abelian model structure is hereditary.
 Finally, it is clear from the proof of
Theorem~\ref{grothendieck-cotorsion-pair} that the cotorsion pair
$(\sW,\sR)$ is $\sC(\sA)$ is generated by a set of objects.
 The cotorsion pair $(\sC(\sA),\sC(\sA)_\inj)$ is generated by
any set of complexes $\sS_0$ such that $\sC(\sA)=\Fil(\sS_0)$,
as in Lemma~\ref{grothendieck-deconstructible} applied to
the category $\sC(\sA)$.
 According to Corollary~\ref{abelian-cofibrantly-generated}, this
means that our abelian model structure is cofibrantly generated.
\end{proof}

 The abelian model structure $(\sL,\sW,\sR)$ defined in
Theorem~\ref{grothendieck-model-structure} is called
the \emph{injective derived model structure} on the abelian
category of complexes~$\sC(\sA)$.

\begin{lem} \label{grothendieck-weak-equivalences}
 For any Grothendieck abelian category\/ $\sA$, the class\/ $\cW$ of
all weak equivalences in the injective derived model structure on
the abelian category\/ $\sC(\sA)$ coincides with the class of all
quasi-isomorphisms of complexes in\/~$\sA$.
\end{lem}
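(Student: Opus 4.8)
The plan is to follow the argument dual to the proof of Lemma~\ref{loc-pres-weak-equivalences}. The two key structural inputs are, first, \cite[Lemma~5.8]{Hov}, which says that in any abelian model category a monomorphism is a weak equivalence if and only if its cokernel is weakly trivial, and dually an epimorphism is a weak equivalence if and only if its kernel is weakly trivial; and, second, the identification of the class $\sW$ of weakly trivial objects in the injective derived model structure with the class $\sC(\sA)_\ac$ of acyclic complexes, which is how this model structure was set up in Theorem~\ref{grothendieck-model-structure}.

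First I would treat monomorphisms and epimorphisms. For a monomorphism $l\:A^\bu\rarrow B^\bu$ of complexes, the long exact sequence of cohomology attached to the short exact sequence $0\rarrow A^\bu\rarrow B^\bu\rarrow\coker(l)\rarrow0$ shows that $l$ is a quasi-isomorphism precisely when $\coker(l)$ is acyclic, i.e.\ when $\coker(l)\in\sW$. Combined with \cite[Lemma~5.8]{Hov}, this gives that a monomorphism is a weak equivalence if and only if it is a quasi-isomorphism. The dual computation, using the kernel, settles the case of epimorphisms.

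Next I would pass to an arbitrary morphism $f$ by factoring it. Using the weak factorization system $(\cL\cap\cW,\,\cR)$ supplied by the model structure, write $f=rl$ with $l$ a trivial cofibration and $r$ a fibration. Here $l$ is a monomorphism that is a weak equivalence, hence by the previous paragraph also a quasi-isomorphism, while $r$ is an epimorphism. Since both the class $\cW$ and the class of quasi-isomorphisms satisfy the two-out-of-three property, $f$ lies in $\cW$ if and only if $r$ does, and $f$ is a quasi-isomorphism if and only if $r$ is; as $r$ is an epimorphism, the two conditions on $r$ coincide by the previous step, and the desired equality of $\cW$ with the class of all quasi-isomorphisms follows. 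Alternatively, one could factor $f$ as a cofibration followed by a trivial fibration, which would work equally well.

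Since everything reduces to the already-established dictionary between weak triviality and acyclicity, there is no genuine obstacle here; the only point requiring care is to keep the mono/epi and cokernel/kernel roles correctly paired, and to make sure the chosen factorization produces a trivial cofibration that is an actual quasi-isomorphism---which is exactly the content of the monomorphism case treated above.
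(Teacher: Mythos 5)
Your proposal is correct and follows essentially the same route as the paper: the paper's proof of Lemma~\ref{grothendieck-weak-equivalences} simply refers back to the proof of Lemma~\ref{loc-pres-weak-equivalences}, which uses exactly the ingredients you name (Hovey's characterization of weak equivalences among monos/epis via weakly trivial cokernels/kernels, the identification $\sW=\sC(\sA)_\ac$, the factorization into a trivial cofibration followed by a fibration, and two-out-of-three). No gaps.
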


\begin{proof}
 Similar to Lemma~\ref{loc-pres-weak-equivalences}.
\end{proof}

 The following corollary presumes existence of (set-indexed) coproducts
in the derived category $\sD(\sA)$.
 Notice that, since the coproducts are exact in the abelian category
$\sA$ and consequently the thick subcategory of acyclic complexes
$\sK(\sA)_\ac\subset\sK(\sA)$ is closed under coproducts,
the coproducts in the derived category $\sD(\sA)$ are induced by
those in the homotopy category $\sK(\sA)$.
 In other words, the Verdier quotient functor $\sK(\sA)\rarrow\sD(\sA)$
preserves coproducts~\cite[Lemma~3.2.10]{Neem-book}.

\begin{cor} \label{grothendieck-well-generated}
 For any Grothendieck abelian category\/ $\sA$, the (unbounded) derived
category\/ $\sD(\sA)$ is a well-generated triangulated category.
\end{cor}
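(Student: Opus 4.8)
The plan is to follow verbatim the argument used for the projective side in Corollary~\ref{loc-pres-well-generated}, invoking the injective derived model structure in place of the projective one. By Theorem~\ref{grothendieck-model-structure}, the triple $\sL=\sC(\sA)$, $\sW=\sC(\sA)_\ac$, $\sR=\sC(\sA_\inj)_\hin$ is a cofibrantly generated hereditary abelian model structure on $\sC(\sA)$. As noted at the beginning of this section, the category of complexes $\sC(\sA)$ in a Grothendieck abelian category is again Grothendieck, hence locally presentable; so this model category is combinatorial.

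Next I would recall, following~\cite[Corollary~1.1.15 and the preceding discussion]{Bec}, that any hereditary abelian model category is stable, so that its homotopy category $\sC(\sA)[\cW^{-1}]$ carries a natural triangulated structure. By Lemma~\ref{grothendieck-weak-equivalences}, the class $\cW$ of weak equivalences in the injective derived model structure coincides with the class of quasi-isomorphisms; therefore the homotopy category $\sC(\sA)[\cW^{-1}]$ is precisely the derived category $\sD(\sA)$. This identifies $\sD(\sA)$ as the homotopy category of a stable combinatorial model category.

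The conclusion then follows immediately from the general principle that the homotopy category of any stable combinatorial model category is well-generated in the sense of Neeman; see~\cite[Proposition~6.10]{Ros} or~\cite[Theorems~3.1 and~3.9]{CR}. I would also remark, as in the statement preceding the corollary, that the coproducts in $\sD(\sA)$ exist and are induced from $\sK(\sA)$ since the directed colimits (in particular coproducts) are exact in the Grothendieck category $\sA$, so the Verdier quotient functor $\sK(\sA)\rarrow\sD(\sA)$ preserves coproducts. There is essentially no obstacle here: all the work has already been done in Theorem~\ref{grothendieck-model-structure} and Lemma~\ref{grothendieck-weak-equivalences}, and the only point requiring attention is the verification that the model category is combinatorial, which reduces to the (already recorded) fact that $\sC(\sA)$ is locally presentable.
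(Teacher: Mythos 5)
Your proposal is correct and follows exactly the route the paper indicates: the paper's own proof simply cites \cite[Section~7.7]{Kra2} and \cite[Theorem~5.10]{Kra3} and then remarks that the statement "is also provable similarly to Corollary~\ref{loc-pres-well-generated}," which is precisely the argument you spell out (injective derived model structure from Theorem~\ref{grothendieck-model-structure}, stability via Becker, identification of the homotopy category with $\sD(\sA)$ via Lemma~\ref{grothendieck-weak-equivalences}, and well-generatedness from \cite{Ros} or \cite{CR}). No gaps; your write-up is just a more explicit version of the paper's one-line sketch.
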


\begin{proof}
 This result can be found in~\cite[Section~7.7]{Kra2}, or in a more 
precise form in~\cite[Theorem~5.10]{Kra3}.
 It is also provable similarly
to Corollary~\ref{loc-pres-well-generated}.
\end{proof}

 The next theorem appeared in the context of DG\+comodules over
a DG\+coalgebra (over a field) in the postpublication \texttt{arXiv}
version of~\cite[Section~5.5]{Pkoszul};
see also~\cite[Theorem~1.1(d)]{Pmc}.

\begin{thm} \label{grothendieck-generated-by-J-as-colocalizing}
 Assume Vop\v enka's principle, and let\/ $\sA$ be
a Grothendieck abelian category with an injective cogenerator~$J$.
 Then the category\/ $\sD(\sA)$ is generated, as a triangulated
category with products, by the single object $J$ (viewed as
a one-term complex concentrated in degree~$0$).
 In other words, the full subcategory of homotopy injective complexes\/
$\sK(\sA)_\hin\subset\sK(\sA)$ is the minimal strictly full triangulated
subcategory in\/ $\sK(\sA)$ containing the object $J$ and closed under
products.
\end{thm}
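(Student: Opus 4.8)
The plan is to carry out the exact product-dual of the proof of Theorem~\ref{loc-pres-generated-by-P-as-localizing}, with Vop\v enka's principle supplying the single ingredient that is automatic for coproducts but not for products.

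I would begin with the dual of the key observation. For the injective object $J$, the one-term complex $J$ is homotopy injective, and for any $Y^\bu$ one has $\Hom_{\sD(\sA)}(Y^\bu,J)\simeq\Hom_\sA(H^0(Y^\bu),J)$; applying this to $Y^\bu=X^\bu[i]$ gives a natural isomorphism $\Hom_{\sD(\sA)}(X^\bu[i],J)\simeq\Hom_\sA(H^i(X^\bu),J)$ for every $i\in\boZ$. Since $J$ is a cogenerator, $\Hom_\sA(M,J)=0$ only for $M=0$; hence $\Hom_{\sD(\sA)}(X^\bu[i],J)=0$ for all $i$ forces $H^i(X^\bu)=0$ for all $i$, that is $X^\bu\simeq0$ in $\sD(\sA)$. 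Thus $J$ is a \emph{weak cogenerator} of $\sD(\sA)$. I would also note, dually to the remark preceding Corollary~\ref{grothendieck-well-generated}, that products in $\sD(\sA)$ exist and are computed inside $\sK(\sA)_\hin$ (which is closed under products in $\sK(\sA)$, since a product of homotopy injective complexes is homotopy injective); combined with Corollary~\ref{enough-homotopy-injectives}, this makes the two formulations in the statement interchangeable.

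Now let $\sD'\subset\sD(\sA)$ be the minimal strictly full triangulated subcategory containing $J$ and closed under products, and let $\iota\:\sD'\rarrow\sD(\sA)$ be the inclusion. By Corollary~\ref{grothendieck-well-generated}, the category $\sD(\sA)$ is well-generated. The crux of the proof is to produce a \emph{left} adjoint $\lambda\:\sD(\sA)\rarrow\sD'$ to $\iota$, i.e.\ to show that $\sD'$ is reflective; this amounts to corepresenting, for each $Y^\bu\in\sD(\sA)$, the product-preserving cohomological functor $Z\longmapsto\Hom_{\sD(\sA)}(Y^\bu,\iota Z)$ on $\sD'$. This is exactly the step that is \emph{not} formally dual to the coproduct case: products and passage to the opposite category are ill-behaved for Grothendieck and locally presentable categories, and Brown representability for the dual can genuinely fail without extra set-theoretic hypotheses. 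Here I would invoke, under Vop\v enka's principle, that a well-generated (equivalently, combinatorial stable) triangulated category satisfies Brown representability for the dual, together with its consequence that a colocalizing subcategory generated by a set of objects is reflective --- the product-dual of the use of~\cite[Theorems~7.2.1(2) and~5.1.1]{Kra2} in the projective argument (see~\cite{Ros,CR}). This yields the desired left adjoint $\lambda$.

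The endgame is then formally dual to the projective one. A left adjoint to a fully faithful triangulated functor is a Verdier quotient functor, so $\lambda$ exhibits $\sD'$ as a Verdier localization of $\sD(\sA)$, and its kernel consists of the objects $X^\bu$ with $\Hom_{\sD(\sA)}(X^\bu,\iota Z)=0$ for all $Z\in\sD'$. Taking $Z=J[-i]$ for $i\in\boZ$, each such $X^\bu$ satisfies $\Hom_{\sD(\sA)}(X^\bu[i],J)=0$ for all $i$, whence $X^\bu\simeq0$ by the weak-cogenerator property. Hence the kernel is zero, $\lambda$ is a triangulated equivalence, and therefore $\iota$ is an equivalence, i.e.\ $\sD'=\sD(\sA)$. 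As indicated, the main obstacle is the middle paragraph: securing the left adjoint, equivalently dual Brown representability, which is the sole reason Vop\v enka's principle is assumed.
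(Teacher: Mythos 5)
Your proof follows the paper's argument essentially verbatim: the same weak-cogenerator computation $\Hom_{\sD(\sA)}(X^\bu,J[i])\simeq\Hom_\sA(H^{-i}(X^\bu),J)$, the same observation that products are computed in $\sK(\sA)_\hin$, the same reduction to showing that the colocalizing subcategory generated by $J$ is reflective under Vop\v enka's principle, and the same endgame identifying the kernel of the resulting left adjoint. The only divergence is in the precise external input: the paper cites \cite[Theorem~2.4]{CGR}, which states directly that under Vop\v enka's principle \emph{every} triangulated subcategory closed under products in the homotopy category of a combinatorial stable model category is reflective --- a cleaner route than passing through dual Brown representability of the ambient category, since the latter would yield reflectivity of your $\sD'$ only if $\sD'$ itself were known to satisfy dual Brown representability; but this is exactly the circle of results you point to, so the gap is one of citation rather than of substance.
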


\begin{proof}
 Notice first of all that (set-indexed) products in the derived
category $\sD(\sA)$ can be simply constructed as the products in
the homotopy category $\sK(\sA)_\hin$, which is equivalent to
$\sD(\sA)$ by Corollary~\ref{enough-homotopy-injectives}.
 The full subcategory $\sK(\sA)_\hin$ is closed under products in
$\sK(\sA)$, so the products in $\sK(\sA)_\hin$ (just as in $\sK(\sA)$)
can be computed as the termwise products of complexes.

 The key observation is that $\Hom_{\sD(\sA)}(X^\bu,J[i])=0$ for all
$i\in\boZ$ implies $X^\bu=0$ for a given object $X^\bu\in\sD(\sA)$,
since for any complex $X^\bu$ one has a natural isomorphism of
abelian groups $\Hom_{\sD(\sA)}(X^\bu,J[i])\simeq
\Hom_\sA(H^{-i}(X^\bu),J)$.
 Denote by $\sD''\subset\sD(\sA)$ the minimal triangulated subcategory
in $\sD(\sA)$ containing $J$ and closed under products.

 Following~\cite[Corollary~1.1.15 and the preceding discussion]{Bec},
any hereditary abelian model category is stable.
 The derived category $\sD(\sA)$ can be equivalently defined as
the homotopy category $\sC(\sA)[\cW^{-1}]$ of the injective derived
model category structure on $\sC(\sA)$.
 The category $\sC(\sA)$ is Grothendieck, hence locally presentable.
 According to~\cite[Theorem~2.4]{CGR}, assuming Vop\v enka's principle,
any triangulated subcategory closed under products in $\sD(\sA)$ is
reflective.
 So, in particular, the inclusion functor $\sD''\rarrow\sD(\sA)$ has
a left adjoint.

 The left adjoint functor to a fully faithful triangulated functor is
a Verdier quotient functor.
 The kernel of our functor $\sD(\sA)\rarrow\sD''$ consists of complexes
$X^\bu$ satisfying $\Hom_{\sD(\sA)}(X^\bu,J[i])=0$ for all $i\in\boZ$,
since $J[i]\in\sD''$.
 Thus all such objects $X^\bu\in\sD(\sA)$ vanish, and the functor
$\sD(\sA)\rarrow\sD''$ is a triangulated equivalence.
 It follows that the inclusion $\sD''\rarrow\sD(\sA)$ is a triangulated
equivalence, too; so $\sD''=\sD(\sA)$, as desired.
\end{proof}

\Section{Coderived Model Structure}  \label{coderived-secn}

 In this section we consider coderived categories \emph{in the sense
of Becker}~\cite{Bec}.
 In well-behaved cases, this means simply the homotopy category of
complexes of injective objects (which was studied first by
Krause~\cite{Kra} in the case of locally Noetherian Grothendieck
categories); see the discussion in the introduction.
 The coderived category in the sense of Becker was also considered
in the preprint~\cite[Section~6]{Sto} and
the paper~\cite[Section~4.1]{Gil4}.

 The coderived category in the sense of Becker has to be
distinguished from the coderived category in the sense of
the books and papers~\cite{Psemi,Pkoszul,PP2,EP,Pweak,PS2,Pps}.
 The two definitions of a coderived category are known to be
equivalent under certain assumptions~\cite[Theorem~3.7]{Pkoszul},
but it is still an open question whether they are equivalent for
the category of modules over an arbitrary associative ring
(see~\cite[Example~2.5(3)]{Pps} for a discussion).

 Let $\sA$ be an abelian category.
 A complex $X^\bu\in\sK(\sA)$ is said to be \emph{coacyclic}
(in the sense of Becker) if $\Hom_{\sK(\sA)}(X^\bu,J^\bu)=0$ for
any complex of injective objects $J^\bu\in\sK(\sA_\inj)$.
 We denote the full subcategory of coacyclic complexes
by $\sK(\sA)_\ac^\co\subset\sK(\sA)$ and its full preimage
under the natural functor $\sC(\sA)\rarrow\sK(\sA)$
by $\sC_\ac^\co(\sA)$.
 Clearly, $\sK(\sA)_\ac^\co$ is a triangulated (and even thick)
subcategory in the homotopy category $\sK(\sA)$.
 The quotient category $\sD^\co(\sA)=\sK(\sA)/\sK(\sA)_\ac^\co$
is called the \emph{coderived category of\/~$\sA$} (in the sense
of Becker).

\begin{lem} \label{coacyclic-lemma}
\textup{(a)} For any short exact sequence\/ $0\rarrow K^\bu\rarrow
L^\bu\rarrow M^\bu\rarrow0$ of complexes in\/ $\sA$, the total complex\/
$\Tot(K^\bu\to L^\bu\to M^\bu)$ of the bicomplex with three rows
$K^\bu\rarrow L^\bu\rarrow M^\bu$ belongs to\/ $\sK(\sA)_\ac^\co$. \par
\textup{(b)} The full subcategory of coacyclic complexes\/
$\sK(\sA)_\ac^\co$ is closed under coproducts in the homotopy category\/
$\sK(\sA)$.
\end{lem}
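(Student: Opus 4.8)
The plan is to mirror the proof of Lemma~\ref{contraacyclic-lemma}, exploiting the fact that the coderived situation is exactly dual to the contraderived one, with complexes of injective objects playing the role of complexes of projective objects and coproducts replacing products. I would dispose of part~(b) first, directly from the definition: if $(X_i^\bu)$ is a family of coacyclic complexes and $J^\bu\in\sK(\sA_\inj)$ is any complex of injective objects, then the functor $\Hom_{\sK(\sA)}(-,J^\bu)$ carries the coproduct to a product, so $\Hom_{\sK(\sA)}\bigl(\coprod_i X_i^\bu,\>J^\bu\bigr)\simeq\prod_i\Hom_{\sK(\sA)}(X_i^\bu,J^\bu)=0$. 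Hence $\coprod_i X_i^\bu$ is again coacyclic, and part~(b) follows immediately.

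For part~(a) I would reuse the complex-of-morphisms notation $\Hom_\sA(C^\bu,D^\bu)$ introduced in the proof of Lemma~\ref{contraacyclic-lemma}, together with the identity $\Hom_{\sK(\sA)}(C^\bu,D^\bu)=H^0\bigl(\Hom_\sA(C^\bu,D^\bu)\bigr)$. Fix a complex of injectives $J^\bu\in\sK(\sA_\inj)$ and a short exact sequence $0\rarrow K^\bu\rarrow L^\bu\rarrow M^\bu\rarrow0$ of complexes in $\sA$. Since each term $J^i$ is injective, the functor $\Hom_\sA(-,J^i)$ is exact; applying the Hom-complex functor $\Hom_\sA(-,J^\bu)$ therefore yields a short exact sequence of complexes of abelian groups
$$
 0\rarrow\Hom_\sA(M^\bu,J^\bu)\rarrow\Hom_\sA(L^\bu,J^\bu)
 \rarrow\Hom_\sA(K^\bu,J^\bu)\rarrow0.
$$
The next step is to identify $\Hom_\sA\bigl(\Tot(K^\bu\to L^\bu\to M^\bu),\>J^\bu\bigr)$ with the total complex of the bicomplex of abelian groups $\Hom_\sA(M^\bu,J^\bu)\rarrow\Hom_\sA(L^\bu,J^\bu)\rarrow\Hom_\sA(K^\bu,J^\bu)$; this is a formal consequence of the additivity of $\Hom$ and the way the totalization is assembled. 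Finally, I would invoke the standard fact that the totalization of any short exact sequence of complexes of abelian groups is acyclic to conclude that $\Hom_\sA\bigl(\Tot(K^\bu\to L^\bu\to M^\bu),\>J^\bu\bigr)$ is acyclic. Taking $H^n$ for all $n\in\boZ$ then gives $\Hom_{\sK(\sA)}\bigl(\Tot(K^\bu\to L^\bu\to M^\bu),\>J^\bu[n]\bigr)=0$, so the totalization is coacyclic.

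The only point that needs genuine care---rather than a bare appeal to duality---is the variance bookkeeping in part~(a): because $\Hom_\sA(-,J^\bu)$ is contravariant in its first argument, the three-row bicomplex gets reversed, and one must check that the exactness making the totalization acyclic is supplied by the termwise \emph{injectivity} of $J^\bu$ (the dual of the termwise projectivity used in Lemma~\ref{contraacyclic-lemma}). I expect this to be the main, though modest, obstacle; everything else is the termwise-exactness and totalization formalism already set up for the contraacyclic case, simply read in the opposite direction.
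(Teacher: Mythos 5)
Your proposal is correct and is essentially the paper's own argument: the paper proves this lemma simply by saying it is obtained from Lemma~\ref{contraacyclic-lemma} by inverting the arrows, and your write-up is precisely that dualization carried out explicitly (termwise injectivity of $J^\bu$ replacing termwise projectivity of $P^\bu$, and the coproduct-to-product behaviour of $\Hom_{\sK(\sA)}(-,J^\bu)$ giving part~(b)). The variance bookkeeping you flag is handled correctly and poses no real obstacle.
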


\begin{proof}
 This is our version of~\cite[Theorem~3.5(a)]{Pkoszul}.
 It can be obtained from Lemma~\ref{contraacyclic-lemma} by
inverting the arrows.
\end{proof}

\begin{rem} \label{coderived-history}
 Let us say a few words about the history of the ``coderived category''
nomenclature and various concepts behind it.
 It appears that the term ``coderived category'' first appeared in
a brief exposition by Keller~\cite{Kel2} of some results from
the Ph.D. thesis of his student Lef\`evre-Hasegawa~\cite{Lef}.
 The results in question represented a general formulation of
differential graded Koszul duality, connecting DG\+modules over
DG\+algebras and DG\+comodules over DG\+coalgebras.
 Thus the coderived category was originally defined for DG\+comodules
over (conilpotent) DG\+coalgebras only; the definition was not
intrinsic to the category of DG\+comodules, in that it used
the passage to DG\+modules over the Koszul dual DG\+algebra.

 Several years earlier, the first-named author of the present paper
came up with his definition of an exotic derived category relevant for
the derived nonhomogeneous Koszul duality purposes.
 This definition had much wider applicability, from curved DG\+modules
to abelian and exact categories with exact coproducts; but it did not
have a convenient name.
 Originally, the term ``derived category of the second kind'' was used
by Positselski in his seminar talks; but it was not convenient in that,
besides being too long, it also stood for two dual concepts
simultaneously (what are now called the coderived and
the contraderived category).
 This problem of the lack of convenient terminology became one of
the (several) reasons why the publication of these results got delayed
for so many years.

 Eventually, the first-named author of the present paper discovered
Keller's note~\cite{Kel2} and picked up the ``coderived category''
nomenclature from it, developing it into a terminological system
featuring also the contraderived (and semiderived) categories.
 Positselski's definitions of the coderived and contraderived categories
appeared in the preprint versions (and subsequently in the published
versions) of the book~\cite{Psemi} and the memoir~\cite{Pkoszul}.
 As a corollary of the Koszul duality theorems from~\cite{Pkoszul}, one
could see that, for DG\+comodules over a DG\+coalgebra over a field,
Positselski's coderived category agrees with the one of
Lef\`evre-Hasegawa and Keller~\cite{Lef,Kel2}.

 So the original point of view was that one should consider the derived
categories of modules and the coderived categories of comodules (also,
the contraderived categories of contramodules); hence the terminology.
 This philosophy proved to be highly illuminating in the context of
semi-infinite homological algebra~\cite{Psemi}.
 It was soon realized that the coderived and contraderived (also
the ``absolute derived'') categories are useful for modules,
too~\cite[Section~6.7]{Pkoszul}, particularly in the context of
matrix factorizations~\cite{PP2,Or,EP,BDFIK}.
 (Of course, the papers~\cite{Jor,Kra,Neem1} came earlier, but
they can be properly classified as representing the approach
which led to Becker's co/contraderived categories.)

 Becker~\cite[Proposition~1.3.6]{Bec} used the ``coderived'' and
``contraderived'' terminology for two abelian model structures on
the category of CDG\+modules over a CDG\+ring which he constructed.
 Hence the ``coderived category in the sense of Becker'', which we
discuss in this section (and the ``contraderived category in
the sense of Becker'', which was studied in
Section~\ref{contraderived-secn}).

 Let us briefly formulate the definitions from~\cite{Psemi,Pkoszul}.
 Let $\sA$ be an abelian category (or more generally, an exact category)
with exact coproduct functors.
 Then a complex in $\sA$ is called coacyclic in the sense
of~\cite{Psemi}, \cite{Pkoszul}, etc., if it belongs to the minimal
triangulated subcategory of $\sK(\sA)$ satisfying the conditions~(a)
and~(b) of Lemma~\ref{coacyclic-lemma}.
 Dually, let $\sB$ be an abelian (or exact) category with exact
products.
 Then a complex in $\sB$ is called contraacyclic in the sense
of~\cite{Psemi}, \cite{Pkoszul}, etc., if it belongs to the minimal
triangulated subcategory of $\sK(\sB)$ satisfying the conditions~(a)
and~(b) of Lemma~\ref{contraacyclic-lemma}.
 The coderived (resp., contraderived) category is defined
in~\cite{Psemi,Pkoszul} as the triangulated quotient category of
the homotopy category by the thick subcategory of coacyclic
(resp., contraacyclic) complexes.
 In an independent development, a similar approach to constructions
of exotic derived categories based on axiomatization of the properties
listed in Lemmas~\ref{contraacyclic-lemma} and~\ref{coacyclic-lemma}
was tried in the paper~\cite{KLN}.

 Thus any complex coacyclic in the sense of~\cite{Psemi,Pkoszul} is
also coacyclic in the sense of Becker~\cite{Bec}; and any complex
contraacyclic in the sense of~\cite{Psemi,Pkoszul} is contraacyclic
in the sense of Becker~\cite{Bec}; but the converse implications
remain an open problem in general.
 All we know is that the coderived category in the sense of Becker
can be viewed as a quotient category or as a subcategory of
the coderived category in the sense of~\cite{Psemi,Pkoszul}; and
similarly the contraderived category in the sense of Becker can be
viewed as a quotient category or as a subcategory of the contraderived
category in the sense of~\cite{Psemi,Pkoszul}.
 The results of~\cite[Sections~3.7--3.8]{Pkoszul} describe
the contexts in which the two approaches are known to agree.
\end{rem}

 The following theorem together with
Theorem~\ref{coderived-model-structure} below are essentially
a more detailed formulation of~\cite[Theorem~4.2]{Gil4}.

\begin{thm} \label{coderived-cotorsion-pair}
 Let\/ $\sA$ be a Grothendieck abelian category.
 Then the pair of classes of objects\/ $\sC(\sA)_\ac^\co$ and\/
$\sC(\sA_\inj)$ is a hereditary complete cotorsion pair in
the abelian category\/ $\sC(\sA)$.
\end{thm}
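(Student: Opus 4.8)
The plan is to realize $(\sC(\sA)_\ac^\co,\sC(\sA_\inj))$ as the cotorsion pair generated by a set of \emph{contractible} complexes and then invoke Theorem~\ref{cotorsion-pair-generated-by-set-complete}. This is dual in spirit to the proof of Theorem~\ref{contraderived-cotorsion-pair}, but it is in fact more elementary, because here the ``nice'' class (the complexes of injectives) is the \emph{right}-hand class and is detected directly by contractible complexes via the adjunction $G^+\dashv({-})^\sharp$. First I would record that $\sC(\sA)$ is again locally presentable (it is Grothendieck, or by Lemma~\ref{complexes-locally-presentable}), and fix a set $\sG_0$ of objects with $\sA=\Fil(\sG_0)$ as in Lemma~\ref{grothendieck-deconstructible}. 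Then $\sG_0^{\perp_1}=\sA_\inj$: by the Eklof lemma (Proposition~\ref{eklof-lemma-prop}) the class ${}^{\perp_1}(\sG_0^{\perp_1})$ is $\Fil$\+closed and contains $\Fil(\sG_0)=\sA$, so it equals $\sA$, whence $\sG_0^{\perp_1}=\sA^{\perp_1}=\sA_\inj$. Let $\sS$ be the set of contractible complexes $G^+((S)_i)$, where $(S)_i$ denotes the object $S\in\sG_0$ placed in degree~$i$ and $i$ ranges over~$\boZ$.

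The two orthogonality computations are the heart of the argument. Since $G^+$ is left adjoint to the exact functor $({-})^\sharp$ and both functors are exact, for any complex $Y^\bu$ one has $\Ext^1_{\sC(\sA)}(G^+((S)_i),Y^\bu)\simeq\Ext^1_{\sA^\sgr}((S)_i,Y^\bu{}^\sharp)\simeq\Ext^1_\sA(S,Y^i)$. Hence $\sS^{\perp_1}$ consists exactly of the complexes all of whose terms $Y^i$ lie in $\sG_0^{\perp_1}=\sA_\inj$, that is, $\sS^{\perp_1}=\sC(\sA_\inj)$. On the other side, for any complex of injectives $J^\bu$ and any complex $X^\bu$ we have $\Ext^1_{\sC(\sA)}(X^\bu,J^\bu)\simeq\Hom_{\sK(\sA)}(X^\bu,J^\bu[1])$ by Lemma~\ref{ext-1-hom-hot}(a); as $\sC(\sA_\inj)$ is closed under the shift, this identifies ${}^{\perp_1}(\sC(\sA_\inj))=\sC(\sA)_\ac^\co$ straight from the definition of coacyclicity. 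Since any set of objects generates a cotorsion pair, it follows that the cotorsion pair generated by $\sS$ is precisely $(\sC(\sA)_\ac^\co,\sC(\sA_\inj))$.

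It remains to verify the hypotheses of Theorem~\ref{cotorsion-pair-generated-by-set-complete}. The left class $\sC(\sA)_\ac^\co$ is generating: every contractible complex is coacyclic, and every $C^\bu$ is an epimorphic image of the contractible complex $G^+(C^\bu{}^\sharp)$ through the adjunction counit. The right class $\sC(\sA_\inj)$ is cogenerating: $\sC(\sA)$ has enough injective objects, and these are (contractible) complexes of injectives by Lemma~\ref{inj-proj-complexes}(a), so every complex embeds into one. Completeness of the cotorsion pair then follows from Theorem~\ref{cotorsion-pair-generated-by-set-complete}.

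Finally, for heredity I would check condition~(2) of Lemma~\ref{hereditary-cotorsion-pairs}, that $\sC(\sA_\inj)$ is closed under cokernels of monomorphisms between its objects. Given a short exact sequence $0\rarrow J_1^\bu\rarrow J_2^\bu\rarrow C^\bu\rarrow0$ of complexes of injectives, each termwise sequence $0\rarrow J_1^i\rarrow J_2^i\rarrow C^i\rarrow0$ splits because $J_1^i$ is injective, so $C^i$ is a direct summand of $J_2^i$ and hence injective; thus $C^\bu\in\sC(\sA_\inj)$, and the pair is hereditary. I do not expect a serious obstacle anywhere; the one point demanding care is the identification $\sS^{\perp_1}=\sC(\sA_\inj)$, where one must ensure that contractible complexes built from a \emph{generating} set of $\sA$ already force every term of $Y^\bu$ to be injective --- this is exactly where $\sA=\Fil(\sG_0)$ and the Eklof lemma are used.
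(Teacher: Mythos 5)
Your proposal is correct and follows essentially the same route as the paper: generate the cotorsion pair by the set of contractible two-term complexes $G^+(S)[i]$ built from a set $\sS_0$ with $\sA=\Fil(\sS_0)$, compute $\sS^{\perp_1}=\sC(\sA_\inj)$ via the exact adjunction $G^+\dashv({-})^\sharp$ together with the Eklof lemma, identify ${}^{\perp_1}(\sC(\sA_\inj))=\sC(\sA)_\ac^\co$ via Lemma~\ref{ext-1-hom-hot}(a), and conclude completeness from Theorem~\ref{cotorsion-pair-generated-by-set-complete}. The only cosmetic difference is that you verify heredity through the right-hand class (termwise splitting of short exact sequences of complexes of injectives), whereas the paper also points to the closure of $\sC(\sA)_\ac^\co$ under kernels of epimorphisms via Lemma~\ref{coacyclic-lemma}(a); both are valid.
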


\begin{proof}
 Let $\sS_0$ be a set of objects in $\sA$ such that $\sA=\Fil(\sS_0)$,
as in Lemma~\ref{grothendieck-deconstructible}.
 The claim is that the cotorsion pair we are interested in is generated
by the set of two-term complexes
$\sS=\{G^+(S)[i]\}_{S\in\sS_0,\,i\in\boZ}\subset\sC(\sA)$
(where $S\in\sS_0$ is viewed as a graded object concentrated in
degree~$0$).

 Indeed, for any graded object $A\in\sA^\sgr$ and any complex
$C^\bu\in\sC(\sA)$ we have $\Ext_{\sC(\sA)}^1(G^+(A),C^\bu)\simeq
\Ext_{\sA^\sgr}^1(A,C^\bu{}^\sharp)$, as explained in the proof of
Theorem~\ref{grothendieck-cotorsion-pair}.
 In particular, for $S\in\sS_0$ we have
$\Ext_{\sC(\sA)}^1(G^+(S)[i],C^\bu)\simeq
\Ext_{\sA^\sgr}^1(S[i],C^\bu{}^\sharp)\simeq\Ext_\sA^1(S,C^{-i})$.
 In view of Proposition~\ref{eklof-lemma-prop}, it follows that
$\sS^{\perp_1}=\sC(\sA_\inj)\subset\sC(\sA)$.

 Furthermore, for any complex $X^\bu\in\sC(\sA)$ and any complex of
injective objects $J^\bu\in\sC(\sA)$ we have
$\Ext^1_{\sC(\sA)}(X^\bu,J^\bu)\simeq\Hom_{\sK(\sA)}(X^\bu,J^\bu[1])$
by Lemma~\ref{ext-1-hom-hot}(a).
 Therefore, ${}^{\perp_1}(\sC(\sA_\inj))=\sC(\sA)_\ac^\co$.
 Hence our pair of classes of objects is indeed the cotorsion pair
generated by the set $\sS\subset\sC(\sA)$.

 Any complex is a quotient complex of a contractible complex.
 All contractible complexes are coacyclic; so the class
$\sC(\sA)_\ac^\co$ is generating in $\sC(\sA)$.
 Any complex in $\sA$ is also a subcomplex of a complex of injective
objects, so the class $\sC(\sA_\inj)$ is cogenerating in $\sC(\sA)$.
 Applying Theorem~\ref{cotorsion-pair-generated-by-set-complete},
we conclude that our cotorsion pair is complete.
 The cotorsion pair is hereditary, since the class $\sC(\sA)_\ac^\co$
is closed under the kernels of epimorphisms in $\sC(\sA)$, as one can
see from Lemma~\ref{coacyclic-lemma}(a).
 It is also clear that the class $\sC(\sA_\inj)\subset\sC(\sA)$ is
closed under the cokernels of monomorphisms.
\end{proof}

The proof of Theorem~\ref{coderived-cotorsion-pair} also provides us
with the following description of the class of coacyclic complexes
in the sense of Becker.

\begin{cor}
Let $\sA$ be a Grothendieck category. Then the coacyclic complexes are
precisely direct summands of those filtered by contractible complexes.
In fact, if\/ $\sS_0\subset\sA$ is a set of objects such that\/
$\sA=\Fil(\sS_0)$ and\/
$\sS=\{G^+(S)[i]\}_{S\in\sS_0,\,i\in\boZ}\subset\sC(\sA)$ as before,
then\/ $\sC(\sA)_\ac^\co=\Fil(\sS)^\oplus$.
\end{cor}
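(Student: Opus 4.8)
The plan is to read off the identity $\sC(\sA)_\ac^\co=\Fil(\sS)^\oplus$ from Theorem~\ref{cotorsion-pair-generated-by-set-left-class}, applied to the cotorsion pair already dissected in the proof of Theorem~\ref{coderived-cotorsion-pair}. Recall from that proof that $(\sC(\sA)_\ac^\co,\sC(\sA_\inj))$ is exactly the cotorsion pair generated by the set $\sS=\{G^+(S)[i]\}_{S\in\sS_0,\,i\in\boZ}$; in other words $\sC(\sA)_\ac^\co={}^{\perp_1}(\sC(\sA_\inj))$ is its left-hand class. Since $\sC(\sA)$ is Grothendieck, hence locally presentable, the only hypothesis of Theorem~\ref{cotorsion-pair-generated-by-set-left-class} that is not yet on record is that the class $\Fil(\sS)$ be generating in $\sC(\sA)$. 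Granting this, that theorem delivers $\sC(\sA)_\ac^\co=\Fil(\sS)^\oplus$ immediately.

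So the one substantive step is to check that $\Fil(\sS)$ is generating, and this is where I expect the only real (though routine) work to lie. First I would recall that for any complex $C^\bu$ the counit $G^+(C^\bu{}^\sharp)\rarrow C^\bu$ of the adjunction $G^+\dashv({-})^\sharp$ is an epimorphism (indeed a termwise split one, being surjective in each degree onto $C^i$), so every complex is a quotient of a complex of the form $G^+(A)$ with $A\in\sA^\sgr$. It then remains to see that $G^+(A)\in\Fil(\sS)$ for every graded object $A$ with terms in $\sA$. For this I would use that $G^+$ is exact and preserves all colimits, being a left adjoint, and therefore carries any $\alpha$\+filtration in $\sA^\sgr$ to an $\alpha$\+filtration in $\sC(\sA)$ with $G^+$\+images of the original subquotients. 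Since $\sA=\Fil(\sS_0)$ and $\Fil$ is closed under coproducts, every graded object with terms in $\sA$ lies in $\Fil(\{S[i]:S\in\sS_0,\ i\in\boZ\})$, where $S[i]$ denotes $S$ regarded as a single-degree graded object; one simply concatenates, over all degrees, a degreewise $\sS_0$\+filtration. Applying $G^+$ and using $G^+(S[i])=G^+(S)[i]\in\sS$, I conclude $G^+(C^\bu{}^\sharp)\in\Fil(\sS)$, so $\Fil(\sS)$ is generating and the displayed equality follows.

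Finally I would translate this into the first sentence of the statement. Each member of $\sS$ is a contractible complex, so $\Fil(\sS)^\oplus$ visibly consists of direct summands of complexes filtered by contractible complexes, which gives one inclusion for free. For the converse I would observe that every contractible complex is coacyclic and that the class $\sC(\sA)_\ac^\co={}^{\perp_1}(\sC(\sA_\inj))$ is closed under transfinitely iterated extensions by the Eklof lemma (Proposition~\ref{eklof-lemma-prop}) and under direct summands; hence any direct summand of a complex filtered by contractibles is coacyclic, and the two descriptions coincide. The main obstacle is none of these individual facts but rather the bookkeeping in the middle paragraph---making precise the passage from an $\sS_0$\+filtration of the graded terms of $C^\bu{}^\sharp$ to an $\sS$\+filtration of $G^+(C^\bu{}^\sharp)$---which is nonetheless straightforward given the exactness and cocontinuity of $G^+$.
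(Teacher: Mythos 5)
Your proof is correct and follows essentially the same route as the paper: both reduce the claim to Theorem~\ref{cotorsion-pair-generated-by-set-left-class} applied to the cotorsion pair generated by $\sS$, with the only work being the verification that $\Fil(\sS)$ is generating, and both handle the ``contractible'' reformulation via the Eklof lemma and closure under summands. The paper checks generation slightly more economically---taking a single generator $L\in\sA=\Fil(\sS_0)$, noting $G^+(L)[i]\in\Fil(\sS)$ and that $\Fil(\sS)$ is closed under coproducts---whereas you filter $G^+(C^\bu{}^\sharp)$ directly for an arbitrary complex $C^\bu$; this is a cosmetic difference only.
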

\begin{proof}
%
Let $L\in\sA$ be a generator. Since $L\in\Fil(\sS_0)$, we clearly have
$G^+(L)[i]\in\Fil(\sS)$ for each $i\in\boZ$. Since $\Fil(\sS)$ is also
closed under coproducts in $\sC(\sA)$, it is a generating class and,
thus, Theorem~\ref{cotorsion-pair-generated-by-set-left-class} tells
that $\sC(\sA)_\ac^\co=\Fil(\sS)^\oplus$. Since every complex in $\sS$
is contractible and any contractible complex is coacyclic,
it also follows that $\sC(\sA)_\ac^\co$ is the class of all
direct summands of complexes
filtered by contractible complexes.
\end{proof}

\begin{cor} \label{enough-injectives-for-coderived}
 For any Grothendieck abelian category\/ $\sA$, the composition of
the inclusion of triangulated categories\/ $\sK(\sA_\inj)\rarrow
\sK(\sA)$ with the Verdier quotient functor\/ $\sK(\sA)\rarrow
\sD^\co(\sA)$ is a triangulated equivalence\/ $\sK(\sA_\inj)\simeq
\sD^\co(\sA)$.
\end{cor}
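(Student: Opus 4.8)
The plan is to mirror exactly the argument used for the dual/contraderived statement in Corollary~\ref{enough-projectives-for-contraderived}, simply inverting all the arrows. Let me think through what needs to happen.

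First I would verify that the functor in question is fully faithful. The composite $\sK(\sA_\inj)\rarrow\sK(\sA)\rarrow\sD^\co(\sA)$ is fully faithful by the same general triangulated-category lemma used in Proposition~\ref{orthogonal-fully-faithful} (here in its dual form): one needs that $\Hom_{\sK(\sA)}(X^\bu,J^\bu)=0$ for all coacyclic $X^\bu$ and all $J^\bu\in\sK(\sA_\inj)$, and this is precisely the definition of coacyclicity in the sense of Becker. So the inclusion of the complexes of injectives into the coderived category is automatically full and faithful.

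The remaining content is essential surjectivity: given any complex $C^\bu\in\sK(\sA)$, I must produce a complex of injective objects $J^\bu\in\sK(\sA_\inj)$ together with a morphism $C^\bu\rarrow J^\bu$ whose cone is coacyclic. Here I would invoke the complete cotorsion pair $(\sC(\sA)_\ac^\co,\,\sC(\sA_\inj))$ furnished by Theorem~\ref{coderived-cotorsion-pair}. Since this cotorsion pair is complete, it admits special preenvelope sequences, so for the object $C^\bu$ I can choose a short exact sequence
$$
 0\lrarrow C^\bu\lrarrow J^\bu\lrarrow X^\bu\lrarrow0
$$
in $\sC(\sA)$ with $J^\bu\in\sC(\sA_\inj)$ and $X^\bu\in\sC(\sA)_\ac^\co$. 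By Lemma~\ref{coacyclic-lemma}(a), the totalization $\Tot(C^\bu\to J^\bu\to X^\bu)$ of this short exact sequence is coacyclic. Since $X^\bu$ itself is coacyclic and $\sK(\sA)_\ac^\co$ is a triangulated (indeed thick) subcategory, it follows from the standard comparison between the totalization of a short exact sequence and the cone of one of its maps that the cone of $C^\bu\rarrow J^\bu$ is coacyclic as well. Thus $C^\bu\rarrow J^\bu$ becomes an isomorphism in $\sD^\co(\sA)$, and $J^\bu$ lies in $\sK(\sA_\inj)$, proving essential surjectivity.

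I do not anticipate any genuine obstacle here, since every ingredient has already been assembled: the main work was done in Theorem~\ref{coderived-cotorsion-pair} (existence and completeness of the cotorsion pair) and in Lemma~\ref{coacyclic-lemma}(a) (totalizations of short exact sequences are coacyclic). The only point requiring a moment's care is the passage from ``totalization is coacyclic'' to ``cone is coacyclic'': one observes that the cone of $C^\bu\rarrow J^\bu$ differs from the totalization of the three-row complex only by the coacyclic summand $X^\bu$, so closedness of $\sK(\sA)_\ac^\co$ under extensions (equivalently, its being a triangulated subcategory) yields the conclusion. The whole corollary is thus entirely dual to Corollary~\ref{enough-projectives-for-contraderived}, and I would present it as such.
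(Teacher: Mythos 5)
Your proposal is correct and follows exactly the paper's own argument: full faithfulness from the definition of coacyclicity (dual of Proposition~\ref{orthogonal-fully-faithful}), then a special preenvelope sequence $0\rarrow C^\bu\rarrow J^\bu\rarrow X^\bu\rarrow0$ from the complete cotorsion pair of Theorem~\ref{coderived-cotorsion-pair}, combined with Lemma~\ref{coacyclic-lemma}(a) to conclude that the cone of $C^\bu\rarrow J^\bu$ is coacyclic. Nothing is missing.
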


\begin{proof}
 It is clear from the definitions that the functor $\sK(\sA_\inj)\rarrow
\sD^\co(\sA)$ is fully faithful (use the argument dual to the proof
of Proposition~\ref{orthogonal-fully-faithful}).
 In order to prove the corollary, it remains to find, for any complex
$C^\bu\in\sK(\sA)$, a complex of injective objects
$J^\bu\in\sK(\sA_\inj)$ together with a morphism of complexes $C^\bu
\rarrow J^\bu$ whose cone belongs to $\sC(\sA)_\ac^\co$.

 For this purpose, consider a special preenvelope short exact sequence
$0\rarrow C^\bu\rarrow J^\bu\rarrow X^\bu\rarrow0$ in the complete
cotorsion pair of Theorem~\ref{coderived-cotorsion-pair}.
 So we have $J^\bu\in\sC(\sA_\inj)$ and $X^\bu\in\sC(\sA)_\ac^\co$.
 By Lemma~\ref{coacyclic-lemma}, the totalization $\Tot(C^\bu\to J^\bu
\to X^\bu)$ of the short exact sequence $0\rarrow C^\bu\rarrow J^\bu
\rarrow X^\bu\rarrow0$ is a coacyclic complex.
 Since the complex $X^\bu$ is coacyclic, it follows that the cone of
the morphism $C^\bu\rarrow J^\bu$ is coacyclic, too.
\end{proof}

 In terms of the full subcategory $\sK(\sA_\inj)\subset\sK(\sA)$,
Corollary~\ref{enough-injectives-for-coderived} means that
the triangulated inclusion functor $\sK(\sA_\inj)\rarrow\sK(\sA)$ has
a left adjoint.
 This result can be also directly deduced from~\cite[Theorem~3.5]{BEIJR}
and can be found in~\cite[Theorem~2.13]{Neem2}
or~\cite[Corollary~5.13]{Kra3}.

\begin{thm} \label{coderived-model-structure}
 Let\/ $\sA$ be a Grothendieck abelian category.
 Then the triple of classes of objects\/ $\sL=\sC(\sA)$, \
$\sW=\sC(\sA)_\ac^\co$, and\/ $\sR=\sC(\sA_\inj)$ is a cofibrantly
generated hereditary abelian model structure on the abelian
category of complexes\/ $\sC(\sA)$.
\end{thm}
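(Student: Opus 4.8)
The plan is to treat this as the injective counterpart of the contraderived model structure of Theorem~\ref{contraderived-model-structure}, dual in spirit to the coderived cotorsion-pair construction and entirely parallel to the injective derived structure of Theorem~\ref{grothendieck-model-structure}. Almost all of the substantive work has already been done in Theorem~\ref{coderived-cotorsion-pair}, which provides the hereditary complete cotorsion pair $(\sW,\sR)=(\sC(\sA)_\ac^\co,\sC(\sA_\inj))$ in $\sC(\sA)$. So my strategy is to feed this cotorsion pair into the abelian-model-structure machinery of Section~\ref{abelian-model-secn}: first produce the injective model structure via Lemma~\ref{inj-proj-model-structures}(a), then read off heredity, and finally verify cofibrant generation through Corollary~\ref{abelian-cofibrantly-generated}.

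First I would observe that $\sC(\sA)$ is again Grothendieck, hence has enough injective objects. To apply Lemma~\ref{inj-proj-model-structures}(a) I must check that $\sR\cap\sW=\sC(\sA)_\inj$ and that $\sW$ is thick. Both follow from Lemma~\ref{intersection-is-inj-proj-complexes}(a): the inclusion $\sR=\sC(\sA_\inj)\subset\sC(\sA_\inj)$ is trivial, the cotorsion pair $(\sW,\sR)$ is visibly invariant under the shift (the coacyclic complexes and the complexes of injectives are each closed under $[1]$), and completeness comes from Theorem~\ref{coderived-cotorsion-pair}; thickness of $\sW=\sC(\sA)_\ac^\co$ can alternatively be seen directly from Lemma~\ref{coacyclic-lemma}(a). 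Lemma~\ref{inj-proj-model-structures}(a), applied with $\sC(\sA)$ in place of $\sA$, then yields the injective abelian model structure $(\sC(\sA),\sC(\sA)_\ac^\co,\sC(\sA_\inj))$, and heredity is automatic, since every injective abelian model structure is hereditary (Section~\ref{abelian-model-secn}).

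The remaining step, cofibrant generation, requires by Corollary~\ref{abelian-cofibrantly-generated} that both associated cotorsion pairs be generated by sets. The pair $(\sL\cap\sW,\sR)=(\sC(\sA)_\ac^\co,\sC(\sA_\inj))$ is generated by the explicit set $\sS=\{G^+(S)[i]\}$ constructed in the proof of Theorem~\ref{coderived-cotorsion-pair}. For the pair $(\sL,\sR\cap\sW)=(\sC(\sA),\sC(\sA)_\inj)$ I would copy the argument from the proof of Theorem~\ref{grothendieck-model-structure}: Lemma~\ref{grothendieck-deconstructible}, applied to the Grothendieck category $\sC(\sA)$, supplies a set $\sS_0$ with $\sC(\sA)=\Fil(\sS_0)$, and this set generates $(\sC(\sA),\sC(\sA)_\inj)$. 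I do not expect a genuine obstacle; the only point deserving care is the identification of the trivially fibrant objects with $\sC(\sA)_\inj$ together with the thickness of $\sW$, and both reduce cleanly to the shift-invariance and completeness hypotheses of Lemma~\ref{intersection-is-inj-proj-complexes}(a). The rest is a faithful transcription of the dual and parallel arguments already established in the paper.
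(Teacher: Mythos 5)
Your proposal is correct and follows essentially the same route as the paper's own proof: it feeds the hereditary complete cotorsion pair of Theorem~\ref{coderived-cotorsion-pair} into Lemma~\ref{inj-proj-model-structures}(a) via Lemma~\ref{intersection-is-inj-proj-complexes}(a) (with Lemma~\ref{coacyclic-lemma}(a) for thickness), notes that injective abelian model structures are automatically hereditary, and obtains cofibrant generation from Corollary~\ref{abelian-cofibrantly-generated} using the generating set from Theorem~\ref{coderived-cotorsion-pair} together with Lemma~\ref{grothendieck-deconstructible} applied to $\sC(\sA)$. Nothing essential is missing.
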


\begin{proof}
 This is similar to the proof of
Theorem~\ref{grothendieck-model-structure}.
 The pair of classes $(\sW,\sR)$ is a complete cotorsion pair in
$\sC(\sA)$ by Theorem~\ref{coderived-cotorsion-pair}.
 According to Lemma~\ref{intersection-is-inj-proj-complexes}(a), it
follows that $\sR\cap\sW=\sC(\sA)_\inj$.
 It follows from Lemma~\ref{coacyclic-lemma}(a) that the class of
coacyclic complexes $\sW$ is thick in $\sC(\sA)$ (see also
Lemma~\ref{W-is-thick}(a) or~\ref{intersection-is-inj-proj-complexes}(a)).
 By Lemma~\ref{inj-proj-model-structures}(a), the triple $(\sL,\sW,\sR)$
is an injective abelian model structure on the category $\sC(\sA)$.
 Finally, it was shown in the proof of
Theorem~\ref{coderived-cotorsion-pair} that the cotorsion pair
$(\sL,\sW)$ in $\sC(\sA)$ is generated by a set of objects.
 For the cotorsion pair $(\sC(\sA),\sC(\sA)_\inj)$, the same was
explained in the proof of Theorem~\ref{grothendieck-model-structure}.
\end{proof}

 The abelian model structure $(\sL,\sW,\sR)$ defined in
Theorem~\ref{coderived-model-structure} is called the \emph{coderived
model structure} on the abelian category of complexes $\sC(\sA)$.

\begin{lem} \label{coderived-weak-equivalences}
 For any Grothendieck abelian category\/ $\sA$, the class\/ $\cW$ of all
weak equivalences in the coderived model structure on the abelian
category\/ $\sC(\sA)$ coincides with the class of all morphisms of
complexes in\/ $\sA$ with the cones belonging to\/ $\sC(\sA)_\ac^\co$.
\end{lem}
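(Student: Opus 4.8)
The plan is to mirror the argument used for Lemma~\ref{loc-pres-weak-equivalences}, since the coderived model structure has exactly the same formal shape as the projective derived model structure (only the roles of cofibrations/fibrations and the class $\sW$ differ). The statement to be proved is that in the coderived model structure $(\sL,\sW,\sR)=(\sC(\sA),\>\sC(\sA)_\ac^\co,\>\sC(\sA_\inj))$, the class $\cW$ of weak equivalences equals the class of morphisms whose cone lies in $\sC(\sA)_\ac^\co$.

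First I would recall the two general facts about abelian model categories cited in the proof of Lemma~\ref{loc-pres-weak-equivalences}, namely~\cite[Lemma~5.8]{Hov}: a monomorphism is a weak equivalence if and only if its cokernel is weakly trivial, and dually an epimorphism is a weak equivalence if and only if its kernel is weakly trivial. Here the class of weakly trivial objects is $\sW=\sC(\sA)_\ac^\co$, the coacyclic complexes in the sense of Becker. For a monomorphism $f\:A^\bu\rarrow B^\bu$ with cokernel $C^\bu$, there is a termwise split short exact sequence relating the cone of $f$ to $C^\bu$; by Lemma~\ref{coacyclic-lemma}(a), applied to the short exact sequence $0\rarrow A^\bu\rarrow B^\bu\rarrow C^\bu\rarrow0$, the cone of $f$ and the cokernel $C^\bu$ differ by a coacyclic complex, so the cone is coacyclic if and only if the cokernel is. The same observation for epimorphisms shows that an epimorphism has coacyclic kernel if and only if it has coacyclic cone. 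This is precisely the coderived analogue of the remark made in Lemma~\ref{contraderived-weak-equivalences}, and it handles all monomorphisms and epimorphisms at once.

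Next I would pass from monos and epis to arbitrary morphisms by the factorization and two-out-of-three properties, exactly as in Lemma~\ref{loc-pres-weak-equivalences}. Any morphism in $\sC(\sA)$ factors as a trivial cofibration (a monomorphism that is a weak equivalence, hence with coacyclic cone) followed by a fibration (an epimorphism); both the class of weak equivalences and the class of morphisms-with-coacyclic-cone satisfy two-out-of-three. For the latter class, the two-out-of-three property follows from the octahedron axiom in $\sK(\sA)$, since a morphism has coacyclic cone if and only if it becomes an isomorphism in the Verdier quotient $\sD^\co(\sA)=\sK(\sA)/\sK(\sA)_\ac^\co$, and isomorphisms are stable under two-out-of-three. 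Combining the factorization with the mono/epi case just established then identifies $\cW$ with the morphisms whose cone is coacyclic.

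The step I expect to require the most care is the passage through the cone for a general morphism: one must be sure that ``coacyclic cone'' is genuinely a two-out-of-three class on all morphisms, not just on those that are monos or epis. The cleanest route is the triangulated reformulation (a morphism lies in the class iff its image in $\sD^\co(\sA)$ is invertible), which makes two-out-of-three automatic and lets the factorization argument close the proof without any further computation. Given that the excerpt explicitly says the proof is ``similar to Lemma~\ref{loc-pres-weak-equivalences},'' I would keep the write-up short and simply point to that lemma together with Lemma~\ref{coacyclic-lemma}(a), which supplies the mono-cokernel-versus-cone and epi-kernel-versus-cone identifications.
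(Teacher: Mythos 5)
Your proposal is correct and follows essentially the same route as the paper: the paper's proof likewise reduces to \cite[Lemma~5.8]{Hov} for monomorphisms and epimorphisms, uses Lemma~\ref{coacyclic-lemma}(a) to identify ``coacyclic cokernel/kernel'' with ``coacyclic cone'' for monos and epis, and then extends to arbitrary morphisms via the (trivial cofibration, fibration) factorization together with two-out-of-three for both classes. Your explicit justification of two-out-of-three for the coacyclic-cone class via invertibility in $\sD^\co(\sA)$ is a sound way to make precise what the paper leaves implicit.
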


\begin{proof}
 Similar to the proof of Lemma~\ref{contraderived-weak-equivalences}.
 One needs to notice that, by Lemma~\ref{coacyclic-lemma}(a),
a monomorphism of complexes in $\sC(\sA)$ has coacyclic cokernel if
and only if it has coacyclic cone, and similarly, an epimorphism of
complexes in $\sC(\sA)$ has coacyclic kernel if and only if it has
coacyclic cone.
\end{proof}

 The following corollary presumes existence of (set-indexed) coproducts
in the coderived category $\sD^\co(\sA)$.
 Here we notice that, since the thick subcategory of coacyclic
complexes $\sK(\sA)_\ac^\co\subset\sK(\sA)$ is closed under coproducts
by Lemma~\ref{coacyclic-lemma}(b), the coproducts in the coderived
category $\sD^\co(\sA)$ are induced by those in the homotopy category
$\sK(\sA)$.
 In other words, the Verdier quotient functor $\sK(\sA)\rarrow
\sD^\co(\sA)$ preserves coproducts~\cite[Lemma~3.2.10]{Neem-book}.

\begin{cor}
 For any Grothendieck abelian category\/ $\sA$, the coderived category\/
$\sD^\co(\sA)$ is a well-generated triangulated category.
\end{cor}

\begin{proof}
 This result can be found in~\cite[Theorem~3.13]{Neem2},
\cite[Theorem~5.12]{Kra3}, or~\cite[Theorem~4.2]{Gil4}.
 It is also provable similarly to
Corollary~\ref{contraderived-well-generated}.
 One notices that the coderived category $\sD^\co(\sA)$ can be
equivalently defined as the homotopy category $\sC(\sA)[\cW^{-1}]$
of the coderived model structure on $\sC(\sA)$ and uses the fact
that the homotopy category of any stable combinatorial model category
is well-generated.
\end{proof}

\begin{rem}
 One can consider the possibility of extending the results of
Sections~\ref{grothendieck-secn}\+-\ref{coderived-secn} to
locally presentable abelian categories $\sA$ \emph{with enough
injective objects}.
 A specific problem arising in this connection is that it is not
clear how to prove a generalization of
Lemma~\ref{grothendieck-deconstructible} not depending on
the assumption that the directed colimits are exact in~$\sA$.
 Why is the directed colimit of the chain of subobjects
$(F_i)_{i<j}$ a subobject in~$M$\,?
 In fact, we do not even know whether the two classes of abelian
categories coincide.
\end{rem}

\begin{qst}
 Let $\sA$ be a locally presentable abelian category with enough
injective objects (or equivalently, a locally presentable abelian
category with an injective cogenerator).
 Does it follow that $\sA$ is Grothendieck?
\end{qst}

\Section{Exact Categories with an Object Size Function}
\label{hom-sets-secn}

 In this section, we consider exact categories in the sense of Quillen.
 An \emph{exact category} is an additive category endowed with a class
of \emph{short exact sequences} satisfying natural axioms.
 For a reference, see~\cite{Bueh}.

 A morphism $A\rarrow B$ in an exact category $\sE$ is said to be
an \emph{admissible monomorphism} if there exists a short exact
sequence $0\rarrow A\rarrow B\rarrow C\rarrow0$.
 The morphism $B\rarrow C$ is said to be an \emph{admissible
epimorphism} in this case.
 We will also say that $A$ is an \emph{admissible subobject} in~$B$.

 An additive category $\sE$ is said to be \emph{weakly
idempotent-complete} if for every pair of morphisms $i\:A\rarrow B$
and $p\:B\rarrow A$ in $\sE$ such that $pi=\id_A$ there exists
an object $C\in\sE$ and an isomorphism $B\simeq A\oplus C$ transforming
the morphism~$i$ into the direct summand inclusion $A\rarrow A\oplus C$
and the morphism~$p$ into the direct summand projection $A\oplus C
\rarrow A$.
 We will assume our exact category $\sE$ to be weakly
idempotent-complete (this assumption simplifies the theory of
exact categories considerably~\cite[Section~7]{Bueh}).

 Let $\sE$ be an exact category.
 We will consider a function~$\psi$ assigning to every object $E\in\sE$
a regular cardinal~$\psi(E)$.
 Given a regular cardinal~$\lambda$, denote by $\sE_\lambda\subset\sE$ the class
of all objects $E\in\sE$ such that $\psi(E)\le\lambda$.
 Instead of the function~$\psi$, one can consider the induced filtration
of the category $\sE$ by the full subcategories~$\sE_\lambda$.
 Obviously, one has $\sE_\lambda\subset\sE_\mu$ for all $\lambda\le\mu$
and $\sE=\bigcup_\lambda\sE_\lambda$.

 The following conditions are imposed on the function~$\psi$,
or equivalently, on the full subcategories $\sE_\lambda\subset\sE$:
\begin{enumerate}
\renewcommand{\theenumi}{\roman{enumi}}
\item $\psi(0)=0$, or in other words, $0\in\sE_0$; for any two
isomorphic objects $E'$ and $E''$ in $\sE$, one has
$\psi(E')=\psi(E'')$;
\item for every regular cardinal~$\lambda$, there is only a set of objects, up
to isomorphism, in the full subcategory $\sE_\lambda\subset\sE$;
\item for every regular cardinal~$\lambda$, the full subcategory
$\sE_\lambda$ is closed under extensions in $\sE$, that is, in other
words, if $0\rarrow A\rarrow B\rarrow C\rarrow0$ is a short exact
sequence in $\sE$ and $A$, $C\in\sE_\lambda$, then $B\in\sE_\lambda$;
\item for any regular cardinal~$\lambda$, an object $B\in\sE$,
and a collection of less than~$\lambda$ morphisms $(A_i\to B)_{i\in I}$
with the domains $A_i\in\sE_\lambda$, there exists an admissible
subobject $D\subset B$ with $D\in\sE_\lambda$ such that the morphism
$A_i\rarrow B$ factorizes through the admissible monomorphism
$D\rarrow B$ for every $i\in I$;
\item for any regular cardinal~$\lambda$ and any admissible epimorphism
$B\rarrow C$ with $C\in\sE_\lambda$ there exists an admissible
subobject $D\subset B$ such that $D\in\sE_\lambda$ and the composition
$D\rarrow B\rarrow C$ is an admissible epimorphism.
\end{enumerate}
 We will say that $\psi$~is an \emph{object size function} on $\sE$ if
the conditions~(i\+-v) are satisfied.
 It follows from the conditions~(i) and~(iii) that the full
subcategory $\sE_\lambda\subset\sE$ inherits an exact category
structure from the exact category~$\sE$.
 Specifically, a composable pair of morphisms in $\sE_\lambda$
is said to be a short exact sequence in $\sE_\lambda$
if it is a short exact sequence in~$\sE$.

\begin{prop} \label{locally-presentable-abelian-object-size-function}
 Any locally presentable abelian category\/ $\sE$ with its abelian
exact structure admits an object size function.
\end{prop}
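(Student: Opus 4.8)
The plan is to take $\psi$ to be a suitably coarsened presentability rank. Since $\sE$ is locally presentable it is locally $\mu$-presentable for some regular $\mu$, hence locally $\lambda$-presentable for every regular $\lambda\ge\mu$; moreover $\sE$ is well-powered and, for each regular $\lambda$, the $\lambda$-presentable objects form a set up to isomorphism \cite{AR}. Call a regular $\lambda\ge\mu$ \emph{good} if the $\lambda$-presentable objects are closed under subobjects. The main technical input, which I expect to be the principal obstacle, is that good cardinals are cofinal; this is a standard closure/reflection argument available precisely because $\sE$ is well-powered (for each $\lambda$-presentable $X$ the ranks of its set-many subobjects are bounded, and one closes a regular cardinal under the resulting rank function), see \cite{AR}. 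For good $\lambda$ the $\lambda$-presentable objects are then also closed under quotients: a quotient of $P$ is $\varinjlim(P\leftarrow K\to 0)$ for the subobject $K=\ker\subseteq P$, and $K$ is $\lambda$-presentable by goodness, so the quotient is a $\lambda$-small colimit of $\lambda$-presentables, hence $\lambda$-presentable.

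For each regular $\nu$ let $g(\nu)$ be the least good cardinal $\ge\nu$, and define $\psi(0):=0$ and, for $E\ne0$, let $\psi(E)$ be the least regular $\nu$ such that $E$ is $g(\nu)$-presentable. Then $\psi$ is regular-cardinal-valued and isomorphism-invariant, which is (i), and since $g$ and presentability are monotone one checks that $\sE_\nu=\{E:\psi(E)\le\nu\}$ is exactly the class of $g(\nu)$-presentable objects, a set up to isomorphism, which is (ii). The point of the coarsening is that, although (iv)--(v) quantify over families of size $<\nu$, every object appearing in them is presentable at the good rank $\lambda:=g(\nu)\ge\nu$, where quotients of $\lambda$-presentables are again $\lambda$-presentable.

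The device that makes (iii)--(v) work, and which notably does \emph{not} require directed colimits in $\sE$ to be exact, is the following reflection lemma, which I would prove first: if $C$ is $\lambda$-presentable and $(S_k)$ is a $\lambda$-directed family of subobjects of $C$ with $\bigvee_k S_k=C$, then $S_{k^\ast}=C$ for some $k^\ast$. Indeed a directed poset is connected, so $\varinjlim_k C=C$; the colimit functor is right exact (being a left adjoint), so applied to the pointwise exact sequences $0\to S_k\to C\to C/S_k\to 0$ it yields an exact sequence $\varinjlim_k S_k\to C\to\varinjlim_k C/S_k\to 0$. The first map has image $\bigvee_k S_k=C$, hence is an epimorphism, so $\varinjlim_k C/S_k=0$. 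As $C$ is $\lambda$-presentable, $\Hom_\sE(C,-)$ preserves this $\lambda$-directed colimit, whence $\varinjlim_k\Hom_\sE(C,C/S_k)=0$; thus the compatible family of projections $C\to C/S_k$ dies at some stage $k^\ast$, i.e. $S_{k^\ast}=C$.

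The remaining verifications are then routine, carried out at the level $\lambda=g(\nu)$ by writing $B=\varinjlim_k B_k$ as a $\lambda$-directed colimit of $\lambda$-presentables. For (iv), each of the $<\nu\le\lambda$ maps $A_i\to B$ factors through some $B_{k_i}$ (as $A_i$ is $\lambda$-presentable), a $\lambda$-directed upper bound $k^\ast$ serves all of them, and $D:=\operatorname{im}(B_{k^\ast}\to B)$ is a subobject of $B$, is $\lambda$-presentable by closure under quotients, and receives every $A_i$. For (v), given an epimorphism $\pi\colon B\to C$ with $C$ $\lambda$-presentable, the subobjects $\operatorname{im}(B_k\to C)$ form a $\lambda$-directed family with join $C$ (since $\coprod_k B_k\to B$ is epic), so the lemma produces $k^\ast$ with $B_{k^\ast}\to C$ epic; then $D:=\operatorname{im}(B_{k^\ast}\to B)$ is a $\lambda$-presentable subobject of $B$ mapping onto $C$. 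Finally, for (iii), given $0\to A\to B\to C\to0$ with $A,C$ $\lambda$-presentable, choose $k^\ast$ so large that $A\to B$ factors through $B_{k^\ast}$ and $B_{k^\ast}\to C$ is epic; then $B':=\operatorname{im}(B_{k^\ast}\to B)$ is $\lambda$-presentable, contains $A=\ker\pi$, and satisfies $\pi(B')=C$, whence $B'=\pi^{-1}(C)=B$, so $B$ is $\lambda$-presentable.
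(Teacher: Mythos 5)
Your strategy is genuinely different from the paper's, and most of it is sound: the reflection lemma (proved via right exactness of colimits and $\lambda$\+presentability of $C$, without invoking exactness of directed colimits) is correct, and the verifications of (iii)--(v) go through once one grants that $\sE_\nu$ consists of $g(\nu)$\+presentable objects closed under quotients. But the load-bearing claim --- that ``good'' regular cardinals, i.e.\ those $\lambda$ for which the $\lambda$\+presentable objects are closed under subobjects, are cofinal --- is exactly where your argument as written breaks down. Iteratively closing under the rank function $X\mapsto\sup\{\mathrm{rank}(S):S\subseteq X\}$ produces a limit cardinal of cofinality~$\omega$, hence a singular one; and the closure property does not pass to regular cardinals lying above such a closure point, because a genuinely $\lambda$\+presentable object need not be $\kappa$\+presentable for any $\kappa<\lambda$, so bounding the ranks of subobjects of the lower presentability classes says nothing about its subobjects. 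The claim is nevertheless true, but the proof requires a different input: choose $\lambda$ with $\mu\trianglelefteq\lambda$ (sharply less than) and $\lambda$ above the ranks of all subobjects of $\mu$\+presentables; then a $\lambda$\+presentable $X$ is (a retract of) a $\lambda$\+small $\mu$\+filtered colimit $\varinjlim_i X_i$ of $\mu$\+presentables, and since $\mu$\+filtered colimits commute with pullbacks, any subobject $S\subseteq X$ equals $\varinjlim_i(X_i\times_X S)$, a $\lambda$\+small colimit of $\lambda$\+presentables, hence $\lambda$\+presentable. Without some such argument your proof is incomplete at its self-declared ``principal obstacle.''

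It is worth noting that the paper sidesteps this entire issue by grading objects by \emph{$\lambda$\+generated} rather than $\lambda$\+presentable rank: $\lambda$\+generated objects are closed under quotients and $\lambda$\+small colimits for \emph{every} regular $\lambda\ge\kappa$, and conditions (i)--(v) only ever require producing subobjects as \emph{images} of maps out of members of $\sE_\lambda$, so closure under quotients suffices and no selection of special cardinals is needed. The paper's substitute for your reflection lemma is the characterization of $\lambda$\+generated objects as those not expressible as the sum of a $\lambda$\+directed family of proper subobjects, proved there using exactness of $\lambda$\+directed colimits (which is available, since these commute with $\lambda$\+small limits in a locally $\lambda$\+presentable category). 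Your cokernel-only proof of the reflection lemma is a nice economy, but in this abelian locally presentable setting it buys nothing that exactness does not already give, whereas switching from ``presentable'' to ``generated'' would let you delete the good-cardinal machinery altogether.
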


\begin{proof}
 Let $\sE$ be a locally $\kappa$\+presentable abelian category (where
$\kappa$~is a regular cardinal).
 Define a function $\psi$ on the objects $E\in\sE$ by the rules
$\psi(0)=0$, \ $\psi(E)=\kappa$ if $E\ne0$ is $\kappa$\+generated,
and $\psi(E)=\lambda$ if $\lambda\ge\kappa$ is the minimal regular
cardinal for which $E$ is $\lambda$\+generated.

 Here an object $E\in\sE$ is said to be \emph{$\lambda$\+generated}
for a regular cardinal~$\lambda$ if the functor $\Hom_\sE(E,{-})\:
\sE\rarrow\Sets$ preserves the colimits of $\lambda$\+directed diagrams
of monomorphisms~\cite[Section~1.E]{AR}.
 An object $E\in\sE$ is $\lambda$\+generated for a given
$\lambda\ge\kappa$ if and only if $E$ is a quotient of
a $\lambda$\+presentable object~\cite[Proposition~1.69]{AR}.

 Then condition~(i) is satisfied by construction, and condition~(ii)
holds by~\cite[Corollary~1.69]{AR}.
 To check the remaining conditions, the following lemma will be
useful.

\begin{lem}
 Let $\lambda$~be a regular cardinal and\/ $\sE$ be a locally
$\lambda$\+presentable abelian category.
 Then an object $E\in\sE$ is $\lambda$\+generated if and only if
it cannot be presented as the sum of a $\lambda$\+directed set
of its proper subobjects (in the inclusion order).
\end{lem}

\begin{proof}
 The key observation is that, in a locally $\lambda$\+presentable
category, $\lambda$\+directed colimits commute with $\lambda$\+small
(in particular, finite) limits~\cite[Proposition~1.59]{AR}.
 For an abelian locally $\lambda$\+presentable category $\sE$, this
means that the functors of $\lambda$\+directed colimit are exact.
 We will use the facts that $\lambda$\+directed colimits in $\sE$
commute with the kernels and pullbacks.

 ``Only if'': Let $(E_i\subset E)_{i\in I}$ be a $\lambda$\+directed
family of subobjects in $E$, ordered by inclusion.
 Then the natural morphism $\varinjlim_{i\in I}E_i\rarrow E$ is
injective, since the morphisms $E_i\rarrow E$ are injective and
the functor $\varinjlim_{i\in I}$ commutes with the kernels.
 It follows that $\varinjlim_{i\in I}E_i=\sum_{i\in I}E_i\subset E$.

 Now assume that $E=\varinjlim_{i\in I}E_i$.
 Then, since the object $E$ is $\lambda$\+generated, the identity
morphism $E\rarrow\varinjlim_{i\in I}E_i=E$ factorizes through
the object $E_{i_0}$ for some index $i_0\in I$.
 Thus $E_{i_0}=E$.
 
 ``If'': Let $(f_{ij}\:F_i\rarrow F_j)_{i<j\in I}$ be
a $\lambda$\+directed diagram of monomorphisms in $\sE$ with the colimit
$F=\varinjlim_{i\in I}F_i$.
 Then the natural morphism $f_i\:F_i\rarrow F$ is a monomorphism for
every $i\in I$, since the morphisms $f_{ij}\:F_i\rarrow F_j$ are
monomorphisms for all $j\ge i$ and the functor $\varinjlim_{i\in I}$
preserves monomorphisms.

 Let $g\:E\rarrow F$ be a morphism.
 Denote by $E_i=E\times_FF_i$ the pullback of the pair of morphisms
$g\:E\rarrow F$ and $f_i\:F_i\rarrow F$.
 Then the objects $E_i$ form a family of subobjects in $\sE$, which is
$\lambda$\+directed by inclusion.
 Furthermore, $\varinjlim_{i\in I}E_i=\varinjlim_{i\in I} E\times_FF_i
=E\times_F\varinjlim_{i\in I}F_i=E\times_FF=E$, so $E=\sum_{i\in I}E_i$.
 By assumption, it follows that there exists $i_0\in\sE$ such that
$E_{i_0}=E$.
 This means that the morphism $g\:E\rarrow F$ factorizes through
the monomorphism $f_{i_0}\:F_{i_0}\rarrow F$.

 Thus the map of sets $\varinjlim_{i\in I}\Hom_\sE(E,F_i)\rarrow
\Hom_\sE(E,F)$ is surjective.
 Obviously, a factorization of~$g$ through~$f_i$ is unique if it
exists for a given $i\in I$, implying that the above map of sets
is injective.
\end{proof}

 Now we are ready to check conditions~(iii\+-v).

 (iii)~Let $(B_i\subset B)_{i\in I}$ be a family of subobjects,
$\lambda$\+directed in the inclusion order and such that
$\sum_{i\in I}B_i=B$.
 Put $A_i=A\cap B_i\subset A$, and denote by $C_i\subset C$
the image of the composition $B_i\rarrow B\rarrow C$.
 Then $\sum_{i\in I}A_i=A$, since the $\lambda$\+directed colimits
commute with the pullbacks, and obviously $\sum_{i\in I}C_i=C$.
 Hence there exist indices $i_1$ and $i_2\in I$ such that
$A_{i_1}=A$ and $C_{i_2}=C$.
 Then, for any index $i\in I$, \ $i\ge i_1$, $i_2$, one has $B_i=B$.

 (iv)~It is provable similarly to~\cite[Proposition~1.16]{AR}
that the class of all $\lambda$\+generated objects is closed under
$\lambda$\+small colimits.
 In particular, the coproduct $\coprod_{i\in I}A_i$ is
$\lambda$\+generated.
 The class of all $\lambda$\+generated objects is also closed under
quotients.
 So it suffices to let $D$ be the image of the morphism
$\coprod_{i\in I}A_i\rarrow B$.

 (v)~In a locally $\lambda$\+presentable category, every object
is the sum of its $\lambda$\+generated subobjects, which form
a $\lambda$\+directed poset with
the inclusion order (cf.~\cite[Theorem~1.70]{AR}).
 In the situation at hand, we thus have $B=\sum_{i\in I}B_i$, where
$I$~is $\lambda$\+directed and $B_i\subset B$ is
$\lambda$\+generated for every $i\in I$.
 Denote by $C_i\subset C$ the image of the composition $B_i\rarrow B
\rarrow C$.
 Then $C=\sum_{i\in I}C_i$.
 Since $C$ is $\lambda$\+generated, it follows that there exists
$i_0\in I$ such that $C_{i_0}=C$.
 Then it suffices to take $D=B_{i_0}\subset B$.
\end{proof}

 Let $\sE$ be an exact category with an object size function~$\psi$.
 For any cardinal~$\lambda$, there is only a set of morphisms in
the full subcategory $\sE_\lambda\subset\sE$ (up to isomorphism);
in particular, there is only a set of admissible epimorphisms
between objects from $\sE_\lambda$ in~$\sE$.
 Denote by $\phi(\lambda)=\phi_\sE(\lambda)$ the minimal regular
cardinal such that $\psi(K)\le\phi(\lambda)$ for every kernel $K$ of
an admissible epimorphism between objects from $\sE_\lambda$ in~$\sE$.
Note that $\phi(\lambda)\ge\lambda$ as one sees when considering
the admissible epimorphisms in $\sE_\lambda$ of the form $K\rarrow 0$.

 Given a regular cardinal~$\lambda$ and an ordinal~$\alpha$, we
construct by transfinite induction regular cardinals
\[
  \phi_\alpha(\lambda)=\phi_{\sE,\alpha}(\lambda)
	\le
  \phi_\sE^\alpha(\lambda)=\phi^\alpha(\lambda)
\]
as follows.
 Put $\phi_0(\lambda)=0$ and $\phi^0(\lambda)=\lambda$.
For a successor ordinal $\alpha=\beta+1$, we put
$\phi_\alpha(\lambda)=\phi^\beta(\lambda)$, while for a limit
ordinal~$\alpha$, we let $\phi_\alpha(\lambda)$ be
the minimal regular cardinal such that $\phi^\beta(\lambda)
\le\phi_\alpha(\lambda)$ for all $\beta<\alpha$.
Finally, in both the cases considered in the former sentence,
we put $\phi^\alpha(\lambda)=\phi(\phi_\alpha(\lambda))$.

 Let $\sE$ be an exact category, and let
$(f_{ij}\:E_i\rarrow E_j)_{0\le i<j<\alpha}$ be an inductive system
in $\sE$ indexed by an ordinal~$\alpha$.
 We will say that such an inductive system is a \emph{chain of
admissible monomorphisms} if the morphism~$f_{ij}$ is an admissible
monomorphism for every $0\le i<j<\alpha$.
 Notice that a chain of admissible monomorphisms does \emph{not}
need to be smooth.

 Let $\kappa$~be an infinite regular cardinal.
 We will say that an exact category $\sE$ has \emph{exact colimits of
$\kappa$\+directed chains of admissible monomorphisms} if
\begin{enumerate}
\renewcommand{\theenumi}{\roman{enumi}}
\setcounter{enumi}{5}
\item for every chain of admissible monomorphisms 
$(f_{ij}\:E_i\to E_j)_{0\le i<j\le\alpha}$ in $\sE$ such that
the cofinality of the ordinal~$\alpha$ is not smaller than~$\kappa$,
the $\kappa$\+directed colimit $\varinjlim_{i<\alpha}E_i$ exists
in~$\sE$;
\item for every inductive system of short exact sequences
$0\rarrow A_i\rarrow B_i\rarrow C_i\rarrow0$ in $\sE$, indexed
by an ordinal~$\alpha$ of the cofinality not smaller than~$\kappa$,
such that all the three inductive systems of objects
$(a_{ij}\:A_i\to A_j)_{0\le i<j<\alpha}$, \
$(b_{ij}\:B_i\to B_j)_{0\le i<j<\alpha}$, and
$(c_{ij}\:C_i\to C_j)_{0\le i<j<\alpha}$ are chains of admissible
monomorphisms, the short sequence $0\rarrow\varinjlim_{i<\alpha}A_i
\rarrow\varinjlim_{i<\alpha}B_i\rarrow\varinjlim_{i<\alpha}C_i
\rarrow0$ is exact in~$\sE$.
\end{enumerate}

 Finally, we will say that colimits of $\kappa$\+directed chains
of admissible monomorphisms in $\sE$ \emph{preserve an object size
function}~$\psi$ if 
\begin{enumerate}
\renewcommand{\theenumi}{\roman{enumi}}
\setcounter{enumi}{7}
\item for every regular cardinal $\lambda>\kappa$, every chain of
admissible monomorphisms $(f_{ij}\:E_i\to E_j)_{0\le i<j\le\alpha}$
in $\sE$ such that the cofinality $\cof(\alpha)$ of
the ordinal~$\alpha$ satisfies the inequalities $\kappa\le
\cof(\alpha)<\lambda$ and $E_i\in\sE_\lambda$ for every
$0\le i<\alpha$, one has $\varinjlim_{i<\alpha}E_i\in\sE_\lambda$.
\end{enumerate}

 Recall from~\cite[Section~10]{Bueh} that a complex $M^\bu\in\sC(\sE)$
in an exact category $\sE$ is called \emph{exact} (or \emph{acyclic}) if each
differential $d^n\colon M^n\rarrow M^{n+1}$ can be factored into
an admissible epimorphism $M^n\rarrow N^{n+1}$ followed by
an admissible monomorphism $N^{n+1}\rarrow M^{n+1}$ in such
a way that we have for each $n$ a short exact sequence
$0\rarrow N^n\rarrow M^n\rarrow N^{n+1}\rarrow 0$.

 The following theorem is the main technical result of this section.

\begin{thm} \label{small-exact-subcomplex}
 Let $\kappa<\lambda$ be two infinite regular cardinals.
 Let\/ $\sE$ be a weakly idempotent-complete exact category with
exact colimits of $\kappa$\+directed chains of admissible monomorphisms
and an object size function~$\psi$ preserved by such colimits.
 Put $\mu=\phi_{\kappa,\sE}(\lambda)$.
 Let $K^\bu$ be a complex in\/ $\sE_\lambda$, let $M^\bu$ be
an exact complex in\/ $\sE$, and let $g\:K^\bu\rarrow M^\bu$ be
a morphism of complexes in\/~$\sE$.
 Then there exist an exact complex $L^\bu$ in\/ $\sE_\mu$
and morphisms of complexes $k\:K^\bu\rarrow L^\bu$ and $m\:L^\bu
\rarrow M^\bu$ such that $g=mk$ and the component $m^n\:L^n
\rarrow M^n$ of the morphism of complexes~$m$ in the degree~$n$
is an admissible monomorphism in\/ $\sE$ for every $n\in\boZ$.
\end{thm}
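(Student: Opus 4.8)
The plan is to reduce the statement to the construction of a single termwise-admissible exact subcomplex. Since $g\:K^\bu\rarrow M^\bu$ is a chain map, it suffices to produce an exact complex $L^\bu$ in $\sE_\mu$ together with a termwise admissible monomorphism $m\:L^\bu\rarrow M^\bu$ whose image contains the image of~$g$; the factorization $k$ is then read off degreewise. I would build $L^\bu=\varinjlim_{j<\kappa}D_j^\bu$ as the colimit of an increasing chain of termwise admissible subobjects $D_j^\bu\subseteq M^\bu$, constructed by transfinite recursion on $j<\kappa$, with $D_j^n\in\sE_{\phi^j(\lambda)}$ for every $n\in\boZ$. Throughout I fix the factorization of $M^\bu$ as an exact complex, writing each differential as $M^n\rarrow N^{n+1}\rarrow M^{n+1}$ with $0\rarrow N^n\rarrow M^n\rarrow N^{n+1}\rarrow0$ short exact.

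For the base step, condition~(iv) applied degreewise to the single morphism $g^n\:K^n\rarrow M^n$ (with $K^n\in\sE_\lambda$) produces a small admissible subobject $D_0^n\subseteq M^n$ in $\sE_\lambda=\sE_{\phi^0(\lambda)}$ through which $g^n$ factors. At a successor step $j\mapsto j+1$ I enlarge $D_j^\bu$ in two independent ways, in every degree simultaneously. First, to make the limit a subcomplex, I apply~(iv) to the morphisms $d^n|_{D_j^n}\:D_j^n\rarrow M^{n+1}$ and $D_j^{n+1}\rarrow M^{n+1}$ to obtain an admissible $D_{j+1}^{n+1}$ containing both images. Second, to force exactness in the limit, I must arrange that every cycle of $D_j^\bu$ becomes a boundary: the cycle object $D_j^n\cap N^n$ arises as the kernel of the admissible epimorphism from $D_j^n$ onto its image in $N^{n+1}$, so by the very definition of~$\phi$ it lies in $\sE_{\phi(\phi^j(\lambda))}=\sE_{\phi^{j+1}(\lambda)}$. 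Pulling this cycle object back along the admissible epimorphism $M^{n-1}\rarrow N^n$ and applying condition~(v), I extract a small admissible subobject of $M^{n-1}$ that surjects onto $D_j^n\cap N^n$, and adjoin it to $D_{j+1}^{n-1}$. This is exactly the point where the object size advances by one application of~$\phi$, so that $D_{j+1}^\bu\in\sE_{\phi^{j+1}(\lambda)}$. At limit stages I take unions, which are admissible subobjects of bounded size by conditions~(vi)--(vii).

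Taking the colimit over the whole chain, conditions~(vi) and~(viii) guarantee that $L^n=\varinjlim_{j<\kappa}D_j^n$ exists and lies in $\sE_\mu$: indeed $\cof(\kappa)=\kappa<\mu$, and each $D_j^n\in\sE_{\phi^j(\lambda)}\subseteq\sE_\mu$ because $\mu=\phi_\kappa(\lambda)$ is the least regular cardinal above all $\phi^\beta(\lambda)$, $\beta<\kappa$. The complex $L^\bu$ is a subcomplex because differentials were closed off at the successor steps, and $m\:L^\bu\rarrow M^\bu$ is a termwise admissible monomorphism with $g=mk$. Exactness of $L^\bu$ is where the regularity of~$\kappa$ does its work: any cycle $x\in L^n$ already lies in some $D_j^n$ with $j<\kappa$, and by the successor step $j\mapsto j+1<\kappa$ it acquires a boundary preimage in $D_{j+1}^{n-1}\subseteq L^{n-1}$. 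Thus the cycles and boundaries of $L^\bu$ coincide, and condition~(vii) upgrades this to genuine short exactness of the sequences witnessing exactness, which are computed as $\kappa$\+directed colimits of the corresponding sequences of the~$D_j^\bu$.

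I expect the main obstacle to be the admissibility-and-size bookkeeping at the successor step, rather than the colimit formalism. In a general Quillen exact category the intersection $D_j^n\cap N^n$ and the image of $D_j^n$ in $N^{n+1}$ need not automatically be admissible subobjects, so the crux is to show, using weak idempotent-completeness together with the size-function axioms, that the cycle object genuinely occurs as the kernel of an admissible epimorphism between objects of $\sE_{\phi^j(\lambda)}$ (so that $\phi$ controls its size) and that the extraction of bounded boundary preimages via~(v) is compatible in the limit with the exact structure. Handling all degrees $n\in\boZ$ at once, and ensuring that adjoining boundary preimages in degree $n-1$ does not spoil the estimates already secured in degree~$n$, is precisely what forces the full $\kappa$\+step iteration and makes the regularity of~$\kappa$ and the exactness axioms~(vi)--(vii) indispensable.
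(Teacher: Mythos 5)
Your overall skeleton (a $\kappa$\+indexed transfinite recursion that alternately closes the chosen subobjects under the differentials and adjoins boundary preimages, with the size advancing by one application of~$\phi$ at each successor step, followed by a $\kappa$\+directed colimit controlled by conditions (vi)--(viii)) matches the paper's proof. But there is a genuine gap at exactly the point you yourself flag as ``the crux'', and it is not a bookkeeping issue that resolves itself: your successor step rests on the claim that the cycle object $D_j^n\cap N^n$ ``arises as the kernel of the admissible epimorphism from $D_j^n$ onto its image in $N^{n+1}$.'' In a general weakly idempotent-complete exact category the image of $D_j^n$ in $N^{n+1}$ need not be an admissible subobject, the induced map onto it need not be an admissible epimorphism, and --- even if both held --- none of the axioms (i)--(v) bounds the size of an image of a small object (condition (v) goes the other way: it extracts a small subobject covering a \emph{given} small admissible quotient). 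Since $\phi$ only controls kernels of admissible epimorphisms between objects already known to lie in $\sE_\lambda$, the size estimate $D_j^n\cap N^n\in\sE_{\phi^{j+1}(\lambda)}$ has no justification as stated, and with it the entire induction collapses.

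The paper's proof circumvents this by reversing the direction of the construction at each stage: one first chooses a small admissible subobject $D_j^{n+1}\subset N^{n+1}$ by condition (iv) (containing the images of $K^n$ and of the previously built $N_i^{n+1}$), forms the pullback $F_j^n=M^n\times_{N^{n+1}}D_j^{n+1}$ (whose bottom row is split off from the given short exact sequence, so admissibility is automatic), uses (v) to find a small $G_j^n\subset F_j^n$ mapping onto $D_j^{n+1}$ by an admissible epimorphism, assembles $M_j^n$ via (iv), and only \emph{then} defines the cycle object $N_j^n$ as the kernel of $M_j^n\rarrow D_j^{n+1}$ --- which is an admissible epimorphism between objects of $\sE_{\phi_j(\lambda)}$ by \cite[Proposition~7.6]{Bueh}, so that $\phi$ applies by definition. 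A second, related omission: your final exactness argument (``any cycle $x\in L^n$ already lies in some $D_j^n$'') is element-wise and does not literally make sense in an exact category; what is actually needed, and what the paper supplies, is a short exact sequence $0\rarrow N_i^n\rarrow M_i^n\rarrow D_i^{n+1}\rarrow0$ at \emph{every} stage $i<\kappa$, together with the observation that the chains $(D_i^n)$ and $(N_i^n)$ are mutually cofinal, so that condition (vii) yields exactness of the colimit sequence. To repair your argument you would have to build the factorization objects $N_i^n$ into the recursion from the start, which is precisely the paper's formulation via admissible subdiagrams of $(N^n\to M^n\to N^{n+1})_{n\in\boZ}$.
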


\begin{proof}
 Let $N^n\in\sE$ be objects for which there exist short exact sequences
$0\rarrow N^n\rarrow M^n\rarrow N^{n+1}\rarrow0$ in $\sE$ such that
the differential $M^n\rarrow M^{n+1}$ is equal to the composition
$M^n\rarrow N^{n+1}\rarrow M^{n+1}$ for every $n\in\boZ$.
 Consider the diagram formed by the morphisms $N^n\rarrow M^n$ and
$M^n\rarrow N^{n+1}$ in the category~$\sE$.

 Proceeding by transfinite induction, we will construct a chain of
admissible subdiagrams $(N_i^n\to M_i^n\to N_i^{n+1})_{n\in\boZ}$
in the diagram $(N^n\to M^n\to N^{n+1})_{n\in\boZ}$ indexed by
the ordinals $0\le i\le\kappa$.
 This means that for every $0\le i\le\kappa$ we will produce admissible
subobjects $M_i^n\subset M^n$ and $N_i^n\subset N^n$ such that
$M_i^n\subset M_j^n$ and $N_i^n\subset N_j^n$ for all
$0\le i\le j\le\kappa$, and the morphisms $N^n\rarrow M^n$ and
$M^n\rarrow N^{n+1}$ take $N_i^n$ into $M_i^n$ and $M_i^n$
into~$N_i^{n+1}$.
 Furthermore, the morphisms $g^n\:K^n\rarrow M^n$ will factorize
through the admissible monomorphisms $M_1^n\rarrow M^n$.
 Finally, we will have $M_i^n$, $N_i^n\in\sE_{\phi^i(\lambda)}$ for all
$0\le i\le\kappa$, \,$n\in\boZ$, and the short sequences $0\rarrow
N^n_\kappa\rarrow M^n_\kappa\rarrow N^{n+1}_\kappa\rarrow0$ will be
exact in $\sE$ (equivalently, in~$\sE_{\phi^i(\lambda)}$) for all
$n\in\boZ$.
 Then we will put $L^\bu=M^\bu_\kappa$, that is $L^n=M^n_\kappa$
for every $n\in\boZ$.

 Put $M_0^n=0=N_0^n$ for all $n\in\boZ$.
 To construct the admissible subobjects $M_1^n\subset M^n$ and
$N_1^n\subset N^n$ for every $n\in\boZ$, we start with the compositions
$K^n\rarrow M^n\rarrow N^{n+1}$ of the morphisms $g^n\:K^n\rarrow M^n$
with the admissible epimorphisms $M^n\rarrow N^{n+1}$.
 Using property~(iv), we choose for every~$n$ an admissible subobject
$D_1^{n+1}\subset N^{n+1}$ such that $D_1^{n+1}\in\sE_\lambda$ and
the morphism $K^n\rarrow N^{n+1}$ factorizes through the admissible
monomorphism $D_1^{n+1}\rarrow N^{n+1}$.
 Let $F_1^n$ denote the pullback as depicted in the following
commutative diagram with short exact sequences in the rows and
(necessarily) admissible monomorphisms in the columns:
\[
\xymatrix{
0 \ar[r] &
N^n \ar[r] \ar@{=}[d] &
F_1^n \ar[r] \ar[d] &
D_1^{n+1} \ar[r] \ar[d] &
0
\\
0
\ar[r] &
N^n \ar[r] &
M^n \ar[r] &
N^{n+1} \ar[r] &
0
}
\]
%
 Using property~(v), we choose an admissible subobject
$G_1^n\subset F_1^n$ such that $G_1^n\in\sE_\lambda$ and
the composition $G_1^n\rarrow F_1^n\rarrow D_1^{n+1}$ is
an admissible epimorphism.

 Notice that the morphism $g^n\:K^n\rarrow M^n$ factorizes through
the admissible monomorphism $F_1^n\rarrow M^n$, since the composition
$K^n\rarrow M^n\rarrow N^{n+1}$ factorizes through the admissible
monomorphism $D_1^{n+1}\rarrow N^{n+1}$.
 Using property~(iv), we choose an admissible subobject
$M_1^n\subset F_1^n$ such that $M_1^n\in\sE_\lambda$ and the three
morphisms $G_1^n\rarrow F_1^n$, \ $K^n\rarrow F_1^n$, and
$D_1^n\rarrow N^n\rarrow F_1^n$ factorize through the admissible monomorphism
$M_1^n\rarrow F_1^n$.
 The composition of admissible monomorphisms $M_1^n\rarrow F_1^n
\rarrow M^n$ is an admissible monomorphism.
 The composition $G_1^n\rarrow M_1^n\rarrow F_1^n\rarrow D_1^{n+1}$ is
an admissible epimorphism by construction; since the exact category
$\sE$ is weakly idempotent-complete, it follows that the morphism
$M_1^n\rarrow D_1^{n+1}$ is an admissible
epimorphism~\cite[Proposition~7.6]{Bueh}.
 Let $N_1^n$ denote the kernel of the latter admissible epimorphism;
then $N_1^n\in\sE_{\phi^1(\lambda)}$. Moreover, since the composition
$N_1^n\rarrow M_1^n\rarrow M^n$ is an admissible monomorphism
by the construction and $\sE$ is weakly idempotent-complete,
an application of~\cite[Proposition~7.6]{Bueh} to $\sE^\sop$
shows that $N_1^n\rarrow N^n$ is an admissible monomorphism.
Since the composition $D_1^n\rarrow N^n\rarrow M^n\rarrow N^{n+1}$
vanishes, and hence so does
$D_1^n\rarrow M_1^n\rarrow F_1^n\rarrow D_1^{n+1}$, the map
$D_1^n\rarrow M_1^n$ factors through $N_1^n\rarrow M_1^n$ and
the factorization $D_1^n\rarrow N_1^n$ is an admissible monomorphism
using the same argument as above.

To summarize, we have the following commutative diagram with short exact
sequences in the rows and all the vertical maps being admissible
monomorphisms for all $n\in\boZ$,
\[
\xymatrix{
& D_1^n \ar[d]
&& K^n \ar@{.>}[dl]_-{\exists!} \ar[ddl]^(.2){g^n}|\hole
\\
0 \ar[r] &
N_1^n \ar[r] \ar[d] &
M_1^n \ar[r] \ar[d] &
D_1^{n+1} \ar[r] \ar[d] &
0
\\
0
\ar[r] &
N^n \ar[r] &
M^n \ar[r] &
N^{n+1} \ar[r] &
0,
}
\]
%
%
%
and we have constructed the desired admissible subdiagram
$(N_1^n\to M_1^n\to N_1^{n+1})_{n\in\boZ}$ in the diagram
$(N^n\to M^n\to N^{n+1})_{n\in\boZ}$.
The first step of our transfinite induction is complete.

 Let $2\le j<\kappa$ be an ordinal.
 Suppose that we have already constructed the admissible subdiagrams
$(N_i^n\to M_i^n\to N_i^{n+1})_{n\in\boZ}$ in the diagram
$(N^n\to M^n\to N^{n+1})_{n\in\boZ}$ for all the ordinals $i<j$.
 Using property~(iv), choose for every $n\in\boZ$ an admissible
subobject $D_j^n\subset N^n$ containing all the admissible subobjects
$N_i^n\subset N^n$ for $i<j$ and such that
$D_j^n\in\sE_{\phi_j(\lambda)}$. We again construct the following
pullback diagram with short exact sequences in the rows and admissible
monomorphisms in the columns,
\[
\xymatrix{
0 \ar[r] &
N^n \ar[r] \ar@{=}[d] &
F_j^n \ar[r] \ar[d] &
D_j^{n+1} \ar[r] \ar[d] &
0
\\
0
\ar[r] &
N^n \ar[r] &
M^n \ar[r] &
N^{n+1} \ar[r] &
0
}
\]

 Using property~(v), choose an admissible subobject $G_j^n\subset F_j^n$
such that $G_j^n\in\sE_{\phi_j(\lambda)}$ and the composition
$G_j^n\rarrow F_j^n\rarrow D_j^{n+1}$ is an admissible epimorphism.
%
 Using property~(iv), choose an admissible subobject $M_j^n\subset
F_j^n$ such that $M_j^n\in\sE_{\phi_j(\lambda)}$ and the two morphisms
$G_j^n\rarrow F_j^n$ and $D_j^n\rarrow N^n\rarrow F_j^n$ factorize
through the admissible monomorphism $M_j^n\rarrow F_j^n$.
 The composition of admissible monomorphisms $M_1^n\rarrow F_1^n
\rarrow M^n$ is an admissible monomorphism.
 The composition $G_j^n\rarrow M_j^n\rarrow F_j^n\rarrow D_j^{n+1}$ is
an admissible epimorphism by construction;
by~\cite[Proposition~7.6]{Bueh}, it follows that the morphism
$M_j^n\rarrow D_j^{n+1}$ is an admissible epimorphism.
 Let $N_j^n$ denote its kernel; then $N_j^n\in\sE_{\phi^j(\lambda)}$.

 Arguing as in the case of $j=1$ above, one shows that $N_j^n\rarrow
N_n$ is an admissible monomorphism and $D_j^n\subset N_j^n\subset N^n$.
 This finishes the construction of the admissible subdiagrams
$(N_j^n\to M_j^n\to N_j^{n+1})_{n\in\boZ}$ in the diagram
$(N^n\to M^n\to N^{n+1})_{n\in\boZ}$ for all ordinals $0\le j<\kappa$.
In particular, we have the following commutative diagram with
short exact sequences in the rows and admissible monomorphisms in
the columns for each $n\in\boZ$ and $i<j$
\[
\xymatrix{
& D_j^n \ar[d]
&& N_i^{n+1} \ar[d]
\\
0 \ar[r] &
N_j^n \ar[r] \ar[d] &
M_j^n \ar[r] \ar[d] &
D_j^{n+1} \ar[r] \ar[d] &
0
\\
0
\ar[r] &
N^n \ar[r] &
M^n \ar[r] &
N^{n+1} \ar[r] &
0
}
\]

 Using the assertion dual to~\cite[Proposition~7.6]{Bueh} again,
the inclusions of admissible subobjects $M_i^n\rarrow M_j^n$ and
$N_i^n\rarrow N_j^n$ are admissible monomorphisms for all $0\le i<j
<\kappa$.
 Thus $(M_i^n)_{0\le i<\kappa}$ and $(N_i^n)_{0\le i<\kappa}$ are
chains of admissible monomorphisms.
 By condition~(vi), it follows that the colimits $M_\kappa^n=
\varinjlim_{i<\kappa}M_i^n$ and $N_\kappa^n=\varinjlim_{i<\kappa}N_i^n$
exist.
 By condition~(vii), the natural morphisms $M_\kappa^n\rarrow M^n$
and $N_\kappa^n\rarrow N^n$ are admissible monomorphisms.
 By condition~(viii), we have $M_\kappa^n$, $N_\kappa^n\in
\sE_{\phi_\kappa(\lambda)}=\sE_\mu$.
 We have constructed the admissible subdiagram
$(N_\kappa^n\to M_\kappa^n\to N_\kappa^{n+1})_{n\in\boZ}$ in
the diagram $(N^n\to M^n\to N^{n+1})_{n\in\boZ}$.

 In order to show that the complex $L^\bu=M_\kappa^\bu$ is exact, it
remains to check exactness of the short sequences
$0\rarrow N_\kappa^n\rarrow M_\kappa^n\rarrow N_\kappa^{n+1}\rarrow0$.
 Here we observe that $(D_i^n)_{0\le i<\kappa}$ is a chain of
admissible subobjects in $N^n$ mutually cofinal with the chain
$(N_i^n)_{0\le i<\kappa}$.
 Hence $\varinjlim_{i<\kappa}D_i^n=N_\kappa^n$.
 Applying condition~(vii) to the chain of admissible monomorphisms
of short exact sequences $0\rarrow N_i^n\rarrow M_i^n\rarrow D_i^{n+1}
\rarrow0$, \,$0\le i\le\kappa$, we conclude that $0\rarrow
\varinjlim_{i<\kappa}N_i^n\rarrow\varinjlim_{i<\kappa}M_i^n\rarrow
\varinjlim_{i<\kappa}D_i^{n+1}\rarrow0$ is a short exact sequence
in~$\sE$.
\end{proof}

 We refer to~\cite[Section~10.4]{Bueh} for the definition of
the \emph{derived category} $\sD(\sE)$ of an exact category~$\sE$
as the Verdier quotient category of the homotopy category $\sK(\sE)$
by the triangulated subcategory of exact complexes.
 We also refer to~\cite[Set-Theoretic Remark~10.3.3]{Wei}
for a discussion of ``existence'' of localizations of categories
(including triangulated Verdier quotient categories, such
as $\sD(\sE)$).
 In a different terminology, the question is whether the derived
category $\sD(\sE)$ ``has Hom sets''.

\begin{cor} \label{derived-of-exact-category-exists}
 Let\/ $\sE$ be a weakly idempotent-complete exact category with
exact colimits of $\kappa$\+directed chains of admissible monomorphisms
and an object size function preserved by such colimits, for some
regular cardinal~$\kappa$.
 Then the derived category\/ $\sD(\sE)$ ``exists'' or ``has Hom sets'',
in the sense that morphisms between any given two objects in\/
$\sD(\sE)$ form a set rather than a proper class.
\end{cor}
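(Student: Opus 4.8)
The plan is to reduce the assertion to a cardinality count on the complexes mediating roofs in the Verdier quotient. Recall from~\cite[Section~10.4]{Bueh} that $\sD(\sE)=\sK(\sE)/\sK(\sE)_\ac$, where $\sK(\sE)_\ac\subset\sK(\sE)$ is the triangulated subcategory of exact complexes; both $\sC(\sE)$ and $\sK(\sE)$ already have Hom sets, so the only issue is the localization. By the standard calculus of fractions for Verdier quotients, every morphism $X^\bu\rarrow Y^\bu$ in $\sD(\sE)$ is represented by a roof consisting of a morphism $s\:W^\bu\rarrow X^\bu$ whose cone $\cone(s)$ is exact (a quasi-isomorphism) together with a morphism $f\:W^\bu\rarrow Y^\bu$, the represented morphism being $fs^{-1}$. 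It therefore suffices to produce a \emph{set} of such roofs that already represents all of $\Hom_{\sD(\sE)}(X^\bu,Y^\bu)$.

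First I would fix $X^\bu$ and $Y^\bu$ and choose the size level. Each term $X^n$ lies in $\sE_{\lambda_n}$ for some regular cardinal, and since there are only countably many terms we may pick a single regular cardinal $\lambda>\kappa$ with $X^\bu$ a complex in $\sE_\lambda$; set $\mu=\phi_{\kappa,\sE}(\lambda)$ as in Theorem~\ref{small-exact-subcomplex}. Given any roof $(s,f)$ as above, consider the triangle $W^\bu\rarrow X^\bu\rarrow E^\bu\rarrow W^\bu[1]$ of $s$, with $E^\bu=\cone(s)$ exact, and write $p\:X^\bu\rarrow E^\bu$ for the second map. Now I would apply Theorem~\ref{small-exact-subcomplex} to $p$ (with $K^\bu=X^\bu\in\sE_\lambda$ and $M^\bu=E^\bu$ exact): it yields an exact complex $L^\bu$ in $\sE_\mu$ together with a factorization $p=mk$, where $k\:X^\bu\rarrow L^\bu$ and $m\:L^\bu\rarrow E^\bu$ is a termwise admissible monomorphism.

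Next I would compare the original roof with one built from $L^\bu$. Complete $k$ to a triangle $V^\bu\rarrow X^\bu\rarrow L^\bu\rarrow V^\bu[1]$; since $L^\bu$ is exact, the first map $s'\:V^\bu\rarrow X^\bu$ is again a quasi-isomorphism. The commuting square expressing $p=mk$, together with the triangulated axiom (TR3), produces a morphism $h\:V^\bu\rarrow W^\bu$ with $s\circ h=s'$. Consequently $fs^{-1}=(fh)(s')^{-1}$, so the roof $(s',fh)$ represents the \emph{same} morphism of $\sD(\sE)$. In other words, every morphism $X^\bu\rarrow Y^\bu$ is represented by a roof whose left leg is the fibre of some $k\:X^\bu\rarrow L^\bu$ with $L^\bu$ exact and termwise in $\sE_\mu$; this is the heart of the argument, showing that the small exact complexes are weakly cofinal among all quasi-isomorphisms into $X^\bu$.

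Finally I would count. By condition~(ii) there is, up to isomorphism, only a set of objects in $\sE_\mu$, hence only a set of isomorphism classes of complexes with all terms in $\sE_\mu$; for each such $L^\bu$ the class $\Hom_{\sK(\sE)}(X^\bu,L^\bu)$ is a set, each resulting $k$ determines its fibre $V^\bu$ up to isomorphism, and $\Hom_{\sK(\sE)}(V^\bu,Y^\bu)$ is again a set. Thus the triples $(L^\bu,k,f)$ form a set, and by the previous paragraph the induced map from this set to $\Hom_{\sD(\sE)}(X^\bu,Y^\bu)$ is surjective; therefore the latter is a set, as desired. The genuinely hard work is packaged in Theorem~\ref{small-exact-subcomplex}, which we assume; within this corollary the only delicate points are the comparison of roofs via (TR3) and the requirement that the factorization pass through an exact complex small enough for condition~(ii) to bound its number — both of which hinge on the fact that $m$ can be arranged to be a termwise admissible monomorphism with $L^\bu\in\sE_\mu$.
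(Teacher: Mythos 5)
Your argument is correct and is essentially the paper's proof: both rest entirely on Theorem~\ref{small-exact-subcomplex} producing, for each $X^\bu$, a set of maps to exact complexes in $\sE_\mu$ through which every map to an exact complex factors, and then conclude that a set of roofs suffices. The only difference is that the paper outsources the roof comparison to Weibel's ``locally small multiplicative system'' criterion \cite[Set-Theoretic Considerations~10.3.6]{Wei}, whereas you verify that criterion by hand via the cone construction and (TR3).
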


\begin{proof}
 It is clear from Theorem~\ref{small-exact-subcomplex} that for every
complex $K^\bu\in\sK(\sE)$ there exists a set of morphisms $f_t\:K^\bu
\rarrow L^\bu_t$, \,$t\in\cT$ from the complex $K^\bu$ to exact
complexes $L^\bu_t$ in $\sK(\sE)$ such that every morphism from $K^\bu$
to an exact complex $M^\bu$ in $\sK(\sE)$ factorizes through one of
the morphisms~$f_t$.
 It follows that the multiplicative system $\cS$ of all morphisms with
exact cones in $\sK(\sE)$ is \emph{locally small} in the sense
of~\cite[Set-Theoretic Considerations~10.3.6]{Wei}; hence
the localization $\sD(\sE)=\sK(\sE)[\cS^{-1}]$ exists.
\end{proof}

\begin{thm} \label{derived-of-locally-presentable-abelian-exists}
 Let\/ $\sE$ be a locally presentable abelian category.
 Then the derived category\/ $\sD(\sE)$ ``exists'', in the sense that
morphisms between any given two objects in\/ $\sD(\sE)$ form a set
rather than a proper class.
\end{thm}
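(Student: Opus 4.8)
The plan is to deduce this from Corollary~\ref{derived-of-exact-category-exists}, applied to $\sE$ equipped with its abelian exact structure (so that admissible monomorphisms, admissible epimorphisms, and short exact sequences are just the ordinary monomorphisms, epimorphisms, and short exact sequences). The whole task then reduces to checking the three hypotheses of that corollary: that $\sE$ is weakly idempotent-complete, that it carries an object size function, and that for some regular cardinal~$\kappa$ it has exact colimits of $\kappa$\+directed chains of admissible monomorphisms with the size function preserved by such colimits.

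The first two hypotheses are immediate. Any abelian category is idempotent-complete, hence weakly idempotent-complete. An object size function~$\psi$ exists by Proposition~\ref{locally-presentable-abelian-object-size-function}; I would fix a regular cardinal~$\kappa$ for which $\sE$ is locally $\kappa$\+presentable and take the function~$\psi$ produced from this~$\kappa$ in the proof of that proposition, so that $\sE_\lambda$ is the class of $\lambda$\+generated objects for every regular $\lambda\ge\kappa$. This same~$\kappa$ is the cardinal I would use in Corollary~\ref{derived-of-exact-category-exists}.

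It remains to verify conditions~(vi)\+-(viii) of the section for this~$\kappa$. Condition~(vi) is free, since a locally presentable category is cocomplete. For condition~(vii) I would invoke the classical fact that in a locally $\kappa$\+presentable category $\kappa$\+directed colimits commute with $\kappa$\+small limits, in particular with pullbacks~\cite[Proposition~1.59]{AR}. A chain indexed by an ordinal~$\alpha$ with $\cof(\alpha)\ge\kappa$ is a $\kappa$\+directed colimit, so for a chain of monomorphisms $f_i\:A_i\rarrow B_i$ one has $A_i\times_{B_i}A_i=A_i$ via the diagonal, and passing to the colimit yields $(\varinjlim_i A_i)\times_{\varinjlim_i B_i}(\varinjlim_i A_i)=\varinjlim_i A_i$; hence $\varinjlim_i A_i\rarrow\varinjlim_i B_i$ is again a monomorphism. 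Combined with the automatic right exactness of colimits, this shows that the colimit of a $\kappa$\+directed chain of short exact sequences of the required form is short exact. For condition~(viii), a chain $(E_i)_{i<\alpha}$ with $\kappa\le\cof(\alpha)<\lambda$ and all $E_i\in\sE_\lambda$ has a cofinal subchain of length~$<\lambda$, so its colimit is a $\lambda$\+small colimit of $\lambda$\+generated objects; since the class of $\lambda$\+generated objects is closed under $\lambda$\+small colimits (as recorded in the proof of Proposition~\ref{locally-presentable-abelian-object-size-function}), the colimit lies in~$\sE_\lambda$.

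The main obstacle, and the only genuinely nontrivial point, is condition~(vii): the exactness of these colimits. In a general locally presentable abelian category the directed colimit functors need not be exact—indeed the colimit of a chain of monomorphisms need not be a monomorphism, as in the contramodule examples of~\cite[Examples~4.4]{PR}—so it is essential that we only confront chains of cofinality at least~$\kappa$: such colimits are $\kappa$\+directed, and it is precisely $\kappa$\+directedness that makes the colimit commute with the pullback that detects monomorphy. The badly behaved examples are indexed by~$\omega$, whose cofinality lies below~$\kappa$ once $\kappa>\omega$, so they are correctly excluded. With conditions~(vi)\+-(viii) established, Corollary~\ref{derived-of-exact-category-exists} applies and shows that $\sD(\sE)$ has Hom sets.
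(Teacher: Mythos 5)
Your proposal is correct and follows essentially the same route as the paper: reduce to Corollary~\ref{derived-of-exact-category-exists} via Proposition~\ref{locally-presentable-abelian-object-size-function}, verify~(vi) from cocompleteness, (vii) from the exactness of $\kappa$\+directed colimits in a locally $\kappa$\+presentable abelian category (\cite[Proposition~1.59]{AR}), and~(viii) by passing to a cofinal subchain of length~$<\lambda$ and using closure of $\lambda$\+generated objects under $\lambda$\+small colimits, exactly as the paper indicates by reference to the proof of condition~(iv). Your extra remarks (the pullback criterion for monomorphy and why the $\omega$\+indexed contramodule counterexamples are excluded) only make explicit what the paper leaves implicit.
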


\begin{proof}
 Let $\kappa$~be a regular cardinal for which the category $\sE$ is
locally $\kappa$\+presentable.
 Then condition~(vi) is satisfied, since all colimits exist in~$\sE$.
 Furthermore, condition~(vii) holds, because the functors of
$\kappa$\+directed colimit are exact in~$\sE$
\,\cite[Proposition~1.59]{AR}.
 A construction of an object size function~$\psi$ on $\sE$ is
spelled out in
Proposition~\ref{locally-presentable-abelian-object-size-function}.
 So conditions~(i\+-v) are satisfied as well.
 Finally, condition~(viii) is provable similarly to the proof of
condition~(iv) in
Proposition~\ref{locally-presentable-abelian-object-size-function}.
 Thus Corollary~\ref{derived-of-exact-category-exists} is applicable,
and we are done.
\end{proof}

\begin{qst}
 Let\/ $\sE$ be a locally presentable abelian category. \par
\textup{(a)} Do set-indexed coproducts necessarily exist in
the derived category $\sD(\sE)$\,? \par
\textup{(b)} If the answer to~(a) is positive, is then
the triangulated category $\sD(\sE)$ well-generated?
\end{qst}

\bigskip

\end{document}